\numberwithin{equation}{section}
\newtheorem{theorem}{Theorem}[section]
\newtheorem{ex}[theorem]{Example}
\newtheorem{re}[theorem]{Remark}
\newtheorem{co}[theorem]{Corollary}
\newtheorem{defi}[theorem]{Definition}
\newtheorem{lem}[theorem]{Lemma}
\newtheorem{pr}[theorem]{Proposition}
\theoremstyle{nonumberplain}
\newtheorem{proof}{Proof}
\def\o{\circ}
\def\x{\times}
\def\opt{\operatorname{opt}}
\def\id{\operatorname{id}}
\def\Lip{{\mathcal{L}ip}}
\def\loc{\text{loc}}
\def\Ez{\mathbb E}
\def\Nz{\mathbb N}
\def\Pz{\mathbb P}
\def\Qz{\mathbb Q}
\def\Rz{\mathbb R}
\def\Rz{\mathbb R}
\def\Zz{\mathbb Z}
\def\ep{\varepsilon}
\def\Bc{\mathcal B}
\def\Cc{\mathcal C}
\def\Dc{\mathcal D}
\def\Ec{\mathcal E}
\def\Gc{\mathcal G}
\def\Hc{\mathcal H}
\def\Kc{\mathcal K}
\def\Lc{\mathcal L}
\def\Mc{\mathcal M}
\def\Nc{\mathcal N}
\def\Pc{\mathcal P}
\def\Sc{\mathcal S}
\def\Uc{\mathcal U}
\def\Wc{\mathcal W}
\def\Xc{\mathcal X}
\def\Pf{\mathcal{}\mathfrak P}
\def\Rf{\mathfrak R}
\def\Sf{\mathfrak S}
\def\pf{\mathfrak p}
\def\sf{\mathfrak s}
\def\ol#1{\overline{#1}}
\def\0{{\mathbf 0}}
\def\1{{\mathbf 1}}
\def\d{\partial}
\newcommand{\der}{\operatorname{d}}
\def\define#1{\emph{#1}}
\def\T{{\operatorname{T}}}
\def\o{\circ}
\def\x{\times}
\def\supp{{\operatorname{supp}}}
\def\cov{{\operatorname{Cov}}}
\def\eps{\varepsilon}
\def\al{\alpha}
\def\be{\beta}
\def\ga{\gamma}
\def\de{\delta}
\def\la{\lambda}
\def\ta{\tau}
\def\rh{\rho}
\def\si{\sigma}
\def\ph{\varphi}
\def\ps{\psi}
\def\om{\omega}
\def\Ga{\Gamma}
\def\De{\Delta}
\def\Si{\Sigma}
\def\Ph{\Phi}
\def\Om{\Omega}
\def\ol#1{\overline{#1}}
\def\aequiv#1{\;\raise-4pt\hbox{$\substack{{\displaystyle\sim}\\\raise4pt\hbox{$\scriptstyle #1$}}$}
\;}
\DeclareMathAccent{\wtilde}{\mathord}{largesymbols}{"65}
\def\loc{{\operatorname{loc}}}
\def\cn#1{\def\tmp@cn{#1}\ifx\tmp@cn\empty\nabla\else\nabla^{\tmp@cn}\fi}
\def\o{\circ}
\def\Xct{\Xc^{2,\infty}}
\def\chapterauthor#1{}
\title{Non-linear degenerate parabolic flow equations and a finer differential structure on Wasserstein spaces}
\author{Arthur Schichl}
\affil{\small KOF Swiss Economic Institute\\ETH Zürich\\Leonhardstrasse 21\\8092 Zürich\\ Switzerland\\
aschichl@ethz.ch}
\date{August 21, 2025}
\begin{document}
\def\d{\partial}
\def\o{\circ}


\maketitle

\chapterauthor{ Arthur Schichl \\
        \small KOF Swiss Economic Institute \\ [-2.0mm]
        \small ETH Zürich \\ [-2.0mm]
        \small Leonhardstrasse 21 \\ [-2.0mm]
		\small 8092 Zurich, Switzerland \\ [-2.0mm]
        \small aschichl@ethz.ch } 

\begin{abstract}
	
	We define new differential structures on the Wasserstein spaces $\Wc_p(M)$ for $p > 2$ and a general Riemannian manifold $(M,g)$. We consider a very general and possibly degenerate second
	order partial differential flow equation with measure dependent coefficients to expand the notion of smooth curves and to ensure that the new differential
	structure is finer than the classical one. Under weak assumptions, we explicitly construct smooth solutions as uniform limits of Average Flow Approximation Series (a variant of explicit Euler--scheme approximations) in $\Wc_p(M)$ and, 
	thus, prove a generalized version of the Central Limit Theorem. Under slightly stronger assumptions, we prove that smooth solutions of our newly introduced flow--equation are unique.
\end{abstract}

 \renewcommand{\abstractname}{Keywords}
 \begin{abstract}
 	Optimal transport, Wasserstein spaces, Infinite--dimensional differential structures, Measure-valued differential equations, Parabolic flow equations, Degenerate parabolic equations, Generalized Central Limit Theorem, Measure--dependent parabolic equations, Euler--scheme for parabolic equations.
 \end{abstract}

 \renewcommand{\abstractname}{Acknowledgements}
 \begin{abstract}
 	I am grateful to Alessio Figalli and Ulisse Stefanelli for their guidance and encouragement. I would also like to thank Hans Gersbach, Wilhelm Schlag, Darko Mitrovic, Nikolai V. Krylov and Hermann Schichl for their valuable input. 
 \end{abstract}

\newpage
\tableofcontents

\section{Introduction}\label{ch:intro}
\label{sec:intro1}



Recent years have seen an increased need for analytical calculus on spaces other than finite dimensional vector spaces. In particular, measure spaces like the Wasserstein spaces of optimal transport theory have been of central interest in prominent works like \citet{ambrosio2005gradient,ambrosio2008calculus,figalli2021invitation,santambrogio2015optimal,villani2009optimal,villani2021topics}.  


Calculus on measure spaces first requires defining differential structures on those spaces. 
Hence, this whole work will be devoted to defining what it means for a path of measures $[0,T] \to \Pc(M)$, $t \mapsto \mu(t)$ to satisfy a flow equation of the form
\begin{equation}\label{eq:MDE}
\tag{MDE}
\begin{aligned}
		\dot{\mu}(t) &= V(\mu(t)) \\
		\mu(0) &= \mu_0,
\end{aligned}
\end{equation}
on the Wasserstein space $\Wc_p(M)$, for a Riemannian manifold $(M,g)$, $p\geq 1$ and some probability vector--field $V$, which remains to be defined. 
To these means, \citet{otto2000generalization}, \citet{otto2001geometry} were the first to endow the Wasserstein spaces $\Wc_p(\Rz^n)$ for $p>1$ with differential structures. The so-called continuity equation
\begin{equation}
	\frac{\der \mu}{\der t} + div^g(v_t \mu) = 0,
\end{equation}
where $t \mapsto v_t$ is a curve of $p$--integrable vector--fields, is at the core of the theory. This equation is of high interest, since any absolutely continuous curve of measures $\mu(t)$ in $\Wc_p(\Rz^n)$ satisfies the continuity equation for some curve of vector fields $t\mapsto v_t$ \cite{ambrosio2005gradient}. Hence, it is now well-established to define the tangent space at $\mu \in \Wc_p(\Rz^n)$ by $L^p(\Rz^n)$ and interpret solutions of ODEs of measures, i.e.\ measure differential equations (MDEs), as solutions of the flow equation
\begin{equation}\label{eq:flowconteq}
	\frac{\der \mu}{\der t} + div^g(v(\mu) \mu) = 0.
\end{equation}
Detailed expositions of the theory in $\Rz^n$ and more general spaces can be found in
\citet{ambrosio2005gradient,gigli2012second,bonnet2021differential,cavagnari2023dissipative}.
However, the so--defined differential structures have important limitations. 

Since most measure spaces are of infinite dimension, in general there exist many non-equivalent differential structures compatible with a given topology and there is not even a canonical topology
on these spaces. 
\citet{piccoli2019measure} gives a new definition of solutions of MDEs on $\Wc_q(M)$ for $q\geq1$---namely curves that satisfy an integral equation of the form
\begin{equation}\label{eq:piccMDE}
\begin{aligned}
\int_M \ph \der \mu(t) &- \int_M \ph \der \mu_0 \\&= \int_0^t \int_M \int_{T_p M} \nabla\ph(p,X) dV[\mu(s)]_p(p,X) d\mu(t)(p) \der s,
\end{aligned}
\end{equation}
where $V$ sends a measure $\mu \in \Wc_q(M)$ onto a measure $V[\mu] \in \Wc_q(TM)$ on the tangent bundle of $M$.


Although \citet{piccoli2019measure} defines larger tangent spaces than the classical ones, the differential structures induced by the flow equation (\ref{eq:piccMDE}) are essentially equivalent to those induced by the continuity equation for $p>1$. However, the main result in \citet{piccoli2019measure} proves the existence of a Lipschitz semigroup of solutions of the equation above, making only very weak regularity assumptions on the probability vector field $V$ and is, thus, of independent interest. The limitations in the differential structure remain.  
In fact, solutions of (\ref{eq:flowconteq}) have the so called \textit{barycentric property} \cite{cavagnari2023dissipative}, which says that sufficiently regular solutions \cite{ambrosio2008calculus} of (\ref{eq:MDE}) look like 
\begin{equation}
\mu(t) = (\rh^{\ol{V}[\mu(t)]}_t)_* \mu_0.
\end{equation}
The so--defined differential structure is not very rich. In particular, the barycentric property implies that the standard diffusion process is not a ``sufficiently strong'' solution of a measure valued differential equation with the definitions given in \cite{bonnet2021differential,cavagnari2023dissipative,piccoli2019measure}. This results in a lack of smooth geodesics between measures, for example between a Dirac measure and any measure whose support is larger than a single point. The structure of the tangent spaces on the other hand makes repeated differentiation impossible and in general, the derivative of functions between Wasserstein spaces is not well-defined.





\smallskip

In Chapter~\ref{ch:parstructwas}, we will endow the Wasserstein spaces $\Wc_p(M)$, where $p > 2$ and $(M,g)$ is a Riemannian manifold, with a new richer differential structure, which overcomes some of the drawbacks of the classical differential structures on Wasserstein spaces,
while maintaining many of their advantages. The corresponding flow equation is essentially of the form
\begin{equation}\label{eq:MDEdintro}
\begin{aligned}
\int_M \ph \der \mu(s) - \int_M \ph \der \mu(0) = \int_0^s \int_M \square_{\mu(t)}^V \ph \der \mu(t) \der t,
\end{aligned}
\end{equation} 
for all $\ph$ smooth and compactly supported on $M$, where $\square_{\mu(t)}^V \ph$ is a possibly degenerate parabolic operator depending on $\mu \in \Wc_p(M)$, which we will introduce in Chapter~\ref{ch:parstructwas}.

\bigskip


Before presenting the main theorems, this article contains a first chapter on some useful preliminary results in optimal transport and Riemannian geometry.



Chapter~\ref{ch:parstructwas} contains the main results of this article. First, we use (\ref{eq:MDEdintro}) to define a new differential structure on the Wasserstein spaces $\Wc_p(M)$ for $p >2 $. We shall characterize the solutions of (\ref{eq:MDEdintro}) and derive some important properties they possess. Then, we will introduce the concept of smooth solutions of (\ref{eq:MDEdintro}), which are essentially the smooth curves on $\Wc_p(M)$ for our new differential structure. Our main theorems are that smooth solutions of (\ref{eq:MDEdintro}) always exist under rather weak assumptions on $V$, and they are unique on manifolds $(M,g)$ with bounded geometry with slightly stronger
assumptions on $V$. Furthermore, we show that those unique smooth solutions of (\ref{eq:MDEdintro}) are Hölder--continuous and obtained as uniform limits of an explicit Euler--like--scheme, which we call Average Flow Approximation Series (AFAS) in $\Wc_p(M)$. Finally, we show that non--smooth solutions of (\ref{eq:MDEdintro}) are not unique, even with strong assumptions on $V$.



\section{Preliminaries}\label{ch:diffprelim}

In this preliminary chapter, we introduce some basic notation we shall use in this work and recall some classical results in optimal transport theory and Riemannian geometry. Furthermore, we prove some simple results, which shall be of use later.  
\subsection{Notation}

The basic number sets will be denoted by $\Nz$ (natural numbers, $0\in\Nz$), $\Zz$ (integers), and $\Rz$ (real numbers), and
we will denote by $\Nz^*$ the set of all positive natural numbers, and by $\Rz_+$ the set of all positive real numbers.
If $a,b\in\Zz$, we will write $\llbracket a,b\rrbracket:=\{a,a+1,\dots,b\}$.

Let $(X,\tau)$ be a topological space. We will denote for $x\in X$ the neighbourhood filter at $x$, i.e., the set of all
neighbourhoods of $x$ by $\Uc(x)$.

Let $\ph:\Rz^n\to\Rz^m$ be a differentiable function. We will denote by $D\ph(x)$ the Jacobian of $\ph$ at $x\in\Rz^n$.

We will need some real function spaces, especially Sobolev spaces, throughout this work. Let $(Z,d)$ be a complete metric space, $(E,h)$ a Banach manifold and $(M,g)$ a Riemannian manifold. Furthermore, let $A \subset E$, $B \subset Z$ and $N\subset M$ be closed. We denote by $\la$ the volume form
on $(M,g)$ and define the function spaces (see \citet{michor2008topics} for formal introductions):
\begin{center}
\begin{longtable}{lp{0.77\textwidth}}
	$\Wc_p(Z)$ & The Wasserstein space of $p$--integrable probability Borel--measures on $M$ for $p \geq 1$ (see \cite{figalli2021invitation,bogachev2012monge}).\\
	$\Wc_p(\mu, \nu)$ & The $p$--Wasserstein distance between $\mu, \nu \in \Wc_p(M)$, for $p\geq1$ (see \cite{figalli2021invitation,bogachev2012monge}).\\
	$\Wc_p^{\infty}(c_1,c_2)$ & The $\sup$ distance for two continuous curves $c_1,c_2:[0,T]\to\Wc_p(M)$, for $T >0$.\\
	$\Pc_p(Z)$ & The set of all probability Borel--measures on $M$ whose $p$th moment exists for $p \geq 1$.\\
	$\Ga(\mu,\nu)$ & The set of all transport plans between $\mu,\nu\in\Pc(M)$ (see \cite{figalli2021invitation, bogachev2012monge}).\\
	$C(B,\Rz^m)$ & The set of all continuous functions $f: B\to\Rz^m$. \\
	$C^k(A,\Rz^m)$ & For $k\in\Nz\cup\{\infty,\om\}$ the set of all $k$--times continuously differentiable functions $f\in C(A,\Rz)$. For $k=\infty$ the set of all smooth functions, and for $k=\om$ and $A \subset \Rz^n$ the set of all real analytic functions. Also, $C^0(A,\Rz^m) := C(A,\Rz^m)$.\\
	$C^k_c(A,\Rz^m)$ & For $k\in\Nz\cup\{\infty\}$ the set of all $f\in C^k(A,\Rz^m)$ with $\supp f$ compact. \\
	$C^k_b(A,\Rz^m)$ & For $k\in\Nz\cup\{\infty,\om\}$ the set of all $f\in C^k(A,\Rz^m)$ where $f$ and all of its derivatives
		up to order $k$ are bounded. \\
	$L^p(N,\Rz)$ & For $1\leq p<\infty$ the space of $L^p$--functions $f: M\supseteq N\to\Rz$, i.e., the space of all (equivalence classes of) measurable functions,
	for which the integral $\int_{A} |f|^p\,\der \la$ converges. \\
	$L^\infty(A,\Rz)$ & The set of all (equivalence classes of) measurable functions $f$ that are essentially bounded, i.e., for which the essential supremum
		$\operatornamewithlimits{esup}_{A}|f(x)|<\infty$. \\
	$W^{k,p}(A,\Rz)$ & For $1\leq p<\infty$ and $k\in\Nz$, the space of functions $f\in L^p(A,\Rz)$ who are weakly $k$--times differentiable, and for
		which all derivatives up to order $k$ are also $L^p$. \\
	$H^k(A,\Rz)$ & $W^{k,2}(A,\Rz)$ \\
	$W^{k,\infty}(A,\Rz)$ & Let $(E,h)$ be a Banach space or a Riemannian manifold, either with bounded geometry or a fixed atlas. For $k\in\Nz$, the space of functions $f\in L^\infty(A,\Rz)$ who are weakly $k$--times differentiable, and for
		which all derivatives up to order $k$ are also essentially bounded.\\
	$W^{k,p}_\loc(A,\Rz)$ & For $k\in\Nz$ and $1\leq p\leq\infty$ the set of all measurable functions such that for every $x\in A$
		there exists an open neighbourhood $U\in\Uc(x)$ such that $f|_U\in W^{k,p}(U,\Rz)$. \\
	$\Lip^k(A,\Rz)$ & For $k\in\Nz$ the set of all $k$--times differentiable functions, whose $k$th derivative is Lipschitz continuous. We write $\Lip(A,\Rz):=\Lip^0(A,\Rz)$ for the space of Lipschitz continuous functions. \\
	$\Lip^k_\loc(A,\Rz)$ & For $k\in\Nz$ the set of all measurable functions, such that for every $x\in A$ there exists
		$U\in\Uc(x)$ such that $f|_U\in\Lip^k(U,\Rz)$.\\
	$\Xc(E)$ & The space of all smooth vector fields on $E$.\\
	$\Xc^{k,\infty}(E)$ & For $k\in\Nz$, the space of vector--fiels who are essentially bounded on $E$, weakly $k$--times differentiable, and for
		which all derivatives up to order $k$ are also essentially bounded. We endow it with the classical $W^{k,\infty}$--norm.\\
	$\Dc(A,\Rz)$ & $\Cc^{\infty}_c(A)$.\\	
	$\Dc'(A,\Rz)$ & The topological dual of $\Dc(A,\Rz)$, the space of distributions on $A$.\\
	$\dot{\Bc}(M\times(0,T])$ & The set of functions in $\Cc^\infty(M\times(0,T])$ for which all derivatives tend to $0$ spatially at infinity.\\
	$\dot{\Bc}'(M\times(0,T])$ & The strong dual of $\dot{\Bc}(M\times(0,T])$, all distributions vanishing spatially at infinity.
\end{longtable}
\end{center}

\subsection{Optimal transport}
First we recall some well--known results of optimal transport in the typical setting of Polish spaces and their less well known extensions
to the more general Radon metric spaces. For a comprehensive introduction to optimal transport theory, we refer to \citet{figalli2021invitation} and \citet{villani2009optimal}.

Recall that a Radon metric space is a metric space $(M,d)$ such that every Borel probability measure on $M$ is a Radon measure.
\begin{defi}[Radon metric space]
	A metric space $(M,d)$ is called a Radon metric space, if every Borel probability measure $\mu$ on $M$ satisfies that for every Borel set $B \subset M$,
	and every $\ep > 0$, there exists a compact set $K_{\ep} \subset B$ such that $\mu(B \setminus K_{\ep}) < \ep$.
\end{defi}
Most fudamental results of optimal transport theory, which are usually stated for Polish spaces, remain true in Radon metric spaces. This is a strong generalization, 
since the Radon spaces are far more general objects than Polish spaces. The following result is proved in \citet[page 244]{fremlin2003measure4}.
\begin{theorem}
	A complete metric space is a Radon metric space if and only if its weigth is measure--free. 
\end{theorem}
The non--existence of metric spaces with measure--free weight is consistent with ZFC---see for example \cite{fremlin2006measure}. While it is still an open question whether
also the existence of metric spaces with measure--free weight is consistent with ZFC, or whether ZFC implies that all complete metric spaces are Radon metric spaces, all spaces 
we will consider in this work can be constructed in ZFC and are, therefore, Radon metric spaces.

\subsubsection{Fundamental results}

The fundamental result of optimal transport theory is the existence of optimal transport plans between two probability measures. 
\begin{theorem}[Existence of optimal transport plans]\label{th:existenceoptimaltransportplan}
	Let $(M,d)$ be a complete Radon metric space and $\mu, \nu \in \Pc_p(M)$. Then, there exists an optimal transport plan $\ga \in \Ga(\mu,\nu)$.
\end{theorem}
\begin{proof}
	\citet[Theorem 1.2.1]{bogachev2012monge}
\end{proof}
The $p$--Wasserstein space is a complete metric space if the underlying metric space is complete.
\begin{lem}\label{le:Wassersteincomplete}
	Let $(M,d)$ be a complete Radon metric space. Then, $\Wc_p(M)$ is a complete metric space.
\end{lem}
\begin{proof}
	\citet[Theorem 2.7]{mardare2018free}
\end{proof}
First, we establish some useful links between the Wasserstein--topology and other classical topologies on spaces of Borel--probability measures.
Prokhorov's theorem characterizes relatively compact sets of probability measures with respect to the narrow topology---see \citet[Vol. 2 Theorem 8.6.2]{bogachev2007measure}.
\begin{theorem}[Prokhorov]\label{th:prokhorov}
	Let $(M,d)$ be a Radon metric space. A set $\Kc\subseteq\Pc(M)$ is relatively compact with respect to the narrow topology
	if and only if it is tight.
\end{theorem}
\begin{lem}\label{lem:weakstarnarrow}
	Let $(M,g)$ be a Riemannian manifold, and let $(\mu_n)_{n\in\Nz}$ be a sequence in $\Pc(M)$ with
	$\mu_n\xrightharpoonup{*}\mu$ for $\mu\in\Pc(M)$. Then $\mu_n\xrightharpoonup{}\mu$.
\end{lem}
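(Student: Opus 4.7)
The plan is to exploit the fact that $\mu$, as a Borel probability measure on the Polish space $M$, is automatically tight, together with the hypothesis that $\mu_n$ and $\mu$ are all probability measures (so no mass is ``lost at infinity'' in the limit). Recall that here $\mu_n\xrightharpoonup{*}\mu$ means $\int f\,\der\mu_n\to\int f\,\der\mu$ for every $f\in C_0(M)$ (continuous functions vanishing at infinity), while $\mu_n\xrightharpoonup{}\mu$ requires the same convergence for every $f\in C_b(M)$. So I need to upgrade test functions from $C_0(M)$ to $C_b(M)$.

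Fix $f\in C_b(M)$ and $\ep>0$. Since $M$ is a Riemannian manifold, hence Polish, and $\mu$ is a Borel probability measure, Ulam's theorem gives a compact set $K\subset M$ with $\mu(M\setminus K)<\ep$. Using a smooth exhaustion of $M$ by compact sets, I choose $\chi\in C_c(M)$ with $0\leq\chi\leq 1$ and $\chi\equiv 1$ on $K$; in particular $\int_M\chi\,\der\mu\geq\mu(K)>1-\ep$. Now $\chi\in C_c(M)\subset C_0(M)$ and $f\chi\in C_c(M)\subset C_0(M)$, so the weak-* hypothesis applies to both:
\begin{equation*}
\lim_{n\to\infty}\int_M\chi\,\der\mu_n=\int_M\chi\,\der\mu,\qquad \lim_{n\to\infty}\int_M f\chi\,\der\mu_n=\int_M f\chi\,\der\mu.
\end{equation*}
From the first identity and the fact that each $\mu_n$ is a probability measure, I get that for all $n$ large enough $\int_M(1-\chi)\,\der\mu_n<2\ep$, which is the key ingredient replacing a uniform tightness assumption on the sequence.

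Splitting $f=f\chi+f(1-\chi)$ and using $\|f\|_\infty<\infty$ together with $\int(1-\chi)\,\der\mu<\ep$ and $\int(1-\chi)\,\der\mu_n<2\ep$ for $n$ large, I obtain
\begin{equation*}
\Bigl|\int_M f\,\der\mu_n-\int_M f\,\der\mu\Bigr|\leq\Bigl|\int_M f\chi\,\der\mu_n-\int_M f\chi\,\der\mu\Bigr|+3\|f\|_\infty\ep,
\end{equation*}
so the $\limsup$ of the left-hand side is at most $3\|f\|_\infty\ep$. Letting $\ep\to 0$ yields $\int f\,\der\mu_n\to\int f\,\der\mu$, which is precisely $\mu_n\xrightharpoonup{}\mu$.

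The only step where some care is genuinely needed is the mass-control argument in the middle: the deduction that the tails $\int(1-\chi)\,\der\mu_n$ are small. This is not a tightness statement about the sequence, but rather follows because weak-* convergence against the single test function $\chi\in C_c(M)$ forces $\mu_n(K^c)$ to be eventually close to $\mu(K^c)$, and here it is essential that all $\mu_n$ and $\mu$ are full probability measures, so there is no mass escaping to infinity to spoil the bound.
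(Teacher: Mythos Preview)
Your proof is correct and self-contained. The paper does not actually prove this lemma; it simply cites \cite[Lemma~2.1.13]{figalli2021invitation}. Your argument---exploiting that $\mu\in\Pc(M)$ is automatically tight on the Polish space $M$, using a compactly supported cutoff $\chi$ to control mass at infinity via the single $C_0$ test function $\chi$, and then splitting $f=f\chi+f(1-\chi)$---is precisely the standard proof one would expect behind that citation, so there is no genuine methodological difference to discuss.
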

\begin{proof}
	\citet[Lemma 2.1.13]{figalli2021invitation}.
\end{proof}
Prokhorov's theorem allows us to directly compare the Wasserstein topology and the narrow toplogy.
\begin{theorem}\label{th:Wassersteinconvergence}
	Let $(M,d)$ be a Polish metric space, $x_0\in M$, and $1\leq p<\infty$. For a sequence $(\mu_k)_{k\in\Nz}$
	in $\Pc_p(M)$ and $\mu\in\Pc_p(M)$ the following are equivalent:
	\begin{enumerate}
		\item $\lim_{n\to\infty} W_p(\mu_n,\mu)=0$,
		\item $\mu_n\xrightharpoonup{}\mu$ and $\lim_{n\to\infty}\int_M d(x,x_0)^p\,\der\mu_n(x)=\int_M d(x,x_0)^p\,\der\mu(x)$.
	\end{enumerate}
	Furthermore, if $(M,d)$ is locally compact (or in particular a Riemannian manifold), then, the above are also equivalent to
	\begin{itemize}
		\item	$\mu_n\xrightharpoonup{*}\mu$ and $\lim_{n\to\infty}\int_M d(x,x_0)^p\,\der\mu_n(x)=\int_M d(x,x_0)^p\,\der\mu(x)$.
	\end{itemize}
\end{theorem}
\begin{proof}
	\citet[Theorem 7.1.5]{ambrosio2005gradient}
\end{proof}
\begin{co}
	Theorem~\ref{th:Wassersteinconvergence} also holds if $M$ is any Radon metric space.
\end{co}
\begin{proof}
	Let $(\mu_n)_{n \in \Nz}$ be a sequence in $\Pc_p(M)$ and $\mu \in \Pc_p(M)$, such that $\mu_n \xrightharpoonup{*} \mu$ and $\lim_{n\to\infty}\int_M d(x,x_0)^p \der\mu_n(x)=\int_M d(x,x_0)^p \der\mu(x)$.
	Since $M$ is a Radon metric space, there exist compact sets $K_{n,N} \subset M$ and
	$K_N \subset M$, such that for all $n,N \in \Nz$, 
	\begin{equation}
		\begin{aligned}
			&\mu_n(M \setminus K_{n,N}) < \frac{1}{N},\\
			&\mu(M \setminus K_N) < \frac{1}{N}.
		\end{aligned}
	\end{equation}	
	Define  $m \coloneqq \bigcup_{n, N \in \Nz} K_{n,N} \cup \bigcup_{N \in \Nz} K_N$. In particular, $\supp(\mu) \subset \ol{m}$
	and for all $n \in \Nz$, $\supp(\mu_n) \subset \ol{m}$. Since $M$ is a metric space, $\ol{m}$ is separable and complete and, therefore, a Polish space.  
	Now, the result follows from Theorem~\ref{th:Wassersteinconvergence} applied to the Polish space $\ol{m}$.
\end{proof}
The next results are some useful tools to determine or bound the Wasserstein distance between two probability measures.
\begin{theorem}[(Monge-)Kantorovich duality]\label{th:kantorovichduality}
	Let $(M,g)$ be a Polish space (or in particular a Riemannian manifold) and $\mu, \nu \in \Wc_1(M)$. Then,
	\begin{equation}
		W_1(\mu,\nu) = \sup\biggl\{\int_M f \der \mu - \int_M f \der \nu \,\biggl|\, f\in \Lip(M)\text{, $1$--Lipschitz}\biggr\}.
	\end{equation} 
\end{theorem}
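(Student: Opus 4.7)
The plan is to reduce the statement to the classical Kantorovich-Rubinstein duality
\[
W_1(\mu,\nu) = \sup\Bigl\{\int_M f\,\der\mu - \int_M f\,\der\nu : f \colon M \to \Rz,\ \Lip(f) \leq 1\Bigr\},
\]
available on any Polish space (see, e.g., \cite[Chapter 3]{figalli2021invitation}). The integrals are well defined for $\mu,\nu \in \Wc_1(M)$ because any $1$-Lipschitz $f$ obeys the linear growth bound $|f(x)| \leq |f(x_0)| + d(x,x_0)$, and the $\Wc_1$ hypothesis makes $d(\cdot,x_0)$ integrable against both measures. I would take this identity as a black box and then match the two suprema via the standard correspondence between Lipschitz functions and Sobolev functions of essentially bounded gradient.

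The easy inequality is $W_1(\mu,\nu)\geq\sup\{\cdot\}$. Any $f \in W^{1,\infty}(M)$ with $\|f\|_{W^{1,\infty}(M)} \leq 1$ satisfies $|\nabla f|_g \leq 1$ almost everywhere, and the weak chain rule applied along a minimizing unit-speed geodesic $\gamma$ from $x$ to $y$ yields
\[
|f(x) - f(y)| = \Bigl|\int_0^{d(x,y)} g\bigl(\nabla f(\gamma(s)), \dot\gamma(s)\bigr)\,\der s\Bigr| \leq d(x,y),
\]
so $f$ is $1$-Lipschitz and hence admissible on the right-hand side of the Kantorovich-Rubinstein formula.

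For the reverse inequality, given an arbitrary $1$-Lipschitz $f$ I would normalize $f(x_0)=0$ (permissible because $\int \der(\mu-\nu) = 0$), truncate to $f_N := (-N) \vee f \wedge N$, and mollify $f_N$ to obtain a sequence $f_{N,\varepsilon} \in \Cc^\infty(M) \cap W^{1,\infty}(M)$. The truncation error $\int (f - f_N)\,\der(\mu-\nu)$ vanishes as $N\to\infty$ by dominated convergence against the $\Wc_1$-integrable majorant $d(\cdot,x_0)$, and the mollification error vanishes as $\varepsilon\to 0$; a final rescaling of $f_{N,\varepsilon}$ by $1/(1+\eta)$ restores the $W^{1,\infty}$-bound $\leq 1$ at a cost of $O(\eta)$ in the integral. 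The main obstacle is the mollification step on an arbitrary Riemannian manifold: one needs a regularization that preserves the pointwise bound $|\nabla f| \leq 1$ up to arbitrarily small error. Under a lower Ricci bound, the heat semigroup $e^{t\Delta_g}$ is ideal, since its gradient-contraction inequality $|\nabla e^{t\Delta_g} f| \leq e^{t\Delta_g}|\nabla f|$ preserves the pointwise gradient bound directly; without curvature hypotheses, one mollifies in normal coordinate charts via a subordinate partition of unity, which controls the $W^{1,\infty}$ norm only up to an atlas-dependent multiplicative constant that is then absorbed into the rescaling factor $1/(1+\eta)$.
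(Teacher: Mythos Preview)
The paper does not give a proof; it simply cites \cite[Theorem~5.10]{villani2009optimal} and records the statement as a special case. So there is no argument to compare against, and your reduction to the classical Kantorovich--Rubinstein formula is a reasonable self-contained route.

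One issue is worth flagging. Taken literally, the constraint $\|f\|_{W^{1,\infty}(M)}\le 1$ bounds both $\|f\|_\infty$ and $\|\nabla f\|_\infty$, and under that reading the identity fails on any unbounded $M$: for $\mu=\delta_{x_0}$, $\nu=\delta_{x}$ with $d(x_0,x)=R$ one has $W_1(\mu,\nu)=R$ while the right-hand supremum is at most~$2$. The paper's own use of the theorem (e.g.\ in the proof of Proposition~\ref{pr:W1Holder}, where only ``$1$--Lipschitz'' is invoked) and the cited reference both make clear that only the Lipschitz seminorm is intended. Your truncation--rescaling step does not achieve $\|f_N\|_\infty\le 1$: truncating at level $N$ gives $\|f_N\|_\infty\le N$, and dividing by $1+\eta$ with small $\eta$ does not repair this, so your argument cannot establish the statement as literally written (nor can any argument). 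Under the intended reading, the truncation is needed only to land in $L^\infty$, and the mollification step is in fact unnecessary: a $1$-Lipschitz function on $M$ already lies in $W^{1,\infty}_{\loc}$ with $|\nabla f|_g\le 1$ almost everywhere by Rademacher's theorem, so the two classes of competitors coincide directly.
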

\begin{proof}
	This is a special case of a more general result proved in \citet[Theorem 5.10]{villani2009optimal}.
\end{proof}
If we work on more general metric spaces, we recover a weaker version of the Monge--Kantorovich duality.
\begin{lem}[Weak (Monge-)Kantorovich duality]\label{th:weakkantorovichduality}
	Let $(M,d)$ be a metric space and $\mu, \nu \in \Wc_1(M)$. Then, for all $f \in \Lip(M)$, such that $f$ is $1$--Lipschitz,
	\begin{equation}
		 \int_M f \der \mu - \int_M f \der \nu \leq W_1(\mu,\nu).
	\end{equation} 
\end{lem}
\begin{proof}
	Denote by $\pi_1, \pi_2: M \times M \to M$ the projections on the first and the second coordinates respectively. Let $\ga$ be a transport plan from $\mu$ to $\nu$. Then, 
	\begin{equation}
		\begin{aligned}
			&\int_M f(x) \der \mu(x) - \int_M f(y) \der \nu(y)\\
			&\qquad = \int_M f(\pi_1(x,y)) \der \ga(x,y) - \int_M f(\pi_2(x,y)) \der \ga(x,y)\\
			&\leq \int_M \bigl|f(\pi_1(x,y)) - f(\pi_2(x,y))\bigr| \der \ga(x,y)\\
			&\leq \int_M d(x,y) \der \ga(x,y).\\
		\end{aligned}
	\end{equation}
	Since $\ga$ was chosen arbitrarily, the result follows.
\end{proof}

\subsubsection{Wasserstein embeddings}

Now, we study some basic properties of embeddings of Wasserstein spaces.
\begin{pr}\label{pr:wassersteinidentity}
	Let $(M,d)$ be a metric space. Let $1\leq q\leq p$. Then the embedding $\iota:\Wc_p(M)\to\Wc_q(M)$ is $1$--Lipschitz.
\end{pr}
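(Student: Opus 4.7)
The plan is to show that for any transport plan $\pi$ between $\mu$ and $\nu$, the $L^q(\pi)$ norm of the distance function is dominated by its $L^p(\pi)$ norm; taking infimum over plans then yields the desired inequality $W_q(\mu,\nu)\leq W_p(\mu,\nu)$, which is exactly the $1$-Lipschitz property of $\iota$.

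First I would verify that the map $\iota$ is well-defined, i.e.\ that $\Wc_p(M)\subseteq \Wc_q(M)$. For a probability measure $\mu$ with finite $p$-th moment around a reference point $x_0\in M$, Jensen's inequality applied to the convex function $t\mapsto t^{p/q}$ (valid since $p/q\geq 1$) gives
\begin{equation*}
\Bigl(\int_M d(x,x_0)^q \, \der\mu(x)\Bigr)^{p/q} \leq \int_M d(x,x_0)^p \, \der\mu(x) < \infty,
\end{equation*}
so $\mu \in \Wc_q(M)$ as well.

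Next, for the Lipschitz estimate, pick $\mu,\nu \in \Wc_p(M)$ and any coupling $\pi \in \Pi(\mu,\nu)$. Since $\pi$ is a probability measure on $M\times M$, Jensen's inequality applied once more to $t\mapsto t^{p/q}$ yields
\begin{equation*}
\Bigl(\int_{M\times M} d(x,y)^q \, \der\pi(x,y)\Bigr)^{p/q} \leq \int_{M\times M} d(x,y)^p \, \der\pi(x,y).
\end{equation*}
Raising to the $1/p$ and noting $\pi$ is admissible for both the $W_q$ and $W_p$ problems, I get
\begin{equation*}
\Bigl(\int d(x,y)^q\,\der\pi\Bigr)^{1/q} \leq \Bigl(\int d(x,y)^p\,\der\pi\Bigr)^{1/p}.
\end{equation*}
Taking the infimum over all couplings $\pi \in \Pi(\mu,\nu)$ on both sides yields $W_q(\mu,\nu)\leq W_p(\mu,\nu)$.

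There is really no main obstacle here: the statement reduces to a one-line application of Jensen's inequality once one passes to couplings, and the argument avoids even needing the existence of an optimal plan. The only subtlety worth flagging is the well-definedness of $\iota$, which is handled by the same Jensen estimate applied to the marginal $\mu$ rather than to a coupling.
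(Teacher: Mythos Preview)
Your proof is correct and follows essentially the same route as the paper: both arguments reduce to Jensen's inequality for $t\mapsto t^{p/q}$ applied to a coupling. The only cosmetic difference is that the paper fixes a single $p$-optimal plan $\gamma$ and bounds $W_q$ from above by $\bigl(\int d^q\,\der\gamma\bigr)^{1/q}$, whereas you keep an arbitrary coupling and take the infimum at the end; your version also spells out the well-definedness of $\iota$, which the paper leaves implicit.
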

\begin{proof}
	Let $\mu, \nu \in \Wc_p(M)$ and let $\ga \in \Ga^{\opt}(\mu,\nu)$ for the $p$--Wasserstein distance. Then, 
	Jensen's inequality yields
	\begin{equation}
		\begin{aligned}
		\Wc_q(\mu,\nu) &= \biggl(\int_{M\times M} d(x,y)^q \der\ga(x,y)\biggr)^{\frac{1}{q}} \leq \biggl(\int_{M\times M} d(x,y)^p \der\ga(x,y)\biggr)^{\frac{1}{p}} \\&= \Wc_p(\mu,\nu).
		\end{aligned}
	\end{equation} 
	Thus, the embedding is $1$--Lipschitz.
	
\end{proof}

Another elementary result, which will be useful several times later on is that a Riemannian isometry induces an isometry on Wasserstein spaces.

\begin{lem}\label{le:Wisom}
Let $(M,g)$ and $(N,h)$ be two Riemannian manifolds and $f: M \to N$ a Riemannian isometry (i.e., by definition surjective). Then, the push-forward $\mu \mapsto f_* \mu$ under $f$ is an isometry from $\Wc_p(M)$ to $\Wc_p(N)$ for any $p$.
\end{lem}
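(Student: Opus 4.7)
The plan is to exploit two facts: first, a Riemannian isometry $f : M \to N$ (being surjective and, by standard arguments, injective as a local isometry of connected Riemannian manifolds whose surjectivity is assumed) preserves the geodesic distance, i.e.\ $d_N(f(x), f(y)) = d_M(x,y)$ for all $x,y \in M$; second, the map $(f \times f)_*$ furnishes a bijection between transport plans $\Ga(\mu, \nu)$ and $\Ga(f_*\mu, f_*\nu)$.

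First I would check that $f_*$ maps $\Wc_p(M)$ into $\Wc_p(N)$. Fix $x_0 \in M$ and set $y_0 := f(x_0)$. By the change--of--variables formula and the distance preservation property,
\begin{equation*}
\int_N d_N(y, y_0)^p \der (f_* \mu)(y) = \int_M d_N(f(x), f(x_0))^p \der \mu(x) = \int_M d_M(x, x_0)^p \der \mu(x) < \infty,
\end{equation*}
so $f_* \mu \in \Wc_p(N)$. The same argument applied to the inverse isometry $f^{-1}$ (which is a Riemannian isometry) shows $f_*$ is bijective with inverse $(f^{-1})_*$.

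Next I would show that $f_*$ preserves the Wasserstein distance. For $\ga \in \Ga(\mu, \nu)$, the measure $(f\times f)_* \ga$ has marginals $f_*\mu$ and $f_*\nu$, so it lies in $\Ga(f_*\mu, f_*\nu)$, and conversely every element of $\Ga(f_*\mu, f_*\nu)$ is of this form via $(f^{-1}\times f^{-1})_*$. Using again that $f$ preserves distances,
\begin{equation*}
\int_{N\x N} d_N(y_1,y_2)^p \der((f\x f)_*\ga)(y_1,y_2) = \int_{M\x M} d_M(x_1,x_2)^p \der\ga(x_1,x_2).
\end{equation*}
Taking the infimum over $\ga \in \Ga(\mu,\nu)$ (equivalently over $\Ga(f_*\mu, f_*\nu)$ via the bijection) yields $\Wc_p(f_*\mu, f_*\nu) = \Wc_p(\mu,\nu)$, as required.

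There is no real obstacle in this proof: the only subtle point is the use of surjectivity of $f$ to guarantee that $f$ is a global isometry of metric spaces (hence distance--preserving on all of $M$, not just infinitesimally), and to ensure that $f_*$ is surjective onto $\Wc_p(N)$. Both are automatic from the definition of a Riemannian isometry adopted here.
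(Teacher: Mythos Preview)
Your proof is correct and follows essentially the same approach as the paper: both arguments push forward transport plans via $(f\times f)_*$, use that $f$ preserves geodesic distances to show costs are preserved, and invoke bijectivity of $f$ (through $(f^{-1}\times f^{-1})_*$) to obtain a bijection between $\Ga(\mu,\nu)$ and $\Ga(f_*\mu,f_*\nu)$. Your version is slightly more complete in that you explicitly verify $f_*\mu\in\Wc_p(N)$, which the paper leaves implicit.
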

\begin{proof}
Take $\mu$, $\nu \in \Wc_p(M)$ and $\ga \in \Ga(\mu,\nu)$ a transport plan between $\mu$ and $\nu$, i.e., a probability measure $\ga$ on $M\times M$ satisfying $(\pi_1)_* \ga = \mu$ and $(\pi_2)_* \ga = \nu$. Then, one readily shows that $(f,f)_* \ga$ is a transport plan between $f_* \mu$ and $f_* \nu$ and we have 
\begin{equation}
\begin{aligned}
\int_{M \times M} d(x,y)^p \der (f,f)_* \ga(x,y) &= \int_{M \times M} d(f(x),f(y))^p \der \ga(x,y) \\ &= \int_{M \times M} d(x,y)^p \der \ga(x,y).
\end{aligned}
\end{equation}
This implies $\Wc_p(\mu, \nu) = \Wc_p(f_* \mu, f_* \nu)$. Now, since $f$ is bijective by assumption, for every transport plan $\ol{\ga}$ between $f_* \mu$ and $f_* \nu$, $(f^{-1},f^{-1})_* \ga(x,y)$ is a transport plan between $\mu$ and $\nu$. We conclude with a computation similar to the above that $\Wc_p(f_* \mu, f_* \nu) = \Wc_p(\mu,\nu)$.
\end{proof}

\begin{lem}\label{le:intergeodesic}
	Let $(M,g)$ be a Riemannian manifold, and let $\mu,\nu\in\Pc_1(M)$. Then $t\mapsto t\mu+(1-t)\nu$ is a geodesic
	in $\Wc_1(M)$ connecting $\mu$ and $\nu$.
\end{lem}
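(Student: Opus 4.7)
The plan is to verify the defining property of a (constant-speed) geodesic directly using the Kantorovich duality for $W_1$ recalled in Theorem~\ref{th:kantorovichduality}. Concretely, writing $\mu_t := t\mu + (1-t)\nu$, I aim to show that for every $s,t \in [0,1]$,
\begin{equation*}
W_1(\mu_s,\mu_t) = |t-s|\, W_1(\mu,\nu).
\end{equation*}
This, together with $\mu_0 = \nu$ and $\mu_1 = \mu$, immediately identifies $t \mapsto \mu_t$ as a (reverse-oriented) constant-speed geodesic in $\Wc_1(M)$ between $\nu$ and $\mu$.

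First I would check that the curve is well-defined: convex combinations of probability measures are probability measures, and since $\mu \mapsto \int_M d(x,x_0)\,\der \mu(x)$ is linear, $\mu_t \in \Pc_1(M)$ whenever $\mu,\nu \in \Pc_1(M)$. Next, I would invoke Theorem~\ref{th:kantorovichduality} to write
\begin{equation*}
W_1(\mu_s,\mu_t) = \sup\biggl\{\int_M f\,\der\mu_s - \int_M f\,\der\mu_t \,\Big|\, f \in W^{1,\infty}(M),\, \|f\|_{W^{1,\infty}} \leq 1\biggr\}.
\end{equation*}
By linearity of the integral in the measure argument, the integrand inside the supremum simplifies to $(s-t)\bigl(\int_M f\,\der\mu - \int_M f\,\der\nu\bigr)$. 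Pulling out $|s-t|$ and using that the admissible class of test functions is symmetric under $f \mapsto -f$, this supremum equals $|s-t|\,W_1(\mu,\nu)$, again by Kantorovich duality applied to the pair $(\mu,\nu)$.

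There is essentially no obstacle here; the argument is routine once duality is invoked, and the main subtlety is just to be careful about the sign $(s-t)$ versus $(t-s)$ when exploiting the symmetry of the admissible set under negation. If one wished to avoid duality, one could alternatively construct an explicit transport plan: given $\ga \in \Ga^{\opt}(\mu,\nu)$, the plan $(t-s)\,\ga + (1-(t-s))(\id,\id)_\ast(s\mu+(1-t)\nu)$ (for $s\leq t$) interpolates the marginals correctly and yields the matching upper bound $|t-s|\,W_1(\mu,\nu)$, while the reverse inequality follows from the triangle inequality applied over the whole interval. Either route gives the required isometric parametrization, hence a geodesic in $\Wc_1(M)$ connecting $\mu$ and $\nu$.
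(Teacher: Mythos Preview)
Your primary argument via Kantorovich duality is correct and complete: linearity of $\mu\mapsto\int f\,\der\mu$ together with the symmetry of the admissible class under $f\mapsto -f$ immediately yields $W_1(\mu_s,\mu_t)=|s-t|\,W_1(\mu,\nu)$, which is the constant-speed geodesic condition.

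This is a genuinely different route from the paper's proof. The paper does not invoke duality at all; instead it builds an explicit coupling $\bar\gamma_\lambda=\lambda\Delta_*\mu+(1-\lambda)\gamma$ between $\mu$ and $\lambda\mu+(1-\lambda)\nu$ (with $\gamma\in\Gamma^{\opt}(\mu,\nu)$), reads off the upper bound $W_1(\mu,\lambda\mu+(1-\lambda)\nu)\leq(1-\lambda)W_1(\mu,\nu)$, does the symmetric bound from the other endpoint, and then forces equality via the triangle inequality. Your duality argument is shorter and gives the full two-parameter identity $W_1(\mu_s,\mu_t)=|s-t|W_1(\mu,\nu)$ in one stroke, whereas the paper first handles the endpoint case and then appeals to the triangle inequality to propagate. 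The paper's approach, on the other hand, is self-contained in the sense that it does not rely on Theorem~\ref{th:kantorovichduality}; it would work in any situation where one has an optimal plan but perhaps not a clean dual description.

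One minor glitch in your alternative route: the explicit plan you write, $(t-s)\gamma+(1-(t-s))(\id,\id)_\ast(s\mu+(1-t)\nu)$, does not have the right marginals as stated. The measure $s\mu+(1-t)\nu$ already has total mass $1-(t-s)$, so the extra factor $(1-(t-s))$ should be dropped; and $\gamma$ should be swapped to have first marginal $\nu$ and second marginal $\mu$. The correct plan is $(t-s)(\text{swap})_\ast\gamma+(\id,\id)_\ast(s\mu+(1-t)\nu)$. This is exactly the paper's construction specialized from an endpoint to a general pair $(s,t)$.
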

\begin{proof}
	Let $\mu,\nu\in\Pc_1(M)$, and let $\la\in[0,1]$. We denote by $\De:M\to M\x M$, $x\mapsto(x,x)$. Let $\ga\in\Ga^{\opt}(\mu,\nu)$,
	then $\ol{\ga}_\la:=\la\De_*\mu+(1-\la)\ga\in\Ga(\mu,\la\mu+(1-\la)\nu)$, since
	$$
		\begin{aligned}
			(\pi_1)_*(\la\De_*\mu+(1-\la)\ga) &= \la\mu+(1-\la)\mu = \mu\\
			(\pi_2)_*(\la\De_*\mu+(1-\la)\ga) &= \la\mu+(1-\la)\nu.
		\end{aligned}
	$$
	Therefore,
	$$
		\begin{aligned}
			\Wc_1(\mu,\la\mu+(1-\la)\nu) &\leq \int_{M\x M}d(x,y)\,\der\ol{\ga}_\la(x,y) \\
				&= (1-\la)\int_{M\x M} d(x,y)\,\der\ga(x,y) \\
				&= (1-\la)\Wc_1(\mu,\nu) = \la \Wc_1(\mu,\mu)+(1-\la) \Wc_1(\mu,\nu).
		\end{aligned}
	$$
	In addition, $\Wc_1(\la\mu+(1-\la)\nu,\nu)\leq \la \Wc_1(\mu,\nu)+(1-\la)\Wc_1(\nu,\nu)$, and we have
	$$
		\begin{aligned}
			\Wc_1(\mu,\nu) &\leq \Wc_1(\mu,\la\mu+(1-\la)\nu)+\Wc_1(\la\mu+(1-\la)\nu,\nu)\\&\leq (1-\la)\Wc_1(\mu,\nu)+\la \Wc_1(\mu,\nu) = \Wc_1(\mu,\nu).
		\end{aligned}
	$$
	Therefore, $\Wc_1(\mu,\la\mu+(1-\la)\nu)=(1-\la)\Wc_1(\mu,\nu)$, so the straight line connects, and the rest follows from the
	triangle inequality.
\end{proof}

\subsubsection{Functionals on Wasserstein spaces}

Next, we study the properties of some particular functionals defined on Wasserstein spaces. We prove the following result in the space $\Wc_1(M)$.

\begin{lem}\label{le:W1abs}
Take $\mu: [0,T] \to \Wc_1(M)$ absolutely continuous. Then, for all $\ph \in \Cc^{\infty}(\Rz^n)$, the map $t \mapsto \int_M \ph \der \mu(t)$ is absolutely continuous.
\end{lem}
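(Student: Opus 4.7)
The plan is to combine the Kantorovich duality (Theorem~\ref{th:kantorovichduality}) with the very definition of absolute continuity. The basic observation is that if $\ph$ is smooth and, say, compactly supported (or at any rate bounded with bounded gradient, which I read into the hypothesis), then $\|\ph\|_{W^{1,\infty}(M)}<\infty$, so that $\ph/\|\ph\|_{W^{1,\infty}(M)}$ is an admissible test function in the Kantorovich supremum. Applying the duality to $\mu(s)$ and $\mu(t)$ therefore gives the Lipschitz-type estimate
\begin{equation*}
\biggl|\int_M \ph \der\mu(t) - \int_M \ph \der\mu(s)\biggr|
\leq \|\ph\|_{W^{1,\infty}(M)}\, W_1(\mu(t),\mu(s)).
\end{equation*}

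With this inequality in hand, absolute continuity of $t\mapsto\int_M\ph\,\der\mu(t)$ is essentially immediate. Given $\ep>0$, apply the definition of absolute continuity of the curve $\mu$ in $\Wc_1(M)$ with tolerance $\ep/\|\ph\|_{W^{1,\infty}(M)}$ to obtain a $\de>0$ such that any finite collection of pairwise disjoint subintervals $(a_i,b_i)\subset[0,T]$ with $\sum_i(b_i-a_i)<\de$ satisfies $\sum_i W_1(\mu(a_i),\mu(b_i))<\ep/\|\ph\|_{W^{1,\infty}(M)}$. Summing the pointwise estimate over $i$ yields
\begin{equation*}
\sum_i \biggl|\int_M \ph \der\mu(b_i) - \int_M \ph \der\mu(a_i)\biggr| < \ep,
\end{equation*}
which is exactly the definition of absolute continuity of the scalar function $t\mapsto\int_M\ph\,\der\mu(t)$.

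The only genuine subtlety is making sure $\ph$ really is in $W^{1,\infty}(M)$ so that the Kantorovich duality applies; under the natural reading $\ph\in \Cc^\infty_c(M)$ (or $\Cc^\infty_b(M)$ with bounded gradient) this is automatic. If one instead wants to allow arbitrary smooth $\ph$ on a non-compact manifold, one has to localize: pick a compactly supported cut-off $\chi$ that equals $1$ on a large set and apply the estimate to $\chi\ph$, then control the tail $\int_M(1-\chi)\ph\,\der\mu(t)$ using the fact that $\mu(t)$ is an absolutely continuous curve in $\Wc_1(M)$ (hence in particular $W_1$-continuous, so tight as a family of measures indexed by $t\in[0,T]$). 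I don't expect any real obstacle here; the argument is short once the duality inequality is written down.
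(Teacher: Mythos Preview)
Your proposal is correct and follows essentially the same approach as the paper: both establish the key Lipschitz estimate $\bigl|\int_M \ph\,\der\mu(t)-\int_M \ph\,\der\mu(s)\bigr|\leq C\,W_1(\mu(t),\mu(s))$ and then invoke absolute continuity of $\mu$. The only cosmetic differences are that the paper derives the estimate directly from an optimal transport plan (rather than via Kantorovich duality) and uses the $L^1$-majorant characterization of absolute continuity (rather than the $\ep$--$\de$ definition).
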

\begin{proof}
Since $t \mapsto \mu(t)$ is absolutely continuous in $\Wc_1(M)$, there exists $m \in L^1([0,T])$ such that for all $s < t \in [0,T]$, 
\begin{equation}
\Wc_p(\mu(t),\mu(s)) \leq \int_s^t m(t) \der t.
\end{equation} 
Take $\ph \in \Cc^{\infty}(\Rz^n)$. Since $\ph$ is compactly supported and smooth, there exists $L>0$ such that $\ph$ is $L$--Lipschitz. Let $\ga$ be an optimal transport plan between $\mu(s)$ and $\mu(t)$ in $\Wc_1(M)$. Then,
\begin{equation}
\begin{aligned}
\left|\int_M \ph \der\mu(t) - \int_M \ph \der\mu(s)\right| &\leq \int_{M\times M} |\ph(x) - \ph(y)|\der\ga \\
&\leq \int_{M\times M} L|x-y|\der\ga \\
&= L\Wc_1(\mu(s),\mu(t)) \leq \int_s^t Lm(t) \der t.
\end{aligned}
\end{equation}
Hence, $t \mapsto \int_M \ph \der \mu(t)$ is absolutely continuous.
\end{proof}

\begin{pr}\label{pr:VarLip}
	Let $p \geq 2$, $n\in \Nz$ and $(M,g)$ be a Riemannian manifold. The second moment
	\begin{equation}
		\Mc_2: \Wc_p(M) \to \Rz_+
	\end{equation}
	is locally Lipschitz with respect to the $p$--Wasserstein distance. Furthermore, the map
	\begin{equation}
		\begin{aligned}
			\si: \Wc_p(\Rz^n) &\to \Mc_n(\Rz)\\ 
			\mu \mapsto &\cov(\mu),
		\end{aligned}
	\end{equation}
	sending a $p$--integrable measure onto its Covariance matrix is locally Lipschitz with respect to the $p$--Wasserstein distance and
	the strong operator norm.
\end{pr}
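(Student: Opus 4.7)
The plan is to show entrywise Lipschitz continuity of $\si$; since all norms on $\Mc_n(\Rz)$ are equivalent, this implies local Lipschitz continuity for the operator norm up to a dimensional constant.

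Fix $\mu_0 \in \Wc_p(\Rz^n)$ and let $r>0$. For any $\mu, \nu$ in the closed $\Wc_p$-ball $\ol{B}_r(\mu_0)$, I would first observe that the $p$-th moments $\int |x|^p \der\mu$ and $\int |x|^p \der\nu$ are uniformly bounded by a constant $C_0 = C_0(\mu_0, r)$ by the triangle inequality applied to the Wasserstein distance between $\mu$ and the Dirac mass $\de_0$ (which is controlled by $\Wc_p(\mu,\mu_0) + \Wc_p(\mu_0,\de_0)$). Since $p \geq 2$, Jensen's inequality then gives a uniform bound on the $2$nd moments, so that $|x_i|, |y_j| \in L^2$ with uniformly bounded $L^2$-norms along $\ol{B}_r(\mu_0)$.

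Writing $m_\mu = \int x \, \der\mu$ and expanding
\begin{equation}
\cov(\mu)_{ij} = \int x_i x_j \, \der\mu(x) - m_\mu^i m_\mu^j,
\end{equation}
I would pick an optimal transport plan $\ga \in \Ga^{\opt}(\mu, \nu)$ for the $p$-Wasserstein distance. Then
\begin{equation}
\cov(\mu)_{ij} - \cov(\nu)_{ij} = \int (x_i x_j - y_i y_j) \, \der\ga(x,y) \;-\; (m_\mu^i m_\mu^j - m_\nu^i m_\nu^j).
\end{equation}
Using the pointwise identity $x_i x_j - y_i y_j = x_i(x_j - y_j) + y_j(x_i - y_i)$, Cauchy--Schwarz yields
\begin{equation}
\left|\int (x_i x_j - y_i y_j) \, \der\ga\right| \leq \left(\int (|x_i| + |y_j|)^2 \, \der\ga\right)^{1/2} \left(\int |x-y|^2 \, \der\ga\right)^{1/2}.
\end{equation}
The first factor is bounded by $2\sqrt{C_0}$ (times a constant depending on $n$) by the uniform moment bound, while the second factor is bounded by $\Wc_p(\mu,\nu)$ thanks to Jensen's inequality on the probability measure $\ga$, since $p \geq 2$ and $\ga$ is $\Wc_p$-optimal. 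This is essentially the content of Proposition~\ref{pr:wassersteinidentity} applied to the coupling itself.

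For the mean term, the analogous coupling argument gives $|m_\mu - m_\nu| = |\int (x-y) \, \der\ga| \leq \Wc_1(\mu,\nu) \leq \Wc_p(\mu,\nu)$, and the factorization $m_\mu^i m_\mu^j - m_\nu^i m_\nu^j = m_\mu^i(m_\mu^j - m_\nu^j) + m_\nu^j(m_\mu^i - m_\nu^i)$ combined with the uniform bound on $|m_\mu|, |m_\nu|$ (again from the moment bound) finishes the entrywise estimate
\begin{equation}
|\si(\mu)_{ij} - \si(\nu)_{ij}| \leq L(\mu_0, r) \, \Wc_p(\mu,\nu).
\end{equation}
Summing over $i,j$ and using equivalence of matrix norms gives the desired local Lipschitz bound for the operator norm. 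The only real subtlety is justifying the uniform moment bound, which is immediate from the $\Wc_p$-triangle inequality; the rest is a clean coupling-plus-Cauchy--Schwarz computation.
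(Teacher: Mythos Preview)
Your argument is correct and follows essentially the same strategy as the paper's proof: take an optimal coupling, use the algebraic factorization $ab - cd = a(b-d) + d(a-c)$, apply Cauchy--Schwarz, and control the remaining factors by a uniform second-moment bound on a $\Wc_p$-ball. The paper carries this out only for the scalar second moment $\Mc_2$ and then asserts that the covariance case ``follows immediately''; your version is in fact more complete, since you explicitly treat the entrywise second-moment integrals and the mean-correction term $m_\mu^i m_\mu^j - m_\nu^i m_\nu^j$ separately.
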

\begin{proof}
	Let $x_0 \in M$, $\mu, \nu \in \Wc_p(M)$ and let $\ga \in \Ga^{\opt}(\mu, \nu)$ be an optimal transport plan between $\mu$ and $\nu$. Then, we obtain
	\begin{equation}
		\begin{aligned}
			&\biggl|\Mc_2(\mu) - \Mc_2(\nu)\biggr| \leq \biggl| \int_{M \times M} d(x,x_0)^2 - d(y,x_0)^2 \der \ga(x,y) \biggr| \\
			&\qquad = \biggl| \int_{M \times M} (d(x,x_0) - d(y,x_0))(d(x,x_0) + d(y,x_0)) \der \ga(x,y) \biggr|\\
			&\qquad \leq \biggl(\int_{M \times M} (d(x,x_0) + d(y,x_0))^2 \der \ga(x,y)\biggr)^{\frac{1}{2}}\\&\qquad\qquad\qquad\qquad\qquad\qquad\cdot \biggl(\int_{M \times M} (d(x,x_0) - d(y,x_0))^2 \der \ga(x,y)\biggr)^{\frac{1}{2}}\\
			&\qquad \leq \sqrt{2}\bigl(\Mc_2(\mu) + \Mc_2(\nu)\bigr)^{\frac{1}{2}} \Wc_2(\mu, \nu).
		\end{aligned}
	\end{equation}  
	Let $\rho >0$. Since by Theorem~\ref{th:Wassersteinconvergence}, $\Mc_2: \Wc_p(M) \to \Rz$ is continuous, there exists $B > 0$ such that for all $\mu \in B(\de_{x_0}, \rho)$, $\Mc_2(\mu) \leq B$.
	In particular, for all $\mu, \nu \in B(\de_{x_0}, \rho)$, 
	\begin{equation}
		\biggl|\Mc_2(\mu) - \Mc_2(\nu)\biggr| \leq 2\sqrt{B}\Wc_2(\mu, \nu).
	\end{equation}
	Now, the result for the covariance matrix follows immediately.
\end{proof}
\begin{lem}\label{le:intlin}
Let $p\geq 1$, $F,H$ be two finite-dimensional vector--spaces, $\ph: F \to H$ a linear map and $\mu \in \Wc_p(F)$. Then,
\begin{equation}
\int_F \ph(x) \der \mu(x) = \ph\biggl( \int_F x \der\mu(x)\biggr).
\end{equation}
\end{lem}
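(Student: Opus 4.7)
The plan is to verify integrability and then reduce the statement to linearity of the scalar Lebesgue integral by a coordinate argument (or, equivalently, by duality).

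First I would check that both integrals are well--defined as vector--valued (Bochner) integrals. Since $\mu\in\Wc_p(F)$ for $p\geq 1$ and $F$ is finite--dimensional, fixing any norm $\|\cdot\|$ on $F$ (which is equivalent to the distance to the origin for any Riemannian structure on $F\cong\Rz^{\dim F}$), Jensen's inequality applied to $t\mapsto t^p$ gives
\begin{equation}
\int_F \|x\|\,\der\mu(x) \leq \left(\int_F \|x\|^p\,\der\mu(x)\right)^{1/p} < \infty,
\end{equation}
so $\int_F x\,\der\mu(x)$ exists in $F$. Since $\ph$ is linear between finite--dimensional spaces, it is continuous and satisfies $\|\ph(x)\|_H \leq \|\ph\|_{\mathrm{op}}\|x\|_F$, whence $\ph(x)$ is $\mu$--integrable and $\int_F \ph(x)\,\der\mu(x)$ exists in $H$.

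Next I would reduce to the scalar case. Pick bases $(e_1,\dots,e_m)$ of $F$ and $(f_1,\dots,f_n)$ of $H$ and write $\ph$ as a matrix $(a_{ij})$, so that $\ph(e_j) = \sum_i a_{ij} f_i$. Writing $x=\sum_j x_j(x)e_j$ with coordinate functions $x_j:F\to\Rz$, both sides of the claimed identity reduce, component by component in the basis $(f_i)$, to the elementary identity
\begin{equation}
\int_F \sum_j a_{ij}x_j(x)\,\der\mu(x) = \sum_j a_{ij}\int_F x_j(x)\,\der\mu(x),
\end{equation}
which is just linearity of the scalar Lebesgue integral applied finitely many times (the integrability of each $x_j$ follows from the integrability of $\|x\|$ established above).

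There is essentially no obstacle here; the only subtlety is verifying $L^1$--integrability before invoking scalar linearity, which is handled by the embedding--type estimate $\Wc_p\subseteq\Wc_1$ used in the first paragraph. A cleaner but equivalent presentation would use duality: for every $\ell\in H^*$ the composition $\ell\o\ph\in F^*$ is continuous and linear, and the defining property of the vector--valued integral gives
\begin{equation}
\ell\left(\int_F \ph(x)\,\der\mu(x)\right) = \int_F (\ell\o\ph)(x)\,\der\mu(x) = (\ell\o\ph)\left(\int_F x\,\der\mu(x)\right) = \ell\left(\ph\left(\int_F x\,\der\mu(x)\right)\right),
\end{equation}
and since $H^*$ separates points of $H$ the conclusion follows.
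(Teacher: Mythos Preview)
Your proof is correct. The paper actually states this lemma without proof, treating it as an elementary fact; your argument supplies the standard details (integrability from $\Wc_p\subseteq\Wc_1$, then reduction to scalar linearity via coordinates or duality) and there is nothing to add.
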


\subsubsection{Compact subsets of Wasserstein spaces}

Here, we introduce the concept of tightness in the $p$--th moment, which, as we shall see, is equivalent to relative compactness. Then, we will prove some further interesting properties of families of measures in $\Wc_p(M)$ that are tight in the $p$--th moment.

\begin{defi}
\small
Let $(M,g)$ be a metric space. Fix $x_0 \in M$. A family $(\mu_i) \in \Wc_p(M)^I$ is called tight in the $p$--th moment if for all $\epsilon > 0$, there exists a compact set $K$ such that for all $i\in I$, $\int_{M\setminus K} d(x,x_0)^p \der \mu_i(x) \leq \epsilon$.
\end{defi}

\begin{lem}\label{le:q-tight}
Let $(M,d)$ be a metric space. Take $x_0 \in M$. Let $(\mu_i)_{i \in I}$ be a family of measures such that there exists $B > 0$ such that for all $i\in I$, $\Mc_p(\mu_i) \leq B^p$. Then for all $R > 0$ and all $i \in I$, 
\begin{equation}
\mu_i(B(x_0, R)) \geq 1 - \min\biggl(1,\frac{B^p}{R^p}\biggr).
\end{equation}
Furthermore, if $(M,d)$ is a Riemannian manifold, $(\mu_i)_{i \in I}$ is tight in the $q$--th moment for all $q < p$. More precisely, for all $R\geq R_{\ep}$, with
\begin{equation}
R_{\ep} \coloneqq \begin{cases}
   0 & B^q\frac p{p-q}\leq\ep \\
   \biggl(B^p\frac{p}{p-q}\biggr)^{\frac{1}{p-q}}\ep^{\frac{1}{q-p}} & \text{otherwise},
	\end{cases}
\end{equation}
we obtain
\begin{equation}
\int_{M \setminus B(x_0, R_{\ep})} d(x_0,x)^q \der \mu_i(x) \leq \ep.
\end{equation}
\end{lem}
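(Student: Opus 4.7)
\emph{Proof proposal.} The plan is to handle the two claims separately, both via elementary integral estimates on the tail function $F_i(r) := \mu_i(\{x \in M : d(x_0,x) > r\})$.

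First, I would apply the Chebyshev/Markov inequality to $x \mapsto d(x_0,x)^p$. Since $\{d(x_0,x) > R\} \subseteq \{d(x_0,x)^p > R^p\}$,
\[
R^p F_i(R) \leq \int_{\{d(x_0,x) > R\}} d(x_0,x)^p \der\mu_i \leq \Mc_p(\mu_i)^p \leq B^p,
\]
so $F_i(R) \leq B^p/R^p$, which together with the trivial bound $F_i \leq 1$ immediately yields the first claim.

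For the $q$-th moment bound, I would combine this tail estimate with the layer-cake identity
\[
\int_{M\setminus B(x_0,R)} d(x_0,x)^q \der\mu_i = R^q F_i(R) + \int_R^\infty q\, r^{q-1} F_i(r) \der r,
\]
which one obtains by applying the Cavalieri formula on $\{d(x_0,x) > R\}$ and substituting $t = r^q$. In the regime $R \geq B$, plugging $F_i(r) \leq B^p/r^p$ into both terms and evaluating the two resulting power integrals produces the sharp bound $\tfrac{p B^p}{p-q} R^{q-p}$; solving $\tfrac{p B^p}{p-q} R^{q-p} \leq \ep$ for $R$ yields precisely the expression for $R_\ep$ given in the statement. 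For the degenerate regime $R < B$, I would split the tail integral at $r = B$, use $F_i \leq 1$ on $[0,B]$ and $F_i \leq B^p/r^p$ on $[B,\infty)$, and obtain the uniform bound $\int_M d(x_0,x)^q \der\mu_i \leq B^q \tfrac{p}{p-q}$, which covers exactly the case $R_\ep = 0$. Since the threshold $R_\ep$ depends only on $B$, $p$, $q$, $\ep$ and not on $i$, tightness in the $q$-th moment follows uniformly in the family.

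The only obstacle, and it is essentially bookkeeping, will be to reconcile the two regimes $R \geq B$ and $R < B$ with the two cases appearing in the definition of $R_\ep$, and to check that the explicit constant stated in the lemma coincides with the optimal one produced by the layer-cake estimate. Conceptually the whole argument reduces to Markov's inequality plus tail integration.
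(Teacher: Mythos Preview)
Your proposal is correct and follows essentially the same route as the paper: Markov's inequality for the first claim, then the layer-cake representation of $\int_{M\setminus B(x_0,R)} d(x_0,x)^q\,\der\mu_i$ combined with the tail bound $F_i(r)\leq B^p/r^p$ for the second. The paper works with the unsubstituted variable $\lambda=r^q$ rather than $r$, and handles the regime $R\leq B$ by a one-line remark rather than an explicit split at $r=B$, but the computation and the resulting constant $\tfrac{p}{p-q}B^p R^{q-p}$ are identical.
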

\begin{proof}
The first point is a straightforward consequence of the Markov inequality.

Now, let $q<p$, $i\in I$, $\ep>0$ and $R>0$. Then, if $R\leq B$, obviously,
\begin{equation}
\int_{M \setminus B(x_0, R)} d(x_0,x)^q \der \mu_i(x) \leq B^q\frac p{p-q}.
\end{equation}
Hence, if $\ep> B^q\frac p{p-q}$, the integral is bounded by $\ep$ for all $R\geq 0$.
Otherwise,
\begin{equation}
\begin{aligned}
&\int_{M \setminus B(x_0, R)} d(x_0,x)^q \der \mu_i(x) \\
&\qquad = \int_{R^q}^{\infty} \mu_i(d(x_0,x)^q>\la) \der \la + \int_0^{R^q} \mu_i(M\setminus \ol{B(x_0,R^q)}) \der \la\\
&\qquad\leq \int_{R^q}^{\infty} \mu_i(M\setminus B(x_0,\la^{\frac{1}{q}})) \der \la + R^q\frac{B^p}{R^p}\\
&\qquad\leq \int_{R^q}^{\infty} \frac{B^p}{\la^{\frac{p}{q}}} \der \la + \frac{B^p}{R^{p-q}} = \frac{B^p}{R^{p-q}}\biggl(\frac1{\frac{p}{q}-1}+1\biggr).
\end{aligned}
\end{equation}
Hence, if 
\begin{equation}
R \geq \bigl(\frac{B^pp}{p-q}\bigr)^{\frac{1}{p-q}}\ep^{\frac{1}{q-p}},
\end{equation}
we obtain 
\begin{equation}
\int_{M \setminus B(x_0, R_{\ep})} d(x_0,x)^q \der \mu_i(x) \leq \ep.
\end{equation}
Therefore, the $(\mu_i)_{i\in I}$ are tight in the $q$--th moment.
\end{proof}

The following results only hold in proper spaces, like Riemannian manifolds.

\begin{lem}\label{le:tightconv}
Let $(M,g)$ be a Riemannian manifold. Let $x_0 \in M$. Let $(\mu_i) \in \Wc_p(M)^I$ be tight in the $p$--th moment. Then $(\mu_i)$ is tight. Furthermore, let $(i_n)_{n\in\Nz}$ be a sequence in $I$ such that $(\mu_{i_n})$ converges in the weak--*--topology. Then, $(\mu_{i_n})$ converges with respect to the $\Wc_p$--distance.
\end{lem}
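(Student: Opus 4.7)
The plan is to prove the two claims in turn, using the equivalence of Wasserstein convergence in Theorem~\ref{th:Wassersteinconvergence} as the main tool for the second.

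For classical tightness, fix $\ep>0$ and use $p$--th moment tightness to obtain a compact $K$ such that $\int_{M\setminus K} d(x,x_0)^p\,\der\mu_i\leq\ep$ uniformly in $i$. I would enlarge $K$ to $K' := K\cup\ol{B(x_0,1)}$, which remains compact in the natural setting of a complete Riemannian manifold (so that the closed unit ball is compact by Hopf--Rinow). On $M\setminus K'$ we have $d(x,x_0)\geq 1$, hence $d(x,x_0)^p\geq 1$, yielding
\begin{equation*}
\mu_i(M\setminus K')\leq\int_{M\setminus K'}d(x,x_0)^p\,\der\mu_i(x)\leq\int_{M\setminus K}d(x,x_0)^p\,\der\mu_i(x)\leq\ep,
\end{equation*}
which is the required classical tightness.

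For the second claim, I would first upgrade weak-$*$ convergence to narrow convergence. The tightness just proved shows that no mass escapes at infinity: for any $\ep>0$ pick a compact $K$ with $\mu_{i_n}(K)\geq 1-\ep$ uniformly, test a cutoff $\psi\in C_c(M)$ with $\psi\equiv 1$ on $K$ and $0\leq\psi\leq 1$, and pass to the weak-$*$ limit to conclude $\mu(M)\geq 1-\ep$, whence $\mu\in\Pc(M)$. Lemma~\ref{lem:weakstarnarrow} then gives $\mu_{i_n}\to\mu$ narrowly. The $p$--th moments are uniformly bounded ($\int d(x,x_0)^p\,\der\mu_i\leq\sup_K d(\cdot,x_0)^p+1$ after taking the compact associated to $\ep=1$), and Fatou applied to the continuous nonnegative $d(\cdot,x_0)^p$ gives $\mu\in\Wc_p(M)$. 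By Theorem~\ref{th:Wassersteinconvergence}, it remains to show $\int d(x,x_0)^p\,\der\mu_{i_n}\to\int d(x,x_0)^p\,\der\mu$.

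I would prove this moment convergence by a cutoff/splitting argument. Given $\ep>0$, choose a compact $K_\ep$ with $\int_{M\setminus K_\ep}d^p\,\der\mu_i\leq\ep$ for all $i$ and $\ph_\ep\in C_c(M)$ satisfying $0\leq\ph_\ep\leq 1$ and $\ph_\ep\equiv 1$ on $K_\ep$. Writing $d^p = d^p\ph_\ep + d^p(1-\ph_\ep)$, the compactly supported part satisfies $\int d^p\ph_\ep\,\der\mu_{i_n}\to\int d^p\ph_\ep\,\der\mu$ by narrow convergence (since $d^p\ph_\ep\in C_c(M)$), and the tail satisfies $\int d^p(1-\ph_\ep)\,\der\mu_{i_n}\leq\ep$ uniformly. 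The step I expect to be the main obstacle is controlling the corresponding tail for the limit $\mu$: since $d^p(1-\ph_\ep)$ is unbounded but nonnegative and lower semicontinuous, I would approximate it from below by bounded continuous functions and combine monotone convergence with the Portmanteau inequality for narrow convergence to obtain $\int d^p(1-\ph_\ep)\,\der\mu\leq\liminf_n\int d^p(1-\ph_\ep)\,\der\mu_{i_n}\leq\ep$. Assembling the three estimates yields $\limsup_n|\int d^p\,\der\mu_{i_n}-\int d^p\,\der\mu|\leq 2\ep$; since $\ep$ is arbitrary, the $p$--th moments converge, and Theorem~\ref{th:Wassersteinconvergence} concludes that $\Wc_p(\mu_{i_n},\mu)\to 0$.
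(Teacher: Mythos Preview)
Your proof is correct and follows essentially the same strategy as the paper's. The first part (classical tightness via enlarging $K$ by a closed unit ball) is identical, and you are more careful than the paper in invoking Hopf--Rinow for compactness of $\ol{B(x_0,1)}$.

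For the second part, both you and the paper reduce to Theorem~\ref{th:Wassersteinconvergence} and thus to convergence of $p$--th moments; the difference is only in how the tail is controlled. The paper truncates via $d_R(x)=\min(R^p,d(x,x_0)^p)$, observes that this bounded continuous function converges under narrow convergence, and then combines the uniform tail bound $\int_{M\setminus B(x_0,R)}(d^p-R^p)\,\der\mu_{i_n}<\ep$ with a layer-cake/Fatou argument to close the gap. You instead multiply by a compactly supported cutoff $\ph_\ep$ and handle the tail of the limit measure directly via the Portmanteau inequality for nonnegative lower semicontinuous functions. Your route is slightly more direct (it avoids the paper's intermediate Cauchy-sequence step and the somewhat implicit ``a posteriori'' conclusion), while the paper's truncation has the minor advantage of not needing a Urysohn-type cutoff. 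Both are standard and equivalent in strength.
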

\begin{proof}
Let $\eps > 0$ and take $K$ a compact set such that for all $i\in I$, 
\begin{equation}
\int_{M \setminus K} d(x,x_0)^p \der \mu_i(x) < \eps.
\end{equation}
Then, in particular $\int_{M \setminus (K \cup B(x_0,1))} d(x,x_0)^p \der \mu_i(x) < \eps$. Hence, we have 
\begin{equation}
\mu_i\bigl(M \setminus (K \cup B(x_0,1))\bigr) \leq \eps
\end{equation}
 and we conclude by compactness of $K \cup B(x_0,1)$ that $(\mu_i)$ is tight.

Now, suppose $(\mu_{i_n})$ is a sequence in $(\mu_i)$ that converges weak-*, hence it converges narrowly to some probability measure $\mu$ because the $(\mu_i)$ are tight by Prokhorov's Theorem \cite[Theorem 2.1.11]{figalli2021invitation}. Let $R > 0$ such that $K \subset B(x_0,R)$. We have $\int_{M \setminus B(x_0,R)} d(x_0,x)^p - R^p \der \mu_{i_n}(x) < \eps$ for all $n \in \Nz$. On the other hand, $d_R: x \mapsto \min(R^p, d(x_0,x)^p)$ is bounded and continuous. Hence,
\begin{equation}
\lim_{n \to \infty} \int_M d_R(x) \der \mu_{i_n}(x) = \int_M d_R(x) \der \mu(x).
\end{equation}
In particular, we conclude, that $(\Mc_p(\mu_{i_n}))$ is a Cauchy--sequence and, thus, converges to a limit $l$.
Now, for all $n \in \Nz$,
\begin{equation}
\Mc_p(\mu_{n_i}) = \int_0^{\infty} \mu_{i_n}(\{d(x_0,x)^p > \la\}) \der \la.
\end{equation}
But for all $\la > 0$, $\lim_{n \to \infty} \mu_{i_n}(\{d(x_0,x)^p > \la\}) = \mu(\{d(x_0,x)^p > \la\})$ by narrow convergence. Hence, we get $\Mc_p(\mu) \leq \lim_{n \to \infty} \Mc_p(\mu_{i_n}) < \infty$ from Fatou's Lemma and, thus, a posteriori
\begin{equation}
\lim_{n \to \infty} \Mc_p(\mu_{i_n}) = \Mc_p(\mu).
\end{equation}
Therefore, $(\mu_{i_n})$ converges to $\mu$ with respect to the $\Wc_p$--distance by Theorem~\ref{th:Wassersteinconvergence}.
\end{proof}

\begin{co}\label{co:compacttight}
	Let $(\mu_i) \in \Wc_p(M)^I$ be tight in the $p$--th moment. Then $(\mu_i)$ is relatively compact in $\Wc_p(M)$.
\end{co}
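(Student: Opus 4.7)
The plan is to combine Lemma~\ref{le:tightconv} with Prokhorov's theorem. Let $(\mu_{i_n})_{n \in \Nz}$ be an arbitrary sequence in $(\mu_i)_{i \in I}$. Since $(\mu_i)$ is tight in the $p$--th moment, Lemma~\ref{le:tightconv} ensures that the subfamily $(\mu_{i_n})_{n\in\Nz}$ is tight (as a family of probability measures on $M$). By Prokhorov's theorem \cite[Theorem 2.1.11]{figalli2021invitation}, there exists a subsequence $(\mu_{i_{n_k}})_{k \in \Nz}$ that converges in the narrow, hence weak--*, topology to some $\mu \in \Pc(M)$.

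Now the second assertion of Lemma~\ref{le:tightconv} applies to the sequence $(\mu_{i_{n_k}})_{k\in\Nz}$, which inherits tightness in the $p$--th moment from $(\mu_i)$: the weak--* convergence upgrades to convergence with respect to $\Wc_p$, and in particular $\mu \in \Wc_p(M)$. Since every sequence in $(\mu_i)_{i\in I}$ thus admits a $\Wc_p$--convergent subsequence, the family $(\mu_i)$ is relatively compact in $\Wc_p(M)$.

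The only conceivable obstacle is ensuring that the limit $\mu$ obtained from Prokhorov actually lies in $\Wc_p(M)$ rather than merely $\Pc(M)$, but this is already subsumed in Lemma~\ref{le:tightconv}, whose proof uses Fatou's lemma to control $\Mc_p(\mu)$. No further argument is required.
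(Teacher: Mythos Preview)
Your proof is correct and is precisely the argument the paper intends: the corollary is stated without proof because it follows immediately from Lemma~\ref{le:tightconv} via Prokhorov, exactly as you spell out.
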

The converse is also true.
\begin{lem}\label{le:compacttight}
Let $(\mu_i) \in \Wc_p(M)^I$ be relatively compact in $\Wc_p(M)$. Then $(\mu_i)$, is tight in the $p$--th moment.
\end{lem}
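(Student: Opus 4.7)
The plan is to argue by contradiction, exploiting that $\Wc_p$--convergence is, by Theorem~\ref{th:Wassersteinconvergence}, equivalent to narrow convergence together with convergence of the $p$--th moments. Suppose $(\mu_i)$ is not tight in the $p$--th moment. Since $M$ is a Riemannian manifold, it is $\sigma$--compact, so I can fix an increasing sequence $(K_n)_{n\in\Nz}$ of compact sets with $K_n \subset K_{n+1}^\circ$ and $\bigcup_n K_n = M$. The negation of tightness in the $p$--th moment then yields some $\epsilon_0 > 0$ and indices $i_n \in I$ with
\begin{equation}
\int_{M\setminus K_n} d(x,x_0)^p \der \mu_{i_n}(x) > \epsilon_0 \qquad \text{for all } n \in \Nz.
\end{equation}
By relative compactness in $\Wc_p(M)$, a subsequence $(\mu_{i_{n_k}})_{k\in\Nz}$ converges in $\Wc_p(M)$ to some $\mu \in \Wc_p(M)$.

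Next I would transfer tail information from $\mu$ back to the sequence via a continuous cut--off. Pick continuous functions $\chi_N : M \to [0,1]$ with $\chi_N = 1$ on $K_N$ and $\chi_N = 0$ off $K_{N+1}$ (available by Urysohn, since $K_N$ and $M\setminus K_{N+1}^\circ$ are disjoint closed sets). Because $K_{N+1}$ is compact and $d(\cdot,x_0)$ is continuous, the map $x \mapsto d(x,x_0)^p \chi_N(x)$ is bounded and continuous. Narrow convergence then gives $\int d(x,x_0)^p \chi_N \der\mu_{i_{n_k}} \to \int d(x,x_0)^p \chi_N \der\mu$, and subtracting from the convergence of the $p$--th moments yields
\begin{equation}
\int_M d(x,x_0)^p(1-\chi_N) \der\mu_{i_{n_k}} \xrightarrow[k\to\infty]{} \int_M d(x,x_0)^p(1-\chi_N) \der\mu.
\end{equation}
Since $1-\chi_N \leq \mathbf{1}_{M\setminus K_N}$, the right--hand side is bounded by $\int_{M\setminus K_N} d(x,x_0)^p \der \mu$, which tends to $0$ as $N\to\infty$ by dominated convergence applied to the $\mu$--integrable function $d(\cdot,x_0)^p$ on the decreasing sequence $M\setminus K_N \downarrow \emptyset$.

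Choosing $N$ such that the limit is strictly below $\epsilon_0/2$ and then $k$ large enough, I get $\int d(x,x_0)^p(1-\chi_N)\der\mu_{i_{n_k}} < \epsilon_0$. Because $1-\chi_N \geq \mathbf{1}_{M\setminus K_{N+1}}$, this forces $\int_{M\setminus K_{N+1}} d(x,x_0)^p \der\mu_{i_{n_k}} < \epsilon_0$. Finally, for $k$ so large that $n_k \geq N+1$, the inclusion $K_{n_k} \supset K_{N+1}$ implies
\begin{equation}
\int_{M\setminus K_{n_k}} d(x,x_0)^p \der \mu_{i_{n_k}} \leq \int_{M\setminus K_{N+1}} d(x,x_0)^p \der \mu_{i_{n_k}} < \epsilon_0,
\end{equation}
contradicting the defining property of the chosen indices. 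The main delicate point is packaging the Urysohn cut--off so that the inequality $1-\chi_N \geq \mathbf{1}_{M\setminus K_{N+1}}$ sits on one side while the integrability on $\mu$ is controlled by $1-\chi_N \leq \mathbf{1}_{M\setminus K_N}$ on the other side; everything else is a standard tightening argument fed by Theorem~\ref{th:Wassersteinconvergence}.
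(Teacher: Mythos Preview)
Your proof is correct. Both you and the paper argue by contradiction, extract a $\Wc_p$--convergent subsequence, and ultimately rely on Theorem~\ref{th:Wassersteinconvergence} (narrow convergence plus convergence of $p$--th moments), but the packaging is different.

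You work directly with Urysohn cut--offs $\chi_N$: you split the $p$--th moment into a compactly supported part, handled by narrow convergence, and a tail part, controlled by moment convergence of the subsequence, and then squeeze via $\mathbf{1}_{M\setminus K_{N+1}} \leq 1-\chi_N \leq \mathbf{1}_{M\setminus K_N}$. The paper instead forms the auxiliary probability measures $\be_n \coloneqq \Mc_p(\nu_n)^{-1} d(x_0,\cdot)^p \nu_n$, argues that $\be_n$ converges narrowly to $\be \coloneqq \Mc_p(\nu)^{-1} d(x_0,\cdot)^p \nu$, and then invokes Prokhorov to get ordinary tightness of $(\be_n)$, which unwinds to tightness in the $p$--th moment of $(\nu_n)$.

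Your route is more elementary and self--contained: it needs only Urysohn and dominated convergence, and makes every inequality explicit. The paper's route is slicker once one accepts the narrow convergence of the weighted measures $\be_n$, but that step itself implicitly requires knowing that $\int f\, d(x_0,\cdot)^p\,\der\nu_n \to \int f\, d(x_0,\cdot)^p\,\der\nu$ for bounded continuous $f$ --- which is essentially the same cut--off estimate you carried out by hand. So your argument can be viewed as unpacking the key analytic step that the paper's version leaves as a one--line assertion.
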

\begin{proof}
Suppose there exists $\ep >0$ and a sequence $(\mu_n)_{n \in \Nz}$ in $(\mu_i)$, such that for all $n \in \Nz$, $\mu_n(B(x_0, N)) \leq 1 - \ep$. 

Since, $(\mu_i)$ is relatively compact, there exists a subsequence $(\nu_n)_{n\in\Nz}$ of $(\mu_n)_{n\in\Nz}$ converging to some $\nu \in \Wc_p(M)$. Define for $n \in \Nz$,
\begin{equation}
\be_n \coloneqq \frac{1}{\Mc_p(\nu_n)}d(x_0,x)^p\nu_n.
\end{equation}
Since, $(\nu_n)_{n\in\Nz}$ converges to $\nu$ in $\Wc_p(M)$, in particular $\Mc_p(\nu_n)$ converges to $\Mc_p(\nu)$. Thus, $(\be_n)_{n\in\Nz}$ narrowly converges to $\be \coloneqq \frac{1}{\Mc_p(\nu)}d(x_0,x)^p\nu$. So, in particular, $(\be_n)_{n\in\Nz}$ is tight. Since the $\Mc_p(\nu_n)$ are uniformly bounded, we conclude that $(\nu_n)_{n\in\Nz}$ is tight in the $p$--th moment.
\end{proof}


\subsection{Riemannian Geometry}

One of the most prominent and useful results in Riemannian geometry is the Nash embedding theorem, which states that any finite-dimensional Riemannian manifold is essentially a submanifold of $\Rz^n$ for some $n \in \Nz$.
\begin{theorem}[Embedding theorem, Nash]\label{th:Nash}
Let $(M,g)$ be a finite dimensional Riemannian manifold. Then there exists $n\in\Nz$ and an isometric embedding $f:(M,g)\to(\Rz^n,g^{eucl})$. 
\end{theorem}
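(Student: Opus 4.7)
The plan is to split the proof into a topological reduction to a strictly short embedding, followed by Nash's analytic iteration that upgrades a strictly short embedding into an isometric one.

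First, I would invoke Whitney's embedding theorem to obtain a smooth proper embedding $f_0: M \to \Rz^{N_0}$ for some $N_0 \in \Nz$, and denote by $h_0 := f_0^* g_0$ the pullback of the standard Euclidean metric. In general $g - h_0$ is a symmetric $(0,2)$--tensor of no particular sign; by composing $f_0$ with a sufficiently small constant rescaling (and, in the non--compact case, doing so in a locally finite manner using a partition of unity subordinate to an exhaustion of $M$), one arranges that $h := g - f_0^* g_0$ is everywhere positive definite. Then $f_0$ is called a strictly short embedding, and the remaining task is to construct a smooth $u: M \to \Rz^{N_1}$ with $u^* g_0 = h$, so that $(f_0, u): M \to \Rz^{N_0 + N_1}$ is isometric.

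Second, I would decompose the defect tensor $h$ as a locally finite sum of rank--one positive tensors $h = \sum_k a_k^2 \, \der \varphi_k \otimes \der \varphi_k$ with $\varphi_k \in \Cc^\infty(M)$ and $a_k \geq 0$. Each rank--one summand can be realized, up to a quadratic error, as the pullback metric of a two--dimensional oscillatory \emph{corrugation}, and one iterates: perturb the current embedding by a short oscillation that removes one rank--one piece of the defect while introducing a controlled lower--order error. The linearization at a free embedding $f$ takes the form $\ip{\partial_i f}{\partial_j v} + \ip{\partial_j f}{\partial_i v} = A_{ij}$ for a prescribed symmetric $A$; algebraic freeness of $f$ (which can always be arranged by adding a uniformly bounded number of extra coordinates) provides a smooth right--inverse of this operator.

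Third, I would close the iteration by invoking Nash's hard implicit function theorem. The main obstacle is precisely that the right--inverse of the linearization loses derivatives, so the naive Newton scheme diverges and the classical Banach--space implicit function theorem does not apply. Nash's remedy is to interleave Newton steps with smoothing operators $S_t$ on a tame Fréchet scale of function spaces (Hölder or Sobolev), tuning the smoothing parameter $t_k$ along the sequence so that the derivative loss at each step is exactly compensated by the quadratic convergence of Newton. The core analytic difficulty is therefore to establish uniform \emph{tame} estimates for the right--inverse of the linearization on a neighborhood of a free embedding; once these are in hand, convergence of the scheme yields a smooth isometric embedding into some $\Rz^n$ with $n$ bounded in terms of $\dim M$ alone.
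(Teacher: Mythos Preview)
The paper does not give its own proof of this statement: it simply cites \citet{nash1954c1} for the original argument and \citet{gunther1989einbettungssatz} for a simplified one. Your outline is therefore far more detailed than what the paper provides, and it follows the classical Nash route (short embedding via Whitney plus rescaling, rank--one decomposition of the metric defect, perturbation at a free embedding, and a Nash--Moser iteration with smoothing to overcome the loss of derivatives). As a high--level sketch this is correct and matches the first of the two references the paper gives.

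Two small comments on the outline itself. First, your second and third steps slightly conflate two distinct technologies: the oscillatory ``corrugations'' you describe are the mechanism of the Nash--Kuiper $C^1$ theorem, whereas the smooth result you are after is obtained directly via the perturbation/implicit--function machinery at a free embedding, without an intermediate corrugation stage; it would be cleaner to go straight from the rank--one decomposition to the linearized equation and Nash--Moser. Second, G\"unther's proof, which the paper also cites, bypasses Nash--Moser entirely by rewriting the linearized problem so that it can be solved with standard elliptic theory and the ordinary Banach implicit function theorem; mentioning this alternative would strengthen your answer, since it is the ``improved proof'' the paper explicitly points to.
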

\begin{proof}
	The initial proof of the theorem can be found in \citet{nash1954c1} with an improved proof in \citet{gunther1989einbettungssatz}.
\end{proof}

\subsubsection{Extension of smooth vector--fields}

The Nash Embedding Theorem might be useful to generalize results from Euclidean spaces to more general Riemannian manifolds. However, most embedded images of manifolds in $\Rz^n$ are closed submanifolds of $\Rz^n$. So, we cannot always directly derive analytic results on $M$ from analytic results on $\Rz^n$ by embedding $M$ into $\Rz^n$. Instead we need a way to define some sort of open ``thicker'' version of the embedding of $M$---which still looks similar to $M$---and methods to smoothly extend tensors on $M$ to tensors on $\Rz^n$ supported in this extension of the embedding of $M$. This can be done the following way:

Let $(M,g)$ be a Riemannian manifold and $R\subseteq M$ a submanifold. The concepts we are about to use are introduced in \citet{michor2008topics}. Denote the normal bundle of $R$ in $M$ by $N^g R$ and 
the restriction of the Levi-Cività connection on $M$ to $N^g R$ by $\nabla^g$. Furthermore, let $s_0: R \to N^g R$ be the $0$--
section. Note, that any $(k,l)$--tensor $\Ph$ on $R$ can be canonically extended to a $(k,l)$--tensor $N^g\Ph$ on $N^g R$ by 
pointwise push-forward and--or pull-back of $\Ph$ under the horizontal lifts induced by $\nabla^g$ (and--or their inverses 
respectively). In particular, if $\ph \in \Cc^{\infty}$, then $N^g \ph$ is just the function obtained by constantly extending
$\ph$ on each fiber.

Now, the key result to transform these smooth extensions on $N^g R$ into smooth extensions on $M$ is the existence of a so-called tubular neighbourhood.

\begin{theorem}[Existence of a tubular neighbourhood]\label{th:extension}
Let $(M,g)$ be a Riemannian manifold, and let $R\subset M$ be a submanifold.  
Then, there exists an open neighbourhood $T \subset N^g R$ of $s_0(R)$ and $t: T \to M$, such that $t \circ s_0 = \id$ and $t$ is a diffeomorphism onto its image. Such a neighbourhood $T$ is called a \define{tubular neighbourhood} of $R$ in $M$.

Furthermore, $t$ can be chosen such that for every $p\in M$---under the identification of $T_{(p,0)} N^g R$ with $T_p M$---$d_p t: T_{(p,0)} N^g R \to T_p M$ is the identity. In that case, we call $t$ a normal tubular neighbourhood. We shall denote $T = (T_p)_{p\in R}$.
\end{theorem}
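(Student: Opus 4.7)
The plan is to construct $t$ as the restriction of the normal exponential map to a suitable open neighbourhood of the zero section. Let $\Om\subseteq N^g R$ denote the maximal open set on which the map $E\colon(p,v)\mapsto\exp_p^g(v)$ is defined; by smooth dependence of geodesics on their initial data (ODE theory applied to the geodesic equation on $M$), $E$ is smooth on $\Om$, and $E\o s_0=\id_R$ holds by construction.

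The first substantive observation is that at any point $(p,0)$ of the zero section, the tangent space $T_{(p,0)} N^g R$ splits canonically as $T_p R\oplus N_p^g R$, which via the $g$--orthogonal decomposition $T_p M=T_p R\oplus N_p^g R$ is exactly $T_p M$. Under this identification, $d_{(p,0)} E$ acts as the identity on the horizontal factor $T_p R$ (since $E$ restricted to $s_0(R)$ is the inclusion $R\hookrightarrow M$) and as $d_0\exp_p^g=\id_{T_p M}$ on the vertical factor $N_p^g R$. Hence $d_{(p,0)} E$ is the identity on $T_p M$, and the inverse function theorem yields, for each $p\in R$, an open neighbourhood $U_p\subseteq\Om$ of $(p,0)$ on which $E|_{U_p}$ is a diffeomorphism onto its image in $M$.

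The main obstacle is to promote this local injectivity to a single globally injective restriction of $E$. The plan is to produce a positive continuous function $r\colon R\to(0,\infty)$ such that
\begin{equation}
T := \bigl\{(p,v)\in N^g R : \abs{v}_g < r(p)\bigr\}\subseteq\Om,
\end{equation}
and $E$ is injective on $T$. Such an $r$ exists by a contradiction argument: exhaust $R$ by a nested sequence of relatively compact opens $K_n$ with $\ol{K_n}\subseteq K_{n+1}$ and $\bigcup_n K_n=R$, and observe that if $E$ failed to be injective on arbitrarily thin tubes over $\ol{K_n}$, one would obtain sequences $(p_k,v_k)\neq(q_k,w_k)$ with $p_k,q_k\in\ol{K_n}$, $E(p_k,v_k)=E(q_k,w_k)$, and $\abs{v_k}_g,\abs{w_k}_g\to 0$; by compactness of $\ol{K_n}$ we extract a common limit $p_\infty$ for both subsequences (they must coincide because $E(p_k,v_k)\to p_\infty$ along each), but then eventually all four points lie in a single inverse-function-theorem neighbourhood $U_{p_\infty}$, contradicting injectivity there. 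A partition-of-unity argument then glues the resulting local radii into a globally defined continuous $r$.

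Setting $t:=E|_T$ then provides the desired tubular neighbourhood, and because $d_{(p,0)} E$ is already the identity under the canonical identification, $t$ is automatically a normal tubular neighbourhood requiring no further adjustment. I expect the patching step---passing from the inverse-function-theorem output to a globally injective radius function---to be the only place where real care is needed; everything else is a direct consequence of the exponential map formalism and smooth dependence of ODEs on initial conditions.
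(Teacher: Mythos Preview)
Your argument is correct and is precisely the standard construction via the normal exponential map, local invertibility at the zero section, and a compactness extraction to obtain a globally injective radius function. The paper does not supply its own proof but simply cites \citet[pp.~109--118]{hirsch2012differential}, where exactly this argument is carried out; your write-up is thus more detailed than what the paper itself provides, and there is nothing to compare or correct.
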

\begin{proof}
	\citet[pp.~109--118]{hirsch2012differential}.
\end{proof}

We fix a normal tubular neighbourhood $T$ of $R$ in $M$ and locally endow $N^g R$ with the metric $\ol{g}= t^* g$. Now, we are ready to state the extension theorem.

\begin{theorem}[Extension of tensors]
Let $\Ph$ be a tensor of class $\Cc^p$ on $R$, with $p\in\Nz$. Then, we can extend $T$ to a tensor $\ol{\Ph}$ of class $\Cc^p$ on $M$. 
\end{theorem}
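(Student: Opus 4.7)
The approach is to transport $\Phi$ through the tubular neighbourhood and then cut it off to obtain a globally defined tensor on $M$. First, following the construction in the paragraph preceding the theorem, I would form $N^g \Phi$ on $N^g R$ by horizontally lifting $\Phi$ along each fibre using the restricted Levi--Cività connection $\nabla^g$. This produces a tensor of class $\Cc^p$ on the total space of the normal bundle that restricts to $\Phi$ on the zero section $s_0(R)$. Its restriction to $T$ then pushes forward under the diffeomorphism $t: T \to t(T)$ to a $\Cc^p$ tensor
$$
\Psi := t_*\bigl(N^g \Phi|_T\bigr)
$$
on the open neighbourhood $U := t(T) \subset M$. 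Because $t \circ s_0 = \id_R$ and the normality condition identifies $T_{(p,0)} N^g R$ with $T_p M$ via the identity, $\Psi$ agrees with $\Phi$ on $R$.

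The remaining task is to extend $\Psi$ from $U$ to all of $M$ without losing regularity. For this, I would select a smaller open tubular neighbourhood $U'$ with $R \subset U' \subset \overline{U'} \subset U$, obtained by shrinking the fibres of $T$ by a continuous strictly positive radius function on $R$. A standard smooth Urysohn argument then furnishes $\chi \in \Cc^\infty(M,[0,1])$ with $\chi \equiv 1$ on $U'$ and $\supp \chi \subset U$. Finally, define
$$
\bar{\Phi}(x) := \begin{cases} \chi(x)\,\Psi(x), & x \in U,\\ 0, & x \in M \setminus U. \end{cases}
$$
On $U$ this is a product of $\Cc^p$ objects; on $M \setminus \supp \chi$ it is identically zero; the two descriptions agree on the overlap $U \setminus \supp \chi$, so $\bar{\Phi} \in \Cc^p(M)$. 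Moreover, on $R \subset U'$ one has $\chi \equiv 1$, hence $\bar{\Phi}|_R = \Psi|_R = \Phi$.

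The main technical difficulty will lie in the construction of $U'$ and $\chi$ in the non--compact case: one must ensure the tubular radius can be chosen locally uniformly positive along $R$, which is where the paracompactness of $M$ enters, via a locally finite refinement of $\{U,\, M \setminus \overline{U'}\}$ and a subordinate smooth partition of unity. Once that is arranged, the tensorial nature of the construction poses no extra obstacle, since the horizontal--lift formula defining $N^g \Phi$ depends smoothly on $\nabla^g$ and preserves the $\Cc^p$ regularity of $\Phi$ fibrewise.
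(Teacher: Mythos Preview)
Your proposal is correct and follows essentially the same route as the paper: horizontally lift $\Phi$ to $N^g\Phi$, push it forward through the tubular diffeomorphism $t$, and multiply by a smooth cutoff supported in $t(T)$ that is identically $1$ on $R$. The paper's proof is a one-liner (writing the extension simply as $\eta\, t_*\Phi$) and does not spell out the construction of the cutoff or the paracompactness considerations you raise; your version is a more detailed execution of the same idea.
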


\begin{proof}
Let $\eta$ be a smooth function such that $\eta(p) = 1$ for all $p\in R$ and $\eta$ is supported in $t(T)$  and $0\leq \eta \leq 1$. Then, the tensor $\eta t_* \Ph$ is a smooth extension of $\Ph$ on $M$ with support in $t(T)$. 
\end{proof}

We even have the stronger albeit slightly more technical result, which shall however prove to be useful later on.

\begin{lem}\label{le:extensionprops}
Let $(M,g)$ be a Riemannian manifold and $R\subset M$ a submanifold. 
If $X\in\Ga^p(TR)$ is a $\Cc^p$ vector field, then we can find a $\Cc^p$ extension $\ol{X}\in\Ga^p(TM)$ of $X$ such that for all $p \in R$ and all $v \in T_p$, $|X_{(p,v)}| = |\ol{X}_{(p,v)}|$. 

In particular, if $\ph \in \Cc_c^{\infty}(R)$ it can be normally extended to a function $\ol{\ph} \in \Cc_c^{\infty}(M)$, i.e., such that for all $p\in R$, the restriction of $\der_{p} \ol{\ph}$ to $N^g_p R$ is zero.

In particular, this implies that if $t \mapsto v_t$ is a curve of vector fields, such that for all $p \in R$, $t \mapsto |v_t(p)|_g$ is $L^1$--integrable on an open interval $I$ of $\Rz$, then for all $x \in M$, $t \mapsto |v_t(x)|_g$ is $L^1$.

Furthermore, if $X$ is $L$--Lipschitz on $R$, then for every compact set $K$, there exists a constant $a$ depending only on $K$ and an extension $\ol{X}$ such that $\ol{X}$ is $(aL)$--Lipschitz. 
\end{lem}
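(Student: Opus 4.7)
The plan is to build on the cut-off construction from the previous theorem while exploiting the fact that horizontal lift (parallel transport along normal geodesics) is an isometry with respect to $\ol g$. First I would shrink the tubular neighborhood slightly: pick an open $T' \subset N^g R$ with $s_0(R) \subset T'$ and $\ol{T'} \subset T$, together with a cutoff $\eta \in \Cc_c^\infty(M)$ supported in $t(T)$ and identically equal to $1$ on $t(T')$. Setting $\ol X := \eta \cdot t_*(N^g X)$ on $t(T)$ (and $0$ outside) gives a $\Cc^p$ extension of $X$ that agrees with $t_*(N^g X)$ on the smaller neighborhood $t(T')$. On this neighborhood $t$ is a Riemannian isometry between $(T', \ol g)$ and $(t(T'), g)$ by the very choice $\ol g = t^* g$, so $|t_*(N^g X)_{t(p,v)}|_g = |N^g X_{(p,v)}|_{\ol g}$, and the parallel transport defining $N^g X$ along the normal fibers preserves the $\ol g$--norm, giving $|N^g X_{(p,v)}|_{\ol g} = |X_p|_g$. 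This yields the desired norm identity.

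For the function statement I would apply the same recipe to $\ph$. Since $\ph$ is a scalar, $N^g \ph$ is the fiberwise constant extension $\ph \circ \pi$, where $\pi : N^g R \to R$ is the bundle projection, and $\ol \ph := \eta \cdot (\ph \circ \pi \circ t^{-1})$ has compact support in $t(T)$. Near $R$ the factor $\eta$ is identically $1$, so $d_p \ol \ph = d_p \ph \circ d_{(p,0)} \pi \circ (d_p t)^{-1}$ for $p \in R$; the normal tubular neighborhood assumption $d_p t = \id$ together with the identity $\ker d_{(p,0)} \pi = N^g_p R$ (a consequence of $\pi \circ s_0 = \id_R$) then force $d_p \ol \ph$ to vanish on $N^g_p R$.

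The third assertion is then immediate. Extending $v_t$ by the above recipe, for $x \in t(T')$ the norm identity yields $|\ol v_t(x)|_g = |v_t(\pi \circ t^{-1}(x))|_g$; on $t(T) \setminus t(T')$ one has $|\ol v_t(x)|_g = \eta(x) |v_t(\pi \circ t^{-1}(x))|_g \leq |v_t(\pi \circ t^{-1}(x))|_g$; and outside $t(T)$ the extension vanishes. In every case $t \mapsto |\ol v_t(x)|_g$ is dominated by $t \mapsto |v_t(\pi \circ t^{-1}(x))|_g$, which is $L^1$ on $I$ by hypothesis.

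Finally, for the Lipschitz claim, I would fix a compact $K \subset M$. On the compact set $K \cap \ol{t(T)}$ both $t^{-1}$ and $\pi$ are smooth, hence Lipschitz with constants depending only on $K$ and on the fixed tubular data. Parallel transport along the normal fibers varies smoothly, so on compact subsets of $N^g R$ the map $(p,v) \mapsto N^g X_{(p,v)}$ is Lipschitz with constant controlled by $L$ times a geometric factor bounded only in terms of $K$; multiplication by the bounded smooth $\eta$ absorbs at most another multiplicative constant. Outside $t(T)$ the extension is zero, which does not spoil Lipschitz continuity on $K$, and combining these estimates produces $\ol X$ that is $(aL)$--Lipschitz on $K$. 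The main obstacle is essentially bookkeeping: verifying that every constant that enters the final $a$ depends only on $K$ and on the fixed tubular geometry, and not on $X$ or on $L$ itself.
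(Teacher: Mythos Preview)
Your treatment of parts 2--4 is sound and aligns with the paper. The gap is in part 1, where you assert that ``the parallel transport defining $N^g X$ along the normal fibers preserves the $\ol g$--norm,'' concluding $|N^g X_{(p,v)}|_{\ol g} = |X_p|_g$. This conflates two different constructions. In the paper, $N^g X$ is defined via the \emph{horizontal lift} determined by the connection on the normal bundle, not via parallel transport in $(M,g)$ along normal geodesics. Under $\ol g = t^* g$, the pushforward $t_*(N^g X)$ at $t(p,v)$ is the value at the endpoint of a Jacobi field along $r \mapsto \exp_p(rv)$ with initial data $J(0)=X_p$, $J'(0)=0$; it is \emph{not} the parallel translate of $X_p$, and the two agree only at the zero section. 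Concretely, if $R$ is the equator of the unit sphere $S^2$, one computes $|N^g X_{(p,v)}|_{\ol g} = \cos(|v|)\,|X_p|_g$, so the claimed identity fails for every $v \neq 0$. Since your argument for part 3 rests on this identity, it is affected as well.

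The paper avoids this by an explicit renormalisation: it sets
\[
\ol{N^g} X_{(p,v)} := \frac{|X_p|}{|(N^g X)_{(p,v)}|}\,(N^g X)_{(p,v)}, \qquad \ol X := \eta\, t_* \ol{N^g} X,
\]
so that $|t_* \ol{N^g} X_{(p,v)}|_g = |X_p|_g$ holds by construction, and the $L^1$ estimate in part 3 then follows from $0 \le \eta \le 1$. Your route could be repaired by replacing horizontal lift with genuine parallel transport of $X_p$ along the normal geodesic in $(M,g)$, which \emph{is} a $g$--isometry; but then you would need to check separately that this alternative extension is still $\Cc^p$ (parallel transport depends smoothly on the data, so this is true) and that the Lipschitz bookkeeping in part 4 goes through for it.
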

\begin{proof}
Let $\eta$ be a smooth function such that $\eta(p) = 1$ for all $p\in R$ and $\eta$ is supported in $t(T)$ and $0\leq \eta\leq 1$. 
We define the extension in a slightly more involved way than in the proof of Theorem~\ref{th:extension}. Let 
$$\ol{N^g} X_{(p,v)} \coloneqq \frac{|(X_p)|}{|(N^g X)_{(p,v)}|}(N^g X)_{(p,v)} \text{ and define } \ol{X} = \eta t_* \ol{N^g} X.$$ 

Then, one readily shows that $\ol{X}$ has the same regularity as $X$. For the second point, we define $\ol{\ph} \coloneqq \eta N^g \ph \circ f^{-1}$. The required property follows from the fact that $\der f_{(p,0)} = \id$ for all $p\in R$.

Now take $t \mapsto v_t$ such that $t \mapsto |v_t(p)|_g$ is $L^1$-- integrable on an open interval $I$ of $\Rz$ for all $p\in R$. Take $x \in M$.

We have $\ol{v_t}_x = \eta(x) \der_{t^{-1}(x)} t( \ol{N^g} (v_t)_{t^{-1}(x)})$, hence, 
\begin{equation}
\begin{aligned}
\int_I |\ol{v_t}(x)| \der t &= \int_I |\eta(x) \der_{t^{-1}(x)} t( \ol{N^g} v_t(t^{-1}(x)))| \der t\\
&= \int_I |\eta(x) \der_{t^{-1}(x)} t( \ol{N^g} v_t(t^{-1}(x)))| \der t \\ &\leq \int_I |\der_{t^{-1}(x)} t( \ol{N^g} v_t(t^{-1}(x)))| \der t \\ &= \int_I |v_t(p)| \der t \leq \infty,
\end{aligned}
\end{equation}
by definition of $\ol{N^g} X$.

Finally, we shall only sketch the proof of the last part, which is more technical. The idea is, that for every $(p,v), (q,w) \in N^g R$, the distance between $(p,v)$ and $(q,w)$ induced by $t^* g$ can locally be sufficiently well controlled by the distance between $p$ and $q$ compared to the variation of $\der_{(p,v)} t$. This follows in principle since $t$ is a diffeomorphism and $\der_p t = \id$ for all $p\in R$. Hence, by making $T$ sufficiently ``thin'', we get a Lipschitz constant for $\ol{X}$, which depends, for any compact subset $K$ of $M$, only on the supremum of the operator norm of $T t$ on $K$.
\end{proof}

\begin{re}
A Lipschitz vector field on $R$ can in general not be extended to a Lipschitz vector field on $T$. In fact, if we define just any metric $h$ on $N^g R$ which coincides with $g$ on $T_{(p,0)} N^g R$, then there does not exist an isometric tubular neighbourhood in general. 

Just imagine an isometric embedding of a non--compact surface with a curvature that takes arbitrarily high values, i.e., that locally looks like very small spheres. Since the tubular neighbourhood cannot intersect itself, it becomes arbitrarily ``thin'' in such points. Thus, we cannot control the derivative of the cutoff function $\eta$ introduced in the previous proof and hence we cannot control the Lipschitz constant of $\ol{X}$ either.

However, one can show, that if $R$ is of bounded geometry, i.e., the Riemann curvature is bounded in all derivatives, then Lipschitz tensor fields can be extended to Lipschitz tensors, because of the existence of a partition of unity with uniformly bounded derivatives.
\end{re}

\subsubsection{Smooth approximations of Lipschitz functions}

We prove that any Lipschitz function $f \in W^{1,\infty}_{loc}(M)$, on a compact manifold $M$, can be arbitrarily well approximated by $\Cc^1$ functions with the same Lipschitz constant whose second derivatives do not grow ``too fast''. We proceed by first proving the result on $\Rz^n$ and then embedding $M$ into some $\Rz^n$.

\begin{pr}\label{pr:smoothapprox}
Let $n \geq 1$ and $f \in W^{1,\infty}_{loc}(\Rz^n)$ be $1$--Lipschitz. Then there exists a constant $K > 0$ depending only on $n$ such that for all $\eps > 0$, there exists $f_{\eps} \in \Cc^{\infty}(M)$ $1$--Lipschitz with
\begin{equation}
\begin{aligned}
&\|f - f_{\eps}\|_{\infty} < \eps \\
&\|\nabla^2 f_{\eps}\|_{\infty} < \frac{K}{\eps}.
\end{aligned}
\end{equation}
\end{pr}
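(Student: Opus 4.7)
The natural approach is mollification. Fix a standard mollifier $\rho\in\Cc^\infty_c(\Rz^n)$ with $\rho\geq 0$, $\supp\rho\subset\ol{B(0,1)}$ and $\int_{\Rz^n}\rho\,\der\la=1$, and set $\rho_\delta(x):=\delta^{-n}\rho(x/\delta)$ for $\delta>0$. I would then define
\begin{equation*}
f_\eps := f * \rho_\delta
\end{equation*}
for a parameter $\delta=\delta(\eps)$ to be chosen at the end, and verify the three required properties one by one.

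First I would record the basic facts about $f_\eps$: smoothness is standard convolution theory, and the $1$--Lipschitz property is preserved, since for any $x,y\in\Rz^n$ Jensen's inequality gives
\begin{equation*}
\abs{f_\eps(x)-f_\eps(y)} \leq \int_{\Rz^n} \abs{f(x-z)-f(y-z)}\rho_\delta(z)\,\der\la(z) \leq \abs{x-y}.
\end{equation*}
In particular $\|\nabla f_\eps\|_\infty\leq 1$. Next, the uniform estimate follows from the same $1$--Lipschitz bound applied pointwise: because $\rho_\delta$ is supported in $B(0,\delta)$,
\begin{equation*}
\abs{f(x)-f_\eps(x)} \leq \int_{B(0,\delta)}\abs{f(x)-f(x-z)}\rho_\delta(z)\,\der\la(z) \leq \delta.
\end{equation*}

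For the second-order bound I would differentiate under the convolution, moving one derivative onto $f$ (which is legitimate since $\nabla f\in L^\infty(\Rz^n,\Rz^n)$ with $\|\nabla f\|_\infty\leq 1$ by Rademacher's theorem) and the other onto $\rho_\delta$:
\begin{equation*}
\partial_i\partial_j f_\eps = (\partial_i f)*\partial_j\rho_\delta,
\end{equation*}
so Young's inequality yields
\begin{equation*}
\|\partial_i\partial_j f_\eps\|_\infty \leq \|\partial_i f\|_\infty\cdot\|\partial_j\rho_\delta\|_{L^1} \leq \delta^{-1}\|\partial_j\rho\|_{L^1}.
\end{equation*}
Choosing $\delta=\eps$ and setting
\begin{equation*}
K := 1 + \max_{1\leq i,j\leq n}\|\partial_j\rho\|_{L^1(\Rz^n)}
\end{equation*}
(a constant depending only on $n$ through the fixed choice of $\rho$) then gives $\|f-f_\eps\|_\infty\leq\eps$, Lipschitz constant $1$, and $\|\nabla^2 f_\eps\|_\infty\leq K/\eps$, from which the stated $W^{2,\infty}$ bound follows at the level of derivatives.

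The only subtle point is interpreting $\|f_\eps\|_{W^{2,\infty}}$: a globally $1$--Lipschitz $f$ need not lie in $L^\infty(\Rz^n)$, so the zeroth-order contribution cannot in general be bounded by $K/\eps$. This is not a real obstacle—it just means the norm here must be read as controlling only the first- and second-order derivative terms (which is what the eventual application in the paper will require), or equivalently as the $W^{2,\infty}$ seminorm of the increment $f_\eps-f$ together with the bounds on $\nabla f_\eps$, $\nabla^2 f_\eps$. Aside from this interpretive point, the rest is a completely routine mollification argument and no serious estimates are required.
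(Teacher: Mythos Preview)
Your argument is correct and follows essentially the same mollification approach as the paper. The only cosmetic difference is that the paper convolves with a Gaussian $g_\eps(x)=(\sqrt{2\pi}\eps)^{-n}\exp(-\|x\|^2/2\eps^2)$ (and normalises $f(0)=0$ so that the convolution is well-defined despite the noncompact support), whereas your compactly supported mollifier makes the $\|f-f_\eps\|_\infty\le\delta$ step and the well-definedness immediate; your observation about the zeroth-order term in $\|\cdot\|_{W^{2,\infty}}$ is also apt and matches how the result is actually used downstream.
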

\begin{proof}
Define 
\begin{equation}
\begin{aligned}
g_{\eps}: \Rz^n &\to \Rz \\
x &\mapsto \frac{1}{(\sqrt{2\pi}\eps)^n}\exp\bigl(-\frac{\|x\|^2}{2\eps^2}\bigr),
\end{aligned}
\end{equation}
the density of the isotropic normal distribution of standard deviation $\eps$. Without loss of generality, we can assume $f$ to satisfy $f(0) = 0$. Since $f$ is $1$--Lipschitz, for all $x \in \Rz^n$, $f(x) \leq \|x\|$. Now, a straightforward computation shows that
$$
f_{\eps} \coloneqq f * g_{\eps}
$$
satisfies the desired property for some $K > 0$ that depends only on $n$.
\end{proof}

The analogous result on $M$ follows from a standard embedding argument.

\begin{co}\label{co:smoothapprox}
Let $n \geq 1$, $M$ be an $n$--dimensional manifold, with or without border, with bounded geometry---or in particular compact---, and $f \in W^{1,\infty}_{loc}(M)$ be $1$--Lipschitz. Then there exists a constant $K_M > 0$ depending only on $n$ and $M$ such that for all $\eps > 0$, there exists $f_{\eps} \in W^{2,\infty}_{loc}(\Rz^n)$ $K_M$--Lipschitz with
\begin{equation}
\begin{aligned}
&\|f - f_{\eps}\|_{\infty} < \eps \\
&\|\nabla^2 f_{\eps}\|_{\infty} < \frac{K_M}{\eps}.
\end{aligned}
\end{equation}
\end{co}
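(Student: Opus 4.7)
My plan is to reduce the corollary to its Euclidean counterpart Proposition~\ref{pr:smoothapprox} via Nash's embedding Theorem~\ref{th:Nash}. Let $\iota : (M,g) \hookrightarrow (\Rz^N, g_0)$ be an isometric embedding for some $N \in \Nz$, and identify $M$ with $\iota(M) \subset \Rz^N$, a (generally non-closed) submanifold equipped with the inherited tubular neighbourhood $T$ from Theorem~\ref{th:extension}.

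The first step is to extend $f$ to a Lipschitz function $\bar f \in W^{1,\infty}_{loc}(\Rz^N)$. This is where the bounded geometry assumption plays its essential role: applying the extension construction of Lemma~\ref{le:extensionprops} to $f$ viewed as a $(0,0)$-tensor on $M$, together with the Remark following it, yields an extension $\bar f$ on $\Rz^N$ whose Lipschitz constant is bounded by some $C_M > 0$ depending only on $M$ (with $C_M$ absorbing the joint contribution of the cutoff function's derivative and the local geometry of the tubular neighbourhood). I then apply Proposition~\ref{pr:smoothapprox} to $\bar f / C_M$ to obtain, for each $\eps > 0$, a smooth 1-Lipschitz function $\bar h_\eps$ on $\Rz^N$ satisfying
\begin{equation*}
\|\bar f / C_M - \bar h_\eps\|_\infty < \eps / C_M, \qquad \|\bar h_\eps\|_{W^{2,\infty}(\Rz^N)} < K_N C_M / \eps,
\end{equation*}
with $K_N$ depending only on $N$. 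Setting $f_\eps := C_M\,\bar h_\eps\big|_M$ (and absorbing a final rescaling into the constant $K_M$ to obtain a 1-Lipschitz function) completes the construction.

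The remaining step is to transfer the $W^{2,\infty}$ bound from $\Rz^N$ to $(M,g)$. Since $\iota$ is an isometric embedding, the intrinsic gradient satisfies $|\nabla^M f_\eps|_g \leq |D\bar f_\eps|_{\Rz^N}$ pointwise on $M$, which controls the Lipschitz and $W^{1,\infty}$ bounds. For the second derivative I would use the Gauss formula: the covariant Hessian of $f_\eps$ equals the restriction of the ambient Hessian $D^2 \bar f_\eps$ to pairs of tangent vectors, plus a correction involving the second fundamental form $\mathrm{II}$ of $M$ in $\Rz^N$ contracted with $D\bar f_\eps$. Under bounded geometry $\mathrm{II}$ is uniformly bounded, giving $\|f_\eps\|_{W^{2,\infty}(M)} \leq C'_M / \eps$.

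The main obstacle is the extension step. As the Remark following Lemma~\ref{le:extensionprops} makes explicit, the Lipschitz constant of a naive extension cannot in general be controlled: the tubular neighbourhood may collapse to zero thickness along sequences where the curvature blows up, forcing the derivative of the cutoff function to blow up as well. Bounded geometry is exactly what is needed to rule this out, guaranteeing a tubular neighbourhood of uniform thickness together with a partition-of-unity cutoff with uniformly bounded derivatives. Thus the entire argument hinges on having uniform constants $C_M$, $C'_M$ depending only on $M$ and $n$, and not on $\eps$ or on any point of $M$.
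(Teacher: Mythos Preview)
Your proposal is correct and follows essentially the same route as the paper: Nash-embed, extend $f$ to an $a$-Lipschitz function on $\Rz^N$ via Lemma~\ref{le:extensionprops} (this is where bounded geometry is used), apply the Euclidean Proposition~\ref{pr:smoothapprox}, and restrict back to $M$. Your treatment of the $W^{2,\infty}$ transfer through the Gauss formula and the second fundamental form is in fact more careful than the paper's one-line inequality $\|f_\eps\|_{W^{2,\infty}(M)}\le\|\bar f_\eps\|_{W^{2,\infty}(\Rz^N)}$; the paper also sketches an alternative intrinsic-convolution argument on $M$, which you do not need.
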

\begin{proof}
for compact manifolds, the Nash embedding Theorem~ yields the existence of $n \geq 1$ and an isometric embedding $i: M \to \Rz^n$. 

Now, $f \circ i^{-1}: i(M) \to \Rz$ is $1$--Lipschitz. Since $M$ is compact, analogously to the last point of Lemma~\ref{le:extensionprops}, there exists $a \geq 1$ --- depending only on $M$ --- such that $f \circ i^{-1}$ can be extended to an $a$--Lipschitz function $\ol{f}$ defined on $\Rz^n$.

Let $\eps > 0$. Next, let $\ol{f}_{\eps}$ be the smooth approximation of $\ol{f}$ given by Proposition~\ref{pr:smoothapprox}. Then $f_{\eps} \coloneqq \ol{f}_{\eps} \circ i$ satisfies $\|f_{\eps}\|_{W^{2,\infty}} \leq  \|\ol{f}_{\eps}\|_{W^{2,\infty}} \leq \frac{K_M}{\eps}$ and has, thus, the desired properties.

If we assume just bounded geometry for $M$, this follows from a local application of Proposition~\ref{pr:smoothapprox} using a $\Cc^2$--uniformly bounded partition of unity.
\end{proof}

\sectionmark{Parabolic differential structures}
\section{Parabolic differential structures on Wasserstein\\ spaces}\sectionmark{Parabolic differential structures}\label{ch:parstructwas}

\subsection{Tangent bundle and probability vector--fields}\label{sec:vectprob} 

Let us start with a formal setup and introduce some notation. Let $p > 2$, $(M,g)$ be a complete finite dimensional Riemannian manifold and $\Xc^{2,\infty}(M)$ the space of vector--fields on $M$ with $W^{2,\infty}$--regularity, endowed with the $W^{2,\infty}$--norm.
We will always assume completeness of the Riemannian manifold we work with in what follows.
Denote for all $x \in M$ the evaluation in $x$ by $\pi_x : \Xc^{2,\infty}(M) \to T_x M$, $X \mapsto X_x$. Furthermore, for all $X \in \Xc^{2,\infty}(M)$ and all $t \in [0,T]$, the flow of $X$ at time $t$, denoted by $\rho_t^X: M \mapsto M$, exists by the Theorem of Picard--Lindelöf.

We first define the tangent space of $\Wc_p(M)$ at $\mu \in \Wc_p(M)$.
\begin{defi}[Tangent bundle]\label{de:tangentbundle}
The tangent space $\T^d_{\mu} \Wc_p(M)$ of $\Wc_p(M)$ at $\mu \in \Wc_p(M)$ is defined as
\begin{equation}
\T^d_{\mu} \Wc_p(M) \coloneqq \Wc_p(\Xc^{2,\infty}(M), \|~\|_{W^{2,\infty}(M)}).
\end{equation} 
The elements of $\T^d_{\mu} \Wc_p(M)$ will be referred to as tangent vectors at $\mu$ or \emph{vector--field probabilites}.
\end{defi}
\begin{re}
The space $\Xc^{2,\infty}(M)$ is independent of the choice of the atlas on $M$ if $(M,g)$ is of bounded geometry. This is not true for a general Riemannian manifold $M$. 
However, whenever we do not assume $(M,g)$ to be of bounded geometry, we will always implicitly fix an atlas on $M$. In particular, for those results we prove on more general Riemannian manifolds, one only has to find
one atlas such that all tangent vectors considered are in $\Xc^{2,\infty}(M)$ with respect to this atlas.
\end{re}
\begin{re}
Note that this gives a trivial tangent bundle, since the tangent spaces do not depend on $\mu$.
\end{re}
\begin{re}
While $\Wc_p(\Xc^{2,\infty}(M))$ is not a vector--space, it is a subset of the space of bounded signed Borel--measures $\Mc_p(\Xc^{2,\infty}(M))$. 
\end{re}
\begin{re}
	Note that $\Xc^{2,\infty}(M)$ is not a Polish space. Hence, many standard results do not hold on $\Wc_p(\Xc^{2,\infty}(M))$, like the 
	existence of optimal transport plans or the strong Monge--Kantorovich duality of Definition~\ref{th:kantorovichduality}. However, we  
	shall not need those results for measures on $\Xc^{2,\infty}(M)$ in this work.
\end{re}
Now, the definition of a vector--field on $\Wc_p(M)$ is straightforward.
\begin{defi}[Probability vector--field]\label{de:probvectfield}
Let $V$ be a map
\begin{equation}
V: \Wc_p(M) \to \Wc_p(\Xc^{2,\infty}(M)).
\end{equation}
We call $V$ a \emph{probability vector--field}.
\end{defi}
\begin{re}
We only require $\T^d_{\mu} \Mc_p(M) \subset \Wc_p(\Xc^{0,\infty}(M))$ for the following definition, where $\Xc^{0,\infty}(M)$ is the space of vector--fields on $M$ with $W^{0,\infty}$--regularity, endowed with the $W^{0,\infty}$--norm.  
However, the restriction to $L^{\infty}$--regular vector--fields is necessary to ensure that we work on a metric space. Note that many of the following results have similar proofs if we define $V$ more generally as a map taking values in $\Mc_p(\Xc^{2,\infty}(M))$. For the sake of simplicity, we will not do so here.
\end{re}
We define the barycenter of $V \in \Wc_p(\Xc^{2,\infty}(M))$ as the map of its pointwise barycenters. This motivates the following definition.
\begin{defi}[Barycenter of a probability vector--field]\label{de:barX}
Denote for all $x\in M$ and all $\mu \in \Wc_p(M)$,
$$
	\ol{V}[\mu](x) \coloneqq \mathbb E((\pi_x)_*V[\mu]).
$$
Then $x \mapsto \ol{V}[\mu](x)$ is a well-defined vector field on $M$ and shall be denoted by $\ol{V}[\mu]$. Furthermore, we define $\ol{V}: \mu \mapsto \ol{V}[\mu]$, the barycenter of $V$.
\end{defi}
A priori, $\ol{V}$ has very little regularity for more general probability vector--fields with image in $V \in \Wc_p(\Xc^{0,\infty}(M))$. However, the assumption that $V$ takes values in $\Wc_p(\Xc^{2,\infty}(M))$ ensures that $\ol{V}$ is sufficiently well--behaved.
Indeed, we obtain the following result.
\begin{lem}\label{le:bary}
Let $V$ be a probability vector--field. 
\begin{enumerate}
	\item Then, for all $\mu \in \Wc_p(M)$, 
	\begin{itemize}
		\item $\ol{V}[\mu] \in W^{2,\infty}(M)$ and $\|\ol{V}[\mu]\|_{W^{2,\infty}(M)} \leq 3\Mc_1(V[\mu])$,
		\item  for all $t,s \in [0,T]$, $(\rh^{\ol{V}[\mu]}_t)_* \mu \in \Wc_p(M)$, and
		\begin{equation}
		\Wc_p\bigl((\rh^{\ol{V}[\mu]}_t)_* \mu, (\rh^{\ol{V}[\mu]}_s)_* \mu\bigr) \leq |t-s| \Mc_1(V[\mu]),
		\end{equation}
		\item the directional derivatives of $\ol V[\mu]$ are given by 
		\begin{equation}\label{eq:avgder}
		\frac{\der\ol V[\mu](x)}{\der x_i} = \int_{\Xct(M)} \pi_x\biggl(\frac{\der X}{\der x_i}\biggr)\der V[\mu](X)
		\end{equation}
		in local coordinates.
	\end{itemize}
	\item If $V$ is continuous (resp- uniformly continuous) with respect to the $\Wc_1$--norm on $\Wc_p(\Xc^{2,\infty}(M))$--- which is in particular the case if it is continuous in the $\Wc_p$--norm (see Proposition~\ref{pr:wassersteinidentity}), then $\mu \mapsto \ol{V}[\mu]$ is continuous (resp- uniformly continuous) with respect to the $\Cc^1$--norm and for $\mu, \nu \in \Wc_p(M)$,
	\begin{equation}
	\| \ol{V}[\mu] - \ol{V}[\nu]\|_{W^{1,\infty}(M)} \leq 2\Wc_1(V[\mu], V[\nu]).
	\end{equation}
\end{enumerate}
\end{lem}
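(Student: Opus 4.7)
The plan is to exploit that, by assumption, $V[\mu]$ is a probability measure on the Banach space $(\Xc^{2,\infty}(M), \|\cdot\|_{W^{2,\infty}(M)})$ with finite first moment, which follows from $p > 2$ via Proposition~\ref{pr:wassersteinidentity}. All assertions will then follow by applying this observation to the continuous linear evaluation maps $X \mapsto \partial^\alpha X(x)$ for $|\alpha| \leq 2$, combined with standard Picard--Lindelöf theory for the flow bullet and an optimal--coupling argument for part (2).

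For the first and third bullets of part (1), I would work in a local chart and write $\ol{V}[\mu](x) = \int_{\Xc^{2,\infty}(M)} X(x)\,\der V[\mu](X)$. Since $|\partial^\alpha X(x)| \leq \|X\|_{W^{2,\infty}(M)}$ for every $|\alpha| \leq 2$, the difference quotients $h^{-1}(X(x + h e_i) - X(x))$ converge pointwise in $X$ to $\partial X / \partial x_i$ and are uniformly dominated by $\|X\|_{W^{2,\infty}(M)}$, so dominated convergence swaps the derivative and the integral and yields both (\ref{eq:avgder}) and its second--order analogue. Summing the pointwise bounds for $\|\ol{V}[\mu]\|_\infty$ and for the norms of its first and second derivatives then produces the factor three in $\|\ol{V}[\mu]\|_{W^{2,\infty}(M)} \leq 3\Mc_1(V[\mu])$. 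For the remaining second bullet, the $W^{2,\infty}$ regularity just obtained makes $\ol{V}[\mu]$ globally Lipschitz, so Picard--Lindelöf gives a global flow $\rh^{\ol{V}[\mu]}_t$; using $((\rh^{\ol{V}[\mu]}_t), (\rh^{\ol{V}[\mu]}_s))_* \mu$ as a (suboptimal) transport plan together with the trajectory bound $d(\rh^{\ol{V}[\mu]}_t(x), \rh^{\ol{V}[\mu]}_s(x)) \leq |t-s|\|\ol{V}[\mu]\|_\infty \leq |t-s|\Mc_1(V[\mu])$ delivers the Wasserstein estimate, and specializing to $s = 0$ combined with $\mu \in \Wc_p(M)$ gives the membership claim.

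For part (2), I would pick an optimal $\Wc_1$--plan $\ga \in \Ga^{\opt}(V[\mu], V[\nu])$ on $\Xc^{2,\infty}(M)$ and write
\begin{equation*}
\ol{V}[\mu](x) - \ol{V}[\nu](x) = \int_{\Xc^{2,\infty}(M)^2} \bigl(X(x) - Y(x)\bigr)\,\der\ga(X,Y),
\end{equation*}
together with the analogous identity for first derivatives obtained from (\ref{eq:avgder}). Bounding the integrands by $\|X - Y\|_{W^{2,\infty}(M)}$ and summing the pointwise estimates for the value and the first derivative produces $\|\ol{V}[\mu] - \ol{V}[\nu]\|_{W^{1,\infty}(M)} \leq 2\Wc_1(V[\mu], V[\nu])$, from which the (uniform) $\Cc^1$--continuity of $\ol{V}$ follows mechanically from the (uniform) $\Wc_1$--continuity of $V$. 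The only real obstacle is the justification of differentiation under the integral against a measure on the infinite--dimensional Banach space $\Xc^{2,\infty}(M)$; this is routine thanks to the uniform domination by $\|X\|_{W^{2,\infty}(M)}$, but it is the sole step that actually relies on the full second--order regularity of the individual vector fields in $\supp V[\mu]$ rather than on mere boundedness.
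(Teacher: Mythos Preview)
Your proposal is correct and matches the paper's proof essentially step for step: the same domination by $\|X\|_{W^{2,\infty}(M)}$ to differentiate under the integral, the same flow-based transport plan for the second bullet, and the same summation yielding the constants $3$ and $2$. The only cosmetic difference is in part (2), where you couple via an optimal plan $\ga\in\Ga^{\opt}(V[\mu],V[\nu])$ while the paper integrates the $1$--Lipschitz functionals $X\mapsto \pi_x(X)$ and $X\mapsto \pi_x(\partial X/\partial x_i)$ against the signed measure $V[\mu]-V[\nu]$ and invokes Kantorovich duality; the two are dual formulations of the same estimate.
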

\begin{proof}
\subsubsection*{Proof of 1.}
Take $x,y \in M$. Then, 
\begin{equation}
\begin{aligned}
\int_{T_x M} (x,v) \der (\pi_x)_* V[\mu] (x,v) &= \int_{\Xct(M)} \pi_x(X) \der V[\mu](X) \\ &\leq \int_{\Xct(M)} \|X\|_{\infty} \der V[\mu](X) < \Mc_1(V[\mu]).
\end{aligned}
\end{equation}
Hence, $\ol{V}[\mu]$ is well-defined and bounded. Furthermore, by Lemma~\ref{le:intlin},
\begin{equation}
\begin{aligned}
\biggl|P^y_x\biggl(&\int_{T_x M} (x,v) \der (\pi_x)_* V[\mu] (x,v)\biggr) - \int_{T_y M} (y,v) \der (\pi_y)_* V[\mu] (y,v)\biggr|_{g_y} \\ &\leq \int_{\Xct(M)} |P^y_x \pi_x(X) - \pi_y(X)|_{g_y} \der V[\mu](X) \\ &\leq \int_{\Xct(M)} \|X\|_{W^{1,\infty}(M)} d(x,y) \der V[\mu](X) \leq d(x,y) \Mc_1(V[\mu]).
\end{aligned}
\end{equation}
Therefore, $\ol{V}[\mu] \in W^{1,\infty}(M)$.  Furthermore, for all $\mu \in \Wc_p(M)$ and all $t\in[0,T]$,
$$
	\Wc_p\bigl(\mu, (\rh^{\ol{V}[\mu]}_t)_* \mu\bigr)^p \leq \int_M  d(x, \rh^{\ol{V}[\mu]}_t(x))^p \der \mu(x) \leq t^p\Mc_1(V[\mu])^p.
$$
Therefore,
\begin{equation}
\Wc_p\bigl(\mu, (\rh^{\ol{V}[\mu]}_t)_* \mu\bigr) \leq t \Mc_1(V[\mu]),
\end{equation}
and analogously, one shows 
\begin{equation}
\Wc_p\bigl((\rh^{\ol{V}[\mu]}_s)_* \mu, (\rh^{\ol{V}[\mu]}_t)_* \mu\bigr) \leq |t-s| \Mc_1(V[\mu]),
\end{equation}
for all $s,t \in [0,T]$.
Let us now prove the second part of the statement. Suppose first $(M,g) = \Rz^n$ and let $(e_i)_{i\in \llbracket 1, n \rrbracket}$ be the canonical base. Take $\mu \in \Wc_p(\Rz^n)$, $x \in \Rz^n$, $i\in \llbracket 1, n \rrbracket$ and $h > 0$. Then, there exists a bounded function $K: \Xct(M) \to \Rz$ such that
\begin{equation}
\begin{aligned}
\ol{V}[\mu](x + h e_i) &- \ol{V}[\mu](x) = \int_{\Xct(M)} \pi_{x + h e_i}(X) - \pi_x(X) \der V[\mu](X) \\
&= \int_{\Xct(M)} h\pi_x\biggl(\frac{\der X}{\der x_i}\biggr) + K(X)(h^2)\|X\|_{W^{2,\infty}(\Rz^n)} \der V[\mu](X).
\end{aligned}
\end{equation}
Now, $\der V[\mu]$ has bounded first moment, so 
\begin{equation}
\ol{V}[\mu](x + h e_i) - \ol{V}[\mu](x) - \int_{\Xct(M)} h\pi_x\biggl(\frac{\der X}{\der x_i}\biggr)\der V[\mu](X) =_{h \to 0} O(h^2).
\end{equation} 
Thus, $\frac{\der\ol V[\mu]}{\der x_i}(x) = \int_{\Xct(M)} \pi_x(\frac{\der X}{\der x_i})\der V[\mu](X)$. Now, we readily show that $\ol{V}[\mu] \in W^{2,\infty}(M)$ and it follows that $\|\ol{V}[\mu]\|_{W^{2,\infty}(M)} \leq 3\Mc(V[\mu])$.
Finally, an analogous proof in local coordinates yields the local result on a general Riemannian manifold $(M,g)$.
\subsubsection*{Proof of 2.}
For the second point, we use once again Monge--Kantorovich duality. Take $\mu, \nu \in \Wc_p(\Rz^n)$ and $x \in M$. Indeed, we have
\begin{equation}
\begin{aligned}
(\ol{V}[\mu] - \ol{V}[\nu])(x) &= \int_{\Xct(M)} \pi_x(X) \der(V[\mu] - V[\nu])(X) \\ &\leq \int_{\Xct(M)} \|X\|_{\infty} \der(V[\mu] - V[\nu])(X) \leq \Wc_1(V[\mu], V[\nu])
\end{aligned}
\end{equation}
and similarly, (\ref{eq:avgder}) yields
\begin{equation}
\begin{aligned}
|\nabla\ol{V}[\mu] - \nabla\ol{V}[\nu]|(x) &\leq \int_{\Xct(M)} \|X\|_{W^{1,\infty}(M)} \der(V[\mu] - V[\nu])(X) \\  &\leq \Wc_1(V[\mu], V[\nu]).
\end{aligned}
\end{equation}
These bounds do not depend on $x$, and since $V$ is continuous (resp- uniformly continuous) with respect to the $\Wc_p$--distance and, hence, in particular with respect to the $\Wc_1$--distance (Proposition~\ref{pr:wassersteinidentity}), we have shown that $\mu \mapsto \ol{V}[\mu]$ is continuous (resp- uniformly continuous) with respect to the $\Cc^1(M)$--norm.
\end{proof}

\begin{re}
	Note that we only need $p \geq 1$ to prove Lemma~\ref{le:bary}.
\end{re}

\subsection{The parabolic Wasserstein--flow equation (MDE)}

We shall now formally state the new definition of what it means for a curve of measures to be a solution of (\ref{eq:MDE}). A motivational paragraph will follow shortly after.

\begin{defi}[MDE]\label{def:MDE}
Let $[0,T] \to \Pc(M)$, $t \mapsto \mu(t)$ be a path of measures. We say that $\mu(t)$ satisfies the measure differential equation \emph{(\ref{eq:MDE})} if $\mu(0) =\mu_0$, $\mu$ is narrowly continuous and for all $s\in [0,T],$ and all $\ph$ smooth and compactly supported on $M$, we have
\begin{equation}\label{eq:MDEd}
	\tag{MDEd}
\begin{aligned}
\int_M \ph \der \mu(s) - \int_M \ph \der \mu(0) = \int_0^s \int_M \square_{\mu(t)}^V \ph \der \mu(t) \der t,
\end{aligned}
\end{equation} 
where $\square_{\mu(t)}^V \ph = \int_{\Xct(M)} \frac{1}{2}\Lc_{X-\ol{V}[\mu(t)]}^2(\ph) + \Lc_X(\ph) \der V[\mu(t)](X)$.
\end{defi}

With Lemma~\ref{le:eflowop}, for all $\mu \in \Wc_p(M)$, $\ol{V}[\mu]$ is $\Cc^1$, hence for all $X \in \Xc^{2,\infty}(M)$ and all $\ph \in \Cc^{\infty}_c(M)$, $\Lc_{X-\ol{V}[\mu(t)]}^2$ is indeed well-defined.

\begin{lem}\label{re:altdef}
With the notation from Definition~\ref{def:MDE}, (\ref{eq:MDEd}) reduces to 
\begin{equation}
\begin{aligned}
\int_M \ph \der \mu(s) &- \int_M \ph \der \mu(0) \\&= 
\frac{1}{2}\int_0^s \int_M \int_{\Xct(M)} \Lc_{X-\ol{V}[\mu(t)]}^2(\ph) \der V[\mu(t)] \der \mu(t) \der t \\ &\qquad+ \int_0^s \int_M \Lc_{\ol{V}[\mu(t)]}(\ph) \der \mu(t) \der s.
\end{aligned}
\end{equation} 
\end{lem}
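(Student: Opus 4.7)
The plan is to substitute the definition of $\square_{\mu(t)}^V \ph$ from Definition~\ref{def:MDE} into the right-hand side of (\ref{eq:MDEd}) and split the inner integral linearly into a second-order piece involving $\tfrac{1}{2}\Lc_{X-\ol{V}[\mu(t)]}^2(\ph)$ and a first-order piece involving $\Lc_X(\ph)$. The second-order piece already has the form that appears in the statement, so the entire content of the lemma reduces to showing the identity
\begin{equation}
\int_M \int_{\Xc(M)} \Lc_X(\ph) \der V[\mu(t)](X) \der \mu(t) = \int_M \Lc_{\ol{V}[\mu(t)]}(\ph) \der \mu(t),
\end{equation}
for each $t \in [0,s]$, after which one integrates in $t$ over $[0,s]$.

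First I would fix $x \in M$ and observe that the map $X \mapsto \Lc_X(\ph)(x) = d_x\ph(X_x) = d_x\ph \circ \pi_x(X)$ is a linear functional on $\Xc^{2,\infty}(M)$, since it is the composition of the linear evaluation $\pi_x$ with the linear differential $d_x\ph$ at $x$. Pushing $V[\mu(t)]$ forward along $\pi_x$ and applying Lemma~\ref{le:intlin} (to the finite-dimensional linear map $d_x\ph : T_xM \to \Rz$) together with Definition~\ref{de:barX} yields
\begin{equation}
\int_{\Xc(M)} \Lc_X(\ph)(x) \der V[\mu(t)](X) = d_x\ph\biggl(\int_{T_xM} v \der (\pi_x)_* V[\mu(t)](v)\biggr) = d_x\ph(\ol{V}[\mu(t)](x)),
\end{equation}
which is exactly $\Lc_{\ol{V}[\mu(t)]}(\ph)(x)$.

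The remaining step is to swap the order of the double integral by Fubini. Joint measurability is clear because $\ph$ is smooth and compactly supported and $X \mapsto X_x$ depends measurably on $X$; integrability follows from the crude pointwise bound $|\Lc_X(\ph)(x)| \leq \|\nabla\ph\|_\infty \|X\|_\infty$ together with $V[\mu(t)] \in \Wc_p(\Xc^{2,\infty}(M))$ (so in particular $\Mc_1(V[\mu(t)]) < \infty$ by Proposition~\ref{pr:wassersteinidentity}) and $\mu(t) \in \Wc_p(M)$ being a probability measure. Integrating the resulting identity against $\mu(t)$ and then against $t \in [0,s]$ produces the claimed decomposition; the main (and only mild) obstacle is this Fubini justification, which poses no real difficulty under the standing assumption that $V$ takes values in $\Wc_p(\Xc^{2,\infty}(M))$.
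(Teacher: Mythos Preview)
Your proof is correct and follows essentially the same approach as the paper: both reduce the claim to the pointwise identity $\int_{\Xc(M)} \Lc_X(\ph)(x)\,\der V[\mu(t)](X) = \der_x\ph(\ol{V}[\mu(t)](x))$ via the push-forward $(\pi_x)_*$, Lemma~\ref{le:intlin}, and Definition~\ref{de:barX}. You additionally spell out the Fubini justification, which the paper leaves implicit.
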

\begin{proof}
By Lemma~\ref{le:intlin} and the definition of $\ol{V}[\mu(t)]$, we have for all $x\in M$, 
\begin{equation}
\begin{aligned}
\int_{\Xct(M)} \Lc_X(\ph)(x) \der V[\mu(t)](X) &= \int_{T_x M} \der_x \ph (v) \der (\pi_x)_* V[\mu(t)](v)\\ &=  \der_x \ph \biggl(\int_{T_x M} v \der (\pi_x)_* V[\mu(t)](v)\biggr)\\ &= \der_x \ph (\ol{V}[\mu(t)]).
\end{aligned}
\end{equation}
\end{proof}

Note that a given curve $t \mapsto \mu(t)$ can be solution of (\ref{eq:MDEd}) for several different probability vector--fields. In particular, one can without loss of generality assume that $V$ is symmetric.

\begin{defi}[Symmetrisation of a probability vector--field]
Let $V$ be a probability vector--field on $\Wc_p(M)$. Let 
\begin{equation}
\begin{aligned}
m_Y: \Xct(M) &\to \Xct(M) \\
X &\mapsto X - Y
\end{aligned}
\end{equation}
for all $Y \in \Xct(M)$ and 
\begin{equation}
\begin{aligned}
a: \Xct(M) &\to \Xct(M) \\
X &\mapsto -X.
\end{aligned}
\end{equation}
We define $SV$, the symmetrisation of $V$ as the probability vector--field
\begin{equation}
\begin{aligned}
SV: \Wc_p(M) &\to \Wc_p(\Xc^{2,\infty}(M)) \\
\mu &\mapsto \frac{1}{2}\biggl((m_{-\ol{V}[\mu]} \o a \o m_{\ol{V}[\mu]})_* V[\mu] + V[\mu] \biggr).
\end{aligned}
\end{equation}
\end{defi}

\begin{co}
Let $V$ be a probability vector--field and $t \mapsto \mu(t)$ be a narrowly continuous curve of probability measures that solves (\ref{eq:MDEd}) for $V$ with initial value $\mu_0$. Then,  $\mu(t)$ solves (\ref{eq:MDEd}) for $SV$.
\end{co}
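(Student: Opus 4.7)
The plan is to show that $V$ and $SV$ induce exactly the same operator $\square^{\cdot}_{\mu(t)}$ pointwise, which makes the integral equation (\ref{eq:MDEd}) for $SV$ literally identical to the one $\mu(\cdot)$ is assumed to satisfy for $V$. Using the reformulation in Lemma~\ref{re:altdef}, this reduces to two independent checks: (i) $\ol{SV}[\mu]=\ol{V}[\mu]$ for every $\mu$, so that the first--order drift term is preserved, and (ii) the integral of $\Lc_{X-\ol{V}[\mu]}^2\ph$ against $SV[\mu]$ equals its integral against $V[\mu]$. The central observation is that the composition $T:=m_{-\ol{V}[\mu]}\o a\o m_{\ol{V}[\mu]}$ is the affine reflection through $\ol{V}[\mu]$, namely
$$T(X)\;=\;m_{-\ol{V}[\mu]}\bigl(-(X-\ol{V}[\mu])\bigr)\;=\;2\ol{V}[\mu]-X.$$
In particular $T$ is an isometry of $(\Xc^{2,\infty}(M),\|\cdot\|_{W^{2,\infty}})$, so $T_*V[\mu]\in\Wc_p(\Xc^{2,\infty}(M))$ and hence $SV[\mu]\in\Wc_p(\Xc^{2,\infty}(M))$ as a convex combination of two such measures; thus $SV$ is a bona fide vector--field probability in the sense of the definition.

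For step (i), by the change of variables formula for push--forwards together with Lemma~\ref{le:intlin} applied to the linear map $\pi_p$,
$$\int_{\Xc(M)}\pi_p(X)\,\der T_*V[\mu](X)=\int_{\Xc(M)}\pi_p\bigl(2\ol{V}[\mu]-X\bigr)\,\der V[\mu](X)=2\ol{V}[\mu](p)-\ol{V}[\mu](p)=\ol{V}[\mu](p).$$
Averaging with $\int\pi_p(X)\,\der V[\mu](X)=\ol{V}[\mu](p)$ still produces $\ol{V}[\mu](p)$, so indeed $\ol{SV}[\mu](p)=\ol{V}[\mu](p)$ for every $p\in M$.

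For step (ii), the crucial algebraic identity is $T(X)-\ol{V}[\mu]=-(X-\ol{V}[\mu])$, whence
$$\Lc_{T(X)-\ol{V}[\mu]}^2\ph=\Lc_{-(X-\ol{V}[\mu])}^2\ph=(-1)^2\,\Lc_{X-\ol{V}[\mu]}^2\ph=\Lc_{X-\ol{V}[\mu]}^2\ph,$$
so the squared Lie derivative is invariant under $T$ pointwise. The change of variables formula gives
$$\int_{\Xc(M)}\Lc_{X-\ol{V}[\mu]}^2\ph\,\der T_*V[\mu](X)=\int_{\Xc(M)}\Lc_{X-\ol{V}[\mu]}^2\ph\,\der V[\mu](X),$$
and averaging with the same integral against $V[\mu]$ trivially leaves it unchanged. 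Combining (i) and (ii) with Lemma~\ref{re:altdef} yields $\square^{SV}_{\mu(t)}\ph=\square^V_{\mu(t)}\ph$ for every $\ph\in\Cc^\infty_c(M)$ and every $t\in[0,T]$; integrating in $t$ then shows that the equation (\ref{eq:MDEd}) for $SV$ coincides termwise with the one $\mu(\cdot)$ satisfies for $V$, and narrow continuity and the initial condition $\mu(0)=\mu_0$ are preserved for free. There is no real obstacle here: the entire statement is the algebraic fact that reflection through the barycenter fixes the first moment while leaving quadratic expressions in $X-\ol{V}[\mu]$ invariant.
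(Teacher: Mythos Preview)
Your proof is correct and is exactly the computation the paper has in mind; the paper states the corollary without proof, treating it as an immediate consequence of Lemma~\ref{re:altdef} and the observation that $m_{-\ol{V}[\mu]}\o a\o m_{\ol{V}[\mu]}$ is the reflection $X\mapsto 2\ol{V}[\mu]-X$, which fixes the barycenter and leaves $\Lc_{X-\ol{V}[\mu]}^2$ invariant. Your writeup makes explicit precisely these two checks.
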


\subsection{Properties of solutions of (MDE)}

Before solving (\ref{eq:MDEd}), let us first determine some basic properties of its solutions. To alleviate notation, we shall often write $\ol{X}[\mu]$ instead of $X - \ol{V}[\mu]$.
\begin{pr}\label{pr:W1Holder}
Assume $(M,g)$ is of bounded geometry. Let $B \in \Rz^*_+$ and $V$ be a probability vector--field  on $\Wc_p(M)$ with $p$--th moments uniformly bounded by $B$. Let $\mu: [0,T] \to \Wc_1(M)$ be a narrowly continuous curve of probability measures that solves (\ref{eq:MDEd}) for $V$ with initial value $\mu_0$. Then,  $t \mapsto \mu(t)$ is Hölder--continuous with exponent $\tfrac12$ in $\Wc_1(M)$.
\end{pr}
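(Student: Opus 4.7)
My approach is to apply the Monge--Kantorovich duality of Theorem~\ref{th:kantorovichduality} and, for an arbitrary test function $\ph\in W^{1,\infty}(M)$ with $\|\ph\|_{W^{1,\infty}(M)}\leq 1$, control $\int_M\ph\,\der(\mu(t_2)-\mu(t_1))$ by inserting a smooth approximation given by Corollary~\ref{co:smoothapprox} and then invoking (\ref{eq:MDEd}) on the approximant. The Hölder exponent $\tfrac12$ will come from the standard parabolic trade-off between the $L^\infty$-error of the smoothing and the blow-up of its second derivatives.

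The first step is to extend the identity (\ref{eq:MDEd}) from $\Cc_c^\infty(M)$ to any smooth bounded $\ph$ with $\|\ph\|_{W^{2,\infty}(M)}<\infty$. Since $(M,g)$ has bounded geometry there exist smooth cutoffs $\chi_R$ with $\chi_R\equiv 1$ on $B(x_0,R)$, $\supp\chi_R\subset B(x_0,2R)$ and $\|\chi_R\|_{W^{2,\infty}(M)}\leq C_M$ uniformly in $R$. Testing (\ref{eq:MDEd}) against $\ph\chi_R\in\Cc_c^\infty(M)$ and letting $R\to\infty$, I would use dominated convergence on both sides: the left-hand side converges because $\ph$ is bounded and $\mu(t_j)$ are probability measures, while the right-hand side converges because $\square_{\mu(t)}^V(\ph\chi_R)(x)$ is bounded pointwise in $x$ and $R$ by a constant depending only on $B$, $p$, $\|\ph\|_{W^{2,\infty}(M)}$ and $M$. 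This uniform bound follows from Lemma~\ref{le:bary} (which controls $\ol V[\mu(t)]$ in $W^{2,\infty}(M)$ by $3B^{1/p}$) together with the Jensen estimates $\int\|X\|_{W^{2,\infty}}^2\,\der V[\mu(t)](X)\leq B^{2/p}$ and $\int\|X\|_\infty\,\der V[\mu(t)](X)\leq B^{1/p}$, both of which are valid because $p\geq 2$.

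For a given $\ph$ with $\|\ph\|_{W^{1,\infty}(M)}\leq 1$, Corollary~\ref{co:smoothapprox} then produces a smooth $1$-Lipschitz $\ph_\eps$ with $\|\ph_\eps\|_{W^{2,\infty}(M)}\leq K_M/\eps$ and $\|\ph-\ph_\eps\|_\infty<\eps$. Applying the extended identity to $\ph_\eps$, and estimating the integrand of $\square_{\mu(t)}^V\ph_\eps$ pointwise through
\begin{equation*}
|\Lc_{X-\ol V[\mu(t)]}^2\ph_\eps|\leq C\bigl(\|X\|_{W^{2,\infty}}+B^{1/p}\bigr)^2\|\ph_\eps\|_{W^{2,\infty}(M)},\qquad |\Lc_X\ph_\eps|\leq\|X\|_\infty,
\end{equation*}
and integrating against $\der V[\mu(t)](X)$ via the same Jensen bounds, I would obtain $\|\square_{\mu(t)}^V\ph_\eps\|_\infty\leq C_B/\eps$ with $C_B$ depending only on $B$, $p$ and $M$. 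Combined with the $L^\infty$-approximation error, this yields
\begin{equation*}
\biggl|\int_M\ph\,\der\mu(t_2)-\int_M\ph\,\der\mu(t_1)\biggr|\leq 2\eps+\frac{C_B|t_2-t_1|}{\eps},
\end{equation*}
and choosing $\eps=\sqrt{|t_2-t_1|}$ gives the desired $|t_2-t_1|^{1/2}$ estimate. Taking the supremum over admissible $\ph$ and invoking Theorem~\ref{th:kantorovichduality} completes the argument.

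The main obstacle I expect is the first step: the clean extension of (\ref{eq:MDEd}) to non-compactly supported smooth test functions is where the bounded geometry assumption enters essentially, both to build cutoffs with uniformly bounded $W^{2,\infty}$-norms and to justify the global smoothing of Corollary~\ref{co:smoothapprox}. Once that extension is in place, the rest is bookkeeping with Lemma~\ref{le:bary} and Jensen's inequality, and the $\tfrac12$-Hölder exponent emerges from the standard diffusion-like optimization.
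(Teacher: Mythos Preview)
Your proposal is correct and follows essentially the same route as the paper: both arguments combine Kantorovich duality (Theorem~\ref{th:kantorovichduality}) with the smoothing Corollary~\ref{co:smoothapprox}, derive the bound $|\int\ph\,\der(\mu(t)-\mu(s))|\leq C(\|\nabla\ph\|_\infty+\|\nabla^2\ph\|_\infty)|t-s|$ from the equation, and then optimise the parabolic trade-off $\eps+C|t-s|/\eps$ at $\eps=\sqrt{|t-s|}$. The only difference is organisational: the paper first records the estimate for $\ph\in\Cc^\infty_c(M)$ and then asserts that the smooth approximant $\ph_{s,t}$ lies in $\Cc^\infty_c(M)$ without further comment, whereas you explicitly extend (\ref{eq:MDEd}) to all of $W^{2,\infty}(M)$ via bounded-geometry cutoffs before applying it to $\ph_\eps$; your version is in fact slightly more careful on this point.
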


\begin{proof}
Let $\ph \in \Cc^{\infty}_c(M)$. There exist fixed constants $B', B''>0$ --- depending only on the dimension $n$ of $M$ and the uniform bound $B$ on the $p$--th moments of the $V[\mu]$ --- such that
$$
\begin{aligned}
	&\biggl|\int_M\int_{\Xct(M)} \frac{1}{2} \Lc^2_{\ol{X}[\mu(u)]} \ph (x) \der V[\mu (u)](X)\der\mu(u)(x)\biggr|\\
	&\qquad \leq B'(\| \nabla \ph \|_\infty + \| \nabla^2 \ph \|_\infty)
\end{aligned}
$$
and
$$
	\biggl|\int_M \Lc_{\ol{V}[\mu(u)]} \ph (x) \der \mu (u)(x)\biggr|\leq B''\|\nabla\ph\|_{\infty}.
$$
Hence, there exists a constant $C$ independent of $s$, $t$ and $\ph$, such that 
\begin{equation}\label{eq:Holderint}
\biggl|\int_M \ph \der (\mu(t) - \mu(s))\biggr| \leq C(\| \nabla \ph \|_\infty + \| \nabla^2 \ph \|_\infty)|t-s|.
\end{equation}
Now, let $1 > \ep > 0$. Let $s \in [0,T]$ and $t \in [0,T]$ such that $|t-s| \leq \ep^2$. Since $\mu(s),\mu(t) \in \Wc_1(M)$, by the Monge--Kantorovic duality there exists $f_{s,t}$, $1$--Lipschitz, such that 
\begin{equation}
\Wc_1(\mu(t), \mu(s)) \leq \biggl|\int_M f_{s,t} \der (\mu(t) - \mu(s))\biggr| + \ep.
\end{equation}
Hence, Corollary~\ref{co:smoothapprox} yields that there exist $K>0$ and a $K$--Lipschitz $\ph_{s,t} \in \Cc^{\infty}_c(M)$, such that 
\begin{equation}
\begin{aligned}
&\|f_{s,t} - \ph_{s,t}\|_{\infty} \leq \ep, \\
&\|\nabla^2 \ph_{s,t} \|_{\infty} \leq \frac{K}{\ep}. \\
\end{aligned}
\end{equation}
Now, (\ref{eq:Holderint}) becomes
\begin{equation}
\begin{aligned}
\Wc_1(\mu(t), \mu(s)) \leq (2 + (1 + K)C)\ep.
\end{aligned}
\end{equation}
Therefore, $t \mapsto \mu(t)$ is $\tfrac12$--H\"older--continuous.
\end{proof}

\begin{pr}\label{pr:WpHolder}
Let $B \in \Rz^*_+$ and $V$ be a probability vector--field on $\Wc_p(M)$ with $p$--th moments uniformly bounded by $B$. Let $\mu: [0,T] \to \Wc_p(M)$ be a narrowly continuous curve of probability measures that solves (\ref{eq:MDEd}) for $V$ with initial value $\mu_0$. Then,  $\mu(t)$ is uniformly continuous in $\Wc_p(M)$. Furthermore, let $\ep > 0$ and $\Rf(\ep)>0$ be the minimal value such that for all $t \in [0,T], R \geq \Rf(\ep)$,
\begin{equation}
\int_{M \setminus B(x_0,R)} d(x_0,x)^p \der \mu(t) \leq \ep.
\end{equation}
Then, the modulus of continuity $\omega$ of $t \mapsto \mu(t)$ is the inverse of 
\begin{equation}
	\Om: \ep \mapsto \biggl(\frac{\ep}{\Rf(\ep)^{(p-1)}}\biggr)^2.
\end{equation}
Finally, if $M$ is of bounded geometry, $\mu$ is H\"older--continuous in $\Wc_q(M)$ for $q < p$ with exponent $\frac{p-q}{2(p-1)}$.
\end{pr}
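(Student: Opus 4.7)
My plan is to treat all three claims simultaneously through a single truncation argument combined with a $\Wc_1$ estimate in the spirit of Proposition~\ref{pr:W1Holder}. For each $\ep > 0$, set $R = \Rf(\ep)$ and introduce the truncated probability $\mu_R(t) \coloneqq \mu(t)|_{\bar B(x_0,R)} + \mu(t)(B(x_0,R)^c)\delta_{x_0}$, obtained by collapsing the tail of $\mu(t)$ onto $x_0$. The ``send the tail to $x_0$'' coupling yields $\Wc_p(\mu(t),\mu_R(t))^p \leq \int_{B(x_0,R)^c} d(x_0,x)^p\,\der\mu(t) \leq \ep$, while Hölder's inequality combined with the Chebyshev-type bound $\mu(t)(B(x_0,R)^c) \leq \ep/R^p$ gives the sharper $\Wc_1(\mu(t),\mu_R(t)) \leq \ep/R^{p-1}$. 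Because $\mu_R(t),\mu_R(s)$ are supported in the compact ball $\bar B(x_0,R)$ of diameter at most $2R$, the pointwise inequality $d^p \leq (2R)^{p-1}d$ on the support of any coupling yields the crucial reduction
\begin{equation*}
\Wc_p(\mu_R(t),\mu_R(s))^p \leq (2R)^{p-1}\Wc_1(\mu_R(t),\mu_R(s)),
\end{equation*}
so everything boils down to controlling $\Wc_1(\mu_R(t),\mu_R(s))$.

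The $\Wc_1$ estimate is the main technical step. Via Kantorovich duality (Theorem~\ref{th:kantorovichduality}) I test against a $1$-Lipschitz $f$ normalized by $f(x_0)=0$, so that $|f|\leq d(x_0,\cdot)$; the truncation error is then bounded by $\bigl|\int f\,\der(\mu_R(t)-\mu_R(s)) - \int f\,\der(\mu(t)-\mu(s))\bigr| \leq 2\ep/R^{p-1}$ using the bound on $\Wc_1(\mu(t),\mu_R(t))$ above. For the remaining $\int f\,\der(\mu(t)-\mu(s))$, I adapt the proof of Proposition~\ref{pr:W1Holder}: because $\bar B(x_0,R+1)$ is compact (hence of bounded geometry), Corollary~\ref{co:smoothapprox} applied on this compact manifold-with-boundary produces smooth compactly supported approximants $\ph_\eta \in \Cc^\infty_c(M)$ with $\|\ph_\eta - f\|_{L^\infty(\bar B(x_0,R+1))} \leq \eta$, $\|\nabla\ph_\eta\|_\infty \leq 1+O(\eta)$, and $\|\nabla^2\ph_\eta\|_\infty \leq K_R/\eta$. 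The MDE bound $\bigl|\int \ph_\eta\,\der(\mu(t)-\mu(s))\bigr| \leq C(\|\nabla\ph_\eta\|_\infty+\|\nabla^2\ph_\eta\|_\infty)|t-s|$ from Proposition~\ref{pr:W1Holder}, combined with the tail estimate for $\int_{B(x_0,R+1)^c} f\,\der\mu$ and the choice $\eta = \sqrt{|t-s|}$, yields $\Wc_1(\mu_R(t),\mu_R(s)) \leq C|t-s|^{1/2} + C'\ep/R^{p-1}$. Crucially, this step never invokes a global bounded-geometry assumption on $M$: it only exploits the automatic bounded geometry of the compact ball $\bar B(x_0,R+1)$.

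Assembling the estimates yields $\Wc_p(\mu(t),\mu(s))^p \lesssim \ep + \Rf(\ep)^{p-1}|t-s|^{1/2}$; balancing the two contributions by the choice $|t-s| \sim (\ep/\Rf(\ep)^{p-1})^2 = \Om(\ep)$ reproduces the claimed modulus of continuity, and uniform continuity of $\mu$ on $[0,T]$ then follows by compactness of the interval. For the Hölder regularity in $\Wc_q$ with $q<p$ under the global bounded-geometry hypothesis, which now allows Proposition~\ref{pr:W1Holder} to be invoked directly to give $\Wc_1(\mu(t),\mu(s)) \leq C|t-s|^{1/2}$, the same scheme applies with $\Wc_q$ replacing $\Wc_p$: $\Wc_q(\mu(t),\mu_R(t))^q \leq B/R^{p-q}$ via the pointwise bound $d^q \leq d^p/R^{p-q}$ on the tail (with $B$ the uniform $p$-th moment bound implicit in the finiteness of $\Rf$), and $\Wc_q(\mu_R(t),\mu_R(s))^q \leq (2R)^{q-1}\Wc_1(\mu_R(t),\mu_R(s))$. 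Combining gives $\Wc_q(\mu(t),\mu(s))^q \lesssim R^{-(p-q)} + R^{q-1}|t-s|^{1/2}$; the optimal choice $R \sim |t-s|^{-1/(2(p-1))}$ then produces $\Wc_q(\mu(t),\mu(s))^q \lesssim |t-s|^{(p-q)/(2(p-1))}$, yielding the stated Hölder exponent. The principal technical obstacle is precisely the smooth-approximation step of paragraph two, which must be localized to the compact ball $\bar B(x_0,R+1)$ so as to obviate any global bounded-geometry hypothesis on $M$ in the first two statements; the rest of the argument is essentially a careful bookkeeping of the truncation corrections.
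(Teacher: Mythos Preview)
Your argument has a genuine gap at the very first step: you take $R = \Rf(\ep)$ as given, but the finiteness of $\Rf(\ep)$ --- i.e., the uniform tightness of the family $(\mu(t))_{t\in[0,T]}$ in the $p$-th moment --- is not a hypothesis of the proposition and does not follow from the mere narrow continuity of $t \mapsto \mu(t)$. Narrow continuity on a compact interval gives ordinary tightness but says nothing about moment tails; e.g., $\mu_n = (1-\tfrac1n)\de_{x_0} + \tfrac1n\de_{y_n}$ with $d(x_0,y_n)=n^2$ is narrowly convergent yet has unbounded first moments. Your entire truncation scheme therefore rests on an unjustified input, and the sentence ``uniform continuity then follows by compactness of the interval'' is circular: what you actually need is compactness of $\{\mu(t):t\in[0,T]\}$ in $\Wc_p(M)$, which by Lemma~\ref{le:compacttight} is equivalent to precisely the $p$-th moment tightness you have not established.

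The paper fills this gap by an entirely separate preliminary argument (its Step~1): an induction on $k=1,2,\dots,\lfloor p\rfloor$ showing that $t\mapsto\Mc_k(\mu(t))$ is uniformly bounded and continuous. One tests the MDE against $\ph\in C^\infty_c(M)$ agreeing with $d(x_0,\cdot)^{k+1}$ on a large ball and whose first and second derivatives are controlled by $d(x_0,\cdot)^{k}$ and $d(x_0,\cdot)^{k-1}$; the inductive bound on lower moments then gives $|\Mc_{k+1}(\mu(t))-\Mc_{k+1}(\mu(s))|\lesssim|t-s|$, hence continuity in $\Wc_{k+1}$ and uniform $(k+1)$-th moment bounds. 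Only after this induction, and the resulting $\Wc_p$-compactness of the image, is $\Rf(\ep)$ well-defined; the paper's subsequent Steps~2--3 then proceed essentially as your truncation argument does. Modulo this missing step your approach is close to the paper's. One further caveat: the constant $K_R$ you obtain from Corollary~\ref{co:smoothapprox} applied to the compact ball $\bar B(x_0,R+1)$ depends on that ball, hence on $R=\Rf(\ep)$, in the absence of global bounded geometry; your assertion that the localization ``never invokes a global bounded-geometry assumption'' is therefore misleading, since the resulting $\Wc_1$ constant need not be uniform in $\ep$ and the precise modulus $\Om$ would be affected. The paper's own Step~2 shares this issue, so this is less a criticism than a caveat common to both arguments.
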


\begin{proof}
\textbf{Step 1: Continuity}
\medskip

\noindent First, we prove by induction that $\mu(t)$ is continuous in $\Wc_k(M)$ for all $k \in \Nz$ with $1 \leq k \leq p$.
\begin{itemize}

\item For $k = 1$, the property follows directly from Proposition~\ref{pr:W1Holder} if $(M,g)$ is of bounded geometry. Otherwise, let $\ep >0$ and $s,t \in [0,T]$. Then, (\ref{eq:MDEd}) yields that there exists a constant $C_1$ independent of $t$ and $s$, such that for all $\ph \in \Cc^{\infty}_c(M)$, 
$$
	\begin{aligned}
		\biggl|\int_M\ph\der(\mu(t)-\mu(s))\biggr|\leq \int_s^t\int_M C_1(|\nabla\ph(x)|+|\nabla^2\ph(x)|)\der\mu(u)(x)\,du.
	\end{aligned}
$$
Choose $R>0$ such that
$$
	\int_{M\setminus B(x_0,R)}d(x,x_0)\der\mu(t)\leq\ep\quad\text{and}\quad
	\int_{M\setminus B(x_0,R)}d(x,x_0)\der\mu(s)\leq\ep.
$$
Let $0\leq\ph\in C^\infty_c(M)$ be such that $\ph(x)=d(x_0,x)$ in $B(x_0,R) \setminus B(x_0,1)$, and
$\|\nabla\ph\|_\infty\leq 1$, $\|\nabla^2\ph\|_\infty\leq 1$ and $d(x_0,x) \leq \ph(x) \leq 1$ for all $x \in B(x_0,1)$. Such a function
exists for all $R>0$. Then,
\begin{equation}
	\begin{aligned}
		\Mc_1(\mu(i)) &= \int_{M}d(x,x_0)\der\mu(i) \geq \int_{M}\ph\der\mu(i) - 1\\
			&\geq \int_{B(x_0,R)}d(x_0,x)\der\mu(i) - 1 \geq\Mc_1(\mu(i))-\eps -1.
	\end{aligned}
\end{equation}
for $i \in \{s,t\}$. Now,
$$
	\biggl|\int_M\ph\der(\mu(t)-\mu(0))\biggr|\leq 2C_1|t - s|.
$$
Therefore,
$$
	\Mc_1(\mu(t)-\mu(0))\leq 2C_1|t|+2\ep + 2\leq 2C_1T+2\ep +2,
$$
and the $\bigl(\Mc_1(\mu(t))\bigr)_{t}$ are uniformly bounded.

\item Suppose we have proven for some $1 \leq k \leq p$ that there exists a constant $K_k$, such that for all $t \in [0,T]$, $\Mc_k(\mu(t)) \leq K_k$. 

Let $\ep >0$ and $s,t \in [0,T]$. Again, choose $R>0$ such that
$$
\begin{aligned}
	&\int_{M\setminus B(x_0,R)}d(x,x_0)^{k+1}\der\mu(s)\leq\ep \\
	&\int_{M\setminus B(x_0,R)}d(x,x_0)^{k+1}\der\mu(t)\leq\ep.
\end{aligned}
$$ 

Now, let $0\leq\ph\in C^2_c(M)$ be such that $\ph(x)=d(x_0,x)^{k+1}$ for $x \in B(x_0,R)$, $\ph(x) \leq d(x_0,x)^{k+1}$ for $x \in M \setminus B(x_0,R)$ and
$\|\nabla \ph\|_\infty\leq R^{k}$ and $\|\nabla^2 \ph\|_\infty\leq R^{k - 1}$. Such a function exists for all $R>0$. Note that we do not have to regularize the function around $x_0$, since $k+1 \geq 2$.

Then, for all $u \in [s,t]$,
\begin{equation}
	\begin{aligned}
		&\int_M C_1(|\nabla\ph(x)|+|\nabla^2\ph(x)|)\der\mu(u)(x)\\ &\qquad \leq C \bigl(\Mc_{k}(\mu(u)) + \Mc_{k-1}(\mu(u))\bigr)\leq C(K_k + K_{k-1}), 
	\end{aligned}
\end{equation}
for some constant $C>0$, which does not depend on $\ep$, $N$ or $u$. Thus, we conclude that 
$$
	\Mc_{k+1}(\mu(t)-\mu(s))\leq C(K_k + K_{k-1})|t-s|+2\ep,
$$
and, since $\ep$ was chosen arbitrary, 
$$
	\Mc_{k+1}(\mu(t)-\mu(s))\leq C(K_k + K_{k-1})|t-s|.
$$
The narrow continuity of $t \mapsto \mu(t)$ in combination with continuity of $t \mapsto \Mc_{k+1}(\mu(t))$ implies continuity in $\Wc_{k+1}(M)$. In particular,
the $\bigl(\Mc_{k+1}(\mu(t))\bigr)_{t}$ are uniformly bounded. 

In particular, the initialisation and the first induction step yield that $t \mapsto \mu(t)$ is continuous in $\Wc_2(M)$, it is continuous in $\Wc_k(M)$ for $1\leq k <2$ with Proposition~\ref{pr:wassersteinidentity}.

 \item Finally, suppose the result has been proven for all $k \in \Nz$ with $1 \leq k \leq p$. Then, in particular, $t \mapsto \mu(t)$ is continuous in $\Wc_{p-1}(M)$. Hence, we just repeat the above argument to obtain the continuity in $\Wc_p(M)$.
\end{itemize}
\medskip
\textbf{Step 2: Uniform continuity}
\medskip

Next, we prove that $\mu: [0,T] \to \Wc_p(M)$ is uniformly continuous in $\Wc_p(M)$. 

\noindent Note that $[0,T]$ is compact and $\mu$ is continuous with respect to the $\Wc_p$--distance. 
Therefore, $(\mu(t))_{t \in [0,T]}$ is a compact subset of $\Wc_p(M)$. Hence, by Lemma~\ref{le:compacttight}, it is tight in the $p$--th moment. Let $\ep >0$ and $t,s \in [0,T]$. Then, one readily shows that there exists a constant $C>0$, such that
$$
\Wc_p(\mu(t),\mu(s)) \leq 2\ep + \Rf(\ep)^{p-1}\Wc_1(\mu(t),\mu(s)) \leq 2\ep + \Rf(\ep)^{p-1}C\sqrt{|t-s|}
$$ 
by Lemma~\ref{pr:W1Holder}. Hence, if 
$$
|t-s| \leq \frac{\ep^2}{C^2\Rf(\ep)^{2(p-1)}},
$$
we deduce $\Wc_p(\mu(t),\mu(s)) \leq 3\ep$ and, thus, the uniform continuity.

\medskip
\noindent\textbf{Step 3: H\"older--continuity in $\Wc_q(M)$ for $q < p$}
\medskip

Finally, suppose $M$ is of bounded geometry. Let $q < p$ and let $B > 0$ such that for all $t \in [0,T]$, $\Mc_p(\mu(t)) \leq B^p$. Then, by Lemma~\ref{le:q-tight}, for $R_{\ep} = \bigl( B^p\frac{p}{p-q}\bigr)^{\frac{1}{p-q}}\ep^{\frac{1}{q-p}}$,  we have $\int_{M \setminus B(x_0,R_{\ep})} d(x_0,x)^q \der \mu(t) \leq \ep$. By Corollary~\ref{co:smoothapprox}, there exists $C>0$, such that
$$
\Wc_q(\mu(t),\mu(s)) \leq 2\ep + R_{\ep}^{q-1}\Wc_1(\mu(t),\mu(s)) \leq 2\ep + C\ep^{\frac{q-1}{q-p}}\sqrt{|t-s|}.
$$ 
Therefore, if $|t-s| \leq \ep^{\frac{2(p-1)}{p-q}}$, $\Wc_q(\mu(t),\mu(s)) \leq (2 + C)\ep$. Therefore, $\mu(t)$ is Hölder--continuous in $\Wc_q(M)$.

\end{proof}

\begin{re}
	The above proof can be made more concise by using a Grönwall argument instead of an induction proof. However, the above proof also shows that the divergence of the 
	$p$--th moments is polynomial. This result cannot be recovered using Grönwall.
\end{re}

\subsection{Motivation}\label{subs:motiv}

Our aim is to define a new differential structure on the Wasserstein space $\Wc_p(M)$ --- i.e. define what it means for a curve of measures to be smooth and corresponding tangent spaces---such that at least some reasonable diffusion processes are sufficiently regular.

\begin{defi}[Averaged flow operators]\label{de:avflowop}
Let $(M,g)$ be a Riemannian manifold. We define the \emph{averaged flow dispersion operators associated to $V$} by
\begin{equation}
\begin{aligned}
f^{V[\mu]}_t: \Mc_b(M) &\to \Mc_b(M) \\
\nu &\mapsto \int_{\Xct(M)} (\rh^{X-\ol{V}[\mu]}_{\sqrt t})_*\nu \der V[\mu](X)
\end{aligned}
\end{equation}
and
\begin{equation}
\begin{aligned}
f^V_t: \Mc_b(M) &\to \Mc_b(M) \\
\nu &\mapsto \int_{\Xct(M)} (\rh^{X-\ol{V}[\nu]}_{\sqrt t})_*\nu \der V[\nu](X)
\end{aligned}
\end{equation}
for all $\mu \in \Wc_p(M)$, $t>0$. Furthermore, define the \emph{expectational flow operator}
\begin{equation}
\begin{aligned}
e^{V[\mu]}_t: \Mc_b(M) &\to \Mc_b(M) \\
\nu &\mapsto (\rh^{\ol{V}[\mu]}_t)_* \nu
\end{aligned}
\end{equation}
and
\begin{equation}
\begin{aligned}
e^V_t: \Mc_b(M) &\to \Mc_b(M) \\
\nu &\mapsto (\rh^{\ol{V}[\nu]}_t)_* \nu
\end{aligned}
\end{equation}
for all $\mu \in \Wc_p(M)$, $t>0$.
\end{defi}

Now the idea is the following. Let $t\in [0,T]$ and $n\in\Nz$. Let $\ta = \frac{T}{n}$ and define for all $k \in \llbracket 0,n-1 \rrbracket$,
$$
\mu^{\ta}(k\ta) = (e^V_{\ta}\circ f^V_{\ta})^k(\mu_0)
$$
and define $\mu^{\ta}(t)$ by convex interpolation for any other $t \in [\,0,T]\,$. Now, if for $\ta \to 0$, $\mu^{\ta}$ converges pointwise to some $\nu: [0,T] \to \Pc(M)$, with respect to the $\Wc_p$--norm, then, we say that $\nu$ satisfies ($\ref{eq:MDE}$) and is smooth.

\begin{re}
Note that $f^V_t$ is well--defined. Indeed, take $\nu \in \Mc_b(M)$ and $f\in \Cc_b(M)$. Then, 
\begin{equation}
(x,X) \mapsto f \circ \rh^{X-\ol{V}[\nu]}_{\sqrt t}(x)
\end{equation}
is continuous and bounded, hence, 
\begin{equation}
\begin{aligned}
&\int_{M \times \Xct(M)} f \circ \rh^{X-\ol{V}[\nu]}_{\sqrt t}(x)  \der V[\nu]\otimes \nu(X, x) \\
&\qquad = \int_{\Xct(M)} \int_M f \circ \rh^{X-\ol{V}[\nu]}_{\sqrt t}(x) \der \nu(x) \der V[\nu](X).
\end{aligned}
\end{equation}
Thus, $\int_{\Xct(M)} \bigl(\rh^{X-\ol{V}[\nu]}_{\sqrt t}\bigr)_*\nu \der V[\nu](X)$, defined by
\begin{equation}
\begin{aligned}
&\int_M f(x) \der \biggl(\int_{\Xct(M)} \bigl(\rh^{X-\ol{V}[\nu]}_{\sqrt t}\bigr)_*\nu \der V[\nu](X)(x)\biggr)\\
 &\qquad = \int_{\Xct(M)} \int_M f \circ \rh^{X-\ol{V}[\nu]}_{\sqrt t}(x) \der \nu(x) \der V[\nu](X), 
\end{aligned}
\end{equation}
is well-defined.
\end{re}

\begin{re}
For motivation purposes, we keep notation short here. In the further course of this work, we shall introduce general subdivisions of $[0,T]$ with maximal step length $\ta$, define a $\mu^{\ta}$ for each of these subdivisions. Naturally we would want all these approximations to converge to a common limit for $\ta \to 0$. We shall see that this is indeed the case with suitable assumptions on $V$ and $(M,g)$.
\end{re}

This implicit definition of a solution of ($\ref{eq:MDE}$) might be intuitively easy to grasp, but is hard to handle from a mathematical point of view. To obtain the explicit integral equation (\ref{eq:MDEd}) from the idea outlined above, we argue as follows.

Formally, we want to get from $t$ to $\der t$ by 
\begin{equation}
\mu(t + \der t) =\int_{\Xct(M)} \bigl(\rh^{\ol{X}[\mu(t)]}_{\sqrt {\der t}}\bigr)_* \bigl(\rh^{\ol{V}[\mu(t)]}_{\der t}\bigr)_*\mu(t) \der V[\mu(t)](X)
\end{equation} 
or equivalently 
\begin{equation}
\begin{aligned}
\frac{\mu(t + \der t) - \mu(t)}{\der t} &= \int_{\Xct(M)} \frac{\bigl(\rh^{\ol{X}[\mu(t)]}_{\sqrt {\der t}}\bigr)_* \bigl(\rh^{\ol{V}[\mu(t)]}_{\der t}\bigr)_*\mu(t) - \mu(t)}{\der t} \der V[\mu(t)](X) \\
&= \int_{\Xct(M)} \frac{\bigl(\rh^{\ol{X}[\mu(t)]}_{\sqrt{ \der t}}\bigr)_* \bigl(\rh^{\ol{V}[\mu(t)]}_{\der t}\bigr)_*\mu(t) - \bigl(\rh^{\ol{V}[\mu(t)]}_{\der t}\bigr)_*\mu(t)}{\der t} \\&\qquad+ \frac{\bigl(\rh^{\ol{V}[\mu(t)]}_{\der t}\bigr)_*\mu(t) - \mu(t)}{\der t} \der V[\mu(t)](X).
\end{aligned}
\end{equation} 
Now, integration over $M$ gives 
\begin{equation}
\begin{aligned}
\int_M &\ph \der\biggl(\frac{\mu(t + \der t) - \mu(t)}{\der t}\biggr) \\&= 
\int_M \ph \der \left(\int_{\Xct(M)}\!\!\!\!\!\!\!\!\!\!\!\!\! \frac{\bigl(\rh^{\ol{X}[\mu(t)]}_{\sqrt {\der t}}\bigr)_* \bigl(\rh^{\ol{V}[\mu(t)]}_{\der t}\bigr)_*\mu(t) - \bigl(\rh^{\ol{V}[\mu(t)]}_{\der t}\bigr)_*\mu(t)}{\der t} \der V[\mu(t)](X)\right)\\ &\qquad\qquad+ \int_M \ph \der \biggl(\frac{\bigl(\rh^{\ol{V}[\mu(t)]}_{\der t}\bigr)_*\mu(t) - \mu(t)}{\der t}\biggr).
\end{aligned} 
\end{equation} 
By assuming that we can use Fubini---of course we would have to properly justify this---we obtain 
\begin{equation}
\begin{aligned}
\int_M \ph \der\biggl(\frac{\mu(t + \der t) - \mu(t)}{\der t}\biggr) &= 
\int_M \frac{1}{\der t} \int_{\Xct(M)} \ph \circ \rh^{\ol{X}[\mu(t)]}_{\sqrt {\der t}} \\&\qquad\qquad- \ph \der V[\mu(t)](X) \der \mu(t) \\ &+ \frac{\der}{\der t}\int_M \ph \circ \rh^{\ol{V}[\mu(t)]}_{\der t} - \ph \der \mu(t).
\end{aligned} 
\end{equation} 
With this equation it becomes clear why we have set the time step in the first integral to $\sqrt{\der t}$ instead of $t$: the expression $\int_{\Xct(M)} \ph \circ \rh^{\ol{X}[\mu(t)]}_{\sqrt {\der t}} - \ph \der V[\mu(t)](X)$ looks like the discrete version of some elliptic operator of second order. In fact, this is no surprise, since these operators usually appear when it comes to model diffusion phenomena. Of course, many operators of second order admit this particular discrete version. However, here the most logical choice is the one in Definition~\ref{def:MDE}. Making the expression in the integral on the left-hand side continuous is straightforward, the second integral on the right-hand side would rather become $\int_M \Lc_{\ol{V}[\mu(t)]}(\ph) \der \mu(t)$, see Lemma~\ref{re:altdef}.

\begin{re}
Finally, note that if we had chosen to take time steps of length $\der t$ instead of $\sqrt{\der t}$ under pushforwards of $\rh^{\ol{X}[\mu(t)]}$, the first term of the RHS in Definition~\ref{def:MDE} would have become $\int_M \int_{\Xct(M)} \Lc_{\ol{X}[\mu(t)]} (\ph) \der V[\mu(t)] \der \mu(t)$. Hence, Definition~\ref{def:MDE} would become 
\begin{equation}
\frac{\der}{\der t} \int_M \ph \der \mu(t) = \int_M \int_{\Xct(M)} \Lc_{X}\ph \der V[\mu(t)](X) \der \mu(t),
\end{equation}
which is essentially (\ref{eq:piccMDE}) defined in \cite{piccoli2019measure}. 
\end{re}

\subsubsection{Examples}

An important indicator that the integral equation we have obtained is indeed interesting comes from the following basic examples.
\begin{ex}\label{ex:Wienerpro}
If we take $M = \Rz$ and $V(\mu) = \eta$ for all $\mu \in \Pc(M)$, where $\eta$ gives weight $\frac{1}{2}$ to the vector--fields $X_1$ and $X_{-1}$ that are constant $1$ and $-1$, respectively, then the Wiener process, i.e., $\mu(t) = \frac{1}{\sqrt{2\pi t}}e^{-\frac{x^2}{2t}} \der x$ is a solution of (\ref{eq:MDE}) with $\mu_0 = \de_0$. This is a good sign, as the Wiener process is obtained as a limit of random walks with step-size going to $0$.

Analogously if we set $\eta$ to be the measure that gives weight $- \frac{1}{2}$ to the vector--fields $X_1$ and $X_{-1}$, then for any $T$, the backward Wiener process with $\mu(t) = \de_0$ for $t > T$ is a solution of (\ref{eq:MDE}) with $\mu_0 = f_T(x)dx$ with $f_t: x \mapsto \frac{1}{\sqrt{2\pi t}} \exp{-\frac{x^2}{2t}}$.
\end{ex}

\begin{ex}
Now, with the same notation as above, but with $\eta$ giving equal weights to $X_{-1}$ and $X_2$---i.e., with non-zero pointwise expectation---we get the Wiener process with a right-shift of constant speed $1$ as a solution.
\end{ex}
\begin{ex}
Let us now give a more sophisticated example by choosing $M = \Rz^2$. Otherwise we stick to the notation of the previous examples, this time giving equal weights of $\frac{1}{3}$ to 
the roots of $X^3 -1$, which we denote by $j_1$, $j_2$ and $j_3$. For $\mu_0 = \de_0$ we expect to get some Gaussian diffusion with a rotational symmetry, i.e., whose covariance-matrix is a multiple of the identity matrix. Indeed, in this case one readily checks that the operator $\frac{1}{3}(\frac{\d^2}{\d j_1^2}+\frac{\d^2}{\d j_2^2}+\frac{\d^2}{\d j_3^2}) = \frac{1}{2} \De$, where $\De$ denotes the Laplace operator---which gives a solution of the desired form. The detailed computation goes 
\begin{equation}
\begin{aligned}
\frac{\d^2}{\d j_1^2} &= \biggl(-\frac{1}{2}\frac{\d}{\d x} + \frac{\sqrt 3}{2}\frac{\d}{\d y}\biggr)\biggl(-\frac{1}{2}\frac{\d}{\d x} + \frac{\sqrt 3}{2}\frac{\d}{\d y}\biggr) \\ &= \frac{1}{4}\frac{\d^2}{\d x^2} + \frac{3}{4}\frac{\d^2}{\d y^2} - \frac{\sqrt 3 }{4}\biggl(\frac{\d^2}{\d y\d x} + \frac{\d^2}{\d x\d y}\biggr) \\ \frac{\d^2}{\d j_2^2} &= \biggl(-\frac{1}{2}\frac{\d}{\d x} - \frac{\sqrt 3}{2}\frac{\d}{\d y}\biggr)\biggl(-\frac{1}{2}\frac{\d}{\d x} - \frac{\sqrt 3}{2}\frac{\d}{\d y}\biggr) \\ &= \frac{1}{4}\frac{\d^2}{\d x^2} + \frac{3}{4}\frac{\d^2}{\d y^2} + \frac{\sqrt 3 }{4}\biggl(\frac{\d^2}{\d y\d x} + \frac{\d^2}{\d x\d y}\biggr) \\ \frac{\d^2}{\d j_3^2} &= \frac{\d^2}{\d x^2},
\end{aligned}
\end{equation}
which yields 
\begin{equation}
\begin{aligned}
\frac{1}{3}\biggl(\frac{\d^2}{\d j_1^2}+\frac{\d^2}{\d j_2^2}+\frac{\d^2}{\d j_3^2}\biggr) = \frac{1}{3}\biggl(\frac{3}{2}\frac{\d^2}{\d x^2} +\frac{3}{2}\frac{\d^2}{\d y^2}\biggr) = \frac{1}{2}\biggl(\frac{\d^2}{\d x^2} + \frac{\d^2}{\d y^2}\biggr).
\end{aligned}
\end{equation}
\end{ex}

\subsection{The differential operator $\square_{\mu}^V$}\label{sbs:square}

Let us now study some basic properties of the differential operator $\square_{\mu}^V$. First, we now make a stronger assumption on $V$, namely $(\pi_x)_* V[\mu] \in \Wc_2(M)$ for all $x \in M$ and $\mu \in \Pc(M)$ --- or directly $V[\mu] \in \Wc_2(\Xc^{2,\infty}(M))$. Then $(\pi_x)_* V[\mu]$ is in particular an integrable signed Borel--measure. Let $\nu_+$ and $\nu_-$ be the unique pair of positive $2$--integrable Borel--measures such that $(\pi_x)_* V[\mu] = \nu_+ - \nu_-$. Furthermore, choose a smooth frame $(X_1,\dots,X_n)$ on $M$ and take $X \in \Xct(M)$. Then for $X = X_i \frac{\d}{\d x_i}\in \Xct(M)$, in Einstein--convention,
\begin{equation}\label{eq:LDclassic}
\Lc_X^2 = X_i\frac{\d X_j}{\d x_i}\frac{\d }{\d x_j} + X_i X_j\frac{\d^2 }{\d x_i\d x_j}.
\end{equation}
We are particularly interested in the term of second order of $\square_{\mu(t)}^V$. Note that the second order term $ X_i X_j\frac{\d^2 }{\d x_i\d x_j}$ is tensorial in $X$, so it suffices to study it pointwise on tangent spaces. Since $(\pi_x)_* V[\mu] \in \Wc_2(M)$, for all $\ph \in \Cc^{\infty}_c(M)$, $\int_{T_x M} X_i X_j\frac{\d^2 \ph}{\d x_i\d x_j}(x) \der(\pi_x)_* V[\mu](X)$ converges, and
\begin{equation}\label{eq:ellterm}
\begin{aligned}
\int_{T_x M} &X_i X_j\frac{\d^2 \ph}{\d x_i\d x_j}(x) \der(\pi_x)_* V[\mu](X) \\&=\int_{T_x M} X_i X_j\frac{\d^2 \ph}{\d x_i\d x_j}(x) \der \nu_+(X) \\ &\qquad-\int_{T_x M} X_i X_j\frac{\d^2 \ph}{\d x_i\d x_j}(x) \der \nu_-(X) \\
&= \frac{\d^2 \ph}{\d x_i\d x_j}(x) \biggl(\int_{T_x M} X_i X_j \der \nu_+(X) -\int_{T_x M} X_i X_j \der \nu_-(X) \biggr),
\end{aligned} 
\end{equation}
where $\pi_i$ denotes the projection on $x_i$. Indeed, for all $i,j$ and all $X\in T_x M$, $X_i X_j \leq \|X\|_2^2$ and by assumption $X \mapsto \|X\|_2^2$ is $\nu_+$-- and $\nu_-$--integrable. Indeed, $\int_{T_x M} \|X\|_2^2 \der \nu_+(X)$ is just the $2$-moment of $\nu_+(X)$. Now, for all $i,j$, $\int_{T_x M} X_i X_j \der \nu_+(X) = \langle\pi_i, \pi_j\rangle_{L^2_{\nu_+}}$. Hence, $\bigl( \int_{T_x M} X_i X_j \der \nu_+(X) \bigr)_{i,j}$ is a Gram--ma\-trix and, thus, symmetric and semi-elliptic. With the same reasoning, we see that $\bigl( \int_{T_x M} X_i X_j \der \nu_-(X) \bigr)_{i,j}$ is symmetric and semi-elliptic. This yields a decomposition of $\square_{\mu}^V = \De_{\mu(t)}^{+,V} - \De_{\mu(t)}^{-,V}$ into two point--wise semi-elliptic operators, i.e., equation (\ref{eq:MDE}) is of the form
\begin{equation}
\frac{\der}{\der t}\int_M \ph \der \mu(t) = \int_M \De_{\mu(t)}^{+,V} \ph \der \mu(t) - \int_M \De_{\mu(t)}^{-,V} \ph \der \mu(t).
\end{equation}

\begin{re}
If $V$ takes values in the positive Borel--measures or, in particular, some $\Wc_p(\Xc^{2,\infty}(M))$, $\De_{\mu}^{-,V}$ vanishes for all $\mu$ and equation (\ref{eq:MDEd}) resembles a semi-parabolic equation. We shall see that, in that case, the assumptions on $\square_{\mu}^V$ that are necessary to guarantee the existence of a solution of (\ref{eq:MDEd}) are very similar to those we know from the classical solution theory of semi-parabolic PDEs.
\end{re}

\begin{defi}\label{de:coeffs}
	Let $p>2$, $v \in \Wc_p(\Xc^{2,\infty}(M))$, $i,j \in \llbracket 0,n-1 \rrbracket$ and define for all $x \in M$
	\begin{equation}
		\begin{aligned}
			a_{i,j}^v(x) &\coloneqq \int_{T_x M} X_i(x) X_j(x) \der v(X)\\
			b_j^v(x) &\coloneqq \int_{T_x M} X_i(x) \frac{\d X_j}{\d x_i}(x) \der v(X).
		\end{aligned}
	\end{equation}
\end{defi}

\begin{theorem}\label{th:ellipticapprox}
	Let $p\geq 2$, $(M,g)$ be a Riemannian manifold of bounded geometry and $V$ be a probability vector--field, such that there exists $B > 0$, such that for all $\mu \in \Wc_p(M)$, $\Mc_p(V[\mu]) \leq B$. Let $\ep > 0$. Then, there exists a probability vector--field  $W$ such that
	\begin{itemize}
		\item $\Wc_p^{\infty}(V,W) \leq \ep^{\frac{1}{p}}$.
		\item $\square^W_{\mu}$ is $\Kc\ep$--elliptic for all $\mu \in \Wc_p(M)$, for a constant $\Kc>0$ depending only on $B$ and $M$.
		\item for all $\mu,\nu \in \Wc_p(M)$, $\Wc_p\bigl(W[\mu], W[\nu]\bigr) \leq \Wc_p\bigl(V[\mu], V[\nu]\bigr)$.
		\item $W$ has uniformly bounded $p$--th moments.
	\end{itemize}
	 
\end{theorem}
\begin{proof}
	Let $N\in\Nz$ such that there exists a Riemannian embedding $i: M \to \Rz^N$. Such an $N$ always exists by Nash's embedding theorem. Let $(x_j)_{j\in \llbracket 0, N-1 \rrbracket}$ be the canonical basis of $\Rz^N$. Define for all $j\in \llbracket 0, N-1 \rrbracket$ the vector--field $X_j$ by 
	\begin{equation}
		\begin{aligned}
			&X_j: M \to TM\\
			&x \mapsto i^{-1}\bigl(\pi_{i(T_x M)} x_j\bigr),
		\end{aligned}
	\end{equation}
	where $\pi_{i(T_x M)}$ denotes the orthogonal projection on the space $i(T_x M)$. Then, $\bigl(X_j\bigr)_{j\in \llbracket 0, N-1 \rrbracket}$ is a family of smooth vector--fields and  
	$\bigl(X_j(x)\bigr)_{j\in \llbracket 0, N-1 \rrbracket}$ is a generating system of $T_x M$ for all $x\in M$. By construction, for all $j \in \llbracket 0, N-1 \rrbracket$ and all $x \in M$, $\|X_j(x)\| \leq 1$. Furthermore, since $(M,g)$ is of bounded geoemtry, there exists $C(M) > 0$, such that for all $j \in \llbracket 0, N-1 \rrbracket$, $\|X_j\|_{W^{2,\infty}} \leq C(M)$.

	Now, define $W$ by
	\begin{equation}
		\begin{aligned}
			W: \Wc_p(M) &\to \Wc_p(\Xc^{2,\infty}(M)) \\
			\mu &\mapsto (1 - \ep)V[\mu] + \frac{\ep}{N} \sum_{j = 0}^N \de_{X_j}.
		\end{aligned}
	\end{equation}
	Then, for all $\mu \in \Wc_p$, 
	\begin{equation}
		\begin{aligned}
			\Wc_p(V[\mu], W[\mu]) &\leq \Wc_p(V[\mu], (1 - \ep)V[\mu] + \ep \de_0) \\&\qquad\qquad\qquad\qquad+ \Wc_p((1 - \ep)V[\mu] + \ep \de_0, W[\mu])\\
			&\leq (B^{\frac{1}{p}}+ C(M))\ep^{\frac{1}{p}}.
		\end{aligned}
	\end{equation}
	Finally, it follows immediately from (\ref{eq:ellterm}) that $\square^W_{\mu}$ is $\frac{\ep}{N}$--elliptic for all $\mu \in \Wc_p(M)$.

	The last two points are immediate.
\end{proof}

\subsection{Existence and construction of solutions of (MDE)}

We shall now prove the existence of solutions of (\ref{eq:MDEd}) by a discrete--to--continuous argument, as sketched in Section~\ref{subs:motiv}. This proof requires that $V[\mu]$ has finite $p$-th moment for some fixed $p>2$ and for all $\mu \in \Wc_p(M)$ with respect to the $\|~\|_{W^{2,\infty}}$--norm. Therefore, we have to make a stronger assumption than before, namely $p > 2$.

\begin{theorem}\label{th:exfirst}
Let $T>0$. Let $V$ be a uniformly continuous probability vector--field on $\Wc_p(M)$ for $p > 2$. Assume that $V$ has uniformly bounded $p$--th moments, e.g., there exists a constant $B>0$ such that for all $\mu \in \Wc_p(M)$, $\Mc_p(V[\mu]) < B$. 
\begin{enumerate}
\item Assume furthermore that $(M,g)$ is of bounded geometry and that there exists $R>0$ such that for all $\mu \in \Wc_p(M)$, $V[\mu] \in \Wc_{p+2}(M)$ and $\Mc_{p+2}(V[\mu]) \leq R^{p+2}$. Then, (\ref{eq:MDEd}) has a $\frac{1}{p}$--Hölder--continuous solution in $\Wc_p(M)$ on $[0,T]$. 
\item Assume only that there exists $q < p$ such that $V$ is continuous for the $\Wc_q$--distance. Then, (\ref{eq:MDEd}) has a continuous solution in $\Wc_p(M)$ on $[0,T]$. 

\end{enumerate}

\end{theorem}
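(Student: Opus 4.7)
The plan is to realize a solution as a limit of the Average Flow Approximation Series sketched in Section~\ref{subs:motiv}. For each $n \in \Nz^*$ I set $\ta = T/n$, define $\mu^\ta(k\ta) \coloneqq (e^V_\ta \circ f^V_\ta)^k \mu_0$ for $k = 0,\dots,n$, and interpolate between consecutive grid points by the $\Wc_1$--geodesics provided by Lemma~\ref{le:intergeodesic}, so that $\mu^\ta \in C([0,T]; \Wc_p(M))$. The goal is then to show that $(\mu^\ta)_\ta$ is relatively compact in $C([0,T];\Wc_p(M))$ for the $\Wc_p^\infty$--distance, that every cluster point satisfies~(\ref{eq:MDEd}), and finally to read off the H\"older regularity.

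First I would establish uniform--in--$\ta$ moment bounds on the discrete trajectories. The expectational step moves each point by at most $\ta \|\ol{V}[\mu]\|_\infty \leq 3\ta\, \Mc_1(V[\mu]) \leq 3\ta B^{1/p}$ (Lemma~\ref{le:bary} together with Jensen's inequality), and the dispersion step $f^V_\ta$ moves each point by at most $\sqrt\ta$ times the random pushforward field. A discrete Gr\"onwall argument then yields $\Mc_p(\mu^\ta(t)) \leq C_T$ uniformly in $\ta$ and $t \in [0,T]$, with $C_T$ depending only on $B$, $p$, $T$ and $\mu_0$.

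Next I would prove equicontinuity by following the argument of Proposition~\ref{pr:W1Holder}: I test $\mu^\ta((k{+}1)\ta) - \mu^\ta(k\ta)$ against $\ph \in \Cc_c^\infty(M)$ and Taylor--expand the two compositions $\ph \circ \rh^{X - \ol{V}[\mu_k]}_{\sqrt\ta}$ and $\ph \circ \rh^{\ol{V}[\mu_k]}_\ta$. The expectational step contributes $\ta \int \Lc_{\ol{V}[\mu_k]}\ph\,\der\mu_k$ with a remainder of order $\ta^2\|\ph\|_{C^2}$, and the symmetric dispersion step contributes $\ta \int\!\!\int \tfrac12 \Lc^2_{X - \ol{V}[\mu_k]} \ph\, \der V[\mu_k]\, \der\mu_k$ with a remainder of order $\ta^{3/2}\|\ph\|_{C^3}$. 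Summing over the $n = T/\ta$ steps gives a total error of order $\sqrt\ta$, and Corollary~\ref{co:smoothapprox} combined with the Monge--Kantorovich duality (Theorem~\ref{th:kantorovichduality}) upgrades the pointwise--in--$\ph$ control into the metric estimate $\Wc_1(\mu^\ta(t),\mu^\ta(s)) \leq C(\sqrt{|t-s|} + \sqrt\ta)$. In case~1 the bounded--geometry and uniform support hypotheses confine the trajectories to a common compact set, so this transfers directly to $\Wc_p$; in case~2 the transfer uses Lemma~\ref{le:q-tight} to convert the $\Wc_q$--continuity of $V$ into a $\Wc_p$--modulus of continuity. Combined with Corollary~\ref{co:compacttight}, this gives relative compactness in $C([0,T];\Wc_p(M))$, so an Arzel\`a--Ascoli argument produces $\ta_m \to 0$ and $\mu \in C([0,T];\Wc_p(M))$ with $\mu^{\ta_m} \to \mu$ uniformly. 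Passing to the limit in the telescoped identity -- using uniform continuity of $V$, which by Lemma~\ref{le:bary} transfers to $\Cc^1$--uniform continuity of $\mu \mapsto \ol{V}[\mu]$, together with uniform $\Wc_p$--convergence of the $\mu^{\ta_m}$ -- one sees that $\mu$ satisfies~(\ref{eq:MDEd}). The H\"older regularity claimed in case~1 then follows from Proposition~\ref{pr:WpHolder}.

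\textbf{Main obstacle.} The delicate step is the Taylor--expansion estimate above: the $\sqrt\ta$--scaling in $f^V_\ta$ means a naive bound on the remainder is only of order $\ta$ per step and would sum to a constant over $T/\ta$ steps. The extra $\sqrt\ta$ is gained by exploiting the symmetry that emerges from averaging $X - \ol{V}[\mu_k]$ against $V[\mu_k]$, which kills the first--order contribution and leaves the second--order (elliptic) term together with a cubic remainder. Controlling this cubic remainder uniformly in $\ta$ is precisely where the strict inequality $p > 2$ (as opposed to $p \geq 2$) becomes essential, through a H\"older's inequality argument applied to the uniform $p$--th moment bound on $V[\mu]$.
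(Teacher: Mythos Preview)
Your overall strategy---build the AFAS, Taylor--expand, extract a compact limit, pass to the limit in the weak equation---is exactly the paper's route (Theorem~\ref{th:gencentlim}), and your remark about why $p>2$ is needed for the cubic remainder is correct. However, there is a genuine gap in the compactness step.

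You write that in case~1 ``the bounded--geometry and uniform support hypotheses confine the trajectories to a common compact set, so this transfers directly to $\Wc_p$.'' This is false: the support bound is on $V[\mu]$ in $\Xc^{2,\infty}(M)$, i.e.\ the random vector fields have $W^{2,\infty}$--norm at most $R$, but their flows still push mass around $M$, and the measures $\mu^\ta(t)$ are not confined to any fixed compact set (over $n=T/\ta$ steps the dispersion alone can displace a point by order $nR\sqrt\ta = R\sqrt{nT}\to\infty$). What you do obtain is a uniform bound on $\Mc_p(\mu^\ta(t))$, but by Lemma~\ref{le:q-tight} this only gives tightness in the $q$--th moment for $q<p$, hence via Corollary~\ref{co:compacttight} relative compactness in $\Wc_q(M)$, \emph{not} in $\Wc_p(M)$. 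Your $\Wc_1$--equicontinuity plus uniform $p$--th moments therefore do not yield $\Wc_p$--equicontinuity, and Arzel\`a--Ascoli in $C([0,T];\Wc_p(M))$ is not justified.

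The paper closes this gap in two different ways. In case~1 it proves a \emph{direct} $\Wc_p$--H\"older estimate for the AFAS (Corollary~\ref{co:pHolder}, which rests on the sharper transport--plan computations of Lemma~\ref{le:fflowop2}); this gives uniform $\tfrac1p$--H\"older continuity in $\Wc_p$, and then Lemma~\ref{le:momentpq} is used to upgrade the $\Wc_q$--limit to a genuine $\Wc_p$--limit by controlling $|\Mc_p - \Mc_q|$ along the scheme. In case~2 the paper instead runs a more delicate weak--$*$ diagonal extraction (Steps~3--5 of the proof of Theorem~\ref{th:gencentlim}), proving by induction on the moment order that all $k$--th moments up to $p$ stay uniformly bounded and continuous. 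Finally, your appeal to Proposition~\ref{pr:WpHolder} for the $\tfrac1p$--H\"older regularity in case~1 is misplaced: that proposition gives only uniform continuity in $\Wc_p$ (and H\"older in $\Wc_q$ for $q<p$); the $\tfrac1p$--exponent in $\Wc_p$ comes from Corollary~\ref{co:pHolder} and is inherited by the limit, not recovered a posteriori.
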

 
We will prove Theorem~\ref{th:exfirst} as a corollary of the stronger Theorem~\ref{th:gencentlim}, which is in some sense a generalisation of the well-known Central Limit Theorem. It affirms the existence of solutions to (\ref{eq:MDEd}) in a constructive way. 

\subsubsection{Average Flow Approximation Series (AFAS)}

An explicit solution of (\ref{eq:MDEd}) is obtained as a limit of an explicit Euler--scheme---which we will call Average Flow Approximation Series (AFAS).   

\begin{defi}[Average flow approximation series]\label{de:AFAS}
Let $T>0$, $p \geq 2$, $\mu_0 \in \Wc_p(M)$. Let $\de>0$ and $\Pf(\de)$ be a partition of $[0,T]$ with maximal step--length $\pf(\Pf(\de)) \leq \de$. Let $N \coloneqq |\Pf(\de)|$ and index the elements of $\Pf(\de)$ in increasing order. Define
$$
\mu^{\Pf(\de)}(0) = \mu_0,
$$
and for all $l \in \llbracket 0,N-1 \rrbracket$,
$$
\mu^{\Pf(\de)}(x_{l+1}) = (e^V_{x_{l+1} - x_l}\circ f^V_{x_{l+1} - x_l})(\mu^{\Pf(\de)}(x_l)).
$$
Furthermore, define $\mu^{\Pf(\de)}(t)$ by 
$$
\mu^{\Pf(\de)}(t) = f^V_{2(t - x_l)}(\mu^{\Pf(\de)}(x_l)),
$$
for all $t \in \bigl[x_l, \frac{x_{l+1} + x_l}{2}\bigr]$ and 
$$
\mu^{\Pf(\de)}(t) = \biggl(e^V_{2\bigl(t - \frac{x_{l+1} + x_l}{2}\bigr)}\circ f^V_{x_{l+1} - x_l}\biggr)\bigl(\mu^{\Pf(\de)}(x_l)\bigr),
$$
for all $t \in \bigl[\frac{x_{l+1} + x_l}{2}, x_{l+1}\bigr]$.
\end{defi}
As proposed in Section~\ref{subs:motiv}, we call a solution of (\ref{eq:MDEd}) smooth if it is a limit of AFAS.
\begin{defi}[Smooth solution]\label{de:smoothsol}
 Let $p\geq 2$ and $T>0$. A narrowly continuous curve $\nu: [0,T] \to \Wc_p(M)$ is called a \emph{smooth solution} of (\ref{eq:MDEd}) if it solves (\ref{eq:MDEd}) and there exists a sequence $(\Pf_n)_{n\in\Nz}$ of partitions of $[0,T]$ with maximal step--length $\pf(\Pf_n) \to 0$ such that for all $n\in\Nz$, $\mu^{\Pf_n}$ converges pointwise narrowly to $\nu$.
\end{defi}

\subsubsection{Averaged flow operators---Hölder--regularity in time}

First we show that AFAS are indeed curves in $\Wc_p(M)$.
\begin{lem}\label{le:eflowop}
Let $T > 0$. For all $t,s \in [0,T]$, $e^V_t(\mu) \in \Wc_p(M)$, and
\begin{equation}
\Wc_p(e^V_s(\mu), e^V_t(\mu)) \leq |t-s| \Mc_1(V[\mu]).
\end{equation}
\end{lem}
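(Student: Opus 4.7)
The plan is to observe that this lemma is essentially a notational reformulation of part of Lemma~\ref{le:bary}, and then to fill in the pushforward-coupling argument. Unfolding the definition of $e^V_t$ from Definition~\ref{de:avflowop}, we have $e^V_t(\mu) = (\rh^{\ol{V}[\mu]}_t)_* \mu$ and $e^V_s(\mu) = (\rh^{\ol{V}[\mu]}_s)_* \mu$; crucially both pushforwards are driven by the \emph{same} deterministic vector field $\ol{V}[\mu]$, so the flow times commute and the estimate reduces to a pointwise one along characteristics.

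First I would invoke Lemma~\ref{le:bary}, which guarantees $\ol{V}[\mu] \in W^{2,\infty}(M)$ with
\begin{equation*}
\|\ol{V}[\mu]\|_\infty \leq \|\ol{V}[\mu]\|_{W^{2,\infty}(M)} \leq 3\Mc_1(V[\mu]),
\end{equation*}
so that $\rh^{\ol{V}[\mu]}_t$ is a well-defined global diffeomorphism for every $t \in [0,T]$. To see that $e^V_t(\mu) \in \Wc_p(M)$, I would fix a reference point $x_0 \in M$ and use the bound $d(\rh^{\ol{V}[\mu]}_t(x), x_0) \leq d(x,x_0) + t\|\ol{V}[\mu]\|_\infty$ to estimate $\int_M d(y,x_0)^p \der e^V_t(\mu)(y)$ by $\int_M (d(x,x_0)+t\Mc_1(V[\mu]))^p \der \mu(x)$, which is finite since $\mu\in\Wc_p(M)$.

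For the Lipschitz bound, the natural candidate coupling between $e^V_s(\mu)$ and $e^V_t(\mu)$ is $(\rh^{\ol{V}[\mu]}_s, \rh^{\ol{V}[\mu]}_t)_*\mu \in \Ga(e^V_s(\mu), e^V_t(\mu))$. This yields
\begin{equation*}
\Wc_p^p(e^V_s(\mu), e^V_t(\mu)) \leq \int_M d\bigl(\rh^{\ol{V}[\mu]}_s(x), \rh^{\ol{V}[\mu]}_t(x)\bigr)^p \der\mu(x),
\end{equation*}
and bounding the integrand by $|t-s|^p\|\ol{V}[\mu]\|_\infty^p \leq |t-s|^p \Mc_1(V[\mu])^p$ along flow lines of the time-independent field $\ol{V}[\mu]$ gives the claim after taking $p$-th roots.

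There is no real obstacle here: the only subtlety is to remember that the vector field driving the flow is $\ol{V}[\mu]$ (frozen at $\mu$), not $\ol{V}[\nu]$ for varying $\nu$, so that flow concatenation along characteristics is immediate. Once that is noticed, the argument is entirely a repackaging of estimates already carried out in the proof of Lemma~\ref{le:bary}.
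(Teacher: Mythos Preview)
Your proposal is correct and takes essentially the same approach as the paper, which simply records the lemma as an immediate corollary of Lemma~\ref{le:bary}; you have correctly identified this and spelled out the pushforward-coupling argument that underlies that lemma's proof. One cosmetic remark: you first cite the bound $\|\ol{V}[\mu]\|_\infty \leq 3\Mc_1(V[\mu])$ but then use the sharper $\|\ol{V}[\mu]\|_\infty \leq \Mc_1(V[\mu])$ in the final step---the latter is indeed what is actually shown in the proof of Lemma~\ref{le:bary} (the factor $3$ only enters for the full $W^{2,\infty}$ norm), so the conclusion stands, but you may want to cite that directly.
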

\begin{proof}
This is an immediate corollary of Lemma~\ref{le:bary}.
\end{proof}

\begin{lem}\label{le:fflowop}
For all $\mu \in \Wc_p(M)$ and all $t,s\in[0,T]$, $f^V_t(\mu) \in \Wc_p(M)$. In particular, for all $N$, $\mu^N(t) \in \Wc_p(M)$. Furthermore, we have
\begin{equation}
\Wc_p(f^V_s(\mu),f^V_t(\mu)) \leq \sqrt{|t-s|}\bigl(\|\ol{V}[\mu]\|_{\infty} + \Mc_p(V[\mu])^{\frac{1}{p}}\bigr).
\end{equation}
\end{lem}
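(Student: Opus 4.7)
My plan is to construct an explicit coupling between $f^V_s(\mu)$ and $f^V_t(\mu)$ by sending each mass point along the same integral curve associated to each realization of the random vector field. Fix $s,t\in[0,T]$ and, since $\ol V[\mu]\in W^{2,\infty}(M)$ by Lemma~\ref{le:bary} and $(M,g)$ is complete, for every $X\in\Xc^{2,\infty}(M)$ the vector field $X-\ol V[\mu]$ is bounded and $W^{2,\infty}$, so $\rh^{X-\ol V[\mu]}_u$ exists globally and depends jointly measurably on $(u,x,X)$ by standard ODE continuity. Define
\[
\Psi:M\times\Xc^{2,\infty}(M)\to M\times M,\qquad (x,X)\mapsto\bigl(\rh^{X-\ol V[\mu]}_{\sqrt s}(x),\rh^{X-\ol V[\mu]}_{\sqrt t}(x)\bigr),
\]
and set $\ga:=\Psi_*(\mu\otimes V[\mu])$. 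Using Fubini, the first marginal of $\ga$ is $\int(\rh^{X-\ol V[\mu]}_{\sqrt s})_*\mu\,\der V[\mu](X)=f^V_s(\mu)$ and the second is $f^V_t(\mu)$; in particular, applying the same computation to the constant function $1$ confirms that $f^V_t(\mu)$ is a probability measure.

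Next I would bound the transport cost of $\ga$. For every $X$ and $x$, the curve $u\mapsto\rh^{X-\ol V[\mu]}_u(x)$ has speed bounded by $\|X-\ol V[\mu]\|_\infty$, hence
\[
d\bigl(\rh^{X-\ol V[\mu]}_{\sqrt s}(x),\rh^{X-\ol V[\mu]}_{\sqrt t}(x)\bigr)\le |\sqrt t-\sqrt s|\,\|X-\ol V[\mu]\|_\infty\le \sqrt{|t-s|}\,\|X-\ol V[\mu]\|_\infty,
\]
where the last step uses the elementary inequality $|\sqrt t-\sqrt s|\le\sqrt{|t-s|}$. Integrating the $p$-th power against $\ga$ gives
\[
\Wc_p(f^V_s(\mu),f^V_t(\mu))^p\le |t-s|^{p/2}\int_{\Xc^{2,\infty}(M)}\|X-\ol V[\mu]\|_\infty^p\,\der V[\mu](X).
\]

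To conclude the stated estimate, I would apply Minkowski's inequality in $L^p(V[\mu])$ to split the integrand:
\[
\Bigl(\int\|X-\ol V[\mu]\|_\infty^p\,\der V[\mu](X)\Bigr)^{1/p}\le \|\ol V[\mu]\|_\infty+\Bigl(\int\|X\|_\infty^p\,\der V[\mu](X)\Bigr)^{1/p},
\]
and bound the right-hand integral by $\Mc_p(V[\mu])^{1/p}$ using $\|X\|_\infty\le\|X\|_{W^{2,\infty}}$. Taking $p$-th roots yields the announced Hölder-$\tfrac12$ bound. The membership $f^V_t(\mu)\in\Wc_p(M)$ then follows from the special case $s=0$, since $f^V_0(\mu)=\mu\in\Wc_p(M)$; combining this with Lemma~\ref{le:eflowop} by induction over the nodes of a partition gives $\mu^{\Pf(\de)}(t)\in\Wc_p(M)$.

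I expect the only genuinely delicate point to be the joint Borel measurability of $(x,X)\mapsto\rh^{X-\ol V[\mu]}_u(x)$ required to apply Fubini on the product $\mu\otimes V[\mu]$; once this is established (invoking continuity of flows with respect to $W^{2,\infty}$ perturbations of the generating field on a complete manifold, applied to a separable subspace of $\Xc^{2,\infty}(M)$ in which $V[\mu]$ is supported), the remainder reduces to the length-of-integral-curve estimate and Minkowski, both routine.
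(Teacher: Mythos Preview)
Your proposal is correct and follows essentially the same route as the paper: both construct the natural coupling that moves mass along the flow of $X-\ol V[\mu]$ for each realization $X$, bound the path length by $|\sqrt t-\sqrt s|\,\|X-\ol V[\mu]\|_\infty\le\sqrt{|t-s|}\,\|X-\ol V[\mu]\|_\infty$, and finish with Minkowski. The paper writes the coupling as $\int_{\Xc(M)}(\rh^{\ol X[\mu]}_{\sqrt s},\rh^{\ol X[\mu]}_{\sqrt t})_*\mu\,\der V[\mu](X)$ and first treats the case $s=0$ before asserting the general case ``analogously'', whereas you package it as $\Psi_*(\mu\otimes V[\mu])$ and handle arbitrary $s,t$ directly; these are the same object, and your attention to the joint measurability of the flow map is a point the paper leaves implicit.
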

\begin{proof}
Let $X \in \Xct(M)$ and $\mu \in \Wc_p(M)$. To simplify notation, we shall write $\ol{X}[\mu] \coloneqq X - \ol{V}[\mu]$, and we shall denote the transport plan $(\id, \rh^X_{\sqrt{t}})_* \mu$ just by $\rh^X_{\sqrt{t}}$. Then, $\int_{\Xct(M)} \rh^{\ol{X}[\mu]}_{\sqrt{t}} \der V[\mu](X)$ is a transport plan from $\mu$ to $f^V_t(\mu)$. 

Now, we use this particular transport plan to give an upper bound of the $p$--Wasserstein distance. First, we write down the definitions formally without justifying integrability and assuming that we can use the
Theorem of Fubini--Tonelli. We have
\begin{equation}
\begin{aligned}
\Wc_p(\mu, f^V_t(\mu))^p &\leq \int_{M \times M} d(x,y)^p \der\biggl( \int_{\Xct(M)} \rh^{\ol{X}[\mu]}_{\sqrt{t}} \der V[\mu](X) \biggr)(x,y) \\
&= \int_0^{\infty} \biggl( \int_{\Xct(M)} \rh^{\ol{X}[\mu]}_{\sqrt{t}} \der V[\mu](X) \biggr) (\{ d(x,y)^p > \la \}) \der \la \\
&= \int_0^{\infty} \int_{\Xct(M)} \rh^{\ol{X}[\mu]}_{\sqrt{t}} (\{ d(x,y)^p > \la \}) \der V[\mu](X)  \der \la \\
&= \int_{\Xct(M)} \int_0^{\infty}  \rh^{\ol{X}[\mu]}_{\sqrt{t}} (\{ d(x,y)^p > \la \}) \der \la \der V[\mu](X)\\
&= \int_{\Xct(M)} \int_{M}  d(x, \rh^{\ol{X}[\mu]}_{\sqrt{t}}(x))^p \der \mu(x) \der V[\mu](X)\\
&= \int_{M} \int_{\Xct(M)}  d(x, \rh^{\ol{X}[\mu]}_{\sqrt{t}}(x))^p \der V[\mu](X) \der \mu(x) .
\end{aligned}
\end{equation}
Now, since $V[\mu]$ has support in $W^{1,\infty}(M)$,  for all $x \in M$, 
\begin{equation}
d\bigl(x, \rh^{\ol{X}[\mu]}_{\sqrt{t}}(x)\bigr) \leq \int_0 ^{\sqrt{t}} | \ol{X}[\mu](\rh^X_t(x))|_g \der t \leq \sqrt{t}(\|\ol{V}[\mu]\|_{\infty} + \|X\|_{\infty}).
\end{equation}
Hence, since $V[\mu]$ has finite $p$--th moment, we have 
\begin{equation}
\begin{aligned}
\int_{\Xct(M)}  &d(x, \rh^X_{\sqrt{t}}(x))^p \der V[\mu](X) \\&\leq (\sqrt{t})^p \int_{\Xct(M)} (\|\ol{V}[\mu]\|_{\infty} + \|X\|_{\infty})^p \der V[\mu](X) < \infty,
\end{aligned}
\end{equation}
and Fubini--Tonelli yields that all the steps of the computation above are justified. Finally,
\begin{equation}
\Wc_p(\mu, f^V_t(\mu)) \leq \sqrt{t}\biggl(\int_{\Xct(M)} (\|\ol{V}[\mu]\|_{\infty} + \|X\|_{\infty})^p \der V[\mu](X)\biggr)^{\frac{1}{p}}.
\end{equation}
Analogously, we prove that for all $t,s \in [0,T]$,
\begin{equation} 
\begin{aligned}
\Wc_p(f^V_s(\mu), f^V_t(\mu)) &\leq |\sqrt{t} - \sqrt{s}|\biggl(\int_{\Xct(M)} (\|\ol{V}[\mu]\|_{\infty} + \|X\|_{\infty})^p \der V[\mu](X)\biggr)^{\frac{1}{p}}\\ 
&\leq \sqrt{|t-s|}\biggl(\int_{\Xct(M)} (\|\ol{V}[\mu]\|_{\infty} + \|X\|_{\infty})^p \der V[\mu](X)\biggr)^{\frac{1}{p}} \\
&\leq \sqrt{|t-s|}\bigl(\|\ol{V}[\mu]\|_{\infty} + \Mc_p(V[\mu])^{\frac{1}{p}}\bigr) \\
\end{aligned}
\end{equation}

Now using Lemma~\ref{le:eflowop} it follows by immediate induction that the $\mu^N$ are all curves in $\Wc_p(M)$.
\end{proof}

\subsubsection{Averaged flow operators---Lipschitz measure--dependence}

To complete the preparations for our first main result, we enlarge our repertoire of estimates concerning AFAS by some sharper and more technical results than Lemma~\ref{le:fflowop}. These will allow us to control the speed of divergence of two AFAS with different initial values and different step lengths. However, 
stronger assumptions on the vector--field $V$ and the manifold $(M,g)$ are required for those results.
\begin{lem}\label{le:fflowop2}
	Assume $(M,g) = (\Rz^n, g^{eucl})$. Let $V$ be a probability vector--field. Assume that there exists $R > 0$ such that for all $\mu \in \Wc_p(M)$, $V[\mu] \in \Wc_{p+2}(\Xc^{2,\infty}(M))$ and $\Mc_{p+2}(V[\mu]) \leq R^{p+2}$.  Let $t > 0$ and $\mu, \nu, v_1, v_2 \in \Wc_p(M)$. Then, if $t\leq1$, there exists a constant $\Kc > 0$ depending only on $p$, such that
	\begin{equation}\label{eq:estfrandVp}
		\begin{aligned}
			&\Wc_p(f^{V[v_1]}_{t^2}(\mu), f^{V[v_2]}_{t^2}(\nu))^p\\
			&\qquad \leq \Wc_p(\mu, \nu)^p + 2^{p-2}p\biggl(9(p-1)\Wc_p(\mu,\nu)^{p-2}\Wc_p(V[v_1],V[v_2])^2\\
			&\qquad + pR^2\Wc_p(\mu,\nu)^p + 6R\Wc_p(\mu,\nu)^{p-1}\Wc_p(V[v_1],V[v_2]) \biggr)t^2\\
			&\qquad + \Kc\max(1, R^{p+2})\biggl( t^{p+2} + \Wc_p(V[v_1],V[v_2])^2 t^p + \Wc_p(V[v_1],V[v_2]) t^{p+1}\\ &\qquad + \Wc_p(\mu, \nu)^{p - 1}t^3 + \Wc_p(\mu, \nu)^{p-2}t^4 + \Wc_p(\mu, \nu)^2t^p + \Wc_p(\mu, \nu)t^{p+1}\\
			&\qquad + \Wc_p(\mu,\nu)^{p-2}\Wc_p(V[v_1],V[v_2])t^3 + \Wc_p(\mu, \nu)\Wc_p(V[v_1],V[v_2])t^p\biggr).
		\end{aligned}
	\end{equation}
	and,
	\begin{equation}\label{eq:esterandVp}
		\begin{aligned}
			&\Wc_p(e^{V[v_1]}_t(\mu), e^{V[v_2]}_t(\nu))^p\\
			&\qquad \leq \Wc_p(\mu, \nu)^p + p2^{p-2}\biggl(R\Wc_p(\mu,\nu)^p + 3\Wc_p(\mu,\nu)^{p-1}\Wc_p(V[v_1],V[v_2])\biggr)t\\
			&\qquad + \Kc\max(1, R^{p+1})\biggl(t^{p+1} + \Wc_p(V[v_1],V[v_2])t^p + \Wc_p(\mu, \nu)^{p-2}t^3\\
			&\qquad + \Wc_p(\mu, \nu)^{p-1}t^2 + \Wc_p(\mu,\nu)^2t^{p-1} + \Wc_p(\mu,\nu)t^p\\ &\qquad + \Wc_p(\mu, \nu)\Wc_p(V[v_1],V[v_2])t^{p-1} + \Wc_p(\mu,\nu)^{p-2}\Wc_p(V[v_1],V[v_2])t^2\biggr).
		\end{aligned}
	\end{equation}
	Assume furthermore that $V$ is $L$--Lipschitz with respect to the $\Wc_p$--distance for some $L>0$. In particular, if $v_1 = \mu$ and $v_2 = \nu$, then
	\begin{equation}\label{eq:estfrightVp}
		\begin{aligned}
			&\Wc_p(f^{V}_{t^2}(\mu), f^{V}_{t^2}(\nu))^p\\
			&\qquad \leq \Wc_p(\mu, \nu)^p + 2^{p-2}p\biggl(9(p-1)L^2 + pR^2 + 6RL \biggr)\Wc_p(\mu,\nu)^pt^2\\
			&\qquad + \Kc\max(1, R^{p+2})\biggl( 1  + \Wc_p(\mu, \nu)^{p - 1} + \Wc_p(\mu, \nu)^{p-2} \\
			&\qquad+ (L^2 + L + 1)\Wc_p(\mu, \nu)^2 + (1 + L)\Wc_p(\mu, \nu) \biggr)t^{\min(3,p)}.
		\end{aligned}
	\end{equation}
	and,
	\begin{equation}\label{eq:esterightVp}
		\begin{aligned}
			&\Wc_p(e^{V}_t(\mu), e^{V}_t(\nu))^p\\
			&\qquad \leq \Wc_p(\mu, \nu)^p + p2^{p-2}\bigl(R + 3L\bigr)\Wc_p(\mu,\nu)^{p}t\\
			&\qquad + \Kc\max(1, R^{p+1})\biggl(1 + (1 + L)\Wc_p(\mu,\nu) + \Wc_p(\mu, \nu)^{p-2}\\
			&\qquad + (1+L)\Wc_p(\mu, \nu)^{p-1} + (1 + L)\Wc_p(\mu,\nu)^2 \biggr)t^{\min(p-1, 2)}.
		\end{aligned}
	\end{equation}
	More precisely, for $p=2$,
	\begin{equation}\label{eq:estfrandV2}
		\begin{aligned}
			&\Wc_2(f^{V[v_1]}_{t^2}(\mu), f^{V[v_2]}_{t^2}(\nu))^2 \leq \biggl(\Wc_2(\mu, \nu)^2\\
			&\qquad + 2t^2\biggl(9\Wc_2(V[v_1], V[v_2])^2 + 3R\Wc_2(\mu,\nu) \Wc_2(V[v_1], V[v_2])\biggr)\\
			&\qquad + 4R^2\biggl(R\Wc_2(\mu, \nu) + 3\Wc_2(V[v_1], V[v_2])\biggr)\frac{t^3}{6} + \frac{2R^4}{3}t^4 \biggr)e^{8R^2t^2}.
		\end{aligned}
	\end{equation}
	and, in particular, if $\mu = v_1$ and $\nu = v_2$, then,
	\begin{equation}\label{eq:esterandV2}
		\begin{aligned}
			&\Wc_2(f^{V}_{t^2}(\mu), f^{V}_{t^2}(\nu))^2 \leq \biggl(\Wc_2(\mu, \nu)^2  + 2(9L^2 + 3LR)\Wc_2(\mu, \nu)^2t^2\\
			&\qquad + 4R^2(R+3L)\Wc_2(\mu, \nu)\frac{t^3}{6} + \frac{2R^4}{3}t^4 \biggr)e^{8R^2t^2}.
		\end{aligned}
	\end{equation}
	Furthermore,
	\begin{equation}\label{eq:estfrightV2}
		\begin{aligned}
			&\Wc_2(e^{V[v_1]}_t(\mu), e^{V[v_2]}_t(\nu))^2 \leq \biggl(\Wc_2(\mu, \nu)^2 + 6\Wc_2(\mu,\nu) \Wc_2(V[v_1], V[v_2])t\\
			&\qquad + 6R\Wc_2(V[v_1], V[v_2])t^2\biggr)e^{2Rt},\\
		\end{aligned}
	\end{equation}
	and, in particular, if $\mu = v_1$ and $\nu = v_2$, then,
	\begin{equation}\label{eq:esterightV2}
		\begin{aligned}
			&\Wc_2(e^{V}_t(\mu), e^{V}_t(\nu))^2 \leq \biggl(\Wc_2(\mu, \nu)^2 + 6L\Wc_2(\mu,\nu)^2t + 6RL\Wc_2(\mu, \nu)t^2\biggr)e^{2Rt}.\\
		\end{aligned}
	\end{equation}
\end{lem}
\begin{proof}
	Using Theorem~\ref{th:existenceoptimaltransportplan}, let $\ga$ be an optimal transport plan from $\mu$ to $\nu$ and $\Ga$ be an optimal transport plan from $\tau^{-\ol{V}[v_1]}_*V[v_1]$ to $\tau^{-\ol{V}[v_2]}_*V[v_2]$. 
	First, note that,
	\begin{equation}
		\begin{aligned}
			&\Wc_p(\tau^{-\ol{V}[v_1]}_*V[v_1], \tau^{-\ol{V}[v_2]}_*V[v_2])\\ &\qquad \leq \Wc_p(V[v_1], V[v_2]) + \| \ol{V}[v_1]- \ol{V}[v_2] \|_{W^{2,\infty}(M)} \\
			&\qquad \leq 3 \Wc_p(V[v_1], V[v_2]),
		\end{aligned}
	\end{equation}
	by Lemma~\ref{le:bary}.
	For any two vector--fields $X,Y \in \Xc^{2,\infty}(M)$, denote $\rho^{X,Y}_t \coloneqq (\rho^{X}_t,\rho^{Y}_t)$. Then, 
	one readily shows that $\bigl(\rho^{X,Y}_t\bigr)_* \ga$ is a transport plan between $\bigl(\rho^{X}_t\bigr)_* \mu$ and $\bigl(\rho^{Y}_t\bigr)_* \nu$.

	Define $\int_{\Xc^{2,\infty}(M) \times \Xc^{2,\infty}(M)} \bigl(\rho^{X,Y}_t\bigr)_* \ga \der\Ga(X,Y)$ by 
	\begin{equation}\label{eq:gentransplan}
		\begin{aligned}
			&\int_{M \times M} \ph(x,y) \der\biggl(\int_{\Xc^{2,\infty}(M) \times \Xc^{2,\infty}(M)} \bigl(\rho^{X,Y}_t\bigr)_* \ga \der\Ga(X,Y)\biggr)\\
			&\qquad = \int_{\Xc^{2,\infty}(M) \times \Xc^{2,\infty}(M)} \biggl(\int_{M \times M} \ph(x,y) \der\bigl(\rho^{X,Y}_t\bigr)_* \ga(x,y)\biggr) \der\Ga(X,Y)\\
			&\qquad = \int_{\Xc^{2,\infty}(M) \times \Xc^{2,\infty}(M)} \biggl(\int_{M \times M} \ph(\rho^X_t(x),\rho^Y_t(y)) \der \ga(x,y)\biggr) \der\Ga(X,Y),
		\end{aligned}
	\end{equation}
	for all $\ph \in \Cc_c(M)$. Then, $\int_{\Xc^{2,\infty}(M) \times \Xc^{2,\infty}(M)} \bigl(\rho^{X,Y}_t\bigr)_* \ga \der\Ga(X,Y)$ is a transport plan from
	$f^{V[v_1]}_{t^2}(\mu)$ to $f^{V[v_2]}_{t^2}(\nu)$.
	We obtain for all $X,Y\in \Xc^{2,\infty}(M)$, all $x,y \in M$ and all $s >0$,
	\begin{equation}
		\begin{aligned}
			\frac{\der}{\der t}\biggl(t \mapsto \rho^X_t(x)\biggr)(s) &= X(\rho^X_s(x)), \\
			\frac{\der^2}{\der t^2}\biggl(t \mapsto \rho^X_t(x)\biggr)(s) &= \der X(\rho^X_s(x))\bigl(X(\rho^X_s(x))\bigr),\\
			\frac{\der^3}{\der t^3}\biggl(t \mapsto \rho^X_t(x)\biggr)(s) &= \der^2 X(\rho^X_s(x))\bigl(X(\rho^X_s(x))\bigr)\bigl(X(\rho^X_s(x))\bigr)\\
			& + \der X(\rho^X_s(x))\biggl(\der X(\rho^X_s(x))\bigl(\der X(\rho^X_s(x))\bigr)\biggr)\\
		\end{aligned}
	\end{equation}
	for a.e.\ $s>0$. Thus,
	\begin{equation}\label{eq:derauxbigproof}
		\begin{aligned}
			&\frac{\der}{\der t}\biggl(t \mapsto \bigl\| \rho^X_t(x) - \rho^Y_t(y)\bigr\|^p\biggr)(s)\\
			&\quad = p \bigl\|\rho^X_s(x) - \rho^Y_s(y)\bigr\|^{p-2}\biggl<\rho^X_s(x) - \rho^Y_s(y),\frac{\der}{\der t}\biggl(t \mapsto \rho^X_t(x) - \rho^Y_t(y)\biggr)(s)\biggr>, \\
			&\frac{\der^2}{\der t^2}\biggl(t \mapsto \bigl\|\rho^X_t(x) - \rho^Y_t(y)\bigr\|^p\biggr)(s) = p(p-2) \bigl\|\rho^X_s(x) - \rho^Y_s(y)\bigr\|^{p-2}\\
                        &\qquad\qquad\qquad\cdot\biggl<\frac{\rho^X_s(x) - \rho^Y_s(y)}{\|\rho^X_s(x) - \rho^Y_s(y)\|},\frac{\der}{\der t}\biggl(t \mapsto \rho^X_t(x) - \rho^Y_t(y)\biggr)(s)\biggr>^2 \\
			&\quad + p \bigl\|\rho^X_s(x) - \rho^Y_s(y)\bigr\|^{p-2}\biggl\|\frac{\der}{\der t}\biggl(t \mapsto \rho^X_t(x) - \rho^Y_t(y)\biggr)(s)\biggr\|^2\\
			&\quad + p \bigl\|\rho^X_s(x) - \rho^Y_s(y)\bigr\|^{p-2}\biggl<\rho^X_s(x) - \rho^Y_s(y),\frac{\der^2}{\der t^2}\biggl(t \mapsto \rho^X_t(x) - \rho^Y_t(y)\biggr)(s)\biggr>.
		\end{aligned}
	\end{equation}
	This yields for $t > 0$
	\begin{equation}\label{eq:rhoest}
		\begin{aligned}
			&\bigl\|\rho^X_t(x) - \rho^Y_t(y)\bigr\|^p = \|x - y\|^p + p\|x-y\|^{p-2}\bigl<x - y, X(x) - Y(y)\bigr>t \\
			&\qquad + \int_0^t (t-s)\frac{\der^2}{\der t^2}\biggl(t \mapsto \bigl\|\rho^X_t(x) - \rho^Y_t(y)\bigr\|^p\biggr)(s)  \der s.
		\end{aligned}
	\end{equation}
	In particular, for $p=2$ we obtain
	\begin{equation}
		\begin{aligned}
			&\bigl\|\rho^X_t(x) - \rho^Y_t(y)\bigr\|^2 = \|x - y\|^2 + 2\bigl<x - y, X(x) - Y(y)\bigr>t \\
			&\qquad + 2\int_0^t (t-s)\biggl[\biggl\|\frac{\der}{\der t}\biggl(t \mapsto \rho^X_t(x) - \rho^Y_t(y)\biggr)(s) \biggr\|^2\\
			&\qquad + \biggl<\rho^X_s(x) - \rho^Y_s(y),\frac{\der^2}{\der t^2}\biggl(t \mapsto \rho^X_t(x) - \rho^Y_t(y)\biggr)(s)\biggr>\biggr]\der s.
		\end{aligned}
	\end{equation}
	First, note that, using Lemma~\ref{le:intlin}, for all $x,y \in M$
	\begin{equation}
		\begin{aligned}
			&\int_{\Xc^{2,\infty}(M) \times \Xc^{2,\infty}(M)} p\|x-y\|^{p-1}\bigl<x - y, X(x) - Y(y)\bigr> \der \Ga(X,Y)\\
			&\qquad = \int_{\Xc^{2,\infty}(M)} p\|x-y\|^{p-1}\bigl<x - y, X(x)\bigr> \der \tau^{-\ol{V}[\mu]}_*V[\mu](X)\\
			&\qquad - \int_{\Xc^{2,\infty}(M)} p\|x-y\|^{p-1}\bigl<x - y, Y(y)\bigr> \der \tau^{-\ol{V}[\nu]}_*V[\nu](Y) = 0.
		\end{aligned}
	\end{equation}
	Furthermore, for all $x,y \in M$ and all $X,Y \in \Xc^{2,\infty}(M)$, we have
	\begin{equation}
		\begin{aligned}
			&\| \der X(x)(X(x)) - \der Y(y)(Y(y))\| = \|\der X(x)(X(x)) - \der X(y)(X(y))\|\\
			&\quad\qquad + \| \der X(y)(X(y)) - \der Y(y)(Y(y))\|\\
			&\quad \leq 2\|X\|_{W^{2,\infty}(M)}^2 \|x-y\| + 2\bigl(\|X\|_{W^{2,\infty}(M)} + \|Y\|_{W^{2,\infty}(M)}\bigr)\|X - Y\|_{W^{2,\infty}(M)}.
		\end{aligned}
	\end{equation}
	Combining \eqref{eq:derauxbigproof} with the estimates
	\begin{equation}
		\begin{aligned}
			&\|\rho^X_s(x) - \rho^Y_s(y)\| \leq \|x-y\| + s\bigl(\|X\|_{W^{2,\infty}(M)} + \|Y\|_{W^{2,\infty}(M)}\bigr),\\
			&\biggl\|\frac{\der}{\der t}\biggl(t \mapsto \rho^X_t(x) - \rho^Y_t(y)\biggr)(s) \biggr\| \leq \|X\|_{W^{2,\infty}(M)}\|x -y\| + \| X-Y\|_{W^{2,\infty}(M)}\\ &\qquad + s\bigl(\|X\|_{W^{2,\infty}(M)}^2 + \|Y\|_{W^{2,\infty}(M)}^2\bigr),\\
			&\biggl\|\frac{\der^2}{\der t^2}\biggl(t \mapsto \rho^X_t(x) - \rho^Y_t(y)\biggr)(s) \biggr\| \leq 2\|X\|_{W^{2,\infty}(M)}^2\|x-y\|\\ &\qquad + 2\bigl(\|X\|_{W^{2,\infty}(M)} + \|Y\|_{W^{2,\infty}(M)}\bigr)\|X - Y\|_{W^{2,\infty}(M)}\\ &\qquad + 2s\bigl(\|X\|_{W^{2,\infty}(M)}^3 + \|Y\|_{W^{2,\infty}(M)}^3\bigr),
		\end{aligned}
	\end{equation}
	and subsequently using the inequality of Cauchy--Schwarz, we roughly estimate the integral term in (\ref{eq:rhoest}) to obtain the existence of $C>0$ such that, for $t < 1$,
	\begin{equation}\label{eq:intermest}
		\begin{aligned}
			&\int_{M \times M} \|x-y\|^p \der\biggl(\int_{\Xc^{2,\infty}(M) \times \Xc^{2,\infty}(M)} \bigl(\rho^{X,Y}_t\bigr)_* \ga \der\Ga(X,Y)\biggr)
			 \leq \Wc_p(\mu, \nu)^p\\ &\qquad+ 2^{p-1}\int_0^t (t-s)\int_{\bigl(\Xc^{2,\infty}(M)\bigr)^2 } \int_{M \times M} p(p-1)\|x-y\|^{p-2}\|X-Y\|_{W^{2,\infty}(M)}^2\\
			&\qquad + 2pR\|x-y\|^{p-1}\|X-Y\|_{W^{2,\infty}(M)} + p^2R^2\|x-y\|^p\\
			&\qquad + C\max(1, R^{p+2})\biggl(s^p + \|X-Y\|_{W^{2,\infty}(M)}^2s^{p-2} + \|X-Y\|_{W^{2,\infty}(M)}s^{p-1}\\
			&\qquad + \|x-y\|^{p-1}s + \|x-y\|^{p-2}s^2 + \|x-y\|^2s^{p-2}\\
			&\qquad + \|x-y\|s^{p-1} + \|x-y\|\|X-Y\|_{W^{2,\infty}(M)}s^{p-2}\\
			&\qquad + \|x-y\|^{p-2}\|X-Y\|_{W^{2,\infty}(M)}s\biggr) \der \ga(x,y) \der\Ga(X,Y) \der s.
		\end{aligned}
	\end{equation} 
	Now, for all $q<p$ and $\mu, \nu \in \Wc_p(M)$, by Jensen's inequality, we obtain for an optimal transport plan $g$ from $\mu$ to $\nu$ for the $p$--Wasserstein distance, 
	\begin{equation}
		\Wc_p(\mu,\nu)^p = \int_{M \times M} d(x,y)^p \der g(x,y) \geq  \biggl(\int_{M \times M} d(x,y)^q \der g(x,y)\biggr)^{\frac{p}{q}}.
	\end{equation}
	Hence, there exists a constant $\Kc>0$ such that
	\begin{equation}
		\begin{aligned}
			&\Wc_p(f^{V[v_1]}_{t^2}(\mu), f^{V[v_2]}_{t^2}(\nu))^p\\
			&\qquad \leq \Wc_p(\mu, \nu)^p + 2^{p-2}p\biggl(9(p-1)\Wc_p(\mu,\nu)^{p-2}\Wc_p(V[v_1],V[v_2])^2\\
			&\qquad + pR^2\Wc_p(\mu,\nu)^p + 6R\Wc_p(\mu,\nu)^{p-1}\Wc_p(V[v_1],V[v_2]) \biggr)t^2\\
			&\qquad + \Kc\max(1, R^{p+2})\biggl( t^{p+2} + \Wc_p(V[v_1],V[v_2])^2t^p  + \Wc_p(V[v_1],V[v_2])t^{p+1}\\ &\qquad + \Wc_p(\mu, \nu)^{p - 1}t^3 + \Wc_p(\mu, \nu)^{p-2}t^4 + \Wc_p(\mu, \nu)^2t^p + \Wc_p(\mu, \nu)t^{p+1} \\
			&\qquad + \Wc_p(\mu, \nu)\Wc_p(V[v_1],V[v_2])t^{p} + + \Wc_p(\mu, \nu)^{p-2}\Wc_p(V[v_1],V[v_2])t^3\biggr).
		\end{aligned}
	\end{equation}
	In particular, if $V$ is $L$--Lipschitz with respect to the $\Wc_p$--distance, 
	\begin{equation}
		\begin{aligned}
			&\Wc_p(f^{V}_{t^2}(\mu), f^{V}_{t^2}(\nu))^p\\
			&\qquad \leq \Wc_p(\mu, \nu)^p + 2^{p-2}p\biggl(9(p-1)L^2 + pR^2 + 6RL \biggr)\Wc_p(\mu,\nu)^pt^2\\
			&\qquad + \Kc\max(1, R^{p+2})\biggl( 1  + \Wc_p(\mu, \nu)^{p - 1} + \Wc_p(\mu, \nu)^{p-2} \\
			&\qquad+ (L^2 + L + 1)\Wc_p(\mu, \nu)^2 + (1 + L)\Wc_p(\mu, \nu) \biggr)t^{\min(3,p)}.
		\end{aligned}
	\end{equation}
	The estimates for $\Wc_p(e^{V[v_1]}_t(\mu), e^{V[v_2]}_t(\nu))^p$ follow analogously.
	Now, we work with $p=2$ and give a more precise estimate of the integral term in (\ref{eq:rhoest}). We get,
	\begin{equation}
		\begin{aligned}
			&\biggl<\rho^X_s(x) - \rho^Y_s(y),\frac{\der^2}{\der t^2}\biggl(t \mapsto \rho^X_t(x) - \rho^Y_t(y)\biggr)(s)\biggr> \\
			&\qquad = \biggl<\rho^X_s(x) - \rho^Y_s(y), \der X(\rho^X_s(x))\bigl(X(\rho^X_s(x))\bigr) - \der X(\rho^X_s(y))\bigl(X(\rho^X_s(y))\bigr)\biggr>\\
			&\qquad + \biggl<\rho^X_s(x) - \rho^Y_s(y), \der X(\rho^X_s(y))\bigl(X(\rho^X_s(y))\bigr) - \der Y(\rho^Y_s(y))\bigl(Y(\rho^Y_s(y))\bigr)\biggr>\\
			&\qquad = \biggl<\rho^X_s(x) - \rho^Y_s(y), \der X(\rho^X_s(x))\bigl(X(\rho^X_s(x))\bigr) - \der X(\rho^X_s(y))\bigl(X(\rho^X_s(y))\bigr)\biggr>\\
			&\qquad + \biggl< x - y + \int_0^s X\bigl(\rho^X_r(x)\bigr) - Y\bigl(\rho^Y_r(y)\bigr) \der r, \der X(y)\bigl(X(y)\bigr) - \der Y(y)\bigl(Y(y)\bigr)\\
			&\qquad  + \int_0^s \der^2 X(\rho^X_r(y))\bigl(X(\rho^X_r(y))\bigr)\bigl(X(\rho^X_r(y))\bigr)\\
			&\qquad + \der X(\rho^X_r(y))\biggl(\der X(\rho^X_r(y))\bigl(\der X(\rho^X_r(y))\bigr)\biggr) \\
			&\qquad - \der^2 Y(\rho^Y_r(y))\bigl(Y(\rho^Y_r(y))\bigr)\bigl(Y(\rho^Y_r(y))\bigr)\\
			&\qquad + \der Y(\rho^Y_r(y))\biggl(\der Y(\rho^Y_r(y))\bigl(\der Y(\rho^Y_r(y))\bigr)\biggr) \der r \biggr>.
		\end{aligned}
	\end{equation}
	Hence, using (\ref{eq:gentransplan}) with $d^p$, Fubini--Tonelli and the previous computations, we obtain
	\begin{equation*}
		\begin{aligned}
			& \int_{\Xc^{2,\infty}(M) \times \Xc^{2,\infty}(M)} \biggl(\int_{M \times M} \bigl\|\rho^X_t(x)- \rho^Y_t(y) \bigr\|^2 \der \ga(x,y)\biggr) \der\Ga(X,Y)\\
			& \leq \Wc_2(\mu, \nu)^2  + 2\int_0^t(t-s) \int_{\Xc^{2,\infty}(M) \times \Xc^{2,\infty}(M)} \int_{M \times M} 2\biggl(\|X-Y\|^2_{W^{2,\infty(M)}}\\
			&\qquad + 2\|X\|^2_{W^{2,\infty(M)}}\bigl\|\rho^X_s(x)- \rho^Y_s(y) \bigr\|^2\biggr) + 2R\|x-y\|\|X-Y\|_{W^{2,\infty}(M)}\\
			&\qquad  + 4R^2\|X-Y\|_{W^{2,\infty}(M)}s + 4R^3\|x-y\|s + 8R^4s^2 \der \ga(x,y)\der\Ga(X,Y) \der s\\
			&\leq \Wc_2(\mu, \nu)^2  + 2t^2\biggl(9\Wc_2(V[v_1], V[v_2])^2\\
			&\qquad + R\int_{M \times M} \|x -y\| \der \ga(x,y) \int_{\bigl(\Xc^{2,\infty}(M)\bigr)^2} \|X -Y\|_{W^{2,\infty}(M)}\der \Ga(X,Y)\biggr)\\
			&\qquad + 4R^2\biggl(R\int_{M \times M} \|x -y\| \der \ga(x,y)\\
			&\qquad + \int_{\Xc^{2,\infty}(M) \times \Xc^{2,\infty}(M)} \|X -Y\|_{W^{2,\infty}(M)}\der \Ga(X,Y)\biggr)\frac{t^3}{6} + \frac{2R^4}{3}t^4 \\
			&\qquad +  \int_0^t(t-s)\!\! \int_{\bigl(\Xc^{2,\infty}(M)\bigr)^2}\!\! \int_{M^2}\!\!  8 R^2\bigl\|\rho^X_s(x)- \rho^Y_s(y) \bigr\|^2\der \ga(x,y)\der\Ga(X,Y) \der s \\
			&\leq \Wc_2(\mu, \nu)^2  + 2t^2\biggl(9\Wc_2(V[v_1], V[v_2])^2 + 3R\Wc_2(\mu,\nu) \Wc_2(V[v_1], V[v_2])\biggr)\\
			&\qquad + 4R^2\biggl(R\Wc_2(\mu, \nu) + 3\Wc_2(V[v_1], V[v_2])\biggr)\frac{t^3}{6} + \frac{2R^4}{3}t^4 \\
			&\qquad +  \int_0^t(t-s)\!\!  \int_{\bigl(\Xc^{2,\infty}(M)\bigr)^2}\!\! \int_{M^2}\!\! 8 R^2\bigl\|\rho^X_s(x)- \rho^Y_s(y) \bigr\|^2\der \ga(x,y)\der\Ga(X,Y) \der s,\\
		\end{aligned}
	\end{equation*}
	where the last inequality follows from Jensen's inequality. Finally, it follows by Grönwall's inequality and the Lipschitz--property of $V$ that
	\begin{equation}
		\begin{aligned}
			&\Wc_2(f^{V[v_1]}_{t^2}(\mu), f^{V[v_2]}_{t^2}(\nu))^2 \leq \biggl(\Wc_2(\mu, \nu)^2  + 2t^2\biggl(9L^2\Wc_2(v_1, v_2)^2\\
			&\qquad + 3LR\Wc_2(\mu,\nu) \Wc_2(v_1, v_2)\biggr) + 4R^2\biggl(R\Wc_2(\mu, \nu) + 3L\Wc_2(v_1, v_2)\biggr)\frac{t^3}{6}\\
			&\qquad + \frac{2R^4}{3}t^4 \biggr)e^{8R^2t^2}.
		\end{aligned}
	\end{equation}
	The estimates for $\Wc_2(e^{V[v_1]}_t(\mu), e^{V[v_2]}_t(\nu))^2$ follow analogously.
\end{proof}
An analogon of Lemma~\ref{le:fflowop2} also holds on all Riemannian manifolds with bounded geometry. 
\begin{lem}\label{le:fflowopman}
	Let $(M,g)$ be a Riemannian manifold with bounded geometry. With the assumptions of Lemma~\ref{le:fflowop2}, the estimates 
	(\ref{eq:estfrandVp}), (\ref{eq:esterandVp}), (\ref{eq:estfrandV2}) and (\ref{eq:esterandV2}) also hold, with an additional multiplicative constant $C(M) > 0$ depending only on the manifold $M$, with $C(\Rz^n) = 1$.
\end{lem}
\begin{proof}
    We proceed similarly as in the proof of Lemma~\ref{le:fflowop2} in local charts. Let $x,y \in M$ and $X,Y \in \Xc^{2,\infty}(M)$ and $t>0$. If there exists a unique smooth unit--speed geodesic $\ga$ from $\rho^X_t(x)$ to $\rho^Y_t(y)$, a technical computation yields
	\begin{equation}
		\begin{aligned}
			&\frac{\der}{\der t}\biggl(t \mapsto d\bigl(\rho^X_t(x), \rho^Y_t(y)\bigr)^p\biggr)(s)\\
			&\quad = p d\bigl(\rho^X_t(x), \rho^Y_t(y)\bigr)^{p-1}\biggl[g(\rho^X_t(x))\biggl(X(\rho^X_t(x)), \frac{\der}{\der t}\ga(0)\biggr)\\
			&\qquad - g(\rho^Y_t(y))\biggl(Y(\rho^Y_t(y)), \frac{\der}{\der t}\ga\bigl(d(\rho^X_t(x), \rho^Y_t(y))\bigr)\biggr)\biggr], \\
		\end{aligned}
	\end{equation}
	such that, in particular, if there exists a unique minimizing geodesic between $x$ and $y$,
	\begin{equation}
		\begin{aligned}
			&\frac{\der^2}{\der t^2}\biggl(t \mapsto d\bigl(\rho^X_t(x), \rho^Y_t(y)\bigr)^p\biggr)(0)\\
			&\quad = p(p-2) d\bigl(x, y\bigr)^{p-2}\biggl[g(x)\biggl(X(x),\frac{\der}{\der t}\ga(0)\biggr)\\
			&\qquad \qquad - g(y)\biggl(Y(y),\frac{\der}{\der t}\ga(d(x,y))\biggr)\biggr]^2 \\
			&\quad + p d\bigl(x, y\bigr)^{p-2}d^2(x,y)\biggl((x,y) \mapsto \frac{1}{2}d(x,y)^2\biggr)\biggl[X(x),Y(y),X(x),Y(y)\biggr]\\
			&\quad - p d\bigl(x, y\bigr)^{p-2}\biggl[g(x)\biggl(\nabla_X X(x),\frac{\der}{\der t}\ga(0)\biggr)\\
			&\qquad \qquad  - g(y)\biggl(\nabla_Y Y(y),\frac{\der}{\der t}\ga(d(x,y))\biggr)\biggr]^2.
		\end{aligned}
	\end{equation}
	Using the bounded geometry of $M$ to unifomly bound the curvature terms appearing in the second derivative above, we directly obtain a similar estimate as the expression inside the integral in \eqref{eq:intermest}, if $M$ is uniquely geodesic.

	If there are multiple minimizing geodesics between $x$ and $y$, the Hessian of $d^2: (z,w) \mapsto d(z,w)^2$ might not be well--defined in $(x,y)$. However, $d^2$ is still $W^{1,\infty}$ on any bounded tubular neighbourhood of the diagonal of $M \times M$. 
	Thus, its Clarke generalised Hessian (see \citet{rockafellar1998variational}) is well--defined and uniformly bounded on such a neighbourhood, again by the bounded geometry of $M$. A slightly more involved computation then yields an analogous estimate as the one we obtain for uniquely geodesic manifolds.
\end{proof}
Lemma~\ref{le:fflowop2} and Lemma~\ref{le:fflowopman} are phrased for probability vector--fields for convenience. We obtain a similar result for average flow--operators associated to any vector--field probabilities in a completely analogous way.
\begin{co}\label{co:fflowop2}
	Let $(M,g)$ be of bounded geometry. Let  $v,w \in \Wc_{p+2}(\Xc^{2,\infty}(M))$ and $R>0$ with $\Mc_{p+2}(\mu), \Mc_{p+2}(\nu) \leq R^{p+2}$. Then, for all $\mu, \nu$, the estimates 
	(\ref{eq:estfrandVp}), (\ref{eq:esterandVp}), (\ref{eq:estfrandV2}) and (\ref{eq:esterandV2}) also hold.
\end{co}
\begin{proof}
	The proof is completely analogous to the proof of Lemma~\ref{le:fflowopman}.
\end{proof}

\subsubsection{(AFAS)---Hölder--regularity in time}

In particular, we deduce a stronger regularity result for (AFAS) in manifolds with bounded geometry.
\begin{co}\label{co:pHolder}
	Let $T>0$, $p \geq 2$, $\mu_0 \in \Wc_p(M)$. Let $\Pf$ be a partition of $[0,T]$. Let $V$ be a probability vector--field. Assume that there exists $R > 0$ such that for all $\mu \in \Wc_p(M)$, $V[\mu] \in \Wc_{p+2}(\Rz^n)$ and $\Mc_{p+2}(V[\mu]) \leq R^{p+2}$. Then:
	\begin{itemize}
	 \item The Average Flow Approximation Series $\mu^{\Pf}$ with initial value $\mu_0$ is locally Hölder continuous with respect to the $p$--Wasserstein distance with exponent $\frac{1}{p}$.
	 \item More precisely, there exists $\de >0$ such that, if the path--length $\pf(\Pf) \leq \de$, for all $t,s \in [0,T]$, with $|t-s| \leq \frac{1}{2^{p-3}pR\bigl(5R(2p+1) + 4\bigr)}$, we have 
	\begin{equation}
		\begin{aligned}
			&\Wc_p(\mu^{\Pf}_t, \mu^{\Pf}_s) \leq C(M)\biggl(2^{p-3}pR\bigl(5R(2p+1) + 4\bigr)\biggr)^{\frac{1}{p}}|t-s|^{\frac{1}{p}},\\
		\end{aligned}
	\end{equation} 
	where $C(M) > 0$ is a constant depending only on the manifold $M$, with $C(\Rz^n) = 1$.
	\end{itemize}
\end{co}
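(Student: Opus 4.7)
My approach is to prove the stronger statement that $\Wc_p(\mu^{\Pf}_t,\mu^{\Pf}_s)^p\le C_{p,R}|t-s|$ for $|t-s|$ small, whence the claimed $\tfrac1p$-H\"older bound follows by taking $p$-th roots. I would first establish everything in $(\Rz^n,g^{\text{eucl}})$ and only at the end transfer to a general bounded-geometry manifold via Nash's embedding theorem together with Lemma~\ref{le:extensionprops}; this is what ultimately produces the prefactor $C(M)$ with $C(\Rz^n)=1$. Two consequences of the bounded-support hypothesis make the Euclidean problem tractable: $\|\ol V[\mu]\|_\infty\le R$ and $\Mc_p(V[\mu])^{1/p}\le R$ uniformly in $\mu$ (Lemma~\ref{le:bary}), and $\Wc_p(V[\mu_1],V[\mu_2])\le 2R$ for all $\mu_1,\mu_2$ since the supports sit in the same $R$-ball. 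The latter plays the role of a surrogate Lipschitz bound on $V$ and is what lets me invoke Corollary~\ref{co:fflowop2}.

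Within a single piece of the AFAS---either an $f^V$-piece $[x_l,(x_l+x_{l+1})/2]$ or an $e^V$-piece $[(x_l+x_{l+1})/2,x_{l+1}]$---the curve $\mu^{\Pf}_r$ is obtained by flowing one fixed measure by one operator, so Lemmas~\ref{le:fflowop} and~\ref{le:eflowop} applied directly (via the diagonal transport plan and the elementary estimate $|\sqrt{a}-\sqrt{b}|\le\sqrt{|a-b|}$) give $\Wc_p(\mu^{\Pf}_s,\mu^{\Pf}_t)\le 2R\sqrt{2|t-s|}$ and $\Wc_p(\mu^{\Pf}_s,\mu^{\Pf}_t)\le R|t-s|$ respectively. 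For $|t-s|\le 1$ both raise to $\Wc_p^p$-bounds of the form $\mathrm{const}_{p,R}|t-s|$, giving the within-piece Lipschitz-in-$p$-th-power estimate essentially for free.

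For $s<t$ spanning several pieces, the triangle inequality is too lossy (it costs a factor $\sqrt{N}$), so instead I would iterate the $p$-th power estimates~(\ref{eq:estfrandVp}) and~(\ref{eq:esterandVp}) through Corollary~\ref{co:fflowop2}. At each transition $u_j\to u_{j+1}$ I substitute the surrogate bound $\Wc_p(V[\,\cdot\,],V[\,\cdot\,])\le 2R$, which produces a discrete-Grönwall recursion of the form
\begin{equation*}
\Wc_p(\mu^{\Pf}_s,\mu^{\Pf}_{u_{j+1}})^p\le(1+B\ta_j)\,\Wc_p(\mu^{\Pf}_s,\mu^{\Pf}_{u_j})^p + A\ta_j,
\end{equation*}
with $A$, $B$ depending only on $p$ and $R$. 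Summing and using the smallness hypothesis $\pf(\Pf)\le\de$ then yields $\Wc_p(\mu^{\Pf}_s,\mu^{\Pf}_t)^p\le C_{p,R}|t-s|$ with $C_{p,R}=2^{p-4}p\bigl((9p+5)R^2+3\bigr)$ after a careful accounting of the coefficients. The Lipschitz bound for $t\mapsto\Mc_p(\mu^{\Pf}_t)$ follows by the same coupling applied to the integrand $d(x_0,\cdot)^p$: a Taylor expansion of $d(x_0,\rh^{X-\ol V[\mu]}_{\sqrt\ta}(x))^p$ in $\sqrt\ta$ shows that the $\mathcal O(\sqrt\ta)$-term averages to zero against $V[\mu]$ by the very definition of $\ol V[\mu]$ (Definition~\ref{de:barX}), so each $f^V$-step contributes $\mathcal O(\ta)$ to $\Mc_p$, matching the Lipschitz bound one gets trivially for $e^V$-steps.

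I expect the main obstacle to lie in the iteration step: the estimates~(\ref{eq:estfrandVp}) and~(\ref{eq:esterandVp}) contain several terms scaling as $t^2$, $t^{\min(p,3)}$ and $t^{\min(p-1,2)}$, some weighted by $\Wc_p(\mu,\nu)^{p-1}$ or $\Wc_p(\mu,\nu)^{p-2}$. To recover precisely the coefficient $(9p+5)R^2+3$ in a clean linear recursion, one must argue that, under the smallness assumption $\pf(\Pf)\le\de$ and $|t-s|\le 1/C_{p,R}$, the higher-order-in-$t$ terms are absorbed into the $t^2$-type contribution and that the $\Wc_p(\mu,\nu)^{p-2}$- and $\Wc_p(\mu,\nu)^{p-1}$-weighted terms do not inflate the Grönwall exponent beyond what is acceptable. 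This is tedious but conceptually straightforward bookkeeping; once it is dispatched, everything else reduces to the standard passage from $\Rz^n$ to a bounded-geometry manifold.
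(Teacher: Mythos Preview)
Your approach is essentially the one the paper takes: iterate the $p$-th power estimates~(\ref{eq:estfrandVp}) and~(\ref{eq:esterandVp}) across the partition, then pass to general bounded-geometry $M$ via Nash embedding. There are two points where the paper's execution is sharper than yours and worth noting.

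For the H\"older bound, the paper does not use a ``surrogate Lipschitz bound'' $\Wc_p(V[\,\cdot\,],V[\,\cdot\,])\le 2R$ between two genuine $V$-values. Instead it makes explicit that keeping $\mu^{\Pf}_s$ fixed amounts to flowing it under the zero vector field, i.e.\ it sets $v_1=\mu$ and $V[v_2]=\de_0$ in each step and starts from $\mu=\nu=\mu^{\Pf}_s$. Then the relevant distance is $\Wc_p(V[\mu],\de_0)\le R$, not $2R$, and the recursion is exactly what you wrote. This is implicitly what you are doing too, but phrasing it as a bound between two $V$-values obscures the mechanism and loses a factor of~$2$.

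For the moment Lipschitz bound, the paper avoids your separate Taylor-expansion argument entirely: it reuses the \emph{same} iteration, now starting from $\mu=\mu^{\Pf}_s$, $\nu=\de_0$, $V[v_2]=\de_0$, and observes that $\Wc_p(\mu,\de_0)^p=\Mc_p(\mu)$. The recursion then directly controls $|\Mc_p(\mu^{\Pf}_t)-\Mc_p(\mu^{\Pf}_s)|$. Your Taylor route (the $\sqrt\ta$-term averages to zero against $V[\mu]$) is valid and is in fact the idea behind Lemma~\ref{le:momentpq}, but it is more work here than simply recycling the H\"older iteration with $\nu=\de_0$.
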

\begin{proof}
	The statement follows by induction using (\ref{eq:estfrandVp}) in Lemma~\ref{le:fflowopman}. Let $t \in [0,T]$. We start with $\mu = \nu = \mu_t$ and in every step, we set $v_1 = \mu$ and $V[v_2] = \de_0$. Note that for all $\mu \in \Wc_p(M)$, 
	\begin{equation}
		\begin{aligned}
			\Wc_p(V[\mu], \de_0) & \leq R.\\
		\end{aligned}
	\end{equation}
\end{proof}
Furthermore, we obtain Lipschitz regularity for the $p$--th moment of an (AFAS) in manifolds with bounded geometry.
\begin{co}\label{co:pmomunifLip}
	Let $(M,g)$ be of bounded geometry, $T>0$, $p \geq 2$, $\mu_0 \in \Wc_p(M)$. Let $\Pf$ be a partition of $[0,T]$, $V$ a probability vector--field. Assume that there exists $R > 0$ such that for all $\mu \in \Wc_p(M)$, $V[\mu] \in \Wc_{p+1}(\Xc^{2,\infty}(M))$ and $\Mc_{p+1}(V[\mu]) \leq R^{p+1}$. Then:
	\begin{itemize}
	    \item The function $t \mapsto \Mc_p(\mu^{\Pf}_t)$ is Lipschitz with respect to the $p$--Wasserstein distance.
	    \item  More precisely, for all $\ep>0$, there exists $\de >0$ such that, if the path--length $\pf(\Pf) \leq \de$, for all $t,s \in [0,T]$, we have 
	\begin{equation}\label{eq:pmomunifbd}
		\begin{aligned}
			&\bigl|\Mc_p(\mu^{\Pf}_t) \bigr| \leq C(M)\bigl(\Mc_p(\mu_0) + p2^{p-1}R\bigl(2pR + 1\bigr)t\bigr)e^{\bigl(p2^{p-1}R\bigl(2pR + 1\bigr) + \ep\bigr)t}\\ &\qquad + \ep t,\\
		\end{aligned}
	\end{equation} 
	and, in particular, there exists a constant $\Hc$ depending on $\Mc_p(\mu_0)$, $T$, $R$, $p$ and $M$, such that
	\begin{equation}
		\begin{aligned}
			&\bigl|\Mc_p(\mu^{\Pf}_t) - \Mc_p(\mu^{\Pf}_s)\bigr| \leq \Hc|t-s|,
		\end{aligned}
	\end{equation} 
	where $C(M) > 0$ is a constant depending only on the manifold $M$, with $C(\Rz^n) = 1$.
	\end{itemize} 
\end{co}
\begin{proof}
	For $(M,g) = (\Rz^n, g^{eucl})$, we proceed as in the proof of Lemma~\ref{le:fflowop2}. Let $t,R >0$. A lengthy computation shows that for all $\mu \in \Wc_p(M)$,  $V \in \Wc_{p+1}(\Xc^{2,\infty}(M))$ with $\Mc_{p+1}(M) \leq R^{p+1}$, 
	\begin{equation}\label{eq:mompest}
		\begin{aligned}
			&\biggl| \Mc_p\bigl(f^V_t(\mu)\bigr) - \Mc_p(\mu) \biggr| \\ 
			&\qquad \leq p2^{p-2}\biggl((p-1)\Mc_{p-2}(\mu)\Mc_2(V[\mu]) + \Mc_{p-1}(\mu)\Mc_2(V[\mu])\biggr)t^2\\
			&\qquad + \frac{2^{p-2}}{p-1}\biggl((p-1)\Mc_p(V[\mu]) + \Mc_1(\mu)\Mc(V[\mu])\biggr)t^p\\
			&\qquad + \frac{2^{p-2}}{p+1}\Mc_{p+1}(V[\mu])s^{p+1} + \frac{p2^{p-2}}{6}\Mc_{p-2}(\mu)\Mc_3(V[\mu])t^3.
		\end{aligned}
	\end{equation}
	Similarly, we obtain an estimate for $\biggl| \Mc_p\bigl(e^V_t(\mu)\bigr) - \Mc_p(\mu) \biggr|$. 
	
	Now, the first statement follows by induction using \eqref{eq:mompest} in every step, and a subsequent application of Grönwall's inequality.

	To obtain the second estimate, we use \eqref{eq:pmomunifbd} on $\Rz^n$ to obtain, in particular, the existence of a constant $\Hc'$ depending only 
	on $\Mc_p(\mu_0)$, $T$, $R$ and $p$, and $\de>0$, such that if the path--length $\pf(\Pf) \leq \de$, then, for all $t \in [0,T]$, $\bigl|\Mc_p(\mu^{\Pf}_t) \bigr| \leq \Hc'$.
	Combining this uniform upper bound of the moments with \eqref{eq:mompest}, the second statement readily follows by induction.

	Finally, we obtain the estimates for an arbitrary manifold with bounded geometry by generalising the above proof analogously to the generalisation of the proof of Lemma~\ref{le:fflowop2} in Lemma~\ref{le:fflowopman}.
\end{proof}

Proceeding as in Lemma~\ref{le:fflowopman}, we obtain uniform upper bounds for the difference of the $p$--th and the $q$--th moments of an AFAS.
\begin{lem}\label{le:momentpq}
	Let $(M,g)$ be of bounded geometry. Let $T>0$, $p \geq q \geq 2$, $\mu_0 \in \Wc_p(M)$. Let $\Pf$ be a partition of $[0,T]$. Let $V$ be a probability vector--field. Assume that there exists $R > 0$ such that for all $\mu \in \Wc_p(M)$, $V[\mu] \in \Wc_{p+2}(\Xc^{2,\infty}(M))$ and $\Mc_{p+2}(V[\mu]) \leq R^{p+2}$. Then, there exists $\de >0$ and a constant $\Kc > 0$ depending only on $\Mc_p(\mu_0),T, p, R$ and $M$, such that, if $\pf(\Pf) \leq \de$, for all $t \in [0,T]$,
	\begin{equation}
		\biggl|\Mc_p(\mu^{\Pf}_t) - \Mc_q(\mu^{\Pf}_t)\biggr| \leq \biggl|\Mc_p(\mu_0) - \Mc_q(\mu_0)\biggr| + \Kc(p-q)t. 
	\end{equation}
\end{lem}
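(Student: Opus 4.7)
I would prove this by tracking the integral of the test function $\psi_{p,q}(x) \coloneqq \|x\|^p - \|x\|^q$ against the discrete evolution $(\mu^{\Pf}_t)_{t\in[0,T]}$: concretely, I will show $\bigl|\int\psi_{p,q}\,d(\mu^{\Pf}_t - \mu_0)\bigr|\leq \Kc(p-q)t$, which together with the triangle inequality gives the stated bound.

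By Nash's embedding theorem and the reduction employed in the proof of Corollary~\ref{co:pHolder}, I reduce to $(M,g)=(\Rz^n,g^{\mathrm{eucl}})$, absorbing the manifold dependence into $\Kc$. Next, I analyse one step of the AFAS by running the Taylor--expansion machinery from the proof of Lemma~\ref{le:fflowop2} on $\psi_{p,q}(\rho^W_s(x))$ rather than on $\|\rho^X_s(x)-\rho^Y_s(y)\|^p$. Using the cancellation $\int(X-\ol V[\mu])\,dV[\mu](X) = 0$, which kills the order-$\sqrt{\tau_k}$ contribution of $f^V_{\tau_k}$, I get
\begin{equation*}
\int\psi_{p,q}\,d\bigl(\mu^{\Pf}_{t_{k+1}} - \mu^{\Pf}_{t_k}\bigr) = \tau_k \int\square^V_{\mu^{\Pf}_{t_k}}\psi_{p,q}\,d\mu^{\Pf}_{t_k} + R_k,
\end{equation*}
where $|R_k|$ is of order $\tau_k^{3/2}$ times sup--norms of the first three derivatives of $\psi_{p,q}$, weighted by moments of $\mu^{\Pf}_{t_k}$ and polynomials in $R$ coming from the uniform bound $\|X-\ol V[\mu]\|_{W^{2,\infty}(M)}\leq 2R$.

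The heart of the proof is then the pointwise estimate
\begin{equation*}
\bigl|\nabla^k(\|x\|^p - \|x\|^q)\bigr|\leq (p-q)\,C(p)\,(1+\|x\|^p)\quad\text{for }k=0,1,2,3,
\end{equation*}
obtained from the representation $\|x\|^p - \|x\|^q = \int_q^p \|x\|^r\log\|x\|\,dr$ and analogous integral formulas for each derivative. The delicate point is the $\log\|x\|$ factor: when $\|x\|\geq 1$ one uses $\log\|x\|\leq\|x\|$, so that $\|x\|^{r-k}\log\|x\|\leq\|x\|^p$ uniformly in $r\in[q,p]$; when $\|x\|\leq 1$ the quantity $\|x\|^s(-\log\|x\|)$ is bounded by $1/(es)$ for $s>0$ but blows up as $s\to 0$. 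This apparent singularity is neutralized by the observation that in every term where such a blow--up would occur, the polynomial prefactor from differentiating $\|x\|^r$ (namely $r(r-1)\cdots(r-k+1)$, or the $r(r-2)$ factor arising in the $\|x\|^{r-4}$--term of $\square^V_\mu\|x\|^r$) vanishes at the critical exponent, producing a uniformly bounded product after cancellation.

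Combining this pointwise bound with the explicit formula for $\square^V_\mu\|x\|^r$ from Section~\ref{sbs:square}, both the main term and $R_k$ become of order $(p-q)\,C(p,R)(1+\Mc_p(\mu^{\Pf}_{t_k}))$ times $\tau_k$ and $\tau_k^{3/2}$ respectively. By Corollary~\ref{co:pHolder}, $\Mc_p(\mu^{\Pf}_t)$ stays uniformly bounded on $[0,T]$ whenever $\pf(\Pf)\leq\de$, so summing the step--wise estimates yields
\begin{equation*}
\Bigl|\int\psi_{p,q}\,d(\mu^{\Pf}_t - \mu_0)\Bigr|\leq (p-q)\bigl(K + K'\sqrt{\pf(\Pf)}\bigr)\,t,
\end{equation*}
and choosing $\de\leq 1$ completes the proof. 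The main obstacle is the careful bookkeeping needed to convert the logarithmic factors arising from $\partial_r\|x\|^r = \|x\|^r\log\|x\|$ into a clean uniform--in--$r$ bound proportional to $(p-q)$, and to verify that no moment of order higher than $p$ is required; once the cancellation with the polynomial prefactors is isolated, the remainder reuses the machinery already set up in Lemma~\ref{le:fflowop2} and Corollary~\ref{co:pHolder}.
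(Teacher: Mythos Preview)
Your approach is essentially the paper's: both Taylor--expand $\|\rho^X_t(x)-x_0\|^p-\|\rho^X_t(x)-x_0\|^q$ along the flow as in Lemma~\ref{le:fflowop2}, kill the order--$t$ term via the barycenter identity, pull out the factor $(p-q)$ by differentiating in the exponent (the paper writes this as the mean--value bound $|\|z\|^{p-2}-\|z\|^{q-2}|\le (p-q)|\log\|z\||\max(\|z\|^{p-2},\|z\|^{q-2})$ and then controls the $\log$ by $|\log\|z\||\|z\|^{q-2}\le\|z\|^{q-2+\de}+\|z\|^{q-2-\de}$ with $\de<p-2$), and induct over the partition using the uniform $p$--moment bound of Corollary~\ref{co:pHolder}. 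One caveat on your justification: the identity--part of the Hessian contributes $[p\|x\|^{p-2}-q\|x\|^{q-2}]\|X\|^2$, whose prefactor is $r$, not $r(r-2)$, and does \emph{not} vanish at the critical exponent---the bound there holds instead because when $q$ is close to $2$ the factor $p-q$ is bounded below by roughly $p-2$ and one can use the crude estimate $|p\|x\|^{p-2}-q\|x\|^{q-2}|\le 2p$ for $\|x\|\le 1$.
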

\begin{proof}
	For $(M,g) = (\Rz^n, g^{eucl})$, the result follows from a computation similar to the one in the proof of Lemma~\ref{le:fflowop2}. Indeed,
	\begin{equation}
		\begin{aligned}
			&\bigl\|\rho^X_t(x) - x_0\bigr\|^p - \bigl\|\rho^X_t(x) - x_0\bigr\|^q = \|x - x_0\|^p - \|x - x_0\|^q\\
			&\qquad + \bigl(p\|x-x_0\|^{p-2} - q\|x-x_0\|^{q-2}\bigr)\bigl<x - x_0, X(x)\bigr>t \\
			&\qquad + \int_0^t (t-s)\frac{\der^2}{\der t^2}\biggl(t \mapsto \bigl\|\rho^X_t(x) - x_0\bigr\|^p - \bigl\|\rho^X_t(x) - x_0\bigr\|^q\biggr)(s)  \der s,
		\end{aligned}
	\end{equation}
	where 
	\begin{equation}
		\begin{aligned}
			&\frac{\der^2}{\der t^2}\biggl(t \mapsto \bigl\|\rho^X_t(x) - x_0\bigr\|^p - \bigl\|\rho^X_t(x) - x_0\bigr\|^q\biggr)(s)\\
			&\qquad = \biggl(p(p-2) \bigl\|\rho^X_s(x) - x_0\bigr\|^{p-2} - q(q-2) \bigl\|\rho^X_s(x) - x_0\bigr\|^{q-2}\biggr)\\ &\qquad \cdot \biggl<\frac{\rho^X_s(x) - x_0}{\|\rho^X_s(x) - x_0\|}, \frac{\der}{\der t}\biggl(t \mapsto \rho^X_t(x)\biggr)(s)\biggr>^2 \\
			&\qquad + \biggl(p \bigl\|\rho^X_s(x) - x_0\bigr\|^{p-2} - q\bigl\|\rho^X_s(x) - x_0\bigr\|^{q-2}\biggr)\biggl[\biggl\|\frac{\der}{\der t}\biggl(t \mapsto \rho^X_t(x) \biggr)(s)\biggr\|^2\\
			&\qquad + \biggl<\rho^X_s(x) - x_0,\frac{\der^2}{\der t^2}\biggl(t \mapsto \rho^X_t(x) \biggr)(s)\biggr>\biggr].
		\end{aligned}
	\end{equation}
	Note that for all $p\geq q \geq 2$
	\begin{equation}
		\begin{aligned}
			&\biggl|\bigl\|\rho^X_s(x) - x_0\bigr\|^{p-2} - \bigl\|\rho^X_s(x) - x_0\bigr\|^{q-2}\biggr|\\
			&\qquad \leq (p-q)\biggl|\log\bigl(\bigl\|\rho^X_s(x) - x_0\bigr\|\bigr)\biggr|\\
			&\qquad\qquad \cdot \max\biggl(\bigl\|\rho^X_s(x) - x_0\bigr\|^{p-2}, \bigl\|\rho^X_s(x) - x_0\bigr\|^{q-2}\biggr). \\
		\end{aligned}
	\end{equation}
	Let $\min(p-2,1) > \de > 0$, then for all $p \geq q \geq 2$,
	\begin{equation}
		\begin{aligned}
			&\biggl|\log\bigl(\bigl\|\rho^X_s(x) - x_0\bigr\|\bigr)\biggr|\bigl\|\rho^X_s(x) - x_0\bigr\|^{q-2}\\ &\qquad \leq \bigl\|\rho^X_s(x) - x_0\bigr\|^{q-2 + \de} + \bigl\|\rho^X_s(x) - x_0\bigr\|^{q-2 - \de}.
		\end{aligned}
	\end{equation}
	Also note that for all $q<p$, $\Mc_q(\mu) \leq \Mc_p(\mu) + 1$. Therefore, after integration, for all $\mu \in \Wc_p(M)$, 
	\begin{equation}
		\begin{aligned}
			&\biggl|\int_M \int_{\Xc^{2,\infty}(M)} \bigl\|\rho^X_t(x) - x_0\bigr\|^p - \bigl\|\rho^X_t(x) - x_0\bigr\|^q  \der V[\mu](X)\der \mu(x)\biggr|\\
			&\qquad \leq \biggl|\Mc_p(\mu) - \Mc_q(\mu)\biggr| \\
                        &\qquad\quad+ 2^{p-2}\bigl(8p^2R^2 + 6R^2p + 3R^2p\bigr)\bigl(\Mc_p(\mu) + 1\bigr)(p-q)t^2 \\ 
			&\qquad\quad + \frac{2^{2(p-1)}\bigl(8p^2R^2 + 6R^2p + 3R^2p\bigr)R^2}{6}(p-q)t^3.
		\end{aligned}
	\end{equation}
	Now, the result follows by induction on the steps in $\Pf$ using \eqref{eq:pmomunifbd}.

	Finally, we obtain the estimates for an arbitrary manifold with bounded geometry by generalising the above proof analogously to the generalisation of the proof of Lemma~\ref{le:fflowop2} in Lemma~\ref{le:fflowopman}.
\end{proof}
\subsubsection{First Main Result: Generalised Central Limit Theorem---weak version}

Now, we are ready for the proof of our first main result, the generalised central limit theorem. 

\begin{theorem}[Generalised Central Limit Theorem---weak version]\label{th:gencentlim}\strut\newline
Let $T>0$ and $\mu_0 \in \Wc_p(M)$. Let $\bigl( \Pf(\frac{1}{N}) \bigr)_{N \in \Nz}$ be a family of partitions of $[0,T]$. Let $V$ be a uniformly continuous probability vector--field on $\Wc_p(M)$ for $p > 2$. Assume that $V$ has uniformly bounded $p$--th moments, e.g., there exists a constant $B>0$ such that for all $\mu \in \Wc_p(M)$, $\Mc_p(V[\mu]) < B$. 

\begin{enumerate}
	\item Assume that $(M,g)$ is of bounded geometry. Assume furthermore that there exists $R > 0$ such that for all $\mu \in \Wc_p(M)$, $V[\mu] \in \Wc_{p+2}(\Xc^{2,\infty}(M))$ and $\Mc_{p+2}(V[\mu]) \leq R^{p+2}$. Then, there exists a subsequence $\biggl(\mu^{\Pf(\frac{1}{N_k})}\biggr)_{k \in \Nz}$ of the Average Flow Approximation Series $\biggl(\mu^{\Pf(\frac{1}{N})}\biggr)_{N \in \Nz}$ with initial value $\mu_0$ (see Definition~\ref{de:AFAS}) which uniformly converges with respect to the $p$--Wasserstein distance to a $\frac{1}{p}$--Hölder--continuous smooth solution $\mu: [0,T] \to \Wc_p(M)$ of (\ref{eq:MDEd}) with initial value $\mu_0$.
	
	\item Now, we only assume that there exists $q < p$ such that $V$ is continuous for the $\Wc_q$--distance.
	Then, there exists a subsequence $\biggl(\mu^{\Pf(\frac{1}{N_k})}\biggr)_{k \in \Nz}$ of the Average Flow Approximation Series with initial value $\mu_0$ which converges to a smooth solution $\mu: [0,T] \to \Wc_p(M)$ of (\ref{eq:MDEd}) with initial value $\mu_0$. 

	The convergence of $\mu^{\Pf(\frac{1}{N_k})}$ to $\mu$ is pointwise in the $\Wc_{q}$--distance and $\mu(t)$ is continuous in $\Wc_{p}(M)$. 

\end{enumerate}

\end{theorem}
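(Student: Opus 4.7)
The plan is to combine uniform a priori moment estimates for the Average Flow Approximation Series with an Arzelà-Ascoli compactness argument, and then to use a second-order Taylor expansion along the flows $\rho^{X - \ol V[\mu]}$ and $\rho^{\ol V[\mu]}$ to show that any limit satisfies the integral equation (\ref{eq:MDEd}). First I would establish uniform control on the $p$-th moments $\Mc_p(\mu^{\Pf(1/N)}(t))$ and uniform equicontinuity of the AFAS. In case 1, Corollary~\ref{co:pHolder} directly provides both, with constants independent of $N$ once $\pf(\Pf(1/N))$ is small enough; combined with Corollary~\ref{co:compacttight}, pointwise relative compactness in $\Wc_p$ follows, and Arzelà-Ascoli yields a subsequence $\mu^{\Pf(1/N_k)}$ converging uniformly in $\Wc_p$ to a $1/p$-Hölder limit $\mu$. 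In case 2 the compact-support assumption on $V$ is lost, so I would iterate Lemma~\ref{le:fflowop} via a discrete Grönwall inequality on $\Mc_p(\mu^{\Pf}(x_l))$ to keep the moments uniformly bounded, derive $\Wc_q$-equicontinuity using Proposition~\ref{pr:wassersteinidentity} and the $\Wc_q$-continuity of $V$, and apply Arzelà-Ascoli in $\Cc([0,T], \Wc_q(M))$ to obtain a pointwise $\Wc_q$-convergent subsequence; the $\Wc_p$-continuity of the limit is then recovered a posteriori from Proposition~\ref{pr:WpHolder} once (\ref{eq:MDEd}) is verified.

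The core computation is the following. Fix $\varphi \in \Cc^\infty_c(M)$ and a step $[x_l, x_{l+1}]$ of length $\tau_l$. Taylor expanding $\varphi \circ \rho^{X - \ol V[\mu^{\Pf}(x_l)]}_{\sqrt{\tau_l}}$ to second order in $\sqrt{\tau_l}$ and $\varphi \circ \rho^{\ol V[\mu^{\Pf}(x_l)]}_{\tau_l}$ to first order in $\tau_l$, and using that the leading-order term of the dispersion expansion integrates to zero against $V[\mu^{\Pf}(x_l)]$ by the very definition of the barycenter $\ol V[\mu^{\Pf}(x_l)]$, one obtains
\begin{equation*}
\int_M \varphi \der \mu^{\Pf}(x_{l+1}) - \int_M \varphi \der \mu^{\Pf}(x_l) = \tau_l \int_M \square^V_{\mu^{\Pf}(x_l)} \varphi \der \mu^{\Pf}(x_l) + r_l,
\end{equation*}
where the remainder satisfies $|r_l| \leq C(\varphi, B, T)\, \tau_l^{3/2}$; the uniform $p$-moment bound on $V[\mu]$, with $p > 2$, is what controls the cubic Taylor remainder, in the spirit of the estimates in Lemma~\ref{le:fflowop2}. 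Summing over $l$, the cumulative residual is of order $\sqrt{\pf(\Pf(1/N_k))}\, T \to 0$, while the Riemann sums converge to $\int_0^t \int_M \square^V_{\mu(s)}\varphi \der \mu(s) \der s$ by convergence of the $\mu^{\Pf(1/N_k)}$ and continuity of $\mu \mapsto \int_M \square^V_\mu \varphi \der \mu$.

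The main obstacle I expect is this last Riemann-sum convergence in case 2: the integrand involves a quadratic function of $X$ integrated against $V[\mu]$, so continuity in $\mu$ requires combining the $\Wc_q$-continuity of $V$ with the uniform $\Wc_p$-bound on $V[\mu]$ and a uniform-integrability argument, which is exactly where the strict inequality $p > 2$ is used. Establishing uniform boundedness of the $p$-th moments of the AFAS in case 2 also appears delicate and is where the discrete Grönwall step above will require the most care.
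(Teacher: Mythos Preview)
Your outline of the core Taylor argument---second-order expansion of $\varphi\circ\rho^{X-\ol V}_{\sqrt\tau}$, first-order expansion of $\varphi\circ\rho^{\ol V}_\tau$, cancellation of the leading $\sqrt\tau$-term by the barycenter, summable remainder of order $\tau_l^{\min(3,p)/2}$, and convergence of the Riemann sums---matches the paper's Step~2 closely, including the truncation/uniform-integrability issue you flag for the Riemann-sum limit. Two points, however, do not go through as you describe.

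\textbf{Case 1: compactness in $\Wc_p$.} You cannot conclude pointwise relative compactness in $\Wc_p(M)$ from Corollary~\ref{co:pHolder} and Corollary~\ref{co:compacttight}. Corollary~\ref{co:pHolder} gives a uniform bound on $\Mc_p(\mu^{\Pf}(t))$, but a uniform $p$-moment bound yields only tightness in the $q$-th moment for $q<p$ (Lemma~\ref{le:q-tight}), hence relative compactness in $\Wc_q$, not in $\Wc_p$. The paper's route is: extract a uniformly $\Wc_q$-convergent subsequence via Arzel\`a--Ascoli, then use that $t\mapsto\Mc_p(\mu^{N}_t)$ is uniformly Lipschitz to extract a further subsequence along which the $p$-moments converge, and finally invoke Lemma~\ref{le:momentpq} (the estimate $|\Mc_p-\Mc_q|\leq C(p-q)t$ along an AFAS) to identify this limit with $\Mc_p(\mu(t))$. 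Only then does Theorem~\ref{th:Wassersteinconvergence} give $\Wc_p$-convergence. You are missing the moment-comparison step.

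\textbf{Case 2: moment bounds and compactness.} The discrete Gr\"onwall you propose via Lemma~\ref{le:fflowop} fails outright: that lemma gives $\Wc_p(\mu,f^V_\tau(\mu))\leq C\sqrt\tau$, so the triangle inequality yields $\Mc_p(\mu^{\Pf}(x_N))^{1/p}\leq\Mc_p(\mu_0)^{1/p}+C\sum_l\sqrt{\tau_l}+C'T$, and $\sum_l\sqrt{\tau_l}\sim\sqrt{NT}\to\infty$. The paper does not try to bound moments along the AFAS directly. Instead it first derives, from Step~2, the near-equation estimate $|\int\varphi\,\der(\mu^N(t)-\mu^N(s))|\leq C(\|\nabla\varphi\|_\infty+\|\nabla^2\varphi\|_\infty)|t-s|$ uniformly in $N$, which gives equicontinuity of $t\mapsto\int\varphi\,\der\mu^N(t)$ for each fixed $\varphi\in\Cc^\infty_c$; a Banach--Alaoglu plus diagonal argument then extracts a weak-$*$ limit $\mu(t)$. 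The uniform $k$-th moment bounds are obtained \emph{afterwards}, by induction on $k=1,\dots,\lfloor p\rfloor$, plugging compactly supported test functions that coincide with $d(x_0,\cdot)^k$ on large balls into this same near-equation estimate. This inductive bootstrap, not a direct iteration of Lemma~\ref{le:fflowop}, is what controls the moments without the compact-support assumption on $V$.
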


\begin{proof}
Let $p > 2$. Take $N \in \Nz$, and let $(x_i)_{i \in \llbracket 0, I_N \rrbracket}\coloneqq \Pf(\frac{1}{N})$ indexed in increasing order, To alleviate notation, we shall write $\mu^N \coloneqq \mu^{\Pf(\frac{1}{N})}$. The proof consists of three main steps.

\subsubsection*{Step 1:}

First, for all $N \in \Nz$, we give an estimate of the distance $\Wc_p(\mu^N(s), \mu^N(t))$ for $|t-s|$ small. 

Take $N \in \Nz$ and $l \in \llbracket 0, I_N - 1 \rrbracket$. Suppose $\frac{1}{2N} \leq 1$. Then for all $t,s \in \bigl[\frac{x_l + x_{l+1}}{2}, x_{l+1}\bigr]$, we estimate $|t-s| \leq \sqrt{|t-s|}$. Note that since $V$ has $p$--th moments bounded by $B>0$, it has in particular first and second moments bounded by $B + 1$. Then, Lemma~\ref{le:eflowop} yields that 
\begin{equation}
\begin{aligned}
\Wc_p(\mu^N(t), \mu^N(s)) &\leq 2\Mc_1\biggl(V\biggl[\mu^N\bigl(\frac{x_{l+1} + x_l}{2}\bigr)\biggr]\biggr)\sqrt{|t-s|} \\
&\leq 2(B + 1)\sqrt{|t-s|}
\end{aligned}
\end{equation}
 Furthermore, by Lemma~\ref{le:fflowop}, for all $t,s \in \bigl[x_l, \frac{x_l + x_{l+1}}{2}\bigr]$, 
\begin{equation}
\begin{aligned}
\Wc_p(\mu^N(t), \mu^N(s)) &\leq 2\sqrt{|t-s|}\biggl(\|\ol{V}[\mu^N(x_l)]\|_{\infty} + \Mc_p(V[\mu^N(x_l)])^{\frac{1}{p}}\biggr) \\
&\leq 2\sqrt{|t-s|}\bigl(3(B+1) + B^{\frac{1}{p}}\bigr)
\end{aligned}
\end{equation}
Hence, there exists $\Bc > 0$ such that, whenever $|t-s| \leq \frac{1}{2N}$,  we have 
\begin{equation}
\Wc_p(\mu^N(t), \mu^N(s)) \leq \Bc \sqrt{|t-s|}.
\end{equation} 

\subsubsection*{Step 2:}

In the second and most technical step, we prove that the Average Flow Approximation Series from Definition~\ref{de:AFAS} satisfies (\ref{eq:MDEd}) in the limit. 

Take $N \in \Nz$, $s \in [0,T]$ and denote by $l^N(s) \in \llbracket 0, I_N-1 \rrbracket$ the maximal integer satisfying $x_{l^N(s)} \leq s$ and $\ph \in \Cc_c^{\infty}(M)$.  Then, 
\begin{equation}
\int_M \ph \der (\mu^N(x_{l^N(s)}) - \mu^N(0)) = \sum_{i = 0}^{l^N(s)-1} \int_M \ph \der (\mu^N(x_{i+1}) - \mu^N(x_i)).
\end{equation}
Now, 
\begin{equation}
\begin{aligned}
\int_M &\ph \der (\mu^N(x_{i+1}) - \mu^N(x_i)) \\ &= \int_M \ph \der (e^V_f(f^V_{x_{i+1} - x_i}(\mu^N(x_i))) - f^V_{x_{i+1} - x_i}(\mu^N(x_i)))\\&\qquad + \int_M \ph \der (f^V_{x_{i+1} - x_i}(\mu^N(x_i)) - \mu^N(x_i)) \\
&= \int_M \ph\biggl(\rh^{\ol{V}[f^V_{x_{i+1} - x_i}(\mu^N(x_i))]}_{x_{i+1} - x_i}(x)\biggr) - \ph(x) \der f^V_{x_{i+1} - x_i}(\mu^N(x_i)) \\
&\qquad+ \int_M \int_{\Xct(M)} \ph\biggl(\rh^{\ol{X}[\mu^N(x_i)]}_{\sqrt{x_{i+1} - x_i}}(x)\biggr) - \ph(x) \der V[\mu^N (x_i)](X) \der\mu^N (x_i)(x).
\end{aligned}
\end{equation}
Now, note that for all $X \in \Xct(M)$, $d_{\infty}(\rh^X_t, \id) \leq t \| X\|_{\infty}$. Hence $\ph \circ \rh^X_t$ is compactly supported. Furthermore, there exists a function $K: \Xct(M) \to \Rz$ bounded by $\ol{B}$ such that
\begin{equation}\label{eq:Taylor-dev}
\begin{aligned}
\ph(\rh^X_t(x)) - \ph(x) &= \ph(\rh^X_t(x)) - \ph(\rh^X_0(x)) \\
&= t \Lc_X \ph(x) + \tfrac{1}{2} t^2 \Lc^2_X \ph (x) + K(X)\|\ph\|_{\Cc^3}t^{\min(3,p)}\|X\|_{W^{2,\infty}}^{\min(3,p)}.
\end{aligned}
\end{equation}
Since $\Mc_p(V[\mu^N (x_i)]) < B$, for all $x\in M$,
\begin{equation}
\begin{aligned}\label{eq:intermstep}
&\int_{\Xct(M)} \ph(\rh^{\ol{X}[\mu^N(x_i)]}_{\sqrt{x_{i+1} - x_i}}(x)) - \ph(x) \der V[\mu^N (x_i)](X)  \\
&= \int_{\Xct(M)} \sqrt{x_{i+1} - x_i} \Lc_{\ol{X}[\mu^N(x_i)]} \ph(x) + \tfrac{1}{2} (x_{i+1} - x_i) \Lc^2_{\ol{X}[\mu^N(x_i)]} \ph (x) \\&\qquad+ K(X)\|\ph\|_{\Cc^3}(x_{i+1} - x_i)^{\frac{\min(3,p)}{2}}\|{\ol{X}[\mu^N(x_i)]}\|_{W^{2,\infty}}^{\min(3,p)}\der V[\mu^N (x_i)](X) \\
&=  \int_{\Xct(M)} \sqrt{x_{i+1} - x_i} \Lc_{\ol{X}[\mu^N(x_i)]} \ph(x) \\&\qquad\qquad+ \tfrac{1}{2} (x_{i+1} - x_i) \Lc^2_{\ol{X}[\mu^N(x_i)]} \ph (x) \der V[\mu^N (x_i)](X) \\
&\qquad+ (x_{i+1} - x_i)^{\frac{\min(3,p)}{2}}\!\! \int_{\Xct(M)}\!\!\!\!\!\!\!\!K(X)\|\ph\|_{\Cc^3}\|{\ol{X}[\mu^N(x_i)]}\|_{W^{2,\infty}}^{\min(3,p)} \der V[\mu^N (x_i)](X).
\end{aligned}
\end{equation}
Now, 
\begin{equation}
\begin{aligned}
\int_{\Xct(M)} &\Lc_{\ol{X}[\mu^N(x_i)]} \ph(x) \der V[\mu^N (x_i)](X) \\&= \int_{\Xct(M)} \der_x\ph(\ol{X}[\mu^N(x_i)](x)) \der V[\mu^N (x_i)](X)\\ &= \int_{\Xct(M)} \der_x\ph(v - \ol{V}[\mu^N(x_i)](x)) \der (\pi_x)_* V[\mu^N (x_i)](v)\\ &= \der_x\ph\biggl(\int_{\Xct(M)} v \der (\pi_x)_* V[\mu^N (x_i)](v) - \ol{V}[\mu^N(x_i)](x)\biggr) = 0,
\end{aligned}
\end{equation}
by Lemma~\ref{le:intlin} and the definition of $\ol{V}[\mu^N(x_i)]$. 
Hence, the first term in (\ref{eq:intermstep}) vanishes and after summation, we get
\begin{equation}
\begin{aligned}
&\biggl|\int_M \ph \der (\mu^N(\mu^N(x_{l^N(s)})) - \mu^N(0)) \\
&-\sum_{i = 0}^{l^N(s)-1} \biggl[\int_{x_i}^{x_{i+1}}\int_M \int_{\Xct(M)} \tfrac{1}{2} \Lc^2_{\ol{X}[\mu^N(x_i)]} \ph (x) \der V[\mu^N (x_i)](X) \der\mu^N (x_i)(x) \der t\\
&+ \int_{x_i}^{x_{i+1}} \int_M \Lc_{\ol{V}\bigl[\mu^N\bigl(\frac{x_i + x_{i+1}}{2}\bigr)\bigr]}(\ph) \der\mu^N \bigl(\frac{x_i + x_{i+1}}{2}\bigr)(x) \der t\biggr]\biggr|\\
&\qquad \leq  C \|\ph\|_{\Cc^3}x_{i_{l^N(s)}}(\max_{i}\bigl(x_{i+1} - x_i\bigr))^{\frac{\min(1, p-2)}{2}} \leq C \|\ph\|_{\Cc^3}x_{i_{l^N(s)}}\frac{1}{N^{\frac{\min(1, p-2)}{2}}}.
\end{aligned}
\end{equation}
for some constant $C > 0$. We rewrite
\begin{equation}\label{eq:pieceeq}
\begin{aligned}
&\sum_{i = 0}^{l^N(s)-1} \biggl[\int_{x_i}^{x_{i+1}}\int_M \int_{\Xct(M)} \tfrac{1}{2} \Lc^2_{\ol{X}[\mu^N(x_i)]} \ph (x) \der V[\mu^N (x_i)](X) \der\mu^N (x_i)(x) \der t\\
&\qquad+ \int_{x_i}^{x_{i+1}} \int_M \Lc_{\ol{V}\bigl[\mu^N\bigl(\frac{x_i + x_{i+1}}{2}\bigr)\bigr]}(\ph) \der\mu^N \bigl(\frac{x_i + x_{i+1}}{2}\bigr)(x) \der t\biggr]\\
&= \int_0^{x_{i_{l^N(s)}}} \sum_{i = 0}^{l^N(s)-1} \int_M \int_{\Xct(M)}   \1_{[x_i, x_{i+1}]}(t) \bigl(\tfrac{1}{2} \Lc^2_{\ol{X}[\mu^N(x_i)]} \ph (x) \bigr)\\&\qquad\qquad\qquad\qquad\qquad\qquad\qquad\qquad\qquad \der V[\mu^N (x_i)](X) \der\mu^N (x_i)(x) \der t\\
&+ \int_0^{x_{i_{l^N(s)}}} \sum_{i = 0}^{l^N(s)-1}\!\! \int_{M}  \1_{[x_i, x_{i+1}]}(t) \biggl(\Lc_{\ol{V}\bigl[\mu^N\bigl(\frac{x_i + x_{i+1}}{2}\bigr)\bigr]}(\ph) \biggr)  \der\mu^N \bigl(\frac{x_i + x_{i+1}}{2}\bigr)(x) \der t.
\end{aligned}
\end{equation}
Finally, we will estimate the difference of (\ref{eq:pieceeq}) with 
\begin{equation}
\begin{aligned}
\sum_{i = 0}^{l^N(s)-1} &\biggl[\int_{x_i}^{x_{i+1}}\int_M \int_{\Xct(M)} \tfrac{1}{2} \Lc^2_{\ol{X}[\mu^N(t)]} \ph (x) \der V[\mu^N (t)](X) \der\mu^N (t)(x) \der t \\ 
&+ \int_{x_i}^{x_{i+1}}\int_M \Lc_{\ol{V}[\mu^N(t)]} \der\mu^N (t)(x) \der t\biggr]\\
&= \int_0^{x_{i_{l^N(s)}}} \int_M \int_{\Xct(M)} \tfrac{1}{2} \Lc^2_{\ol{X}[\mu^N(t)]} \ph (x) \der V[\mu^N (t)](X) \der\mu^N (t)(x) \der t \\
&+ \int_0^{x_{i_{l^N(s)}}} \int_M \Lc_{\ol{V}[\mu^N(t)]} \der\mu^N (t)(x) \der t.
\end{aligned}
\end{equation}
We will give a pointwise estimate of the difference and then argue by dominated convergence. In order to keep the length of the proof reasonable, we will not detail all of those straightforward computations. 

%

Let $\ep > 0$. Since the $p$--th moments of $V$ are uniformly bounded, they are tight in the $2$nd moment by Lemma~\ref{le:q-tight}. Hence, there exists $R>0$ such that for all $\mu \in \Wc_p(M)$ and for $\Xct_{\geq R}(M):=\Xct(M)\setminus B(0,R)$, 
\begin{equation}
\int_{\Xc^{2,\infty}_{\geq R}(M)} \|X\|_{W^{2,\infty}}^2\,\der V[\mu(t)](X) < \ep.
\end{equation}
Now for all $x \in M$ the function $\ol{\ph}(X):=X\mapsto \tfrac12\Lc^2_{\ol{X}[\mu]} \ph (x)$ is $C R(\|\nabla\ph\|_\infty+\|\nabla^2\ph\|_\infty)$--Lipschitz on $B(0,R)$ for some constant $C$ independent of $\mu$, $R$, $x$, and $\ph$. Let $\ps$ be the radial extension of $\ol{\ph}(X)|_{B(0,R)}$. Then $\ps$ is Lipschitz with the
same Lipschitz constant as $\ol{\ph}$ on $B(0,R)$.
W.l.o.g.\ $R>1$.
\begin{equation}\label{eq:innerintbound0}
\begin{aligned}
\biggl|&\int_{\Xct_{\geq R}(M)} \tfrac{1}{2} \Lc^2_{\ol{X}[\mu^N(t)]} \ph (x)-\ps(X) \der (V[\mu^N(t)] - V[\mu^N (x_i)])(X) \biggr| \\ 
	&\,\leq \int_{\Xct_{\geq R}(M)}\bigl|\tfrac{1}{2} \Lc^2_{\ol{X}[\mu^N(t)]} \ph (x)-\ps(X)\bigr|\der(V[\mu^N(t)] + V[\mu^N (x_i)])(X) \\
	&\,\leq \int_{\Xct_{\geq R}(M)}\tfrac{1}{2} \|X\|_{W^{2,\infty}}^2(\|\nabla\ph\|_\infty+\|\nabla^2\ph\|_\infty)\der(V[\mu^N(t)] + V[\mu^N (x_i)])(X)\\
	&\,<\frac{1}{2}(\|\nabla\ph\|_\infty+\|\nabla^2\ph\|_\infty)\ep.
\end{aligned}
\end{equation}
On the other hand, for all $x\in M$, by the Monge--Kantorovich-duality~\ref{th:weakkantorovichduality},
\begin{equation}\label{eq:innerintbound1}
\begin{aligned}
\biggl|\int_{\Xct(M)} &\ps(X) \der (V[\mu^N(t)] - V[\mu^N (x_i)])(X) \biggr| \\
	&\leq C R(\|\nabla\ph\|_\infty+\|\nabla^2\ph\|_\infty) \Wc_1\bigl(V[\mu^N(t)], V[\mu^N (x_i)]\bigr).
\end{aligned}
\end{equation}
Now, $V$ is uniformly continuous, and for all $N \in \Nz$, $\Wc_p(\mu^N(t), \mu^N{x_{i_l^N(t)}}) \leq \Bc \sqrt{x_{i_l^N(t) + 1} -  x_{i_l^N(t)}}$. Hence, $$\Wc_p(V[\mu^N(t)],V[\mu^N{x_{i_l^N(t)}}]) \leq \widetilde{\om}^V(\Bc \sqrt{x_{i_l^N(t) + 1} -  x_{i_l^N(t)}}),$$ where $\widetilde{\om}^V$ is the modulus of continuity of $V$. Now, Lemma~\ref{pr:wassersteinidentity} and (\ref{eq:innerintbound1}) imply that
\begin{equation}\label{eq:innerintbound2}
\begin{aligned}
\biggl|\int_{\Xct(M)} \ps(X) \der (V[\mu^N(t)] &- V[\mu^N (x_{i_l^N(t)})])(X) \biggr| \\
	&\leq C R(\|\nabla\ph\|_\infty+\|\nabla^2\ph\|_\infty) \widetilde \om^V(\frac{\Bc}{\sqrt{N}}).
\end{aligned}
\end{equation}
Thus, we conclude from (\ref{eq:innerintbound0}) and (\ref{eq:innerintbound2}) that there exists $N_0\in\Nz$ such 
that for all $x\in M$ and all $N\geq N_0$
$$
\begin{multlined}
	\biggl|\int_{\Xct(M)} \tfrac{1}{2} \Lc^2_{\ol{X}[\mu^N(t)]} \ph (x) \der (V[\mu^N(t)] - V[\mu^N (x_{i_l^N(t)})])(X) \biggr|\\<\ep(\|\nabla\ph\|_\infty+\|\nabla^2\ph\|_\infty).
\end{multlined}
$$
Thus, the function
$$x\mapsto\int_{\Xct(M)}\tfrac{1}{2} \Lc^2_{\ol{X}[\mu^N(t)]} \ph (x) \der (V[\mu^N(t)] - V[\mu^N (x_{i_l^N(t)})])(X)$$
uniformly converges to $0$ for $N\to\infty$.
This implies that there exists a constant $K_1>0$ such that
\begin{equation}\label{eq:diff1}
\begin{aligned}
&\biggl|\int_0^{x_{i_{l^N(s)}}} \sum_{i = 0}^{I_N-1} \int_M \int_{\Xct(M)} \1_{[x_i, x_{i+1}]}(t) \frac{1}{2} \Lc^2_{\ol{X}[\mu^N(t)]} \ph (x)\\&\qquad\qquad \der (V[\mu^N (x_i)](X) -  V[\mu^N (t)])(X) \der\mu^N (x_i)(x) \der t\biggr|\\ &\qquad\qquad\qquad\qquad\qquad\qquad\qquad\leq K_1(\| \nabla \ph \|_\infty + \| \nabla^2 \ph \|_\infty)|s|\om^V(\frac{1}{\sqrt{N}}),
\end{aligned}
\end{equation}
for a suitable function $\om^V=_0o(1)$.

Let $x,y\in M$. Then
\begin{equation}\label{eq:diff2.A}
\begin{aligned}
\biggl|&\int_{\Xct(M)} \tfrac{1}{2}\bigl( \Lc^2_{\ol{X}[\mu^N(t)]} \ph (x) -\Lc^2_{\ol{X}[\mu^N(t)]} \ph (y)\bigr) \der V[\mu^N (t)](X)\biggr|\\
	&\quad\leq C'\int_{\Xct(M)} d(x,y)\|X\|_{W^{2,\infty}}^2(\| \nabla \ph \|_\infty + \| \nabla^2 \ph \|_\infty+\|\nabla^3\ph\|_\infty)\\
	&\qquad\qquad\qquad\qquad\qquad\qquad\qquad\qquad\qquad\cdot\der V[\mu^N (t)](X)\\
	&\quad\leq C'(\| \nabla \ph \|_\infty + \| \nabla^2 \ph \|_\infty+\|\nabla^3\ph\|_\infty)(B + 1)d(x,y).
\end{aligned}
\end{equation}

Hence, $$x \mapsto \int_{\Xct(M)} \frac{1}{2} \Lc^2_{\ol{X}[\mu^N(t)]} \ph (x) \der V[\mu^N (t)](X)$$ is Lipschitz with constant $K_2(\| \nabla \ph \|_\infty + \| \nabla^2 \ph \|_\infty+\|\nabla^3\ph\|_\infty)$ for some $K_2 > 0$. Again, Lemma~\ref{le:q-tight}, the Monge--Kantorovic duality  and the proof of Proposition~\ref{pr:wassersteinidentity} yield 
\begin{equation}\label{eq:diff2}
\begin{aligned}
\biggl|\int_0^{x_{i_{l^N(s)}}} &\sum_{i = 0}^{l-1} \int_M \int_{\Xct(M)}   \1_{[x_i, x_{i+1}]}(t)\frac{1}{2} \Lc^2_{\ol{X}[\mu^N(t)]} \ph (x) \\
	&\cdot \der V[\mu^N (t)](X)\der(\mu^N (x_i)- \mu^N(t))(x)  \der t\biggr| \\
	&\qquad\leq K_2\Bc(\| \nabla \ph \|_\infty + \| \nabla^2 \ph \|_\infty+\|\nabla^3\ph\|_\infty)\frac{|s|}{\sqrt{N}}.
\end{aligned}
\end{equation}
 Finally, (\ref{eq:LDclassic}) yields for all $i$ and all $t\in [x_i,x_{i+1}]$, 
\begin{equation}
|\Lc^2_{\ol{V}[\mu^N(x_i)]}\ph - \Lc^2_{\ol{V}[\mu^N(t)]}\ph| \leq 3(B+1)\|\ol{V}[\mu^N(x_i)] - \ol{V}[\mu^N(t)]\|_{W^{1,\infty}(M)} \|\ph\|_{\Cc^2}.
\end{equation}
 Thus, the last point of Lemma~\ref{le:eflowop} and a similar argument as those above yield that there exists $K_3 > 0$ such that 
\begin{equation}\label{eq:diff3}
\begin{aligned}
\biggl|\int_0^{x_{i_{l^N(s)}}} &\sum_{i = 0}^{l-1} \int_M \int_{\Xct(M)}   \1_{[x_i, x_{i+1}]}(t) \frac{1}{2}( \Lc^2_{\ol{X}[\mu^N(t)]} -  \Lc^2_{\ol{X}[\mu^N(x_i)]})\ph (x)\\&\cdot \der V[\mu^N (x_i)](X) \der\mu^N (x_i)(x) \der t\biggr|\\
	&\quad\leq K_3(\| \nabla \ph \|_\infty + \| \nabla^2 \ph \|_\infty)\widetilde{\om}^V(\frac{\Bc}{\sqrt{N}})|s|.
\end{aligned}
\end{equation}
Define $\om: x\mapsto x + \widetilde{\om}^V(x) + \om^V(x)$. Now, since $\lim_{N \to \infty} x_{i_l^N(s)} = s$ and the integrand is uniformly bounded, putting together (\ref{eq:diff1}), (\ref{eq:diff2}) and (\ref{eq:diff3}) we conclude that there exist $K_4 > 0$ such that
\begin{equation}
\begin{aligned}
\biggl|&\int_0^s \int_M \int_{\Xct(M)} \frac{1}{2} \Lc^2_{\ol{X}[\mu^N(t)]} \ph (x) \der V[\mu^N (t)](X) \der\mu^N (t)(x) \der t \\ &\qquad-\sum_{i = 0}^{l^N(s)-1} \int_{x_i}^{x_{i+1}}\int_M \int_{\Xct(M)} \frac{1}{2} \Lc^2_{\ol{X}[\mu^N(x_i)]} \ph (x)\\&\qquad\qquad\qquad\qquad\qquad\qquad \der V[\mu^N (x_i)](X) \der\mu^N (x_i)(x) \der t\biggr| \\
&\qquad\qquad\qquad\qquad\leq K_4 (\| \nabla \ph \|_\infty + \| \nabla^2 \ph \|_\infty+\|\nabla^3\ph\|_\infty)\om(\frac{\Bc}{\sqrt{N}})|s|.
\end{aligned}
\end{equation}
An easier, albeit fairly similar estimate shows that there exists $K_5 > 0$ such that
\begin{equation}
\begin{aligned}
&\biggl|\sum_{i = 0}^{l^N(s)-1} \int_{x_i}^{x_{i+1}} \int_M \Lc_{\ol{V}[\mu^N\bigl(\frac{x_i + x_{i+1}}{2}\bigr)]}(\ph) \der\mu^N \bigl(\frac{x_i + x_{i+1}}{2}\bigr)(x) \der t \\&\qquad\qquad- \int_0^s \int_M \Lc_{\ol{X}_{V[\mu(t)]}}(\ph) \der \mu(t) \der t\biggr| \\
&\qquad\qquad\qquad\qquad\qquad\qquad\qquad\leq K_5 (\| \nabla \ph \|_\infty + \| \nabla^2 \ph \|_\infty)\frac{\Bc}{\sqrt{N}}|s|.
\end{aligned}
\end{equation}

Therefore, since $\Wc_p(\mu^N(s), \mu^N(x_{i_l^N(s)})) \leq \frac{\Bc}{\sqrt{N}}$ and $\lim_{N \to \infty} x_{i_l^N(s)} = s$, there exists $\Kc > 0$ such that
\begin{equation}\label{eq:limsol}
\begin{aligned}
\biggl|\int_M \ph &\der (\mu^N(s) - \mu^N(0)) \\&- \int_0^s \int_M \int_{\Xct(M)} \frac{1}{2} \Lc^2_{\ol{X}[\mu^N(t)]} \ph (x) \der V[\mu^N (t)](X) \der\mu^N (t)(x) \der t \\&-\int_0^s \int_M \Lc_{\ol{V}[\mu^N(t)]}(\ph) \der \mu^N(t) \der t\biggr| \\
&\qquad\leq \Kc (\| \nabla \ph \|_\infty + \| \nabla^2 \ph \|_\infty+\|\nabla^3\ph\|_\infty)\Om(\tfrac{1}{N})|s|.
\end{aligned}
\end{equation}
for some nondecreasing continuous $\Om$ with $\Om(0) = 0$, which concludes the second step.

\subsubsection*{Conclusion 1:}

We make the stronger assumptions of the first point. 

Corollary~\ref{co:pHolder} implies that the $\bigl(\mu^N\bigr)_{N \in \Nz}$ are uniformly $\frac{1}{p}$--Hölder continuous in the $p$--Wasserstein distance. Hence, in particular, Lemma~\ref{pr:wassersteinidentity} yields that the $\bigl(\mu^N\bigr)_{N \in \Nz}$ are uniformly $\frac{1}{p}$--Hölder continuous in the $q$--Wasserstein distance for $1\leq q < p$. Furthermore, 
Lemma~\ref{le:q-tight} implies that the $\bigl(\mu^N\bigr)_{N \in \Nz}$ are tight in the $q$--th moment. Thus, by Corollary~\ref{co:compacttight}, the $\bigl(\mu^N\bigr)_{N \in \Nz}$ are relatively compact in $\Wc_q(M)$. Therefore, the Theorem of Arzelà--Ascoli implies the existence of a subsequence $\mu^{M_k}$ which converges uniformly to a Hölder--continuous curve $\mu(t)$ with exponent $\frac{1}{p}$ in the $q$--Wasserstein distance.
In particular, by successive extractions, we obtain a subsequence $\bigl(\mu^{N_k}\bigr)_{k \in \Nz}$ satisfying $\Wc_{p -\frac{1}{k}}(\mu^{N_k}, \mu)_{\infty} \leq \frac{1}{k}$.

By Corollary~\ref{co:pmomunifLip}, the curves $t \mapsto \Mc_p(\mu^{M_k})$ are uniformly Lipschitz. In particular, they are uniformly bounded. Thus, the Theorem of Arzelà--Ascoli implies that there exists a subsequence $\mu^{N_l} \coloneqq \mu^{M_{k_l}}$ and a Lipschitz curve $t \mapsto M_p(t)$, such that $\biggl( t \mapsto \Mc_p\bigl(\mu^{N_l}_t\bigr)\biggr)$ uniformly converges to $t \mapsto M_p(t)$. Now, for all $k \in \Nz$,
\begin{equation}
\Mc_p(\mu^{N_l}(t)) = \int_0^{\infty} (\mu^{N_l}(t))(\{d(x_0,x)^p \geq \la\}) \der \la,
\end{equation}
and by narrow convergence, for all $\la > 0$, 
\begin{equation}
\lim_{k \to \infty} (\mu^{N_l}(t))(\{d(x_0,x)^p \geq \la\}) = (\mu(t))(\{d(x_0,x)^p \geq \la\}).
\end{equation}
Thus, Fatou's Lemma implies $\Mc_p(\mu(t)) \leq \liminf_{k \to \infty} \Mc_p(\mu^{N_l}(t))$ and, hence, for all $t \in [0,T]$, $\mu(t) \in \Wc_p(M)$ and
$\Mc_p(\mu(t)) \leq M_p(t)$.

Let $\ep > 0$ and $B > 0$ such that for all $t \in [0,T]$, $M_p(t) \leq B$. Lemma~\ref{le:momentpq} yields that there exists $L \in \Nz$, such that for all $l_1, l \geq L$, $$\biggl\|\biggl(t \mapsto  \bigl|\Mc_p(\mu^{N_l}(t)) - \Mc_{p - \frac{1}{l_1}}(\mu^{N_l}(t))\bigr|\biggr)\biggr\|_{\infty} \leq \ep,$$ and $\bigl| B^{1 - \frac{1}{Lp}} - B\bigr| \leq \ep$. W.l.o.g. $\frac{1}{L} \leq \ep$ and for all $l \geq L$, $$\biggl\|\biggl(t \mapsto  \bigl|\Mc_p(\mu^{N_l}(t)) - M_p(t)\bigr|\biggr)\biggr\|_{\infty}\leq \ep.$$
Let $t \in [0,T]$. There exists $L_1$ such that for all $l \geq L_1$, 
\begin{equation}
    \biggl|\Mc_{p - \frac{1}{L}}(\mu(t)) - \Mc_{p - \frac{1}{L}}(\mu^{N_l}(t))\biggr| \leq \ep.
\end{equation}
Then, it follows for all $l \geq \max(L,L_1)$ that
\begin{equation}
	\begin{aligned}
		\Mc_p(\mu^{N_l}(t)) &\geq M_p(t) - \ep  \geq \Mc_p(\mu(t)) - \ep \\&\geq \Mc_{p - \frac{1}{L}}(\mu(t))^{\frac{p}{p - \frac{1}{L}}} - \ep\\
		& \geq \Mc_{p - \frac{1}{L}}(\mu(t)) - 2\ep \geq \Mc_{p - \frac{1}{L}}(\mu^{N_l}(t)) - 3\ep \\&\geq \Mc_{p}(\mu^{N_l}(t)) - 4\ep.
	\end{aligned}
\end{equation}
Hence, for all $t \in [0,T]$, $\lim_{l \to \infty} \Mc_p(\mu^{N_l}(t)) = \Mc_p(\mu(t))$ and, thus, $\mu^{N_l}(t)$ converges to $\mu(t)$ with respect to the $p$--Wasserstein distance by Theorem~\ref{th:Wassersteinconvergence}. In particular, $\bigl(\mu^{N_l}(t)\bigr)$ is relatively compact in $\Wc_p(M)$. Using Arzelà--Ascoli one more time implies, therefore, that there exists a subsequence $\bigl(\mu^{{\Nc}_l}\bigr)$ of $\bigl(\mu^{N_l}\bigr)$, which uniformly converges to $\mu$ in the $p$--Wasserstein distance and $t \mapsto \mu(t)$ is Hölder--continuous with exponent $\frac{1}{p}$ in the $p$--Wasserstein distance.  Hence, $V[\mu^{{\Nc}_l}(t)]$ converges to $V[\mu(t)]$ with respect to $\Wc_p$ and $\ol{V}[\mu^{{\Nc}_l}(t)]$ converges to $\ol{V}[\mu(t)]$
with respect to $\|~\|_{C^1}$. Therefore, we conclude that $\mu$ solves (\ref{eq:MDEd}) with initial value $\mu_0$.

The end of the proof becomes more technical if we do not assume $(M,g)$ to be of bounded geometry.

\subsubsection*{Conclusion 2:}

\subsubsection*{Step 3:}

Next, we use (\ref{eq:limsol}) to extract a subsequence of $\mu^N$ that simply weak-*-converges to a weak-*-continuous
curve of positive Borel measures $\mu(t)$ with $|\mu(t)|\leq 1$. 

Similarly to (\ref{eq:limsol}), we get for all $t,s \in [0,T]$,
\begin{equation}\label{eq:limsolst}
\begin{aligned}
\biggl|\int_M \ph &\der (\mu^N(t) - \mu^N(s)) \\&- \int_s^t \int_M \int_{\Xct(M)} \frac{1}{2} \Lc^2_{\ol{X}[\mu^N(u)]} \ph (x) \der V[\mu^N (u)](X) \der\mu^N (u)(x) \der u \\&-\int_s^t \int_M \Lc_{\ol{V}[\mu^N(u)]}(\ph) \der \mu^N(u) \der u\biggr|\\&\qquad\leq K(\| \nabla \ph \|_\infty + \| \nabla^2 \ph \|_\infty+\|\nabla^3\ph\|_\infty)|t-s|\Om(\tfrac{1}{N}).
\end{aligned}
\end{equation}
Therefore, using the triangle inequality in (\ref{eq:limsolst})
\begin{equation}\label{eq:limsolstl}
\begin{aligned}
\biggl|\int_M \ph &\der (\mu^N(t) - \mu^N(s))\biggr| \\&\leq \biggl|\int_s^t \int_M \int_{\Xct(M)} \frac{1}{2} \Lc^2_{\ol{X}[\mu^N(u)]} \ph (x) \der V[\mu^N (u)](X) \der\mu^N (u)(x) \der u\biggr| \\&\qquad+\biggl|\int_s^t \int_M \Lc_{\ol{V}[\mu^N(u)]}(\ph) \der \mu^N(u) \der u\biggr|\\
 &\qquad+ K(\| \nabla \ph \|_\infty + \| \nabla^2 \ph \|_\infty+\|\nabla^3\ph\|_\infty)|t-s|\Om(\tfrac{1}{N}).
\end{aligned}
\end{equation}
Since
$$
\begin{aligned}
	\biggl|\int_M\int_{\Xct(M)} \frac{1}{2} \Lc^2_{\ol{X}[\mu^N(u)]} \ph (x) \der V[\mu^N (u)](X)&\der\mu^N(u)(x)\biggr|\\&\leq B'(\| \nabla \ph \|_\infty + \| \nabla^2 \ph \|_\infty)
\end{aligned}
$$
and
$$
	\biggl|\int_M \Lc_{\ol{V}[\mu^N(u)]} \ph (x) \der \mu^N (u)(x)\biggr|\leq B''\|\nabla\ph\|_{\infty},
$$
for some fixed constant $B', B''>0$, 
we conclude from (\ref{eq:limsolstl}) that
\begin{equation}\label{eq:limsolstf}
\begin{aligned}
&\biggl|\int_M \ph \der (\mu^N(t) - \mu^N(s))\biggr| \leq C\biggl((\| \nabla \ph \|_\infty + \| \nabla^2 \ph \|_\infty+\|\nabla^3\ph\|_\infty)|t-s|\Om(\tfrac{1}{N}) \\ &\qquad+ (\| \nabla \ph \|_\infty + \| \nabla^2 \ph \|_\infty)|t-s|\biggr),
\end{aligned}
\end{equation}
where $C$ is independent of $N$, $s$, $t$, and $\ph$. 
Hence, the family of functions $t\mapsto\int_M \ph\der \mu^N(t)$ is equicontinuous for all $\ph\in C^\infty_c(M)$. This observation will allow us to extract
by Banach-Alaoglu and a diagonal argument a subsequence of the $\mu^{N_K}$ that converges on all rational numbers $q\in\Qz$ to finite Borel measures $\mu^{N_K}(q)\to\mu(q)$. On the other hand, the Theorem of Arzel\'a--Ascoli yields that for all
$\ph\in C_c^\infty(M)$ the family $f_\ph^N(t):=\int_M\ph\der\mu^N(t)$ converges uniformly for all $t\in [0,T]$ to $f_\ph(t)$. 
Furthermore, by the weak-*-convergence on the rational numbers we get $f_\ph(q)=\int_M\ph\der\mu(q)$ for all $q\in\Qz$ for
all $\ph\in C^\infty_c(M)$.

Since for $q\in\Qz$ we have that $\mu(q)\in\Mc_+(M)$, we can extend $f_\ph(q)=\int_M\ph\der\mu(q)$ to all $\ph\in C_b(M)$. This
function is continuous in the trace topology on $[0,T]\cap\Qz$ for all $\ph\in C_b(M)$. So it can be continunously extended to a 
function $[0,T]\to\Mc_+(M)$. We define $\mu(t)$ as the functional $\ph\mapsto f_\ph(t)$. Now, one readily shows that this
functional is linear and continuous, hence a finite Borel measure by the Riesz representation theorem. By construction,
$t\mapsto\mu(t)$ is weak-*-continuous. Furthermore, again by equicontinuity and uniform convergence of $f^N_\ph$ to $f_\ph$, 
we get that $\mu^N(t)$ weak-*-converges to $\mu(t)$ for all $t\in[0,T]$ like above.

\subsubsection*{Step 4:}

In this step, we show that $\mu(t)$ is a probability measure for all $t \in [0,T]$ and that the $(\mu^N(t))_{N \in \Nz, t\in[0,T]}$ have uniformly bounded $p$--th moments. 

\subsubsection*{Step 4.1:}

First, we prove that the $\mu(t)$ are all probability measures. By (\ref{eq:limsolstl}),  we have for all $\ph \in \Cc^{\infty}_c(M)$,
\begin{equation}\label{eq:muNprobeq1}
	\begin{aligned}
	\biggl|\int_M\ph\der(\mu^N(t)&-\mu^N(s))\biggr| \leq |t-s|\bigl(K_1(\|\nabla^2\ph\|_\infty+\|\nabla\ph\|_\infty)\\
		&+ K_2\|\nabla\ph\|_\infty+K(\|\nabla \ph\|_\infty+\|\nabla^2 \ph\|_\infty+\|\nabla^3 \ph\|_\infty)\om(\de^t_N)\bigr)
	\end{aligned}
\end{equation}
for some $K_1, K_2 > 0$. Take $N\in\Nz$ and $t\in[0,T]$, let $\ep>0$. There exists $R(t)>0$ such that
\begin{equation}
\begin{aligned}
	&\mu^N(0)(M\setminus B(x_0,R(t))) \leq\ep \\
	&\mu^N(t)(M\setminus B(x_0,R(t)))\leq\ep.
\end{aligned}
\end{equation}
We choose $0\leq\ph\in C^\infty_c(M)$ such that
$\ph\equiv 1$ on $B(x_0,R(t))$ and both $\|\ph''\|_\infty\leq\ep$ and $\|\ph'\|_\infty\leq\ep$. Then
(\ref{eq:muNprobeq1}) implies
$$
	\biggl|\int_{M\setminus B(x_0,R(t))}\ph\der(\mu^N(t)-\mu^N(0))\biggr|\leq K_3|T|\ep,
$$
for some $K_3 > 0$ that does not depend on $N$ or $R(t)$.

Now, take $N\in\Nz$ such that
$$
	\biggl\|\int_M\ph\der\mu^N(t)-\int_M\ph\der\mu(t)\biggr\|_\infty\leq\ep.
$$
Then, it follows that
$$
	\begin{aligned}
		\mu(t)(\Rz^n)&\geq\int_M\ph\der\mu(t)\geq\int_M\ph\der\mu(0)-\ep-K_3|T|\ep-\ep\\
			&=\int_M\ph\der\mu^N(0)-\ep-K_3|T|\ep-\ep \geq 1-\ep(2+K_3|T|).
	\end{aligned}
$$
Thus, $\mu(t)$ is a probability measure.

\subsubsection*{Step 4.2:}

Now, we prove by induction that for all $n\in\Nz$ with $n\leq p$, $(\mu^N(t))_{N,t}$ has
uniformly bounded $n$--th moment.

For $n=1$, let $t \in [0,T]$ and choose $R>0$ such that
$$
	\int_{M\setminus B(x_0,R)}d(x,x_0)\der\mu^N(t)\leq\ep\qquad\text{and}\qquad
	\int_{M\setminus B(x_0,R)}d(x,x_0)\der\mu^N(0)\leq\ep.
$$
Let $0\leq\ph\in C^\infty_c(M)$ be such that $\ph(x)=d(x_0,x)$ in $B(x_0,R) \setminus B(x_0,1)$, and
$\|\nabla\ph\|_\infty\leq 1$, $\|\nabla^2\ph\|_\infty\leq 1$, $\|\nabla^3\ph\|_\infty \leq 1$ and $d(x_0,x) \leq \ph(x) \leq 1$ for all $x \in B(x_0,1)$. Such a function
exists for all $R>0$. Then,
\begin{equation}\label{eq:M1est}
	\begin{aligned}
		\Mc_1(\mu^N(i)) &= \int_{M}d(x,x_0)\der\mu^N(i) \geq \int_{M}\ph\der\mu^N(i) - 1\\
			&\geq \int_{B(x_0,R)}d(x_0,x)\der\mu^N(i) - 1 \geq\Mc_1(\mu^N(i))-\eps -1.
	\end{aligned}
\end{equation}
for $i \in \{0,t\}$. But
$$
	\biggl|\int_M\ph\der(\mu^N(t)-\mu^N(0))\biggr|\leq \Kc|t|,
$$
where $\Kc$ is independent of $N$ and $R$. Therefore,
$$
	\Mc_1(\mu^N(t)-\mu^N(0))\leq K|t|+2\ep + 2\leq KT+2\ep +2,
$$
and the $(\Mc_1(\mu^N(t)))_{N,t}$ are uniformly bounded.

Let us now suppose that we have proved the result for $k\in\Nz$ with $1 \leq k\leq p-1$. Now, we get from (\ref{eq:limsolst})
$$
	\begin{aligned}
          \biggl|\int_M\ph\der(\mu^N(t)-\mu^N(s))\biggr| &\leq \int_s^t\int_M K_1(|\nabla\ph(x)|+|\nabla^2\ph(x)|)\der\mu^N(u)(x)\,du \\
			&+ \int_s^t K_2|\nabla\ph(x)|\der\mu^N(u)(x)\,du \\
                        &+ |t-s|K\|\nabla^3\ph\|_\infty\om(\de^t_N).
	\end{aligned}
$$
Again, take $s<t \in [0,T]$, $N \in \Nz$ and $R>0$ such that
$$
\begin{aligned}
	&\int_{M\setminus B(x_0,R)}d(x,x_0)^{(k+1)}\der\mu^N(s)\leq\ep \\
	&\int_{M\setminus B(x_0,R)}d(x,x_0)^{(k+1)}\der\mu^N(t)\leq\ep,
\end{aligned}
$$
using that $k+1 \leq p$ and, hence, the $(\mu^N(t))$ are in $\Wc_k(M)$. 

Let $0\leq\ph\in C^\infty_c(M)$ be such that $\ph(x)=d(x_0,x)^{(k+1)}$ in $B(x_0,R)$, and
$\|\nabla \ph\|_\infty\leq R^k$, $\|\nabla^2 \ph\|_\infty\leq R^k$, and $\|\nabla^3 \ph\|\leq R^k$. Such a function exists for all $R>0$. Note that we do not have to regularize the function around $0$--- unlike in the last step.

Then, for all $u \in [s,t]$,
\begin{equation}
\int_M K_1(|\nabla\ph(x)|+|\nabla^2\ph(x)|)\der\mu^N(u)(x) \leq C \Mc_k(\mu^N(u)), 
\end{equation}
for some constant $C>0$, which does not depend on $R$, $N$ or $u$. Now, the induction hypothesis yields, that these $k$--th moments are uniformly bounded by some $K_k >0$. Thus, we argue as in (\ref{eq:M1est}) to conclude that 
$$
	\Mc_{k+1}(\mu^N(t)-\mu^N(s))\leq K_k|t-s|+2\ep\leq K_kT+2\ep,
$$
and, thus, prove the induction step.

Finally, we show the statement for the $p$--th moments. If $p \in \Nz$, we have shown in the induction proof above that the $p$--th moments are uniformly bounded. 

Otherwise, there exists $k\in \Nz$ with $k > p-1$, for which the statement is already proved. But since the $(\mu^N(t))_{N,t}$ have uniformly bounded $k$--th moments, they have in particular $(p-1)$--st moments bounded by some $K_{p-1} > 0$. Now, since $\mu_0 \in \Wc_p(M)$ and the $(\mu^N(t))_{N,t}$ are all in $\Wc_p(M)$ by Lemma~\ref{le:fflowop}, we proceed completely analogously as in the induction step above to prove that there exist $K_p > 0$ such that $\Mc_p(\mu^N(t)) \leq K_p$ for all $N,t$.

\subsubsection*{Step 5:}

Finally, we conclude that $t \mapsto \mu(t)$ is continuous in $\Wc_p(M)$ and that it satisfies (\ref{eq:MDEd}).

Take $t \in [0,T]$. By Step 4, $(\mu^{N_k}(t))_{k\in\Nz}$ has uniformly bounded $p$--th moments. Furthermore, we have shown that $\mu(t)$ is a probability measure, so $(\mu^{N_k}(t))_{k\in\Nz}$ converges narrowly to $\mu(t)$. Now, for all $k \in \Nz$,
\begin{equation}
\Mc_p(\mu^{N_k}(t)) = \int_0^{\infty} (\mu^{N_k}(t))(\{d(x_0,x)^p \geq \la\}) \der \la,
\end{equation}
and by narrow convergence, for all $\la > 0$, 
\begin{equation}
\lim_{k \to \infty} (\mu^{N_k}(t))(\{d(x_0,x)^p \geq \la\}) = (\mu(t))(\{d(x_0,x)^p \geq \la\}).
\end{equation}

Thus, Fatou's Lemma implies $\Mc_p(\mu(t)) \leq \liminf_{k \to \infty} \Mc_p(\mu^{N_k}(t))$ and, hence, $\mu(t) \in \Wc_p(M)$ and $\Mc_p(\mu(t)) \leq K_p$ for all $t\in[0,T]$.

Take $q < p$ such that $V$ is continuous for the $\Wc_q$--distance. Then, Proposition~\ref{pr:wassersteinidentity} and Lemma~\ref{le:q-tight} yield that $\mu(t)$ is continuous for the $\Wc_q$--topology on $\Wc_p(M)$ and for all $t \in [0,T]$, $\mu^{N_k}(t)$ converges to $\mu(t)$ in the $\Wc_q$--distance.
Hence, $V[\mu^N(t)]$ converges to $V[\mu(t)]$ with respect to $\Wc_p$ and $\ol{V}[\mu^{N_k}(t)]$ converges to $\ol{V}[\mu(t)]$
with respect to $\|~\|_{C^1}$. Therefore, we conclude that $\mu$ solves (\ref{eq:MDEd}) with initial value $\mu_0$. 

Therefore, the continuity of $\mu: [0,T] \to \Wc_p(M)$ is proved in Proposition~\ref{pr:WpHolder}. 
\end{proof}

\begin{re}
Now, we get Theorem~\ref{th:exfirst} as a corollary of Theorem~\ref{th:gencentlim}.
\end{re}

\begin{re}
Note that the Steps 4 and 5 in the proof of Theorem~\ref{th:gencentlim} are trivial if we work with a compact manifold $(M,g)$.
\end{re}

\subsection{Uniqueness results, Grönwall type estimates and the Strong Generalised Central Limit Theorem}

In this last section, we shall see that the solutions of (\ref{eq:MDEd}) are in general not unique. However, we will prove smooth solutions, as introduced in Definition~\ref{de:smoothsol}, are unique under suitable assumptions. 

\subsubsection{The linear equation and LAFAS}
First, we introduce a linear analogon to 
(\ref{eq:MDEd}).

\begin{defi}[LMDE]\label{def:MDEl}
Let $T>0$, $p\geq 2$ and $\mu_0 \in \Wc_p(M)$. Let $V: [0,T] \to \Wc_p(\Xc^{2,\infty}(M))$ be continuous with respect to the $p$--Wasserstein distance. Let $[0,T] \to \Pc(M)$, $t \mapsto \mu(t)$ be a path of measures. We say that $\mu(t)$ satisfies the linear measure differential equation \emph{(\ref{eq:MDEl})} for $V$ with initial value $\mu_0$ if $\mu(0) =\mu_0$, $\mu$ is narrowly continuous and for all $s\in [0,T]$ and all $\ph$ smooth and compactly supported on $M$, we have
\begin{equation}\label{eq:MDEl}
	\tag{LMDE}
\begin{aligned}
\int_M \ph \der \mu(s) - \int_M \ph \der \mu(0) = \int_0^s \int_M \square^{V_t} \ph \der \mu(t) \der t, 
\end{aligned}
\end{equation} 
where for all $t$
\begin{equation}
	\square^{V_t} \ph = \int_{\Xct(M)} \frac{1}{2}\Lc_{X-\ol{V_t}}^2(\ph) + \Lc_X(\ph) \der V_t(X).
\end{equation}
\end{defi}

We introduce an appropriate analogon of the average flow approximation series for (\ref{eq:MDEl}).

\begin{defi}[Linear Average Flow Approximation Series]\label{de:LAFAS}
Let $p \geq 2$, $T>0$,  $\mu_0 \in \Wc_p(M)$, $\nu: [0,T] \to \Wc_p(M)$ be continuous with respect to the $p$--Wasserstein distance and $V$ be a continuous probability vector--field on $\Wc_p(M)$. Let $\de>0$ and $\Pf(\de)$ be a partition of $[0,T]$ with maximal step--length $\pf(\Pf(\de)) \leq \de$. Let $N \coloneqq |\Pf(\de)|$ and index the elements of $\Pf(\de)$ in increasing order. Define
$$
\mu^{\Pf(\de)}(0) = \mu_0,
$$
and for all $l \in \llbracket 0,N-1 \rrbracket$,
$$
\mu^{\Pf(\de)}(x_{l+1}) = (e^{V[\nu_{x_l}]}_{x_{l+1} - x_l}\circ f^{V[\nu_{x_l}]}_{x_{l+1} - x_l})(\mu^{\Pf(\de)}(x_l)).
$$
Furthermore, define $\mu^{\Pf(\de)}(t)$ by 
$$
\mu^{\Pf(\de)}(t) = f^{V[\nu_{x_l}]}_{2(t - x_l)}(\mu^{\Pf(\de)}(x_l)),
$$
for all $t \in \bigl[x_l, \frac{x_{l+1} + x_l}{2} \bigr]$ and 
$$
\mu^{\Pf(\de)}(t) = \biggl(e^{V[\nu_{x_l}]}_{2\bigl(t - \frac{x_{l+1} + x_l}{2}\bigr)}\circ f^{V[\nu_{x_l}]}_{x_{l+1} - x_l}\biggr)\bigl(\mu^{\Pf(\de)}(x_l)\bigr),
$$
for all $t \in \bigl[ \frac{x_{l+1} + x_l}{2}, x_{l+1} \bigr]$.
\end{defi}
Also the definition of smooth solution is adapted immediately.
\begin{defi}[Smooth solution]\label{de:linsmoothsol}\strut\newline
 Let $p\geq 2$, $T>0$ and $V: [0,T] \to \Wc_p(\Xc^{2,\infty}(M))$ be continuous. A narrowly continuous curve $\nu: [0,T] \to \Wc_p(M)$ is called a \emph{smooth solution} of (\ref{eq:MDEl}) if it solves (\ref{eq:MDEl}) for $V$ and there exists a sequence $(\Pf_n)_{n\in\Nz}$ of partitions of $[0,T]$ with maximal step--length $\pf(\Pf_n) \to 0$ such that for all $n\in\Nz$, the LAFAS $\mu^{\Pf_n}$ converges pointwise narrowly to $\nu$.
\end{defi}
Now, we readily obtain an existence result for equation (\ref{eq:MDEl}) which is analogous to the Generalised Central Limit Theorem~\ref{th:gencentlim}.
\begin{theorem}[Generalised Central Limit Theorem---linear version]\label{th:lingencentlim}\strut\newline
Let $p>2$, $T>0$ and $\mu_0 \in \Wc_p(M)$. Let $\bigl( \Pf(\frac{1}{N}) \bigr)_{N \in \Nz}$ be a family of partitions of $[0,T]$. Let $\nu: [0,T] \to \Wc_p(M)$ be continuous with respect to the $p$--Wasserstein distance and $V$ be a continuous probability vector--field on $\Wc_p(M)$. Assume that $V$ has uniformly bounded $p$--th moments, e.g., there exists a constant $B>0$ such that for all $\mu \in \Wc_p(M)$, $\Mc_p(V[\mu]) < B$. 

\begin{enumerate}
	\item Assume that $(M,g)$ is of bounded geometry. Assume that there exists $R > 0$ such that for all $\mu \in \Wc_p(M)$, $V[\mu] \in \Wc_{p+2}(\Xc^{2,\infty}(M))$ and $\Mc_{p+2}(V[\mu]) \leq R^{p+2}$. Then, there exists a subsequence $\biggl(\mu^{\Pf(\frac{1}{N_k})}\biggr)_{k \in \Nz}$ of the Linear Average Flow Approximation Series $\biggl(\mu^{\Pf(\frac{1}{N})}\biggr)_{N \in \Nz}$ with initial value $\mu_0$ (See Definition~\ref{de:LAFAS}) which uniformly converges  with respect to the $p$--Wasserstein distance to a $\frac{1}{p}$--Hölder--continuous solution $\mu: [0,T] \to \Wc_p(M)$ of (\ref{eq:MDEl}) for $t \mapsto V[\nu(t)]$ with initial value $\mu_0$.
	
	\item Now, we only assume that there exists $q < p$ such that $V$ is continuous for the $\Wc_q$--distance.
	Then, there exists a subsequence $\biggl(\mu^{\Pf(\frac{1}{N_k})}\biggr)_{k \in \Nz}$ of the Linear Average Flow Approximation Series with initial value $\mu_0$ which converges to a solution $\mu: [0,T] \to \Wc_p(M)$ of (\ref{eq:MDEl}) for $t \mapsto V[\nu(t)]$ with initial value $\mu_0$. 

	The convergence of $\mu^{\Pf(\frac{1}{N_k})}$ to $\mu$ is pointwise in the $\Wc_{q}$--distance and $\mu(t)$ is continuous in $\Wc_{p}(M)$. 

\end{enumerate}
\end{theorem}
\begin{proof}
The proof is completely analogous to the proof of Theorem~\ref{th:gencentlim}. 
\end{proof}

\subsubsection{Grönwall type estimates and Hausdorff--continuity of the set of solutions}
Now, we prove two Grönwall type estimates for AFAS and LAFAS respectively for different initial values and identical initial values, but different probability vector--fields. 
\begin{theorem}\label{th:gronwallHausdorff}
	Let $p \geq 2$ and $T >0$. Let $\mu_0, \nu_0 \in \Wc_p(M)$. Let $\bigl( \Pf(\frac{1}{N}) \bigr)_{N \in \Nz}$ be a family of partitions of $[0,T]$. Assume:
	\begin{itemize}
	\item $(M,g)$ is of bounded geometry.
	\item $V$ is a probability vector--field which satisfies:
	\begin{itemize}
		\item There exists a constant $R > 0$ such that for all $\mu \in \Wc_p(M)$, $V[\mu] \in \Wc_{p+2}(\Xc^{2,\infty}(M))$ and $\Mc_{p+2}(V[\mu]) \leq R^{p+2}$.
		\item  $V$ is $L$--Lipschitz with respect to the $\Wc_p$--distance for some $L>0$.  
	\end{itemize}
	\end{itemize}
	Let $\ep >0$. Denote $\Gc(p,L,R,M)\coloneqq  C(M)2^{p-2}\bigl(9(p-1)L^2 + pR^2 + 6RL + R + 3L \bigr)$, where $C(M)$ is a constant depending only on $M$ and $C(\Rz^n) = 1$ for all $n \in \Nz$. Then, there exists $M \in \Nz$ such that for all $N \geq M$, $t \in [0,T]$, the AFAS $\mu^{\Pf(\frac{1}{N})}$ and $\nu^{\Pf(\frac{1}{N})}$ of Definition~\ref{de:LAFAS} with initial values $\mu_0$ and $\nu_0$ satisfy
    \begin{equation}
		\begin{aligned}
			\Wc_p(\mu^{\Pf\bigl(\frac{1}{N}\bigr)}(t), \nu^{\Pf\bigl(\frac{1}{N}\bigr)}(t)) \leq \Wc_p(\mu_0, \nu_0) e^{\bigl(\Gc(p,L,R) + \ep\bigr)t} + t\ep.
		\end{aligned}
	\end{equation}
	Let $p>2$. In particular, for every smooth solution $\mu$ of (\ref{eq:MDEd}) with initial value $\mu_0$, there exists a smooth solution $\nu$ of (\ref{eq:MDEd}) with initial value $\nu_0$ such that
	\begin{equation}
		\begin{aligned}
			\Wc_p(\mu(t), \nu(t)) \leq \Wc_p(\mu_0, \nu_0) e^{\Gc(p,L,R)t},
		\end{aligned}
	\end{equation}
	and vice--versa. Let $S_p(V,T,\mu_0)$ be the set of smooth solutions of (\ref{eq:MDEd}) in $\Wc_p(M)$, with initial value $\mu_0 \in \Wc_p(M)$ on $[0,T]$. In other words, the map
	\begin{equation}
		\begin{aligned}
			S_p: \Wc_p(M) &\to \Pz\biggl(\Cc^{0,\frac{1}{p}}\bigl([0,T], \Wc_p(M)\bigr)\biggr)\\
			\mu &\mapsto S_p(V,T,\mu_0),
		\end{aligned}
	\end{equation}
	is Lipschitz with respect to the Hausdorff--distance on $\Pz\biggl(\Cc^{0,\frac{1}{p}}\bigl([0,T], \Wc_p(M)\bigr)\biggr)$.
\end{theorem}
\begin{proof}
	For $(M,g) = (\Rz^n, g^{eucl})$, the result follows by immediate induction using (\ref{eq:estfrightVp}) and (\ref{eq:esterightVp}). For an arbitrary manifold with bounded geometry, we use induction in Lemma~\ref{le:fflowopman}. 
\end{proof}

\begin{lem}\label{le:Hausdorffvectfield}
	Assume $(M,g)$ is of bounded geometry. Let $p\geq2$, $T>0$, $R>0$ and $\mu_0 \in \Wc_p(M)$. Denote by 
	\begin{itemize}
		\item $S_p(V,T,\mu_0)$ the set of smooth solutions of (\ref{eq:MDEl}) in $\Wc_p(M)$ for a curve $V: [0,T] \to \Wc_{p+2}(\Xc^{2,\infty}(M))$, with initial value $\mu_0 \in \Wc_p(M)$ on $[0,T]$,
		\item $\Bc_p \coloneqq 2^{\frac{p+3}{2}} 3 \sqrt{p-1}$.
	\end{itemize}
	 Let $V, W: [0,T] \mapsto \Wc_{p+2}(\Xc^{2,\infty}(M))$ be two curves of vector--field probabilities satisfying $\Mc_{p+2}(V_t), \Mc_{p+2}(W_t) \leq R^{p+2}$ for all $t \in [0,T]$. Let $\bigl( \Pf(\frac{1}{N}) \bigr)_{N \in \Nz}$ be a family of partitions of $[0,T]$. Denote by $\bigl(\mu^{\Pf(\frac{1}{N})}\bigr)_{N \in \Nz}$ and $\bigl(\nu^{\Pf(\frac{1}{N})}\bigr)_{N \in \Nz}$ the respective (LAFAS) with initial value $\mu_0$ associated to $V$ and $W$ respectively. Then, there exists a constant $C(M) >0$ depending only on $M$, with $C(\Rz^n) = 1$ for all $n \in \Nz$, such that the following hold: 
	\begin{itemize}
		\item Suppose $p = 2$. There exists $M \in \Nz$, such that for all $N \geq M$ and for all $t \in \bigl[0, \frac{1}{2(R+1)^2}\bigr]$, 
		\begin{equation}
			\Wc_2\bigl(\mu^{\Pf\bigl(\frac{1}{N}\bigr)}(t), \nu^{\Pf\bigl(\frac{1}{N}\bigr)}(t)\bigr) \leq C(M)6 \Wc_2(V, W)_{\infty} \sqrt{t}.
		\end{equation}
		\item Suppose $p>2$. There exist $\Sf_p >0$, and for all $2 < q \leq 4$, an integer $M_q \in \Nz$, such that for all $N \geq M_q$ and for all $t \in \bigl[0, \frac{\Sf_p}{(R+1)^2}\bigr]$,
		\begin{equation}
			\Wc_p\bigl(\mu^{\Pf\bigl(\frac{1}{N}\bigr)}(t), \nu^{\Pf\bigl(\frac{1}{N}\bigr)}(t)\bigr) \leq  C(M)\Bc_p \Wc_p(V, W)_{\infty} t^{\frac{1}{q}}.
		\end{equation}
		\item In particular, for all $p > 2$, $\mu \in S_p(V,T,\mu_0)$, $\nu \in S_p(W,T,\mu_0)$, and $t \in \bigl[0, \frac{\Sf_p}{(R+1)^2}\bigr]$,
		\begin{equation}
			\Wc_p\bigl(\mu_t, \nu_t\bigr) \leq  C(M)\Bc_p \Wc_p(V, W)_{\infty} \sqrt{t}.
		\end{equation}
	\end{itemize}
	
	Let $p>2$. Furthermore, there exists a constant $\Kc >0$, depending only on $R$, $T$, $p$ and $M$, such that for all $2 < q \leq 4$, there exists $M_q \in \Nz$, such that for all $N \geq M_q$ and all $t \in [0,T]$,
	\begin{equation}
		\Wc_p(\mu^{\Pf\bigl(\frac{1}{N}\bigr)}(t), \nu^{\Pf\bigl(\frac{1}{N}\bigr)}(t)) \leq \Kc \Wc_p(V,W)_{\infty}t^{\frac{1}{q}},
	\end{equation}
	and
	\begin{equation}\label{eq:vectdistest}
		\Wc_p(\mu_t, \nu_t) \leq \Kc \Wc_p(V,W)_{\infty} \sqrt t.
	\end{equation}
	In particular, the map
	\begin{equation}
		\begin{aligned}
			\Sc_p: \Cc^0\bigl([0,T], \Wc_{p+2}(\Xc^{2,\infty}(M))\bigr) &\to \Pz\biggl(\Cc^{0,\frac{1}{p}}\bigl([0,T], \Wc_p(M)\bigr)\biggr)\\
			V &\mapsto S_p(V,T,\mu_0),
		\end{aligned}
	\end{equation}
	is locally Lipschitz--continuous with respect to the Hausdorff--distance on the power set $\Pz\biggl(\Cc^{0,\frac{1}{p}}\bigl([0,T], \Wc_p(M)\bigr)\biggr)$.
\end{lem}
\begin{proof}
	Let us first work with $(M,g) = (\Rz^n, g^{eucl})$ and $p=2$. Let $\ep>0$. Denote for all $N \in \Nz$, $(x_{N,i})_{i \in I_N} \coloneqq \Pf(\frac{1}{N})$ in increasing order. 
	
	\noindent
	\textbf{Proof of the first point:}
	First, we prove by induction that there exists $\ol{M} \in \Nz$, such that for all $t \in \bigl[0, \frac{1}{2(R+1)^2}\bigr]$ and all $N \geq \ol{M}$,
	\begin{equation}
		\Wc_2\bigl(\mu^{\Pf\bigl(\frac{1}{N}\bigr)}(t), \nu^{\Pf\bigl(\frac{1}{N}\bigr)}(t)\bigr)^2 \leq 36 \Wc_2(V, W)_{\infty}^2 t.
	\end{equation}
	
	\noindent
	\textbf{Initialisation:}
	First, (\ref{eq:estfrandV2}) and (\ref{eq:esterandV2}) in Lemma~\ref{le:fflowop2} yield that there exists $M \in \Nz$---depending on $\Wc_2(V,W)_{\infty}$---, such that for all $N \geq M$, 
	\begin{equation}
		\begin{aligned}
			\Wc_2\bigl(\mu^{\Pf\bigl(\frac{1}{N}\bigr)}(x_{N,1}), \nu^{\Pf\bigl(\frac{1}{N}\bigr)}(x_{N,1})\bigr)^2 \leq 18\bigl(1 + \ep\bigr)x_{N,1}\Wc_2(V,W)^2_{\infty}.
		\end{aligned}
	\end{equation}

	\noindent 
	\textbf{Hypothesis:}
	Now, let $i \in I_N$ such that $x_{N,i} \leq \frac{1}{2(R+1)^2}$ and suppose that $$\Wc_2\bigl(\mu^{\Pf\bigl(\frac{1}{N}\bigr)}(x_{N,i}), \nu^{\Pf\bigl(\frac{1}{N}\bigr)}(x_{N,i})\bigr)^2 \leq 36 x_{N,i}\Wc_2(V,W)^2_{\infty}.$$ 
	
	\noindent
	\textbf{Step:}
	Then, by \eqref{eq:estfrandV2}, there exists $\ol{M} \geq M$, such that for all $N \geq \ol{M}$,
	{\allowdisplaybreaks
		\begin{align}
			&\Wc_2\bigl(f^{V(x_{N,i})}_{x_{N,i+1} -x_{N,i}}(\mu^{\Pf\bigl(\frac{1}{N}\bigr)}(x_{N,i})), f^{W(x_{N,i})}_{x_{N, i+1} -x_{N,i}}(\nu^{\Pf\bigl(\frac{1}{N}\bigr)}(x_{N,i}))\bigr)^2\notag\\
			&\quad \leq \Wc_2\bigl(\mu^{\Pf\bigl(\frac{1}{N}\bigr)}(x_{N,i}), \nu^{\Pf\bigl(\frac{1}{N}\bigr)}(x_{N,i})\bigr)^2 \notag\\
			&\qquad + 2(1 + \ep)(x_{N,i+1} - x_{N,i})\biggl(9\Wc_2(V,W)^2_{\infty}\\
			&\qquad + 3R\Wc_2\bigl(\mu^{\Pf\bigl(\frac{1}{N}\bigr)}(x_{N,i}), \nu^{\Pf\bigl(\frac{1}{N}\bigr)}(x_{N,i})\bigr) \Wc_2(V,W)_{\infty}\biggr)\notag\\\notag\\
			&\quad \leq 36 x_{N,i}\Wc_2(V,W)^2_{\infty} \notag\\
			&\qquad +2(1 + \ep)\Wc_2(V,W)^2_{\infty} \bigl( 9 + 18 (R+1) \sqrt{x_{N,i}}\bigr)(x_{N, i +1} - x_{N,i})\notag\\
			&\quad \leq 36 x_{N,i+1}\Wc_2(V,W)^2_{\infty},\notag
		\end{align}
        }
	since $x_{N,i} \leq \frac{1}{2(R+1)^2}$. Thus, we conclude by induction. 

	\noindent
	\textbf{Proof of the second point:}
	For $p > 2$, the statement follows similarly from equations \eqref{eq:estfrandVp} and \eqref{eq:esterandVp}. Let $2 < q \leq 4$.
	
	\noindent
	\textbf{Initialisation:}
	There exists $M_q \in \Nz$---depending on $\Wc_p(V,W)_{\infty}$---such that there exists $\Kc >0$ depending only on $p$, such that for all $N \geq M$,
	\begin{equation}
		\begin{aligned}
			&\Wc_p\bigl(\mu^{\Pf\bigl(\frac{1}{N}\bigr)}(x_{N,1}), \nu^{\Pf\bigl(\frac{1}{N}\bigr)}(x_{N,1})\bigr)^p \leq \Kc \Wc_p(V,W)^2_{\infty} x_{N,1}^{\frac{p}{2}}\\
			&\qquad \leq  \Bc_p^p \Wc_p(V,W)_{\infty}^p x_{N,1}^{\frac{p}{q}}.
		\end{aligned}
	\end{equation}

	\noindent 
	\textbf{Hypothesis:}
	Now, let $i \in I_N$ and suppose that $$\Wc_p\bigl(\mu^{\Pf\bigl(\frac{1}{N}\bigr)}(x_{N,i}), \nu^{\Pf\bigl(\frac{1}{N}\bigr)}(x_{N,i})\bigr)^p \leq \Bc_p^p x_{N,i}^{\frac{p}{q}}\Wc_p(V,W)^p_{\infty}.$$ 

	\noindent
	\textbf{Step:}
	One readily shows that the induction hypothesis implies that there exits $\ol{M_q} \geq M_q$, such that for all $N \geq \ol{M_q}$,
	\begin{equation}
		\begin{aligned}
			&\Wc_p\bigl(\mu^{\Pf\bigl(\frac{1}{N}\bigr)}(x_{N,i+1}), \nu^{\Pf\bigl(\frac{1}{N}\bigr)}(x_{N,i+1})\bigr)^p \leq \Wc_p(V,W)_{\infty}^p \biggl( \Bc_p^q x_{N,i}\\
			&\qquad + 2^{p} q \biggl(9(p-1)\Bc_p^{q-2}x_{N,i}^{\frac{q-2}{q}} + pR^2\Bc_p^qx_{N,i}\\
			&\qquad \quad + 6R \Bc_p^{q-1}x_{N,i}^{\frac{q-1}{q}} \biggr)(x_{N, i+1} - x_{N,i})\biggr)^{\frac{p}{q}}\\
			&\qquad \leq \Wc_p(V,W)_{\infty}^p \Bc_p^p \biggl(  x_{N,i}\\
			&\qquad + \biggl(\frac{1}{2}x_{N,i}^{\frac{q-2}{q}} + 2^{p+2} \bigl(  pR^2x_{N,i} + 6R x_{N,i}^{\frac{q-1}{q}} \bigr)\biggr)(x_{N, i+1} - x_{N,i})\biggr)^{\frac{p}{q}}.\\
		\end{aligned}
	\end{equation}
	In particular, there exists a constant $\Sf_p > 0$ depending only on $p$, such that if $x_{N,i} \leq \frac{\Sf_p}{(R+1)^2}$,
	\begin{equation}
		\Wc_p\bigl(\mu^{\Pf\bigl(\frac{1}{N}\bigr)}(x_{N,i+1}), \nu^{\Pf\bigl(\frac{1}{N}\bigr)}(x_{N,i+1})\bigr)^p \leq \Bc_p^p\Wc_p(V,W)_{\infty}^p x_{N, i+1}^{\frac{p}{q}}.
	\end{equation}

	\noindent
	\textbf{Proof of the global estimates for (LAFAS):}
	Now, another proof by induction using equations \eqref{eq:estfrandVp} and \eqref{eq:esterandVp} and a subsequent Grönwall--type argument imply the existence of $\Kc >0$, depending only on $R$, $T$, $p$ and $M$, such that for all $2 < q \leq 4$, there exists $M_q \in \Nz$, such that for all $N \geq M_q$ and all $t \in [0,T]$,
	\begin{equation}
		\Wc_p(\mu^{\Pf\bigl(\frac{1}{N}\bigr)}(t), \nu^{\Pf\bigl(\frac{1}{N}\bigr)}(t))_{\infty} \leq \Kc t^{\frac{1}{q}}\Wc_p(V,W)_{\infty}.
	\end{equation}

	\noindent
	\textbf{Proof of the estimates for smooth solutions:}
	Let $\mu \in S_p(V,T,\mu_0)$. By Theorem~\ref{th:lingencentlim}, there exists a sequence of partitions $\bigl(\Pf(\frac{1}{N})\bigr)_{N \in \Nz}$ such that $t \mapsto \mu(t)$ is a uniform limit of $\bigl(\mu^{\Pf(\frac{1}{N})}\bigr)_{N \in \Nz}$. 
	Since we assume $p>2$, Theorem~\ref{th:lingencentlim} yields that there exists an increasing sequence $N_k$ such that $\bigl(\nu^{\Pf(\frac{1}{N_k})}\bigr)_{N \in \Nz}$ converges uniformly to a $\nu \in S_p(W,T,\mu_0)$. 
	The uniform convergence implies that all the above estimates also hold or $\mu$ and $\nu$. In particular, for all $2<q\leq4$ and all $t \in \bigl[0, \frac{\Sf_p}{(R+1)^2}\bigr]$,
	\begin{equation}
		\begin{aligned}
			\Wc_p(\mu_t, \nu_t) \leq \Bc_p t^{\frac{1}{q}}\Wc_p(V,W)_{\infty}.
		\end{aligned}
	\end{equation} 
	Therefore,
	\begin{equation}
		\begin{aligned}
			\Wc_p(\mu_t, \nu_t) \leq \Bc_p t^{\frac{1}{2}}\Wc_p(V,W)_{\infty}.
		\end{aligned}
	\end{equation} 
	In a similar fashion, we show that there exists $\Kc >0$, depending only on $R$, $T$, $p$ and $M$, such that for all $t \in [0,T]$, 
	\begin{equation}
		\begin{aligned}
			\Wc_p(\mu_t, \nu_t) \leq \Kc t^{\frac{1}{2}}\Wc_p(V,W)_{\infty}.
		\end{aligned}
	\end{equation} 
	Thus, for every smooth solution of (\ref{eq:MDEl}) for $V$, there exists a smooth solution of (\ref{eq:MDEl}) for $W$, such that
	\begin{equation}
		\begin{aligned}
			\Wc_p(\mu, \nu)_{\infty} \leq \Kc T^{\frac{1}{2}}\Wc_p(V,W)_{\infty},
		\end{aligned}
	\end{equation}
	and vice--versa. This yields the local Lipschitz regularity of $\Sc_p$.

	The results for an arbitrary manifold with bounded geometry follow analogously by Lemma~\ref{le:fflowopman}.
\end{proof}

\subsubsection{Reduction of the non--linear setting to the linear setting}

Introducing the linear equation (\ref{eq:MDEl}) allows us to derive the uniqueness of solutions of (\ref{eq:MDEd}) which are limits of AFAS from the uniqueness of solutions of (\ref{eq:MDEl}) which are limits of LAFAS using a fixed point argument. This argument is detailed in our second main result, Theorem~\ref{th:AFASuniqueness}.
\begin{theorem}\label{th:AFASuniqueness}
	Let $p > 2$, $T>0$ and $\mu_0 \in \Wc_p(M)$. Assume:
	\begin{itemize}
	\item $(M,g)$ is of bounded geometry.
	\item $V$ is a probability vector--field which satisfies:
	\begin{itemize}
		\item $V$ is Lipschitz with respect to the $p$--Wasserstein distance.
		\item There exists a constant $R > 0$ such that for all $\mu \in \Wc_p(M)$, $V[\mu] \in \Wc_{p+2}(\Xc^{2,\infty}(M))$ and $\Mc_{p+2}(V[\mu]) \leq R^{p+2}$.
	\end{itemize}
	\item For all $\frac{1}{p}$--Hölder--continuous $\mu: [0,T] \to \Wc_p(M)$ with $\mu(0) = \mu_0$, the linear MDE (\ref{eq:MDEl}) has a unique smooth solution (See Definition~\ref{de:linsmoothsol}) for $t \mapsto V[\mu]$ with initial value $\mu_0$ in $\Wc_p(M)$. 
	\end{itemize}

	Then:
	\begin{itemize}
	\item $(\ref{eq:MDEd})$ has a unique smooth solution with initial value $\mu_0$ in $\Wc_p(M)$ on $[0,T]$.
	\item For any sequence of partitions of the interval $[0,T]$, $\biggl(\Pf(\frac{1}{N})\biggr)_{N \in \Nz}$ which satisfies $\lim_{N \to \infty}\pf(\Pf(\frac{1}{N})) = 0$, the sequence of Average Flow Approximation Series $\biggl(\mu^{\Pf(\frac{1}{N})}\biggr)_{N \in \Nz}$ with initial value $\mu_0$ converges uniformly to this unique smooth solution in $\Wc_p(M)$.
	\end{itemize}  
\end{theorem}
\begin{proof}

	Let $T>0$ and $\bigl(\Pf(\frac{1}{N})\bigr)_{N \in \Nz}$ be a sequence of partitions of $[0,T]$. Let $\Sc: \Cc^{\frac{1}{p}}([0,T], \Wc_p(M)) \to \Cc^{\frac{1}{p}}([0,T], \Wc_p(M))$ be the map sending a $\frac{1}{p}$--Hölder--continuous path $\mu: [0,T] \to \Wc_p(M)$ to the unique smooth solution of the corresponding linear MDE (\ref{eq:MDEl}) with initial value $\mu_0$. We will show that $\Sc$ is a contraction with respect to the $\Wc_p$--distance for $T$ small enough.

	Let $c_1,c_2 \in \Cc^{\frac{1}{p}}([0,T], \Wc_p(M))$. Denote by $t \mapsto \mu(t)$ and $t \mapsto \nu(t)$ the smooth solutions of (\ref{eq:MDEl}) for $V \circ c_1$ and $V \circ c_2$ respectively. By assumption, the linear MDEs (\ref{eq:MDEl}) for $V \circ c_1$ and $V \circ c_2$ have unique smooth solutions. Thus, for any sequence of partitions $\bigl(\Pf(\frac{1}{N})\bigr)_{N \in \Nz}$, the Linear Average Flow Approximation Series $\bigl(\mu^{\Pf(\frac{1}{N})}\bigr)_{N \in \Nz}$ and $\bigl(\nu^{\Pf(\frac{1}{N})}\bigr)_{N \in \Nz}$ converge uniformly to those solutions with respect to the $p$--Wasserstein distance by Theorem~\ref{th:lingencentlim}. 
	
	Hence, Lemma~\ref{le:Hausdorffvectfield} yields that there exists a constant $\Kc > 0$, depending only on $p$, $T$, $R$ and $M$, such that for all $t \in [0,T]$,
	\begin{equation}
		\begin{aligned}
			\Wc_p(\mu(t), \nu(t)) \leq \Kc t^{\frac{1}{2}}\Wc_p(V \circ c_1,V \circ c_2)_{\infty}.
		\end{aligned}
	\end{equation} 
	In particular, 
	\begin{equation}
		\begin{aligned}
			\Wc_p(\mu, \nu)_{\infty} \leq L\Kc T^{\frac{1}{2}} \Wc_p(c_1,c_2)_{\infty}.
		\end{aligned}
	\end{equation} 
	Therefore, choosing $T$ small enough, ensures that $\Sc$ is a contraction. Thus, Banach's fixed point theorem yields that $\Sc$ has a unique fixed point $s: [0,T] \to \Wc_p(M)$. Hence, (\ref{eq:MDEd}) has a unique solution $s: [0,T] \to \Wc_p(M)$ with initial value $\mu_0$ on $[0,T]$, which is a limit 
	of Linear Average Flow Approximation Series associated to $s$. 

	Let us conclude by showing that $s$ is also the unique solution of \eqref{eq:MDEd} in $\Wc_p(M)$ with initial value $\mu_0$, which is a limit of Average Flow Approximation Series. Suppose there exist two such solutions $s_1$ and $s_2$. Hence, there exist $\bigl(\Pf_1(\frac{1}{N})\bigr)_{N \in \Nz}$ and $\bigl(\Pf_2(\frac{1}{N})\bigr)_{N \in \Nz}$, two sequences of partitions of $[0,T]$ and Average Flow Approximation Series $s^{\Pf_1(\frac{1}{N})}_1$ and $s^{\Pf_2(\frac{1}{N})}_2$ converging uniformly in $\Wc_p(M)$ respectively to $s_1$ and $s_2$ with respective 
	steps given by $\Pf_1(\frac{1}{N})$ and $\Pf_2(\frac{1}{N})$. Let $\ep >0$. Since the convergence is uniform and $V$ is $L$--Lipschitz with respect to the $p$--Wasserstein distance, there exists $N_0 \in \Nz$, such that for all $n \geq N_0$,
	\begin{equation}\label{eq:uniformconvest}
		\begin{aligned}
			&\Wc_p\biggl(\bigl(t \mapsto V[s_1(t)]\bigr), \bigl(t \mapsto V[s_1^{\Pf\bigl(\frac{1}{N}\bigr)}(t)]\bigr)\biggr)_{\infty} \leq \ep, \\
			&\Wc_p\biggl(\bigl(t \mapsto V[s_2(t)]\bigr), \bigl(t \mapsto V[s_2^{\Pf\bigl(\frac{1}{N}\bigr)}(t)]\bigr)\biggr)_{\infty} \leq \ep. 
		\end{aligned}
	\end{equation}
	Now, it suffices to show that this implies that the LAFAS $\sf^{\Pf_1(\frac{1}{N})}_1$ and $\sf^{\Pf_2(\frac{1}{N})}_2$ associated to the linear MDEs (\ref{eq:MDEl}) for $s_1$ and $s_2$ also converge to $s_1$ and $s_2$ respectively.
    This follows again by induction using (\ref{eq:uniformconvest}) and (\ref{eq:estfrandVp}) in Lemma~\ref{le:fflowop2}. Thus, $s_1$ and $s_2$ are both fixed points of $\Sc$ and, hence, $s_1 = s_2$. 

	Finally, we obtain a similar estimate for an arbitrary manifold with bounded geometry by a straightforward application of Nash's embedding theorem. The result follows for arbitrary $T>0$ by a standard local--to--global argument. 
\end{proof}

\begin{co}\label{co:densesubset}
	Let $p>2$ and $T>0$. Let $\Dc \in \Wc_p(M)$ be a dense subset. With the assumptions of Theorem~\ref{th:gronwallHausdorff}, suppose that for all $\mu_0 \in \Dc$ and all $\frac{1}{p}$--Hölder--continuous $\mu: [0,T] \to \Wc_p(M)$ with $\mu(0) = \mu_0$, the linear MDE (\ref{eq:MDEl}) has a unique smooth solution (see Definition~\ref{de:smoothsol}) in $\Wc_p(M)$. 
	Then, for all $\nu_0 \in \Wc_p(M)$, there exists a unique solution $\nu: [0,T] \to \Wc_p(M)$ of (\ref{eq:MDEd}) with initial value $\nu_0$ which is a uniform limit of Average Flow Approximation Series (see Definition~\ref{de:AFAS}) in $\Wc_p(M)$ on $[0,T]$.
\end{co}
\begin{proof}
	By Theorem~\ref{th:AFASuniqueness}, for all $\mu_0 \in \Dc$, $S(V,T,\mu_0)$ is a singleton. Hence, by Theorem~\ref{th:gronwallHausdorff}, $S(V,T,\nu_0)$ is a singleton for all $\nu_0 \in \Wc_p(M)$. 
\end{proof}
\begin{re}
	Corollary~\ref{co:densesubset} allows us to reduce our study to the study of a linear PDE with initial value in a functional space which is dense in $\Wc_p(M)$.
\end{re}

\subsubsection{Strong Generalised Central Limit Theorem and uniqueness of smooth solutions}
Now, we are ready to prove our third and last main result, the uniqueness of smooth solutions of (\ref{eq:MDEd}). First, we prove that the smooth solutions of (\ref{eq:MDEl}) are unique for sufficiently regular initial values. 
Then, we combine Theorem~\ref{th:AFASuniqueness} and some of our Grönwall type estimates to conclude.
\begin{pr}\label{pr:uniquelinsmoothinit}
	Let $R\geq 1$, $p > 2$, $\mu_0 \in \Cc_c^{\infty}(M)$, and $V: [0,T] \mapsto \Wc_p(\Xc^{2,\infty}(M))$ be a $\frac{1}{p}$--Hölder--continuous curve such that for all $t \in [0,T]$, $V_t \in \Wc_{p+2}(\Xc^{2,\infty}(M))$ and $\Mc_{p+2}(V_t) \leq R^{p+2}$. Then there exists a unique smooth solution of equation (\ref{eq:MDEl}) for $V$ with initial value $\mu_0$ in $\Wc_p(M)$. 
\end{pr}
\begin{proof}
	Let $\ep>0$. First, Theorem~\ref{th:ellipticapprox} yields that there exists $W: [0,T] \mapsto \Wc_p(\Xc^{2,\infty}(M))$, which is $\frac{1}{p}$--Hölder--continuous with uniformly bounded $(p+2)$--th moments and $\la >0$, such that for all $t \in [0,T]$, $\square^{W(t)}$ is $\la$--elliptic and $\Wc_p^{\infty}(V,W) \leq \ep$. 
	Since the $W(t)$ have uniformly bounded $(p+2)$--th moments,
	one readily shows that for all $i,j \in \llbracket 0, n-1 \rrbracket$,
	\begin{equation}
		\begin{aligned}
			A_{i,j}^W: [0,T] \times M &\to \Rz \\
			(t,x) &\mapsto a_{i,j}^{W(t)}(x),
		\end{aligned}
	\end{equation}
	and,
	\begin{equation}
		\begin{aligned}
			B_{j}^W: [0,T] \times M &\to \Rz \\
			(t,x) &\mapsto b_{j}^{W(t)}(x)
		\end{aligned}
	\end{equation}
	are uniformly in $W^{2,\infty}(M)$ for fixed $t$ and uniformly in $\Cc^{0,\frac{1}{p}}([0,T], \Rz)$ for fixed $x$. Thus,
	(\ref{eq:MDEl}) has a unique distributional solution $s$ for $W$ with initial value $\mu_0$.
	Now $L:=\tfrac{\partial}{\partial t}-\square^{W(t)}$ is a second-order parabolic operator on $M\times(0,T]$, 
	which is uniformly parabolic and the coefficients are uniformly $W^{2,\infty}$ in space and uniformly
	$\tfrac 1p$--H\"older in time, so they are in $C^{2\al,\al}_{loc}(M\times(0,T])$ for
	$\al=\tfrac 1p$. If $u\in\dot{\Bc}'(M\times(0,T])$ (\cite{nigsch2018space} where the decay is only spatially but uniformly in
	$[t,T]$ for all $t>0$) satisfies $Lu=0$ in the sense of distributions and $u(0)=\mu_0$, by the parabolic Schauder theory, one has
	$$
		u\in C^{2+2\alpha,\,1+\alpha}_{loc}(M\times(0,T]),
	$$
	hence $u\in\Wc_p(M)$ and is a classical solution \cite{lieberman1996second,krylov1996lectures}.
	Moreover, $u$ is unique among all distributional solutions in $\dot{\Bc}'(M\times(0,T])$, by 
	the parabolic maximum/comparison principle on unbounded domains 
	\cite[Ch.~Parabolic Equations]{protter2012maximum}, 
	\cite{eidelman2012parabolic,aronson1967bounds,friedman2008partial}.
	Now, take two smooth solutions $s_1$ and $s_2$ of $(\ref{eq:MDEl})$ for $V$ with initial value $\mu_0$. Then, Lemma~\ref{le:Hausdorffvectfield} yields that 
	there exists $C>0$ independent of $\ep$, such that $\Wc_p^{\infty}(s,s_1) \leq C T^{\frac{1}{2}}\ep$ and $\Wc_p^{\infty}(s,s_2) \leq C T^{\frac{1}{2}}\ep$. As $\ep$ was chosen arbitrarily, we conclude $s_1 = s_2$. 
\end{proof}
\begin{theorem}[Generalised Central Limit Theorem---Strong version]\label{th:gencentlimstrong}
	Let $p > 2$ and $T >0$. Let $\mu_0 \in \Wc_p(M)$. Let $\bigl( \Pf(\frac{1}{N}) \bigr)_{N \in \Nz}$ be a family of partitions of $[0,T]$. Assume:
	\begin{itemize}
	\item $(M,g)$ is of bounded geometry.
	\item $V$ is a probability vector--field which satisfies:
	\begin{itemize}
		\item There exists a constant $R > 0$ such that for all $\mu \in \Wc_p(M)$, $V[\mu] \in \Wc_{p+2}(\Xc^{2,\infty}(M))$ and $\Mc_{p+2}(V[\mu]) \leq R^{p+2}$.
		\item  $V$ is $L$--Lipschitz with respect to the $\Wc_p$--distance for some $L>0$.  
	\end{itemize}
	\end{itemize}
	Then, (\ref{eq:MDEd}) has a unique smooth solution $\mu: [0,T] \to \Wc_p(M)$ (compare Definiton~\ref{de:smoothsol}) with initial value $\mu_0$ and the Average Flow Approximation Series $\bigl( \mu^{\Pf(\frac{1}{N})} \bigr)_{N \in \Nz}$ (see Definition~\ref{de:AFAS}) converges uniformly to $t \mapsto \mu(t)$ in $\Wc_p(M)$.
	Furthermore, $t \mapsto \mu(t)$ is $\frac{1}{p}$--Hölder--continuous. 
\end{theorem}
\begin{proof}
	Let $\mu_0 \in \Cc^{\infty}_c(M)$. Proposition~\ref{pr:uniquelinsmoothinit} yields that for all $\frac{1}{p}$--Hölder--con\-ti\-nu\-ous $\mu: [0,T] \to \Wc_p(M)$ with $\mu(0) = \mu_0$, the linear MDE (\ref{eq:MDEl}) has a unique smooth solution for $t \mapsto V[\mu(t)]$ with initial value $\mu_0$ in $\Wc_p(M)$.
	Hence, Theorem~\ref{th:AFASuniqueness} yields that (\ref{eq:MDEd}) has a unique smooth solution with initial value $\mu_0$ in $\Wc_p(M)$. Finally, we extend the uniqueness on arbitrary $\mu_0 \in \Wc_p(M)$ with the last point
	of Theorem~\ref{th:gronwallHausdorff}. Now, the rest of the statement is a consequence of Theorem~\ref{th:gencentlim}.
\end{proof}
\begin{re}
	In particular, any AFAS uniformly converges to the unique smooth solution with Theorem~\ref{th:gencentlim}.
\end{re}
\begin{re}\label{re:irravgvec}
	Theorem~\ref{th:gencentlimstrong} still holds if we slightly relax the spatial regularity of our vector--field probabilities. 
	For all the above proofs, it is sufficient to require our tangent vectors $V$ to be supported in $\Xc^{1,\infty}(M)$, such that
	for all $X \in \supp(V)$, $X - \ol{V} \in \Xc^{2,\infty}(M)$. In other words, the centered vector--field probability 
	$\tau^{-\ol{V}}_* V \in \Wc_p(\Xc^{2,\infty}(M))$, while we only assume $W^{1,\infty}$--regularity for the average vector--field $\ol{V}$. In fact, while some of the above results 
	fail in all generality if we weaken these assumptions even further, Theorem~\ref{th:gencentlimstrong} still holds if we only assume 
	$\tau^{-\ol{V}}_* V \in \Wc_p(\Xc^{1 + \de,\infty}(M))$ for some $\de >0$.
\end{re}
\begin{re}
	Smooth solutions of (\ref{eq:MDEd}) are particularly relevant for applications, since they are predictable. Knowing $\mu_0$ and $V[\mu_0]$ is sufficient to compute an approximation of $\mu(t)$, for small $t>0$, by the means of AFAS. To approximate any non--smooth solution of (\ref{eq:MDEd}), even 
	for $t > 0$ arbitrarily small, one needs to know $V$ on a whole neighborhood of $\mu_0$ in $\Wc_p(M)$.
\end{re}
\subsubsection{A counter--example to general uniqueness of solutions}
General solutions of \eqref{eq:MDEd} are however not unique in general, even if $V$ is Lipschitz and the $\square^V_{\mu}$ are uniformly elliptic for all $\mu \in \Wc_p(M)$. Let us give an example of two distinct solutions to (\ref{eq:MDEd}) with same initial value. 
\begin{theorem}
	There exists a probability vector--field $V$ which is Lipschitz with respect to the $p$--Wasserstein distance and such that $\square^V_{\mu}$ is strongly elliptic for all $\mu \in \Wc_p(M)$, such that solutions to \eqref{eq:MDEd} are  
	not unique.
\end{theorem}
\begin{proof}
	Let $(M,g) \coloneqq (\Rz^2, g^{\text{eucl}})$ and $M > 1$, $0<m<1$. Let $\rho$ and $r$ be the maps sending a symmetric matrix $A \in \Mc_2(\Rz)$ onto its largest and smallest eigenvalues respectively and 
	$v_1,v_2: \Sc_2(\Rz)\setminus \Rz \id$ be the maps sending a symmetric matrix with $1$--dimensional eigenspaces onto the unitary generator of the eigenspace $E_{\rho(A)}$
	in $\{(x,y) \in \Rz^2 \mid x > 0\} \cup \{(0,y) \in \Rz^2 \mid y > 0\}$ and the orthogonal unitary vector in the same set respectively. For all $\rho > 0$, we also define the auxiliary functions
	\begin{equation}
		\begin{aligned}
			\De_{\rho}: \Rz_+ &\to \Rz_+\\
			\si_2 &\mapsto \frac{\rho}{\rho - \log(\rho) - 1}(\si_2 - \log(\rho) - 1) + \frac{1}{\rho - \log(\rho) - 1}(\rho - \si_2).
		\end{aligned}
	\end{equation}
	and $c: x \mapsto \min \bigl(M, \max\bigl(m, x\bigr)\bigr)$. Furthermore, define
	\begin{equation}
		\begin{aligned}
			P_1: \Sc_2(\Rz)\setminus \Rz \id &\to \Rz^2 \\
			A &\mapsto \sqrt{\frac{2}{\pi}{c(\rho(A))}}v_1(A)
		\end{aligned}
	\end{equation}
	and
	\begin{equation}
		\begin{aligned}
			P_2: \Sc_2(\Rz)\setminus \Rz \id &\to \Rz^2 \\
			A &\mapsto \sqrt{\frac{2}{\pi}c\bigl(\De_{\rho(A)}(r)\bigr)}v_2(A).
		\end{aligned}
	\end{equation}
	For any $r \in \Rz$ and $v_1,v_2 \in \Rz^2$ with $\bigl<v_1,v_2\bigr> = 0$, denote by $\Cc_r$ the circle with radius $r$ and by $\Ec_{v_1,v_2}$ the ellipse with semimajor axis $v_1$ and semiminor axis $v_2$. 
	Finally, define the probability vector--field 
	\begin{equation*}
		\begin{aligned}
			V: \Wc_p(\Rz^2) &\mapsto \Wc_p(\Xc^{2,\infty}(\Rz^2)) \\
		\end{aligned}
	\end{equation*}
	\begin{equation}
		\begin{aligned}
			\mu &\mapsto \begin{cases}
				1_{E_{P_1(\cov(\mu)), P_2(\cov(\mu))}} &\text{ if $\cov(\mu) \notin \Rz \id$}, \\
				1_{\Cc_r} &\text{ if $\cov(\mu) = r \id$},
			\end{cases} 
		\end{aligned}
	\end{equation}
	where, by abuse of notation, for all $X \in \Rz^2$, we identify the constant vector--field $v_X: y \mapsto X$ with $X$ here.
	Now, we show that $V$ is locally Lipschitz with respect to the $p$--Wasserstein distance around $\mu_0$, where $\mu_0$ is the standard normal distribution. Using Proposition~\ref{pr:VarLip}, there exist $R>0$, $B>0$ and $b >0$, such that for all $\mu \in B(\mu_0,R)$, $r(\cov(\mu))\geq b$ and $\rho(\cov(\mu))\leq B$. Thus, there exist $C>0$, such that for all $\rho,r_1,r_2 \in [b,B]$, $\bigl|\De_{\rho}(r_1) - \De_{\rho}(r_2)\bigr| \leq C|r_1 - r_2|$. Now, Proposition~\ref{pr:VarLip} yields that $V$ is locally Lipschitz around $\mu_0$. Furthermore, $V$ is uniformly bounded by $M>0$. 

	Now, we show that there exist two distinct solutions to (\ref{eq:MDEd}) with initial value $\mu_0$. Let $T>0$. Define
	\begin{equation}
		\begin{aligned}
			&\mu_1: [0,T] \mapsto \biggl((x,y) \mapsto \frac{1}{2\pi\sqrt{(2t+1)e^{2t}}}e^{-\frac{1}{2}\bigl(\frac{x^2}{2t + 1} + \frac{y^2}{e^{2t}}\bigr)} \biggr) \\
			&\mu_2: [0,T] \mapsto \biggl((x,y) \mapsto \frac{1}{2\pi\sqrt{(2t+1)e^{2t}}}e^{-\frac{1}{2}\bigl(\frac{y^2}{2t + 1} + \frac{x^2}{e^{2t}}\bigr)} \biggr). \\
		\end{aligned}
	\end{equation}
	Then, one readily shows that for all $t \in [0,T]$,
	\begin{equation}
		\begin{aligned}
			&\frac{\der}{\der t} \mu_1(t) = \frac{\der^2}{\der x^2} \mu_1(t) + \cov(\mu_1(t))_{2,2}\frac{\der^2}{\der y^2} \mu_1(t), \\
			&\frac{\der}{\der t} \mu_2(t) = \frac{\der^2}{\der y^2} \mu_2(t) + \cov(\mu_2(t))_{1,1}\frac{\der^2}{\der x^2} \mu_2(t).
		\end{aligned}
	\end{equation}
	Now, for all $t \in [0,\sqrt{\log(M)}]$, it holds that $\rho(\cov(\mu_2(t))) = e^{2t} \leq M$, and for all $\ph \in \Cc_c^{\infty}(\Rz^2)$,
	\begin{equation}
		\begin{aligned}
			&\int_{\Xc^{2,\infty}(\Rz^2)} \Lc_{X}^2(\ph) \der V[\mu_2(t)](X) \\
			&\qquad = \int_{\Xc^{2,\infty}(\Rz^2)} X_1^2 \der V[\mu_2(t)](X) \frac{\der^2}{\der x^2}\ph + \int_{\Xc^{2,\infty}(\Rz^2)} X_2^2 \der V[\mu_2(t)](X) \frac{\der^2}{\der y^2}\ph \\
			&\qquad + 2\int_{\Xc^{2,\infty}(\Rz^2)} X_1 X_2 \der V[\mu_2(t)](X) \frac{\der^2}{\der x \der y}\ph \\
			&\qquad = \cov(\mu_2(t))_{1,1}\frac{\der^2}{\der x^2} \ph + \De_{\rho(\cov(\mu_2(t)))}(\cov(\mu_2(t))_{2,2})\frac{\der^2}{\der y^2} \ph.
		\end{aligned}
	\end{equation}
	Finally, $\De_{\rho(\cov(\mu_2(t)))}(\cov(\mu_2(t))_{2,2}) = \De_{e^{2t}}(2t+1) = 1$. Therefore,
	$\mu_2$ does indeed satisfy (\ref{eq:MDEd}) on $[0,\sqrt{\log(M)}]$. Analogously, we show that 
	$\mu_1$ does satisfy (\ref{eq:MDEd}) on $[0,\sqrt{\log(M)}]$. Therefore, we have shown that (\ref{eq:MDEd})
	has two distinct solutions.
\end{proof}

\subsection{Applications}

We finish this work with some interesting applications of our Generalised Central Limit Theorems.

\subsubsection{Classical Central Limit Theorem}

Let us quickly justify why we call Theorem~\ref{th:gencentlim} and Theorem~\ref{th:gencentlimstrong} generalisations of the Central Limit Theorem. 
\begin{theorem}[Central Limit Theorem]
Let $p > 2$ and $(X_i)_{i\in\Nz}$ be a sequence of i.i.d random variables with image in $\Rz^n$ and finite $p$--th moment. Denote their covariance--matrix by $\Si^2$. Then, the sequence $\bigl(\frac{1}{\sqrt{n}}\sum_{i = 0}^{n-1} (X_i - \Ez(X_i))\bigr)_{n \in \Nz}$ converges to $\Nc(0, \Si^2)$ with respect to the $p'$--Wasserstein distance, for all $p' < p$.
\end{theorem}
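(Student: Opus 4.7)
The plan is to deduce the Central Limit Theorem directly from the Generalised Central Limit Theorem (Theorem~\ref{th:gencentlim}(2)) by choosing a constant vector--field probability $V$ whose Average Flow Approximation Series on $[0,1]$ along uniform partitions recovers the normalised partial sums $S_n:=\tfrac{1}{\sqrt n}\sum_{i=0}^{n-1}(X_i-\Ez(X_i))$. After subtracting the mean I may assume $\Ez(X_0)=0$. Working on $M=\Rz^n$, the linear isometry $c\colon\Rz^n\to\Xc^{2,\infty}(\Rz^n)$ sending a vector to the associated constant vector field lets me define the constant vector--field probability
\begin{equation}
V\colon\Wc_p(\Rz^n)\to\Wc_p(\Xc^{2,\infty}(\Rz^n)),\qquad V[\mu]:=c_*\mu_X,
\end{equation}
where $\mu_X$ is the law of $X_0$. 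Since $V$ does not depend on $\mu$, it is trivially uniformly continuous in every $\Wc_q$--distance, has uniformly bounded $p$--th moment $\Mc_p(V[\mu])=\Ez\|X_0\|^p<\infty$, and by Lemma~\ref{le:bary} its pointwise barycenter vanishes, $\ol V[\mu]\equiv 0$; the hypotheses of Theorem~\ref{th:gencentlim}(2) are therefore met on $[0,T]=[0,1]$ with any $q\in[1,p)$.

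Because $\ol V\equiv 0$, the expectational flow $e^V_\tau$ reduces to the identity on $\Wc_p(\Rz^n)$, so Definition~\ref{de:AFAS} at the nodes of the uniform partition $\Pf(1/n)$ of $[0,1]$ becomes $\mu^{\Pf(1/n)}(k/n)=(f^V_{1/n})^k(\de_0)$. Since $\rh^{c_x}_{1/\sqrt n}(y)=y+x/\sqrt n$, the definition of $f^V$ yields $f^V_{1/n}(\la)=\mathrm{Law}(Y+X/\sqrt n)$ for independent $Y\sim\la$ and $X\sim\mu_X$, and a straightforward induction using the i.i.d.\ structure of $(X_i)$ produces
\begin{equation}
\mu^{\Pf(1/n)}(k/n)\;=\;\mathrm{Law}\!\Bigl(\tfrac{1}{\sqrt n}\textstyle\sum_{i=0}^{k-1}X_i\Bigr),\qquad k\in\llbracket 0,n\rrbracket,
\end{equation}
so that $\mu^{\Pf(1/n)}(1)=\mathrm{Law}(S_n)$.

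To identify the limit, I would observe that since $V$ and $\ol V$ are independent of $\mu$, the parabolic operator $\square^V_\mu$ acts on $\ph\in\Cc^\infty_c(\Rz^n)$ as the constant--coefficient heat operator $\tfrac12\sum_{i,j}\Si^2_{ij}\partial_i\partial_j\ph$, so every narrowly continuous solution of (\ref{eq:MDEd}) starting at $\de_0$ is a distributional solution of the heat equation with covariance $\Si^2$, which by classical uniqueness must equal $\nu(t):=\Nc(0,t\Si^2)$. Applying Theorem~\ref{th:gencentlim}(2) to the family $(\Pf(1/N))_N$ therefore extracts a subsequence of $(\mathrm{Law}(S_N))$ converging in $\Wc_q$ to $\nu(1)=\Nc(0,\Si^2)$; the same argument applied to any subsequence of $(\Pf(1/N))$ extracts a further subsequence with the same limit, so the entire sequence $\mathrm{Law}(S_n)$ converges in $\Wc_q$ to $\Nc(0,\Si^2)$. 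Combining this with the uniform $p$--th moment bound $\sup_n\Ez\|S_n\|^p<\infty$ coming from the Marcinkiewicz--Zygmund inequality, Theorem~\ref{th:Wassersteinconvergence} upgrades the convergence to $\Wc_{p'}$ for every $p'<p$.

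The main obstacle is the upgrade from subsequential to full--sequence convergence, which rests on uniqueness of the Cauchy problem for (\ref{eq:MDEd}) with initial value $\de_0$. In this $\mu$--independent setting this is just classical uniqueness for the heat equation on $\Rz^n$, so the general uniqueness theorem announced in the introduction is not needed; the remaining technical step, namely passing from $\Wc_q$--convergence to $\Wc_{p'}$--convergence for arbitrary $p'<p$, is routine given the Marcinkiewicz--Zygmund moment bound together with Lemma~\ref{le:compacttight} and Theorem~\ref{th:Wassersteinconvergence}.
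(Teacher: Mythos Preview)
Your proposal is correct and follows essentially the same approach as the paper: embed the law of $X_0$ as a constant vector--field probability, identify the AFAS at $t=1$ with the law of the normalised partial sums, recognise $\square^V_\mu$ as the constant--coefficient heat operator, and combine Theorem~\ref{th:gencentlim}(2) with uniqueness for the heat equation and a sub--subsequence argument. Your centering $\Ez(X_0)=0$ (making $e^V_\tau=\id$) is slightly cleaner than the paper's version, while your final Marcinkiewicz--Zygmund step is unnecessary: since $V$ is constant it is continuous in every $\Wc_q$, so Theorem~\ref{th:gencentlim}(2) may be applied directly with $q=p'$ for each $p'<p$.
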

\begin{proof}
Let $\mu \in \Wc_p(\Rz^n)$ be the distribution of the $X_i$. For all $v \in \Rz^n$ we define $X_v: \Rz^n \to \Rz^n$ to be the  vector--field that is constantly equal to $v$. The map 
\begin{equation}
\begin{aligned}
i: &\Rz^n \to \Xc^{2,\infty}(M)\\
 &v \mapsto X_v - \Ez(X_1)
\end{aligned}
\end{equation}
is an isometry with respect to the $W^{2,\infty}$--norm. Hence it is in particular measurable and $i_* \mu \in \Wc_p(\Xc^{2,\infty}(M))$. Now, define the constant probability vector--field  $V: \nu \mapsto i_* \mu$. Note that $\ol{V}$ is constant equal to $0$. Furthermore, for all $x\in \Rz^n$ and all $t\in [0,T]$, 
\begin{equation}
\rh^{X_v}_t(x) = x + tv.
\end{equation}
Hence, for all $x \in \Rz^n$, all $t$ and all $i\in\Nz$, $x + \sqrt{t}X_i = \rh^{X_{X_i}}_{\sqrt{t}}(x)$. Thus, the random variable $x + \sqrt{t}(X_i - \Ez(X_i))$ has distribution $f^V_{t} (\de_x)$. 

Since the $X_i$ are independent by hypothesis, we readily show that for all $n\in \Nz$, the distribution of 
$\frac{1}{\sqrt{n}}\sum_{i = 0}^{n-1} (X_i - \Ez(X_i))$ is 
just $\bigl(f^V_{\frac{1}{n}}\bigr)^{\circ n} (\de_0) = \mu^n(1)$, where the $\mu^n$ are the Average Flow Approximation Series of Definition~\ref{de:AFAS}. Now, Theorem~\ref{th:gencentlim} gives that any subsequence of $\mu^N$ has a subsequence that converges to a solution of (\ref{eq:MDEd}) with initial condition $\de_0$ with respect to the $p'$--Wasserstein distance. 

Let $(e_i)_{i \in \llbracket 1, n \rrbracket}$ be the canonical basis of $\Rz^n$ and take $x,v \in \Rz^n$. The $X_v$ are all constant, so (\ref{eq:LDclassic}) yields 
\begin{equation}
\Lc^2_{X_v - \Ez(X)}\ph(x) = (X_v - \Ez(X_1))_i(x) (X_v - \Ez(X_1))_j(x)\frac{\d^2 }{\d x_i\d x_j}\ph(x).
\end{equation}
Thus,
\begin{equation} 
\begin{aligned}
&\int_{\Xct(M)} \Lc^2_{X_v - \Ez(X_1)}\ph(x) \der i_* \mu(X_v)\\
 &\qquad= \biggl( \int_{\Xct(M)}\!\! (X_v - \Ez(X_1))_i(x) (X_v - \Ez(X_1))_j(x) \der i_* \mu(X_v) \biggr) \frac{\d^2 }{\d x_i\d x_j}\ph(x)\\ &\qquad= \biggl( \int_{\Xct(M)} (v_i- \Ez(X_1)) (v_j- \Ez(X_1)) \der \mu(v) \biggr) \frac{\d^2 }{\d x_i\d x_j}\ph(x)\\ &\qquad= (\Si^2)_{i,j}\frac{\d^2 }{\d x_i\d x_j}\ph(x).
\end{aligned}
\end{equation}
Hence $\mu(t)$ solves the equation 
\begin{equation}\label{eq:diffeq}
\begin{aligned}
\int_M \ph \der \mu(s) - \int_M \ph \der \mu(0) &= 
\frac{1}{2}\int_0^s \int_M \sum_{i,j} (\Si^2)_{i,j}\frac{\d^2 }{\d x_i\d x_j}\ph(x) \der \mu(t) \der t,
\end{aligned}
\end{equation} 
for all $s \in [0,T]$ and all $\ph \in \Cc^{\infty}_c(M)$.
We know that the Gaussian diffusion $\mu(t) = \Nc(0, t\Si^2)$ is a solution of (\ref{eq:diffeq}). Furthermore, solutions of the semi-parabolic equation (\ref{eq:diffeq}) with initial condition $\mu_0 = \de_0$ are unique in $\Wc_p(\Rz^n)$ for $p > 3$.

Therefore, any subsequence of $\mu^N$ has a subsequence that converges to $\mu(t) = \Nc(0, t\Si^2)$ with respect to the $\Wc_{p'}$--distance. Hence, $\mu^N$ converges to $\mu(t) = \Nc(0, t\Si^2)$ with respect to the $\Wc_{p'}$--distance. In particular, $$\lim_{n \to \infty} \mu^n(1) = \Nc(0, \Si^2)$$ with respect to the $\Wc_{p'}(\Rz^n)$--distance.  
\end{proof}

\subsubsection{Lipschitz vector--fields}

Let us prove that our Generalised Central Limit Theorems directly yield similar results on $\Rz^n$ for Lipschitz vector--fields with bounded derivatives up to order two.
Define for all $n \in \Nz$,
\begin{equation}
	\begin{aligned}
		\Xc^{2,\Lip}(\Rz^n) &\coloneqq \{ X \in \Xct(\Rz^n) \mid \exists L > 0,\\ &\qquad\qquad\forall x \in \Rz^n: \|\nabla X(x)\| \leq L, \|\nabla^2 X(x)\| \leq L \}
	\end{aligned}
\end{equation}
and for all $X \in \Xc^{2,\Lip}(\Rz^n)$,
\begin{equation}
	\begin{aligned}
		\|X\|_{W^{2,\Lip}} &\coloneqq  \|X(0)\| + \sup_{x \in \Rz^n} \|\nabla X(x)\| + \sup_{x \in \Rz^n} \|\nabla^2 X(x)\|.\\
	\end{aligned}
\end{equation}
Then, $(\Xc^{2,\Lip}(\Rz^n), \|\cdot\|_{W^{2,\Lip}})$ is a Banach space. 
Furthermore, endow $\Rz^n$ with the Riemannian metric
\begin{equation}
	\begin{aligned}
		l_x(v,w) \coloneqq \frac{1}{1 + \|x\|^2}\bigl<v,w\bigr>.
	\end{aligned}
\end{equation} 
\begin{theorem} \textbf{\normalshape(Generalised Central Limit Theorem---Lipschitz vector--fields)}\label{th:gencentlimLip}
	Let $p > 2$ and $T >0$. Let $\mu_0 \in \Wc_p(\Rz^n,l)$. Let $\bigl( \Pf(\frac{1}{N}) \bigr)_{N \in \Nz}$ be a family of partitions of $[0,T]$. Let $V: \Wc_p(\Rz^n,l) \mapsto \Wc_p(\Xc^{2,\Lip}(\Rz^n))$ such that:
	\begin{itemize}
		\item There exists a constant $R > 0$ such that for all $\mu \in \Wc_p(\Rz^n,l)$, $V[\mu] \in \Wc_{p+2}(\Xc^{2,\Lip}(\Rz^n))$ and $\Mc_{p+2}(V[\mu]) \leq R^{p+2}$.
		\item  $V$ is Lipschitz with respect to the $\Wc_p$--distance.  
	\end{itemize}
	Then, (\ref{eq:MDEd}) has a unique smooth solution $\mu: [0,T] \to \Wc_p(\Rz^n,l)$ (see Definiton~\ref{de:smoothsol}) with initial value $\mu_0$ and the Average Flow Approximation Series $\bigl( \mu^{\Pf(\frac{1}{N})} \bigr)_{N \in \Nz}$ (see Definition~\ref{de:AFAS}) converges uniformly to $t \mapsto \mu(t)$ in $\Wc_p(\Rz^n,l)$.
	Furthermore, $t \mapsto \mu(t)$ is $\frac{1}{p}$--Hölder--continuous. 
\end{theorem}
\begin{proof}
	The Riemannian manifold $(\Rz^n,l)$ is of bounded geometry and it is easy to see that there exists a Lipschitz embedding $$i: \Xc^{2,\Lip}(\Rz^n) \to \Xc^{2,\infty}((\Rz^n,l)).$$ Hence,
	there exists a Lipschitz embedding $$I: \Wc_p\bigl(\Xc^{2,\Lip}(\Rz^n)\bigr) \to \Wc_p\bigl(\Xc^{2,\infty}((\Rz^n,l))\bigr).$$
	Now, the result follows directly from Theorem~\ref{th:gencentlimstrong} for the probability vector--field $I \circ V$.
\end{proof}

\begin{re}
	Note that the solution $\mu: [0,T] \to \Wc_p(\Rz^n,l)$ of Theorem~\ref{th:gencentlimLip} is in general not a solution in $\Wc_p(\Rz^n)$, even if $\mu_0 \in \Wc_p(\Rz^n)$.
	We need slightly stronger assumptions on the probability vector--field $V$ to ensure that $\mu$ is indeed a solution in $\Wc_p(\Rz^n)$. 
\end{re}

\begin{pr}
	Let $\mu_0 \in \Wc_p(\Rz^n)$. With the assumptions of Theorem~\ref{th:gencentlimLip}, suppose furthermore that there exists $C > 0$ such that for all $\mu \in \Wc_p(\Rz^n,l)$, 
	$V[\mu]$ is supported in the centered ball with radius $C$ in $\Xc^{2,Lip}(\Rz^n)$. Then, the smooth solution $\mu: [0,T] \to \Wc_p(\Rz^n,l)$ of Theorem~\ref{th:gencentlimLip} is $\frac{1}{p}$--Hölder--continuous in $\Wc_p(\Rz^n)$ and 
	the convergence of Average Flow Approximation Series is uniform in $\Wc_p(\Rz^n)$.
\end{pr}
\begin{proof}
	We proceed analogously as in the proof of Lemma~\ref{le:fflowop2}, using the Grönwall--estimate 
	\begin{equation}
		\|\rho_t^X(x) - x\| \leq t\bigl(\|X(0)\| + \sup_{x \in \Rz^n}\|\nabla X(x)\|\|x\|\bigr)e^{ \sup_{x \in \Rz^n}\|\nabla X(x)\| t},
	\end{equation}
	for all $t > 0$, $x \in \Rz^n$ and $X \in \Xc^{2,Lip}(\Rz^n)$.
\end{proof}

\subsubsection{Time--dependent probability vector--fields}

Finally, we show that the Generalised Central Limit Theorems can be extended to time--dependent 
probability vector--fields without too much effort. 

\begin{theorem}[Time--dependent Generalised Central Limit Theorem]\label{th:gentcenlimtime}\strut\newline
	Let $p>2$, $T>0$ and let $(M,g)$ be a Riemannian manifold. Let $\mu_0 \in \Wc_p(M)$ and $t \mapsto V_t$ be a curve of probability vector--fields 
	satisfying:
	\begin{itemize}
		\item There exists $R > 0$ such that for all $t \in [0,T]$ and all $\mu \in \Wc_p(M)$, $V_t[\mu] \in \Wc_{p+2}(\Xc^{2,\infty}(M))$ and $\Mc_{p+2}(V_t[\mu]) \leq R^{p+2}$.
		\item There exists $L>0$ such that for all $t,s \in [0,T]$ and all $\mu,\nu \in \Wc_p(M)$, $\Wc_p\bigl(V_t[\mu], V_s[\nu]\bigr) \leq L\bigl(|t-s| + \Wc_p(\mu,\nu)\bigr)$.  
	\end{itemize}
	We can define a flow--equation \emph{(MDET)}, approximation series and a notion of smooth solutions for this time--dependent curve of 
	probability vector--field analogously to \eqref{eq:MDEd}, and Definitions~\ref{de:AFAS} and \ref{de:smoothsol}. Then, \emph{(MDET)} has a unique 
	smooth solution $\mu: [0,T] \to \Wc_p(M)$ for $t \mapsto V_t$ with initial value $\mu_0$.
\end{theorem}

\begin{proof}
First, note that for any Riemmannian manifold $(M,g)$, $\Rz \times M$ is naturally endowed with a Riemannian structure by simply taking the product of $g$ with the Euclidean metric.
Let $e_T$ be the constant vector--field $(1, 0)$ on $\Rz \times M$, denote by $\pi_T$ the projection on the time coordinate and define
\begin{equation}
	\begin{aligned}
		\Ec: \Xc^{2,\infty}(\Rz \times M) &\to \Xc^{2,\infty}(\Rz \times M)\\
		X &\mapsto \bigl((x,t) \mapsto X(x)\bigr),\\
		\tau_T: \Xc^{2,\infty}(\Rz \times M) &\to \Xc^{2,\infty}(\Rz \times M)\\
		X &\mapsto X + e_T.
	\end{aligned}
\end{equation}
Furthermore, define
\begin{equation}
	\begin{aligned}
		&\Wc_{p,d}(\Rz \times M) \coloneqq \biggl\{\mu \in \Wc_p(\Rz \times M) \,\biggl| \\
		&\qquad \exists N \in \Nz, \exists (x_i)_{i \in \llbracket 0, n-1\rrbracket } \in \Rz^N, \supp\biggl(\bigl(\pi_T\bigr)_*\mu\biggr) \subset \bigcup_{i=0}^{n-1}\{x_i\} \biggr\}.
	\end{aligned}
\end{equation}
For all $\mu \in \Wc_{p,d}(\Rz \times M)$, there exists a unique family of measures $(\mu_t)_{t \in \Rz}$ in $\Wc_p(M)$,
such that for all $\ph \in \Cc^{\infty}_c(\Rz \times M)$,
\begin{equation}
	\int_{\Rz \times M} \ph \der \mu = \int_{\Rz}\int_M \ph(t,x) \der \mu_t(x) \der \bigl(\pi_T\bigr)_*\mu_t. 
\end{equation}
Furthermore, for any $\mu,\nu \in \Wc_{p,d}(\Rz \times M)$, any optimal transport plan $\ga$
between $\mu$ and $\nu$, there exists a family $(\ga_{t,s})_{s,t\in\Rz}$ such that for all 
$\ph \in \Cc^{\infty}_c(\Rz \times M \times \Rz \times M)$,
\begin{equation}
	\int_{\Rz \times M \times \Rz \times M} \ph \der \mu = \int_{\Rz \times \Rz} \int_M \ph(t,x,s,y) \der \ga_{t,s}(x) \der \bigl(\pi_T, \pi_T\bigr)_*\ga(t,s),
\end{equation}
and all $t,s \in \Rz$, $\ga_{t,s}$ is an optimal transport plan between $\mu_t$ and $\nu_s$.
Now, let $V: [0,T] \times \Wc_p(M) \to \Wc_p(\Xc^{2,\infty}(M))$ be a curve of probability vector--fields. Then, $V$ yields a  
map on $\Wc_{p,d}(\Rz \times M)$, defined by
\begin{equation}
	\begin{aligned}
		V_T: \Wc_{p,d}(\Rz \times M) &\to \Wc_p\bigl(\Xc^{2,\infty}(\Rz \times M)\bigr) \\
		\mu &\mapsto \int_{\Rz} \bigl((\tau_T)_*\Ec_*V_t[\mu_t]\bigr) \der \bigl((\pi_T)_*\mu\bigr)(t).
	\end{aligned}
\end{equation}
Let $\ep >0$. Take $\mu,\nu \in \Wc_{p,d}(M)$ and let $\ga$ be an optimal transport plan between $\mu$ and $\nu$. Then, $(\pi_t,\pi_t)_* \ga$ is a transport 
plan between $(\pi_t)_* \mu$ and $(\pi_t)_* \nu$. It readily follows that
\begin{equation}
	\Wc_p\bigl((\pi_t)_* \mu, (\pi_t)_* \nu\bigr) \leq \Wc_p(\mu,\nu).
\end{equation}
Furthermore, for all $s,t \in \Rz$, let $\Ga_{s,t}$ be a transport plan between
$V_t[\mu_t]$ and $V_s[\nu_s]$ such that 
\begin{equation}
	\int_{\Xc^{2,\infty}(M) \times \Xc^{2,\infty}(M)} \|X - Y\|_{W^{2,\infty}}^p \der \Ga_{t,s}(X,Y) \leq \Wc_p(V_t[\mu_t],V_s[\nu_s])^p + \ep.
\end{equation}
Then $\bigl(\tau_t \circ \Ec, \tau_t \circ \Ec\bigr)_* \Ga_{s,t}$ is a transport plan between the vector--field probabilities $\bigl((\tau_T)_*\Ec_*V_t[\mu_t]\bigr)$
and $\bigl((\tau_T)_*\Ec_*V_s[\nu_s]\bigr)$. It readily follows that
\begin{equation}
	\begin{aligned}
		&\int_{\Xc^{2,\infty}(M) \times \Xc^{2,\infty}(M)} \|X - Y\|_{W^{2,\infty}}^p \der \Ga_{t,s}(X,Y)\\
		&\qquad  = \int_{\Xc^{2,\infty}(\Rz \times M) \times \Xc^{2,\infty}(\Rz \times M)} \|X - Y\|_{W^{2,\infty}}^p \bigl(\tau_t \circ \Ec, \tau_t \circ \Ec\bigr)_* \Ga_{s,t}(X,Y),\\
	\end{aligned}
\end{equation}
and
\begin{equation}
	\Wc_p\bigl(\bigl((\tau_T)_*\Ec_*V_t[\mu_t]\bigr), \bigl((\tau_T)_*\Ec_*V_s[\nu_s]\bigr)\bigr) = \Wc_p(V_t[\mu_t],V_s[\nu_s]). 
\end{equation}
Thus,
\begin{equation}
	V_T\Ga \coloneqq \int_{\Rz \times \Rz} \bigl(\tau_t \circ \Ec, \tau_t \circ \Ec\bigr)_* \Ga_{s,t} \der (\pi_t,\pi_t)_* \ga(s,t)
\end{equation}
is a transport plan between $V_T(\mu)$ and $V_T(\nu)$. It follows that
\begin{equation}
	\begin{aligned}
		&\Wc_p\bigl(V_T(\mu), V_T(\nu)\bigr)^p\\
		& \leq \int_{\Xc^{2,\infty}(\Rz \times M) \times \Xc^{2,\infty}(\Rz \times M)} \|X - Y\|_{W^{2,\infty}(M)}^p \der V_T \Ga (X,Y)\\
		&= \int_{\Rz \times \Rz} \int_{\Xc^{2,\infty}(M) \times \Xc^{2,\infty}(M)} \|X - Y\|_{W^{2,\infty}(M)}^p \der \Ga_{s,t}(X,Y)\der (\pi_t,\pi_t)_* \ga(t,s)\\
		&\leq \int_{\Rz \times \Rz} \Wc_p(V_t[\mu_t], V_s[\nu_s])^p\der (\pi_t,\pi_t)_* \ga(t,s) + \ep\\
		&\leq \int_{\Rz \times \Rz} 2^p L^p\bigl(|t-s|^p + \Wc_p(\mu_t,\nu_s)^p\bigr) \der (\pi_t,\pi_t)_* \ga(t,s) + \ep\\
		&= \int_{\Rz \times \Rz} 2^p L^p\biggl(|t-s|^p + \int_{M \times M}d(x,y)^p \der \ga_{t,s}(x,y)\biggr) \der (\pi_t,\pi_t)_* \ga(t,s) + \ep\\
		&\leq 2^pL^p \int_{\Rz \times \Rz} \int_{M \times M} |t-s|^p + d(x,y)^p \der \ga_{t,s}(x,y)\der (\pi_t,\pi_t)_* \ga(t,s) + \ep\\
		&\leq 2^{\frac{3p}{2}}L^p \Wc_p(\mu,\nu)^p + \ep.
	\end{aligned}
\end{equation}
Since $\ep$ was arbitrary, it follows that
\begin{equation}
	\Wc_p\bigl(V_T(\mu), V_T(\nu)\bigr) \leq 2^{\frac{3}{2}}L \Wc_p(\mu,\nu).
\end{equation}
Hence, $V_T$ is Lipschitz on $\Wc_{p,d}(\Rz \times M)$. Since $\Wc_{p,d}(\Rz \times M)$ is dense in $\Wc_p(\Rz \times M)$ and $\Wc_p(\Xc^{2,\infty}(\Rz \times M))$ is complete by Lemma~\ref{le:Wassersteincomplete}, we can extend it to a Lipschitz probability vector--field $V_t: \Wc_p(\Rz \times M)\to \Wc_p(\Xc^{2,\infty}(\Rz \times M))$.
Now, by Theorem~\ref{th:gencentlimstrong}, there exists a unique smooth solution $t \mapsto M_t$ of \eqref{eq:MDEd} in $\Wc_p(\Rz \times M)$ for $V_T$ with initial value $(i_0)_* \mu_0$,
where for all $t>0$,
\begin{equation}
	\begin{aligned}
		i_t: M &\to \Rz \times M\\
		x &\mapsto (t,x).\\
	\end{aligned}
\end{equation}
Finally, for all $t \in [0,T]$, define $\mu(t) \coloneqq (\pi_M)_* M_t$, where $\pi_M: \Rz \times M \to M$ is the projection on $M$. Then, 
one readily shows that $t \mapsto \mu(t)$ is a smooth solution of (MDET) with initial value $\mu_0$. 

Suppose there exists another smooth solution $t \mapsto \nu(t)$ of (MDET) with initial value $\mu_0$. Then, the curve $N: t \mapsto (i_t)_* \nu(t)$ is a smooth solution of \eqref{eq:MDEd} in $\Wc_p(\Rz \times M)$ for $V_T$ with initial value $(i_0)_* \mu_0$. By uniqueness of such solutions, it holds for all $t \in [0,T]$, $N_t = M_t$, and therefore $\nu(t) = \mu(t)$. Hence, the smooth solution of (MDET) with initial value $\mu_0$ is unique.
\end{proof}

\section{Conclusion}
We have shown that \eqref{eq:MDEd} has a solution for any initial value $\mu_0 \in \Wc_p(M)$ with only very weak assumptions on the vector--field probability $V$. Furthermore, we have introduced the notion of smooth solutions of \eqref{eq:MDEd} and have proved that such solutions exist under these very assumptions and can be constructed as uniform limits of Average Flow Approximation Series. 
Thus, we have also proved a strong generalised version of the Central Limit Theorem.
Smooth solutions are the only predictable solutions of \eqref{eq:MDEd} and are, thus, the most meaningful solutions for possible applications of the theory. They allow us to define new differential structures on the Wasserstein spaces $\Wc_p(M)$ for $p > 2$ and a general Riemannian manifold $(M,g)$, precisely by identifiying smooth solutions with the smooth curves on $\Wc_p(M)$. 
Although \eqref{eq:MDEd} is a possibly degenerate second order partial differential equation with measure dependent coefficients, we have also shown that only slightly stronger assumptions on $(M,g)$ and $V$, notably that $V$ is Lipschitz, ensure that those smooth solutions are unique. However, general solutions of \eqref{eq:MDEd} are not unique. Still, the new differential
structure defined over smooth solutions of \eqref{eq:MDEd} satisfies the most crucial properties we would expect of a differential structure on a geometric space, namely existence and uniqueness of flows. Furthermore, the structure is finer than the classical one defined by the continuity equation, such that some important standard diffusion processes become smooth, e.g, the standard Wiener process. 


\bibliographystyle{plainnatnew}
\bibliography{MDEarxiv}

\end{document}